\tikzset{help lines/.style={step=#1cm,very thin, color=gray},
help lines/.default=.5} % draws a grid spaced #1 cm
\tikzset{thick grid/.style={step=#1cm,thick, color=gray},
thick grid/.default=1} % draws a grid spaced #1 cm
\tikzstyle{ann}=[fill=white, inner sep=1pt, font=\footnotesize{#1}]
\tikzstyle{annfar}=[inner sep=2pt, font=\footnotesize{#1}]
\tikzstyle{annfarer}=[inner sep=3pt, font=\footnotesize{#1}]
\tikzstyle{annrot}=[fill=white, text=blue!75!black, inner sep=1pt, font=\footnotesize{#1}]
\tikzstyle{wall}=[thick]
\tikzstyle{nullwall}=[thick, dotted]
\newtheorem{thm}{Theorem}[subsection]
\newtheorem{thm*}{Theorem}
\newtheorem{lem}[thm]{Lemma}
\newtheorem{cor}[thm]{Corollary}
\newtheorem{prop}[thm]{Proposition}
\theoremstyle{definition}
\newtheorem{defn}[thm]{Definition}
\newtheorem{exm}[thm]{Example}
\newtheorem{rem}[thm]{Remark}
\newtheorem{rmk}[thm]{Remark}
\numberwithin{equation}{section}
\DeclareMathOperator{\ind}{ind}%\newcommand{\ind}{\text{ind}}
\DeclareMathOperator{\Coker}{Coker}%\newcommand{\coker}{\text{coker}}
\DeclareMathOperator{\Hom}{Hom}%
\DeclareMathOperator{\Ext}{Ext}%
\DeclareMathOperator{\Tor}{Tor}%
\DeclareMathOperator{\End}{End}%
\DeclareMathOperator{\add}{add} % additive category generated by ...
\DeclareMathOperator{\pd}{projdim}%
\DeclareMathOperator{\id}{injdim}%
\DeclareMathOperator{\gldim}{gldim}%
\DeclareMathOperator{\domdim}{domdim}%
\DeclareMathOperator{\repdim}{repdim}%
\DeclareMathOperator{\findim}{findim}%
\DeclareMathOperator{\modd}{mod}%
\DeclareMathOperator{\Gdim}{Gdim}%
\DeclareMathOperator{\Gen}{Gen}%
\DeclareMathOperator{\Cogen}{Cogen}%
\DeclareMathOperator{\Sub}{Sub}%
\newcommand{\commentout}[1]{}
\newcommand{\cC}{\ensuremath{{\mathcal{C}}}}
\newcommand{\cF}{\ensuremath{{\mathcal{F}}}}
\newcommand{\cP}{\ensuremath{{\mathcal{P}}}}
\newcommand{\cQ}{\ensuremath{{\mathcal{Q}}}}
\newcommand{\cT}{\ensuremath{{\mathcal{T}}}}
\newcommand{\cX}{\ensuremath{{\mathcal{X}}}}
\newcommand{\cY}{\ensuremath{{\mathcal{Y}}}}
\newcommand{\cx}{\ensuremath{{\mathcal{X}}}}
 \newcommand{\cc}{\ensuremath{{\mathcal{C}}}}
\newcommand{\Top}{\operatorname{top}}
\newcommand{\rad}{\operatorname{rad}}
\newcommand{\soc}{\operatorname{soc}}
\newcommand{\Ker}{\operatorname{Ker}}
\newcommand{\cok}{\operatorname{Coker}}
\newcommand{\Ima}{\operatorname{Im}}
\newcommand{\bk}{\mathbf{k}}
\newcounter{margincounter}
\title[Dominant dimension and tilting modules]{Dominant dimension and tilting modules}
\author{Van C.~Nguyen}
\address{Department of Mathematics, Hood College, Frederick, MD 21701, USA}
\email{nguyen@hood.edu}
\author{Idun Reiten}
\address{Institutt for matematiske fag, Norges Teknisk-Naturvitenskapelige Universitet, N-7491 Trondheim, Norway}
\email{idun.reiten@ntnu.no}
\author{Gordana Todorov}
\address{Department of Mathematics, Northeastern University, Boston, MA 02115, USA}
\email{g.todorov@northeastern.edu}
\author{Shijie Zhu}
\address{Department of Mathematics, Northeastern University, Boston, MA 02115, USA}
\email{zhu.shi@husky.neu.edu}
\date{\today}                      % Activate to display a given date or no date
\keywords{tilting modules, dominant dimension, Auslander algebras, Nakayama algebras}
\subjclass[2010]{
16G10; 16G20; 16G70; 16S50; 16S70
}
\begin{document}

%%%%%%%%%%%%%%%%%%%%%%%%%%%%%%%

\begin{abstract} 
%We study which algebras have a tilting module which is both generated and cogenerated by projective-injective modules. In this paper we concentrate on algebras of global dimension 3. Auslander algebras have such a tilting module by \cite{CBS} and for algebras of global dimension 2, Auslander algebras are classified by the existence of such a tilting module.
We study which algebras have tilting modules that are both generated and cogenerated by projective-injective modules. 
%In this paper we concentrate on algebras of global dimension 3. 
Crawley-Boevey and Sauter have shown that Auslander algebras have such tilting modules; and for algebras of global dimension $2$, Auslander algebras are classified by the existence of such tilting modules.

In this paper, we show that the existence of such a tilting module is equivalent to the algebra having dominant dimension at least $2$, independent of its global dimension. In general such a tilting module is not necessarily cotilting. Here, we show that the algebras which have a tilting-cotilting module generated-cogenerated by projective-injective modules are precisely $1$-Auslander-Gorenstein algebras.

When considering such a tilting module, without the assumption that it is cotilting, we study the global dimension of its endomorphism algebra, and discuss a connection with the Finitistic Dimension Conjecture. Furthermore, as special cases, we show that triangular matrix algebras obtained from Auslander algebras and certain injective modules, have such a tilting module. We also give a description of which Nakayama algebras have such a tilting module.

\end{abstract}

\maketitle
\tableofcontents
%\listoffigures
 %%\newpage

 %%%%%%%%%%%%%%%%%%%%%%%%%%%%%%%
 
 \section*{Introduction}\label{sec: Introduction}

   Let $\Lambda$ be an artin algebra and let $\modd \Lambda$ be the category of finitely generated left $\Lambda$-modules.  Throughout the paper, $\gldim \Lambda$ denotes the global dimension of $\Lambda$, $\domdim \Lambda$ denotes its dominant dimension (c.f. Definition \ref{dom.dim.defn}), and $\Gdim \Lambda$ denotes its Gorenstein dimension  (c.f. Remark \ref{gdim defn}). For any $\Lambda$-module $M$, $\pd M$ denotes its projective dimension and $\id M$ denotes its injective dimension. 
   
   Let $\widetilde{Q}$
   %=\coprod_{i=1}^tQ_i$, where $\{Q_i\}_{i=1}^t$ are 
be the direct sum of representatives of the isomorphism classes of all indecomposable projective-injective $\Lambda$-modules. Let $\mathcal C_\Lambda: = (\Gen \widetilde{Q}) \cap (\Cogen \widetilde{Q})$ be the full subcategory of $\modd \Lambda$ consisting of all modules generated and cogenerated by $\widetilde{Q}$. 
%In \cite[Lemma 1.1]{CBS}, it was shown  that  when  $\gldim.\Lambda=2$, the algebra $\Lambda$ is an Auslander algebra if and only if there exists a tilting $\Lambda$-module $T_cC$ in $\mathcal C_{\Lambda}$.
When $\gldim \Lambda=2$, Crawley-Boevey and Sauter showed in \cite[Lemma 1.1]{CBS} that the algebra $\Lambda$ is an Auslander algebra if and only if there exists a tilting $\Lambda$-module $T_\cC$ in $\mathcal C_{\Lambda}$.
In fact, $T_\cC$ is the direct sum of representatives of the isomorphism classes of indecomposable modules in $\mathcal C_\Lambda$. Furthermore $T_\cC$ is the unique tilting module in $\mathcal C_{\Lambda}$ and it is also a cotilting module.

There is another characterization of Auslander algebras as algebras $\Lambda$ such that $\gldim \Lambda \leq 2$ and $\domdim \Lambda \geq 2$. From the above result in \cite{CBS}, it follows that in global dimension 2, the existence of a tilting module in $\mathcal C_{\Lambda}$ is equivalent to $\domdim \Lambda \geq 2$. 
In this paper, we show that the existence of such a tilting module
%a tilting module in $\mathcal C_{\Lambda}$
is equivalent to $\domdim \Lambda \geq 2$
%to the dominant dimension of an algebra $\Lambda$ 
%and existence of a tilting module in the subcategory $\mathcal C_{\Lambda}$ are closely related, 
without any condition on the global dimension of $\Lambda$, and we give a precise description of such a tilting module (see Corollary \ref{cotilting cor}, Corollary \ref{T_C compute} and Remark \ref{C_C compute}):

%In this paper, we show that the dominant dimension of an algebra $\Lambda$ and existence of a tilting module in the subcategory $\mathcal C_{\Lambda}$ are closely related, without any condition on the global dimension of $\Lambda$:

\begin{thm*} Let $\Lambda$ be an artin algebra.  Let $\widetilde{Q}$ be the projective-injective $\Lambda$-module as  above. %Then:
 \begin{enumerate}
\item The following statements are equivalent:
 \begin{enumerate}
  \item[(a)] $\domdim \Lambda\geq 2$,
  \item[(b)] $\mathcal C_\Lambda$ contains a tilting $\Lambda$-module $T_\cC$, 
  \item[(c)] $\mathcal C_\Lambda$ contains a cotilting $\Lambda$-module $C_\cC$. 
 \end{enumerate}
 
\item If a tilting module $T_\cC$ exists, then $T_\cC \simeq \widetilde{Q} \oplus \left(\bigoplus_i \Omega^{-1} P_i\right)$, where $\Omega^{-1} P_i$ is the cosyzygy of $P_i$ and the direct sum is taken over representatives of the isomorphism classes of all indecomposable projective non-injective $\Lambda$-modules $P_i$.
 
\item If a cotilting module $C_\cC$ exists, then $C_\cC \simeq \widetilde{Q} \oplus \left(\bigoplus_i \Omega I_i\right)$, where $\Omega I_i$ is the syzygy of $I_i$ and the direct sum is taken over representatives of the isomorphism classes of all indecomposable injective non-projective $\Lambda$-modules $I_i$.
\end{enumerate} 
\end{thm*}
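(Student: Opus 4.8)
The plan is to prove part (1) directly and deduce (2), (3) from it by a uniqueness argument, proving (a)$\Leftrightarrow$(b) first and then obtaining (a)$\Leftrightarrow$(c) via the duality $D\colon\modd\Lambda\to\modd\Lambda\op$. I will freely use two elementary facts: for any module $M$, one has $M\in\Cogen\widetilde Q$ iff $I^0(M)$ is projective-injective (the injective envelope of $M$ is a summand of any injective containing it), and $\domdim\Lambda\ge 2$ iff $\Lambda\in\Cogen\widetilde Q$ and $\Omega^{-1}\Lambda\in\Cogen\widetilde Q$ (reading off the first two terms of the minimal injective coresolution of $\Lambda$). For (a)$\Rightarrow$(b) I would take $T_\cC:=\widetilde Q\oplus\bigoplus_i\Omega^{-1}P_i$ and check it is tilting and lies in $\mathcal C_\Lambda$. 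Since $\domdim\Lambda\ge 2$, each indecomposable projective non-injective $P_i$ has minimal injective coresolution starting $0\to P_i\to I^0(P_i)\to I^1(P_i)\to\cdots$ with $I^0(P_i), I^1(P_i)$ projective-injective; thus $\Omega^{-1}P_i$ is both a quotient of $I^0(P_i)$ and a submodule of $I^1(P_i)$, so $\Omega^{-1}P_i\in\mathcal C_\Lambda$, and $0\to P_i\to I^0(P_i)\to\Omega^{-1}P_i\to 0$ is a projective resolution, so $\pd T_\cC\le 1$. Summing these sequences (and the split sequence $0\to P\to P\to 0\to 0$ for projective-injective $P$) over the indecomposable projective summands of $\Lambda$ gives the required coresolution $0\to\Lambda\to T^0\to T^1\to 0$ with $T^0\in\add\widetilde Q$, $T^1\in\add T_\cC$. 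The only nonformal point is $\Ext^1_\Lambda(T_\cC,T_\cC)=0$, for which I would isolate the sublemma that $\Ext^1_\Lambda(\Omega^{-1}P_i,M)=0$ for every $M\in\Gen\widetilde Q$: by the above projective resolution this $\Ext$ is $\cok(\Hom_\Lambda(I^0(P_i),M)\to\Hom_\Lambda(P_i,M))$, and given $f\colon P_i\to M$ and a projective-injective cover $Q\twoheadrightarrow M$, the pullback of $f$ splits because $P_i$ is projective, lifting $f$ to $P_i\to Q$, which then extends along $P_i\hookrightarrow I^0(P_i)$ because $Q$ is injective; composing back extends $f$. Since $\widetilde Q$ is projective and $T_\cC\in\mathcal C_\Lambda\subseteq\Gen\widetilde Q$, this yields $\Ext^1_\Lambda(T_\cC,T_\cC)=0$, and in fact $\mathcal C_\Lambda\subseteq T_\cC^{\perp_1}$.

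For (b)$\Rightarrow$(a), let $T\in\mathcal C_\Lambda$ be tilting with coresolution $0\to\Lambda\to T^0\xrightarrow{h}T^1\to 0$, $T^0,T^1\in\add T\subseteq\mathcal C_\Lambda$. From $\Lambda\hookrightarrow T^0\in\Cogen\widetilde Q$ we get $\Lambda\in\Cogen\widetilde Q$, so $I^0(\Lambda)$ is projective-injective. The crucial step exploits $T^0\in\Gen\widetilde Q$ (not just $\Cogen\widetilde Q$): choose an epimorphism $\pi\colon Q_0\twoheadrightarrow T^0$ with $Q_0$ projective-injective, set $N_0=\ker\pi\subseteq Q_0$ (so $N_0\in\Cogen\widetilde Q$), pull the coresolution back along $\pi$, and split off the projective module $\Lambda$; this produces an exact sequence $0\to\Lambda\oplus N_0\to Q_0\to T^1\to 0$ and hence, dividing by $\Lambda$, an exact sequence $0\to N_0\to Q_0/\Lambda\to T^1\to 0$. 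Now $\Cogen\widetilde Q$ is closed under extensions — here injectivity of $\widetilde Q$ is used — so $Q_0/\Lambda\in\Cogen\widetilde Q$, and since $Q_0$ is injective it contains $I^0(\Lambda)$ as a summand, whence $Q_0/\Lambda\cong\Omega^{-1}\Lambda\oplus(\text{projective-injective})$. So $\Omega^{-1}\Lambda\in\Cogen\widetilde Q$ and $\domdim\Lambda\ge 2$. Finally (a)$\Leftrightarrow$(c) follows by applying (a)$\Leftrightarrow$(b) to $\Lambda\op$ and transporting along $D$, which interchanges $\widetilde Q\leftrightarrow\widetilde Q\op$, $\mathcal C_\Lambda\leftrightarrow\mathcal C_{\Lambda\op}$, tilting$\leftrightarrow$cotilting and $\Omega^{-1}\leftrightarrow\Omega$, together with the left-right symmetry $\domdim\Lambda=\domdim\Lambda\op$.

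For (2): if a tilting $T\in\mathcal C_\Lambda$ exists then $\domdim\Lambda\ge 2$ by (1), so $T_\cC=\widetilde Q\oplus\bigoplus_i\Omega^{-1}P_i$ is itself a tilting module in $\mathcal C_\Lambda$, and it remains to show $T\simeq T_\cC$. Here $\Ext^1_\Lambda(T_\cC,T)=0$ is the case $M=T$ of the sublemma (using $T\in\Gen\widetilde Q$) together with $\widetilde Q$ projective, while $\Ext^1_\Lambda(T,T_\cC)=0$ follows from $\widetilde Q$ injective and from $\Ext^1_\Lambda(T,\Omega^{-1}P_i)$ being squeezed between $\Ext^1_\Lambda(T,I^0(P_i))=0$ and $\Ext^2_\Lambda(T,P_i)=0$ (the latter by $\pd T\le 1$). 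Thus $T\oplus T_\cC$ is a partial tilting module; comparing the numbers of non-isomorphic indecomposable summands with $T$ and with $T_\cC$ forces $\add T=\add T_\cC$, hence $T\simeq T_\cC$. Part (3) is the image of (2) under $D$ applied to $\Lambda\op$.

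I expect the main obstacle to be the second step of (b)$\Rightarrow$(a). One must first recognize that the $\Gen\widetilde Q$-half of $\mathcal C_\Lambda$ is indispensable — the trivial sequence $0\to\Lambda\to\Lambda\to 0\to 0$ shows that $\Lambda\in\Cogen\widetilde Q$ by itself never forces $\domdim\Lambda\ge 2$ — then set up the pullback precisely so that the cosyzygy $\Omega^{-1}\Lambda$ emerges as a direct summand of $Q_0/\Lambda$, and finally observe that closure of $\Cogen\widetilde Q$ under extensions (which genuinely requires $\widetilde Q$ to be injective) is exactly what completes the argument. Everything else — the tilting axioms for $T_\cC$, the duality bookkeeping, and the summand-counting in the uniqueness step — should be routine given the results already in the paper.
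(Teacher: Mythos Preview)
Your proof is correct, and it diverges from the paper's in two places worth noting.

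For (b)$\Rightarrow$(a), the paper argues summand-by-summand: for each indecomposable projective non-injective $P$, the minimal left $\add T_\cC$-approximation $P\to T_P$ is shown (in a separate proposition) to have $T_P$ projective-injective, and then the injective envelope of the cokernel is projective-injective because the cokernel lies in $\mathcal C_\Lambda$. Your pullback argument works with $\Lambda$ all at once and replaces the approximation lemma by the single observation that $\Cogen\widetilde Q$ is extension-closed because $\widetilde Q$ is injective. This is more self-contained and avoids the approximation machinery; the paper's route, on the other hand, yields the extra structural fact that minimal $\add T_\cC$-approximations of projectives land in $\add\widetilde Q$, which it uses elsewhere.

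For (a)$\Rightarrow$(b) and for (2), the paper first builds a set-theoretic map $\mathbf\Omega\colon\ind\cX\to\ind\cP$ from the indecomposables of projective dimension $1$ in $\mathcal C_\Lambda$ to the indecomposable projective non-injectives, shows it is always injective and that $\domdim\Lambda\ge 2$ makes it bijective, and reads off both existence and the explicit form of $T_\cC$ from that bijection; uniqueness is obtained by showing that any tilting module in $\mathcal C_\Lambda$ must equal $\widetilde Q\oplus X$ where $X$ collects \emph{all} indecomposables of $\mathcal C_\Lambda$ with $\pd=1$. You instead verify the tilting axioms for $\widetilde Q\oplus\bigoplus_i\Omega^{-1}P_i$ directly via your sublemma, and prove uniqueness by showing $\Ext^1$ vanishes both ways between $T$ and $T_\cC$ so that $T\oplus T_\cC$ is partial tilting and a summand count finishes. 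Your argument is shorter; the paper's bijection $\mathbf\Omega$ buys the additional information that every indecomposable in $\mathcal C_\Lambda$ of projective dimension $1$ is already a cosyzygy $\Omega^{-1}P_i$.
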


 %\begin{prop}
%
%\end{prop}
%
%\textcolor{red}{Z: our proof is based on an observation of the indecomposability of the syzygy of any indecomposable module with its projective cover also injective, which allows us to describe each indecomposable summands of the tilting module precisely. i.e. they are either indecomposable projective-injectives or the first cosyzygy of an indecomposable projective module. The proof also uses (3') in the definition of tilting modules while Julia's proof uses (3). This also answers a question in Julia's paper (pg.7 MDP).}

Dominant dimensions of algebras under derived equivalences induced by tilting modules were studied by Chen and Xi; in particular they looked at a special class of the so-called \emph{canonical tilting modules} \cite[p.385]{CX} (or canonical $k$-tilting modules to specify the projective dimension being $k$, c.f.~Remark \ref{canonical k tilting}). Recently, the same tilting modules, also called \emph{$k$-shifted modules}, are studied by Pressland and Sauter in \cite{PS}. They show that the existence of a $k$-shifted module is equivalent to the dominant dimension of the algebra being at least $k$. %When $k=1$, our tilting module $T_\cC$ in $\cC_\Lambda$ coincides with their canonical $1$-tilting module. 
We remark that our tilting module $T_\cC$ in $\mathcal C_{\Lambda}$ is a canonical $1$-tilting module. However when $\domdim\Lambda \leq 1$, a canonical $1$-tilting module never belongs to $\cC_\Lambda$. 

In this paper, we concentrate on the existence and properties of the classical tilting module $T_\cC$ in the subcategory $\cC_\Lambda$; in addition to its description, we also consider classes of algebras $\Lambda$ which have such tilting modules in $\cC_\Lambda$.   

Theorem $1$ is proved and discussed in detail in Sections~\ref{subsec:tilting} and \ref{subsec:cotilting}. %We give a dual statement for the existence of cotilting module in $\mathcal C_{\Lambda}$ in Section~\ref{subsec:cotilting}. 
As a generalization of \cite[Lemma 1.1]{CBS}, we describe Auslander algebras as algebras $\Lambda$ with finite global dimension such that there exists a tilting-cotilting module in $\cC_\Lambda$ (see Corollary \ref{tilting-cotilting cor}). More generally, we characterize a larger class, of $1$-Auslander-Gorenstein algebras (c.f.~Definition~\ref{mAusGor}) as: %as algebras $\Lambda$ such that there exists a tilting-cotilting module in $\cC_\Lambda$ (see Theorem \ref{tilting-cotilting}). A more general situation is considered by Pressland and Sauter in \cite{PS}.

\begin{thm*}
Let $\Lambda$ be an artin algebra. Then the subcategory $\mathcal C_\Lambda$ contains a tilting-cotilting module if and only if $\Lambda$ is a $1$-Auslander-Gorenstein algebra.
\end{thm*}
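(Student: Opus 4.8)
The plan is to deduce everything from Theorem~1 of the Introduction together with the standard fact that a classical tilting module is cotilting exactly when its injective dimension is at most~$1$. Recall from Definition~\ref{mAusGor} that $\Lambda$ is $1$-Auslander-Gorenstein precisely when $\domdim\Lambda\geq 2$ and $\id{}_\Lambda\Lambda\leq 2$. A tilting-cotilting module in $\mathcal C_\Lambda$ is in particular a tilting module in $\mathcal C_\Lambda$, so Theorem~1 already tells us that, if such a module exists, then $\domdim\Lambda\geq 2$ and the module is isomorphic to $T_\cC=\widetilde{Q}\oplus\bigl(\bigoplus_i\Omega^{-1}P_i\bigr)$, where the $P_i$ range over the indecomposable projective non-injective $\Lambda$-modules. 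Hence the assertion reduces to: \emph{assuming $\domdim\Lambda\geq 2$, the tilting module $T_\cC$ is cotilting if and only if $\id{}_\Lambda\Lambda\leq 2$.} For the criterion ``tilting module of injective dimension $\leq 1$ is cotilting'' I would argue that $\Ext^1_\Lambda(T_\cC,T_\cC)=0$ (since $T_\cC$ is tilting), so once $\id{}_\Lambda T_\cC\leq 1$ the module $T_\cC$ is a partial cotilting module and hence a direct summand of a cotilting module by the dual of Bongartz's lemma; since a cotilting module and the tilting module $T_\cC$ both have exactly $n$ pairwise non-isomorphic indecomposable summands ($n$ the number of isomorphism classes of simple $\Lambda$-modules), $T_\cC$ is itself cotilting. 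So the whole statement comes down to proving $\id{}_\Lambda T_\cC\leq 1\iff\id{}_\Lambda\Lambda\leq 2$.

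For the implication $\id{}_\Lambda\Lambda\leq 2\Rightarrow\id{}_\Lambda T_\cC\leq 1$: the summand $\widetilde{Q}$ of $T_\cC$ is injective, so it suffices to bound $\id\Omega^{-1}P_i$. Each $P_i$ is a direct summand of ${}_\Lambda\Lambda$, hence has minimal injective resolution of length at most $2$, say $0\to P_i\to I^0\xrightarrow{d^0}I^1\xrightarrow{d^1}I^2\to 0$ with all $I^j$ injective. The cosyzygy $\Omega^{-1}P_i=\operatorname{coker}(P_i\to I^0)$ is then isomorphic to $\operatorname{im}d^0=\ker d^1$, so there is a short exact sequence $0\to\Omega^{-1}P_i\to I^1\to I^2\to 0$ with $I^1$ injective; the long exact sequence in $\Ext^{\ast}_\Lambda(-,\,\cdot\,)$ gives $\Ext^{k}_\Lambda(-,\Omega^{-1}P_i)\cong\Ext^{k-1}_\Lambda(-,I^2)=0$ for $k\geq 2$, i.e.\ $\id\Omega^{-1}P_i\leq 1$. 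For the converse, assume $T_\cC$ is cotilting, so $\id\Omega^{-1}P_i\leq 1$ for all $i$; the defining short exact sequence of the cosyzygy, $0\to P_i\to E(P_i)\to\Omega^{-1}P_i\to 0$, together with $\id E(P_i)=0$, forces $\id P_i\leq 2$ via the long exact sequence in $\Ext^{\ast}_\Lambda(-,\,\cdot\,)$. Since every other indecomposable summand of ${}_\Lambda\Lambda$ is injective, $\id{}_\Lambda\Lambda\leq 2$, and combined with $\domdim\Lambda\geq 2$ (available because $T_\cC$ lies in $\mathcal C_\Lambda$ as a tilting module) we conclude $\Lambda$ is $1$-Auslander-Gorenstein. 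As a byproduct, in this situation $T_\cC\simeq C_\cC$, by the uniqueness part of Theorem~1(3).

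I do not expect a serious obstacle: everything rests on Theorem~1, and the only substantive content is the dimension bookkeeping converting the self-injective bound $\id{}_\Lambda\Lambda\leq 2$ into $\id T_\cC\leq 1$ through the two cosyzygy short exact sequences, plus the standard completion argument that a tilting module of injective dimension at most one is cotilting. The one point I would be careful about is the identification of $\Omega^{-1}P_i$ (defined via the injective envelope of $P_i$) with $\ker d^1$ inside the minimal injective resolution of $P_i$, which is what makes the sequence $0\to\Omega^{-1}P_i\to I^1\to I^2\to 0$ exact with injective end terms and hence legitimately bounds $\id\Omega^{-1}P_i$.
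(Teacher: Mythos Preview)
Your argument is correct and follows essentially the same route as the paper's proof of Theorem~\ref{tilting-cotilting}: both directions reduce, via Theorem~1, to the equivalence $\id{}_\Lambda T_\cC\leq 1\iff\id{}_\Lambda\Lambda\leq 2$ under the standing hypothesis $\domdim\Lambda\geq 2$, and both establish this equivalence by dimension-shifting along the cosyzygy sequences $0\to P_i\to I^0\to\Omega^{-1}P_i\to 0$. The only notable difference is one of explicitness: the paper simply asserts that $\id T_\cC\leq 1$ makes $T_\cC$ cotilting, whereas you spell out the missing verification of condition~$(3^o)$ via the dual Bongartz completion and the count of indecomposable summands. Your caution about identifying $\Omega^{-1}P_i$ with $\ker d^1$ in the minimal injective resolution is well placed but unproblematic, and is exactly what the paper uses implicitly (via Corollary~\ref{ind}) when it argues that $I_0/T_0$ is the second cosyzygy of the relevant projective.
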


The (sub)structures of classes of such algebras with their homological properties are described in the following diagram (see Definition \ref{mAusGor} and Remark \ref{dtr selfinj} for some definitions): \\

{\Small
\begin{tikzpicture}
\tikzstyle{iellipse}=[draw=black,shape=ellipse,very thick];  %The fill color "green!20!white" means 20% green and 80% white mixed together.
\tikzstyle{ibox}=[draw=black,shape=rectangle,very thick];
\node[ibox, align=center, below] (n1) at (2,0)  {Existence of tilting-cotilting module in $\cC_\Lambda$}; %Multiline text can be included inside a node. A new line is indicated by double backslash "\\", but additionally you have to specify the alignment using the node option "align=".
\node[ibox, align=center, below] (n2) at (2,-1)  {\\ \textbf{$1$-Auslander-Gorenstein algebras} \\ $\id\,_\Lambda\Lambda \leq 2 \leq \domdim \Lambda$, \\ $ \gldim \Lambda \in \{0,2, \infty\}$ }; 
\node[iellipse, align=center, below] (n3) at (-1.2,-2.8)  {\textbf{$DTr$-selfinjective algebras} \\ $\Gdim \Lambda = 2 = \domdim \Lambda$,\\ $ \gldim \Lambda \in \{2, \infty\}$ };
\node[iellipse, align=center, below] (n4) at (5.2,-2.8)  {\textbf{Selfinjective algebras} \\ $\Gdim \Lambda=0$, $\domdim \Lambda = \infty$,\\ $\gldim \Lambda \in \{0, \infty\}$};
\node[ibox, align=center, below] (n5) at (-3.4,-5)  {\textbf{Auslander algebras} \\ $\Gdim \Lambda = 2 = \domdim \Lambda$,\\ $ \gldim \Lambda = 2$ \\ (Non-semisimple)};
\node[ibox, align=center, below] (n6) at (1,-5)  {\textbf{Non-Auslander} \\ \textbf{$DTr$-selfinjective algebras} \\  $\Gdim \Lambda = 2 = \domdim \Lambda$,\\ $ \gldim \Lambda = \infty$ };

\draw[-, very thick, <->=stealth] (n1) -- (n2);
\draw[-, very thick, >=stealth] (n2) -- (n3);
\draw[-, very thick, >=stealth] (n2) -- (n4);
\draw[-, very thick, >=stealth] (n3) -- (n5);
\draw[-, very thick, >=stealth] (n3) -- (n6);
\end{tikzpicture} 
} 
\vskip 7pt

In Section \ref{DomDim}, we gather further properties of algebras with dominant dimension at least $2$. From the  results in \cite{Mo,M,T}, it follows that such algebras are isomorphic to $\End_\Lambda(X)^{op}$ for some algebra $\Lambda$ and a $\Lambda$-module $X$ which is a generator and a cogenerator; we recall what these algebras $\Lambda$ and modules $X$ should look like and also give a precise description of the tilting module $T_\cC$ in terms of $\Lambda$ and $X$ in Proposition~\ref{XT}. In Section \ref{endomorphism algebra}, given an artin algebra $\Lambda$ with $\gldim\Lambda=d$ and a tilting module $T_\cC\in\cC_\Lambda$ (if it exists), we study the endomorphism algebra $B_\cC:=\End_\Lambda(T_\cC)^{op}$. We show that $d-1\leq\gldim B_\cC\leq d$, and $\gldim B_\cC=d-1$ if and only if $\pd (\tau T_\cC) <d$, (see Corollary \ref{gldim B} and Theorem \ref{drop}). Applying this together with the description of algebras of dominant dimension at least $2$ in Section \ref{DomDim}, we obtain a result about the Finitistic Dimension Conjecture for a certain class of artin algebras of representation dimension at most $4$ in Corollary \ref{findim}. 

%We give an explicit description of what those algebras $A$ and modules $X$ are in Section \ref{DomDim}. \\
%(Will add more here later!)

In Section \ref{Auslander}, we construct classes of algebras closely related to Auslander algebras which have tilting modules in the subcategory $\mathcal C_{\Lambda}$ of $\modd \Lambda$. More precisely we have:

\begin{thm*} Let $A$ be an Auslander algebra. Let $E$ be an injective $A$-module such that $\End_A(E)$ is a semisimple algebra and $\Hom_A(E,Q)=0$, for all projective-injective $A$-modules $Q$. Then $A[E]$, the triangular matrix algebra of $A$ and the $A$-$\End_A(E)^{op}$-bimodule $E$, has a tilting module in the subcategory $\mathcal C_{A[E]}$.
\end{thm*}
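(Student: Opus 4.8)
The plan is to reduce the assertion to the dominant-dimension criterion obtained earlier: by Theorem~1(1) it suffices to show $\domdim A[E]\ge 2$, for then $\cC_{A[E]}$ automatically contains a tilting module, of the explicit form given by Theorem~1(2). Write $B:=A[E]$ and $C:=\End_A(E)\op$, which is a semisimple algebra by hypothesis. I would work throughout with the standard description of $\modd B$ as the category of triples $(X,Y,f)$, where $X\in\modd A$, $Y\in\modd C$, and $f\colon E\otimes_C Y\to X$ is an $A$-homomorphism. In these terms: $\rad B$ acts on $(X,Y,f)$ through $\rad A$ on $X$, through the faithful action of $C$ on $E$ composed with $f$, and through $\rad C=0$ on $Y$; one has ${}_BB\cong(A,0,0)\oplus Be_C$ with $Be_C=(E,C,\mathrm{id})$; and the indecomposable injective $B$-modules are the modules $(I,\Hom_A(E,I),\mathrm{ev})$ with $I$ indecomposable injective over $A$, together with the modules $(0,S,0)$ with $S$ simple over $C$.

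The first step is to check that $Be_C=(E,C,\mathrm{id})$ is already injective. A short computation with the $\rad B$-action gives $\soc(E,C,\mathrm{id})=(\soc_A E,0,0)$; since $E$ is injective over $A$ its socle is essential in it, hence $(\soc_A E,0,0)$ is essential in $(E,C,\mathrm{id})$, so the injective envelope of $(E,C,\mathrm{id})$ in $\modd B$ equals that of $(\soc_A E,0,0)$, namely $(E,\Hom_A(E,E),\mathrm{ev})$. A length count shows $(E,C,\mathrm{id})$ and $(E,\Hom_A(E,E),\mathrm{ev})$ have the same length, hence coincide; so $Be_C$ is injective and contributes only to the zeroth term of the minimal injective coresolution of ${}_BB$.

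The second step is to analyse the remaining summand $(A,0,0)$. Its socle is $(\soc_A A,0,0)$, so its injective envelope in $\modd B$ is $(I^0,\Hom_A(E,I^0),\mathrm{ev})$, where $I^0$ is the injective envelope of ${}_AA$; consequently its cosyzygy is $(\Omega^{-1}_A({}_AA),0,0)$, whose injective envelope is $(I^1,\Hom_A(E,I^1),\mathrm{ev})$, where $I^1$ is the next term in the minimal injective coresolution of ${}_AA$. Now the hypotheses on $A$ and $E$ come in: since $A$ is an Auslander algebra, $\domdim A\ge 2$, so $I^0$ and $I^1$ are projective-injective $A$-modules; and since $\Hom_A(E,Q)=0$ for every projective-injective $A$-module $Q$, the two envelopes simplify to $(I^0,0,0)$ and $(I^1,0,0)$. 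Each of these is projective over $B$ (because $I^0,I^1$ are projective over $A$) and injective over $B$ (because $\Hom_A(E,I^j)=0$), hence projective-injective.

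Putting the two steps together, the minimal injective coresolution of ${}_BB$ begins
\[
0\longrightarrow {}_BB\longrightarrow (I^0,0,0)\oplus(E,C,\mathrm{id})\longrightarrow (I^1,0,0)\longrightarrow\cdots,
\]
whose first two terms are projective-injective, so $\domdim B\ge 2$ and Theorem~1 supplies the required tilting module $T_\cC\in\cC_B$. The step demanding the most care is the bookkeeping inside $\modd B$: one must verify that $Be_C$ is injective, and that the socles of $(A,0,0)$ and of its cosyzygy lie entirely on the $A$-side, so that the relevant injective envelopes are genuinely of the form $(I^j,\Hom_A(E,I^j),\mathrm{ev})$. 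Once that is done, the Auslander property of $A$ (forcing $I^0,I^1$ projective-injective over $A$) and the vanishing $\Hom_A(E,Q)=0$ (promoting them to projective-injective $B$-modules) finish the argument.
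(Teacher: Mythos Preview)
Your argument is correct, and it takes a genuinely different route from the paper's. The paper does \emph{not} verify $\domdim A[E]\ge 2$ and then invoke Theorem~1; instead it works entirely inside $\cC_{A[E]}$. It first classifies the indecomposable projective-injective $A[E]$-modules---they are the triples $(0,Q_i,0)$ for $Q_i$ projective-injective over $A$, together with the triples $(K_i,I_i,\eta_i)$ coming from the summands of $E$---and then shows that the image $(0,T_\cC,0)$ of the Auslander tilting module under the embedding $\Psi:\modd A\to\modd A[E]$ is a partial tilting module in $\cC_{A[E]}$ with $n_A$ summands. Adding the $r$ projective-injective triples $(K_i,I_i,\eta_i)$ brings the count up to $n_A+r$, the number of simples of $A[E]$, so this sum is the tilting module $T_{\cC_{A[E]}}$.

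Your approach is more economical once Theorem~1 is in hand: you only need to check two terms of the injective coresolution of ${}_BB$, and the identification of projective-injective $B$-modules falls out along the way rather than being a separate lemma. What the paper's approach buys is an explicit formula for $T_{\cC_{A[E]}}$ in terms of $T_\cC$, making visible how the tilting module over $A[E]$ is assembled from the one over $A$; your route gives the same module via Theorem~1(2), but one step removed. Both arguments ultimately hinge on the same two facts you isolate at the end: that $\domdim A\ge 2$ makes $I^0,I^1$ projective-injective over $A$, and that $\Hom_A(E,Q)=0$ promotes $(I^j,0,0)$ to projective-injective over $B$.
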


In Section \ref{Nakayama}, 
%we consider Nakayama algebras, which break down into two types: Linear-Nakayama (L for linear) and Cyclic-Nakayama (c.f. Section~\ref{subsec:Nakayama}). 
we use a numerical condition to give a characterization of Nakayama algebras $\Lambda$ which have a tilting module in $\mathcal C_{\Lambda}$. This class of algebras has been classified by Fuller in \cite[Lemma 4.3]{F}; we give a combinatorial approach using Auslander-Reiten theory: 

%\begin{thm*} Let $\Lambda$ be a Nakayama algebra with $n$ simple modules. Then there exists a tilting module in $\mathcal C_{\Lambda}$ if and only if $\cP_c \subseteq\{j-c_{j}\in\mathbb Z_n \mid j\in\cQ_c\}$, where $c$ is an admissible sequence of a given Kupisch series, the set $\cP_c$ labels all projective non-injective $\Lambda$-modules, the set $\cQ_c$ labels all projective-injective $\Lambda$-modules, and $c_j$ is the length of indecomposable module $P_j$. 
%\end{thm*} 

\begin{thm*} Let $\Lambda$ be a Nakayama algebra with $n$ simple modules. Let $c$ be an admissible sequence of a given Kupisch series. Let the set $\cP_c$ label all indecomposable projective non-injective $\Lambda$-modules, the set $\cQ_c$ label all indecomposable projective-injective $\Lambda$-modules, and $c_j$ be the length of the indecomposable module $P_j$. 

Then there exists a tilting module in $\mathcal C_{\Lambda}$ if and only if $\cP_c \subseteq\{j-c_{j}\in\mathbb Z_n \mid j\in\cQ_c\}$.
\end{thm*}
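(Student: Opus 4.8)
The plan is to deduce the statement from the first displayed Theorem (Theorem~$1$), which says $\cC_\Lambda$ contains a tilting module exactly when $\domdim\Lambda\ge 2$; so it suffices to translate $\domdim\Lambda\ge 2$ into the stated condition on the Kupisch series. I would do this by computing, from the Auslander--Reiten combinatorics of a Nakayama algebra, the first two terms of a minimal injective copresentation $0\to{}_\Lambda\Lambda\to I^0\to I^1$; recall $\domdim\Lambda\ge 2$ means precisely that $I^0$ and $I^1$ are projective-injective. All indecomposables are uniserial, and in the indexing of the statement the socle of $P_j$ is $S_{g(j)}$ for the affine function $g(j)=j-c_j+1$ of $j\in\mathbb Z_n$; admissibility of the Kupisch series is exactly the assertion that the consecutive differences $g(j+1)-g(j)$, computed in $\mathbb Z$, are all $\ge 0$ (and sum to $n$ around the cycle). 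The first bookkeeping step records the consequences of this monotonicity: the step $j\mapsto j+1$ is a \emph{stall} of $g$ exactly when $P_j\in\cP_c$ and a \emph{jump} exactly when $P_j\in\cQ_c$; cutting $\mathbb Z_n$ into the maximal arcs (``plateaux'') on which $g$ is constant, there are at least two plateaux, each has a unique last vertex, $\cQ_c$ is exactly the set of last vertices, the remaining vertices of each plateau are exactly the elements of $\cP_c$, distinct plateaux have distinct $g$-values, and so $m\mapsto g(m)$ is a bijection $\cQ_c\xrightarrow{\ \sim\ }\im g$. Since $P_m$ is injective with socle $S_{g(m)}$ for $m\in\cQ_c$, it follows that $I(S_v)\cong P_m$ is projective-injective whenever $v=g(m)\in\im g$, while $I(S_v)$ is not projective when $v\notin\im g$.

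With this dictionary, $\domdim\Lambda\ge 1$ is automatic: $I^0=\bigoplus_j I(\soc P_j)=\bigoplus_j I(S_{g(j)})$ and each summand is projective-injective by the last observation (the linearly oriented case is the same, with $g$ a genuine non-decreasing function on $\{1,\dots,n\}$). So $\domdim\Lambda\ge 2$ reduces to the single requirement that $I^1$ be projective-injective. Now $\Omega^{-1}P_j=0$ for $j\in\cQ_c$; for $j\in\cP_c$, let $m\in\cQ_c$ be the last vertex of the plateau through $j$, so $g(j)=g(m)$ and hence $I(P_j)=I(\soc P_j)\cong P_m$. As $j$ precedes $m$ in that plateau and $j-c_j=m-c_m$, we get $c_j<c_m$, and the essential monomorphism $P_j\hookrightarrow P_m$ identifies $P_j$ with the unique length-$c_j$ submodule of $P_m$; therefore
\[
\Omega^{-1}P_j\;\cong\;P_m/P_j ,
\]
which is uniserial with top $S_m$ and socle $S_{(m-c_m)+c_j+1}=S_{j+1}$. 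Hence $I^1\cong\bigoplus_{j\in\cP_c}I(S_{j+1})$.

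Finally, for $j\in\cP_c$ the summand $I(S_{j+1})$ is projective-injective iff $S_{j+1}$ is the socle of a projective, i.e.\ $j+1\in\im g$; by the bijection $\cQ_c\xrightarrow{\ \sim\ }\im g$ and the identity $g(m)-1=m-c_m$ this says $j\in\{\,m-c_m\mid m\in\cQ_c\,\}$. Thus $I^1$ is projective-injective exactly when $\cP_c\subseteq\{\,j-c_j\in\mathbb Z_n\mid j\in\cQ_c\,\}$, and together with Theorem~$1$ this is the assertion.

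The substantive part is the plateau bookkeeping of the first step: that $\cQ_c$ is precisely the set of last vertices of plateaux and that $m\mapsto g(m)$ is a bijection onto $\im g$. These are what make ``$I^0$ projective'' free and collapse the two a priori conditions (on $I^0$ and on $I^1$) to the single inclusion, and they also fix the orientation conventions so that the shift is $j\mapsto j-c_j$ and not $j\mapsto j+c_j$. The linearly oriented case, where the circular-monotonicity argument does not apply verbatim, needs its own (easier) end-vertex analysis.
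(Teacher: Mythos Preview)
Your proof is correct and follows essentially the same route as the paper: both reduce the question to $\domdim\Lambda\ge 2$ via the main equivalence (Theorem~\ref{tilting}), then use the uniserial combinatorics to compute $\soc P_i=S_{i-c_i+1}$ and $\soc(\Omega^{-1}P_i)=S_{i+1}$, concluding that $\domdim P_i\ge 2$ if and only if $i=j-c_j$ for some $j\in\cQ_c$ (the paper's Lemmas~\ref{soc P} and \ref{quadrilateral} and the ensuing theorem). Your plateau/monotonicity bookkeeping makes explicit why $I^0$ is automatically projective-injective (i.e.\ $\domdim\Lambda\ge 1$), a point the paper asserts in Lemma~\ref{quadrilateral} without argument; one small slip is the claim ``at least two plateaux'', which fails for admissible sequences like $(2,3,\dots,n{+}1)$ on $\mathbb Z_n$, but this does not affect the rest of your argument.
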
 
%For more details, see Corollary~\ref{domdim cn} and Theorem~\ref{domdim ln}.
The description of such a tilting module is given in Section~\ref{tilting Nakayama}. 

\vskip 5pt
{\bf Acknowledgement:}  We would like to thank Rene Marczinzik, Matthew Pressland, and Julia Sauter for helpful conversations and remarks, especially Matthew for pointing out a mistake in the original proof of Theorem \ref{drop}. The third author would like to thank NTNU for their hospitality during her several visits while working on this project. This work was done when the first author was a Zelevinsky Research Instructor at Northeastern University, she thanks the Mathematics Department for their support. 

%%%%%%%%%%%%%%%%%%%%%%%%%%%%%%%

%\section{Projective-Injectives and the subcategory $\mathcal C$}
\section{Projective-injectives and the subcategory $\mathcal C_{\Lambda}$}

Let $\Lambda$ be an artin algebra and let $\modd\Lambda$ be the category of finitely generated left $\Lambda$-modules.

\begin{defn}
%Let $\Lambda$ be an artin algebra. 
A  $\Lambda$-module $X$ is called a \textbf{generator} if for any $\Lambda$-module $M$, there is an epimorphism $X^m\rightarrow M$, for some $m$. A $\Lambda$-module $X$ is called a \textbf{cogenerator} if for any $\Lambda$-module $M$, there is a monomorphism $M\rightarrow X^m$, for some $m$. 
\end{defn}

We denote by $\Gen (X)$, respectively $\Cogen (X)$, the full subcategories of $\modd\Lambda$ consisting of modules generated by $X$, respectively cogenerated by $X$. Notice that $X$ is a {\bf generator-cogenerator} if and only if each indecomposable projective $\Lambda$-module and indecomposable injective $\Lambda$-module is isomorphic to a direct summand of $X$. 

\begin{defn}\label{tilde Q}
Let $\widetilde{Q}:=\bigoplus_{i=1}^t Q_i$, where the $Q_i$ are representatives of the isomorphism classes of all indecomposable projective-injective $\Lambda$-modules. Let $\mathcal C_{\Lambda}:= (\Gen \widetilde{Q}) \cap (\Cogen \widetilde{Q})$ be the full subcategory of $\modd\Lambda$ consisting of all modules generated and cogenerated by $\widetilde{Q}$. 
\end{defn}

We are going to investigate when there exists a tilting module in $\mathcal C_{\Lambda}$.

%%%%%%%%%%%%%%%%%%%%%%%%%%%%%%%
\subsection{General properties of the subcategory $\mathcal C_{\Lambda}$}

We now describe some basic homological properties of the modules in the subcategory $\mathcal C_{\Lambda}$ for artin algebras $\Lambda$. 

\begin{prop}\label{k-1}
Let $\Lambda$ be an artin algebra with $\gldim \Lambda = d$. Let $\mathcal C_{\Lambda} = (\Gen \widetilde{Q})\cap (\Cogen \widetilde{Q})$, where $\widetilde{Q}$ is the above projective-injective $\Lambda$-module. Let $X$ be any module in $\mathcal C_{\Lambda}$. Then $\pd X\leq d-1$ and $\id X\leq d-1$.
\end{prop}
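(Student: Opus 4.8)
The plan is to exploit the two-sided nature of the condition $X \in \cC_\Lambda = (\Gen\widetilde Q)\cap(\Cogen\widetilde Q)$, using $\Cogen\widetilde Q$ to bound $\pd X$ and $\Gen\widetilde Q$ to bound $\id X$; by symmetry (passing to $\Lambda\op$) it suffices to prove one of the two inequalities, say $\pd X \le d-1$. Since $X$ is cogenerated by $\widetilde Q$, there is a short exact sequence $0 \to X \to \widetilde Q^m \to X' \to 0$ for some $m$ and some $\Lambda$-module $X'$. Because $\widetilde Q$ is a projective module, this is a projective presentation of the cokernel: $X'$ has projective dimension at most $\pd X + 1$, but more usefully $X \cong \Omega X'$ is a first syzygy of $X'$, so $\pd X = \pd(\Omega X') \le \pd X' - 1$ whenever $\pd X' \ge 1$. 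Since $\gldim\Lambda = d$ gives $\pd X' \le d$, we immediately get $\pd X \le d-1$. (If $\pd X' = 0$ then $X$ is a direct summand of a projective module and $\pd X = 0 \le d-1$, using $d \ge 1$; the degenerate case $d = 0$ forces $\modd\Lambda$ semisimple, where the statement is vacuous or trivial.)

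For the dual inequality $\id X \le d-1$: since $X \in \Gen\widetilde Q$, there is a short exact sequence $0 \to X'' \to \widetilde Q^n \to X \to 0$ with $\widetilde Q^n$ injective, exhibiting $X$ as a cosyzygy $\Omega^{-1}X''$, hence $\id X \le \id X'' - 1 \le d - 1$ by the same reasoning applied to injective dimensions (or simply apply the syzygy argument above in $\modd\Lambda\op$ via the duality $D$, which sends $\widetilde Q$ to the analogous projective-injective module over $\Lambda\op$ and exchanges $\Gen$ with $\Cogen$ and $\pd$ with $\id$).

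I do not expect a serious obstacle here: the only point requiring a little care is the interaction at the boundary, i.e. making sure the syzygy/cosyzygy bookkeeping is correct when the relevant projective (resp. injective) dimension is $0$, and handling the trivial cases $d = 0, 1$ separately or noting the inequality becomes $\pd X \le -1$, which should be read as "$X$ is projective-injective" — indeed when $d = 1$ every module in $\cC_\Lambda$ is a summand of $\widetilde Q$, which one can check directly. The essential mechanism is just: a syzygy of a module over an algebra of global dimension $d$ has projective dimension at most $d - 1$, together with the observation that membership in $\Cogen\widetilde Q$ (resp. $\Gen\widetilde Q$) realizes $X$ as a syzygy (resp. cosyzygy) because $\widetilde Q$ is projective-injective.
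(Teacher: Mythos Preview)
Your proposal is correct and follows essentially the same approach as the paper: you use the same two short exact sequences (arising from $X\in\Gen\widetilde Q$ and $X\in\Cogen\widetilde Q$) together with the projective-injectivity of $\widetilde Q$, phrasing the conclusion in syzygy/cosyzygy language where the paper writes out the long exact $\Ext$ sequences explicitly. The paper's formulation is slightly cleaner in that it directly establishes $\Ext^d_\Lambda(X,-)=0$ and $\Ext^d_\Lambda(-,X)=0$, so the edge cases you worry about ($\pd X'=0$, $d\le 1$) never arise and need no separate treatment.
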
 
\begin{proof} Since $X$ is in $\mathcal C_{\Lambda}$, there exist short exact sequences:
$$0\rightarrow N \rightarrow Q_0\rightarrow X\rightarrow 0 \qquad \text{ and } \qquad
0\rightarrow X \rightarrow Q'_0\rightarrow L\rightarrow 0,$$
with $Q_0$ and $Q_0'$ projective-injective $\Lambda$-modules. Then there are induced long exact sequences:
$$\dots\rightarrow \Ext^{d}_{\Lambda}(\ ,N) \rightarrow \Ext^{d}_{\Lambda}(\ ,Q_0)\rightarrow \Ext^{d}_{\Lambda}(\ ,X)\rightarrow  \Ext^{d+1}_{\Lambda}(\ ,N) \rightarrow \cdots, \text{ and}$$
$$\dots\rightarrow \Ext^{d}_{\Lambda}(L,\ ) \rightarrow \Ext^{d}_{\Lambda}(Q'_0,\ )\rightarrow \Ext^{d}_{\Lambda}(X,\ )\rightarrow  \Ext^{d+1}_{\Lambda}(L,\ ) \rightarrow \cdots,$$
which show that $\Ext^{d}_{\Lambda}(\ ,X)=0$ and $\Ext^{d}_{\Lambda}(X,\ )=0$, since $Q_0$ is injective, $Q_0'$ is projective and $\gldim \Lambda = d$. Also $\Ext^{j}_{\Lambda}(\ ,X)=0$ and $\Ext^{j}_{\Lambda}(X,\ )=0$, for all $j\geq d$. Hence, $\pd X \leq d-1$ and $\id X \leq d-1$.
\end{proof}

As an Auslander algebra $A$ has $\gldim A \leq 2$, we obtain the following consequence:

\begin{cor} \label{C_A} Let $A$ be an Auslander algebra. Let $X$ be in $\mathcal C_A$. Then $\pd X\leq1$ and $\id X \leq 1$. 
\end{cor}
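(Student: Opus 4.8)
The statement to prove is Corollary~\ref{C_A}: for an Auslander algebra $A$ and any $X$ in $\mathcal{C}_A$, one has $\pd X \leq 1$ and $\id X \leq 1$.

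The plan is to simply invoke Proposition~\ref{k-1}, which has just been established: if $\gldim \Lambda = d$, then every module in $\mathcal{C}_\Lambda$ has projective dimension at most $d-1$ and injective dimension at most $d-1$. So first I would recall the standard fact that an Auslander algebra $A$ satisfies $\gldim A \leq 2$ (this is one of the defining/characterizing properties of Auslander algebras, and is already referenced in the introduction of the paper). If $\gldim A \leq 1$, the conclusion is immediate and even stronger. If $\gldim A = 2$, then applying Proposition~\ref{k-1} with $d = 2$ yields $\pd X \leq 1$ and $\id X \leq 1$ for every $X \in \mathcal{C}_A$, which is exactly the assertion.

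There is essentially no obstacle here; the corollary is a direct specialization of the preceding proposition to the case $d = 2$. The only point worth a sentence is to note that the bound $d - 1 = 1$ is what the proposition delivers, and that the case $\gldim A \leq 1$ is covered a fortiori since then $\mathcal{C}_A$ consists of modules of projective and injective dimension at most $0$. Thus the proof is a one-line deduction.

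\begin{proof}
Since $A$ is an Auslander algebra, $\gldim A \leq 2$. If $\gldim A \leq 1$ the statement is clear, so assume $\gldim A = 2$. Then $\mathcal{C}_A = (\Gen \widetilde{Q}) \cap (\Cogen \widetilde{Q})$ with $\widetilde{Q}$ the projective-injective $A$-module, and Proposition~\ref{k-1} applied with $d = 2$ gives $\pd X \leq d - 1 = 1$ and $\id X \leq d - 1 = 1$ for every $X$ in $\mathcal{C}_A$.
\end{proof}
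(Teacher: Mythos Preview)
Your proof is correct and follows the same route as the paper: the paper simply notes ``As an Auslander algebra $A$ has $\gldim A \leq 2$, we obtain the following consequence'' and states the corollary without further argument, i.e.\ it is a direct application of Proposition~\ref{k-1} with $d=2$. Your case split $\gldim A\le 1$ versus $\gldim A=2$ is a harmless bit of extra care, since Proposition~\ref{k-1} is phrased for $\gldim\Lambda=d$ exactly.
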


\begin{prop} \label{PinC} Let $\Lambda$ be an artin algebra. Then:
\begin{enumerate}
\item If $P$ is projective and $P$ is in $\mathcal C_{\Lambda}$, then $P$ is projective-injective.\\
If $I$ is injective and $I$ is in $\mathcal C_{\Lambda}$, then $I$ is projective-injective.
\item Let $X$ be in $\mathcal C_{\Lambda}$.\ Then the projective cover $P(X)$ of $X$ is injective, and the injective envelope $I(X)$ of $X$ is projective. Hence, $P(X)$ and $I(X)$ are in $\mathcal C_{\Lambda}$.
%Let $X$ be in $\mathcal C_{\Lambda}$. Then $I(X)$, the injective envelope of $X$, is  projective. So it is in $\mathcal C_{\Lambda}$.
\end{enumerate}
\end{prop}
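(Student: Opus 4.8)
The plan is to exploit the two defining short exact sequences available for any module in $\mathcal C_\Lambda$: if $X\in\mathcal C_\Lambda$, then there is an epimorphism $Q_0\onto X$ with $Q_0\in\add\widetilde Q$ (from $X\in\Gen\widetilde Q$) and a monomorphism $X\into Q_0'$ with $Q_0'\in\add\widetilde Q$ (from $X\in\Cogen\widetilde Q$). For part (1), suppose $P$ is projective and lies in $\mathcal C_\Lambda$. The monomorphism $P\into Q_0'$ into a projective-injective module splits, since $P$ is projective and... no — rather, one uses that $P$ is a summand of the injective $Q_0'$ once we know the inclusion splits; but splitting needs projectivity of the quotient or injectivity of $P$, neither of which is assumed. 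The cleaner route: $P$ is generated by $\widetilde Q$, so there is an epimorphism $\widetilde Q^m\onto P$; since $P$ is projective this splits, exhibiting $P$ as a direct summand of $\widetilde Q^m$, hence $P$ is projective-injective. Dually, if $I$ is injective and in $\mathcal C_\Lambda$, then $I\into\widetilde Q^m$ splits because $I$ is injective, so $I$ is a summand of $\widetilde Q^m$, hence projective-injective.

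For part (2), let $X\in\mathcal C_\Lambda$ and let $p\colon P(X)\onto X$ be its projective cover. Pick an epimorphism $q\colon \widetilde Q^m\onto X$ coming from $X\in\Gen\widetilde Q$. By the lifting property of the projective cover (or by projectivity of $\widetilde Q^m$ combined with essentiality of $\ker p$), the map $q$ factors as $q = p\circ g$ for some $g\colon \widetilde Q^m\to P(X)$, and $g$ is necessarily an epimorphism since $\im g + \ker p = P(X)$ forces $\im g = P(X)$ by Nakayama/essentiality of $\ker p$. Thus $P(X)$ is a quotient of a projective-injective module. But $P(X)$ is projective, so this epimorphism $\widetilde Q^m\onto P(X)$ splits, and $P(X)$ is a summand of $\widetilde Q^m$; hence $P(X)$ is injective, and in particular projective-injective, so it lies in $\mathcal C_\Lambda$. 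The statement about the injective envelope $I(X)$ is the exact dual: factor the monomorphism $X\into\widetilde Q^m$ through the injective envelope $X\into I(X)$, conclude $I(X)$ embeds as a direct summand of $\widetilde Q^m$ (the embedding splits because $I(X)$ is injective), so $I(X)$ is projective-injective and lies in $\mathcal C_\Lambda$.

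The only mildly delicate point—the "main obstacle," such as it is—is justifying that the lift $g\colon \widetilde Q^m\to P(X)$ of the epimorphism $\widetilde Q^m\onto X$ through $P(X)\onto X$ is itself an epimorphism; this is where the \emph{cover} (as opposed to an arbitrary projective presentation) is used, via the fact that the kernel of a projective cover is superfluous, so any submodule of $P(X)$ mapping onto $X$ must be all of $P(X)$. Everything else is a routine application of projectivity/injectivity to split the relevant sequences. Note that part (2) immediately gives a second proof of part (1): if $P$ is projective and in $\mathcal C_\Lambda$ then $P = P(P)$ is injective.
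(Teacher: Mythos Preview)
Your proof is correct and follows essentially the same approach as the paper: for part (1) you use that a projective quotient of $\widetilde Q^m$ is a summand (the paper uses the epimorphism from $\Gen\widetilde Q$ in exactly this way), and for part (2) you argue that $P(X)$ is a summand of the projective-injective module mapping onto $X$. The only difference is that the paper states this last fact in one line (``the projective cover $P(X)$ is a direct summand of $Q_0$''), whereas you spell out the lift-and-superfluous-kernel argument; this extra detail is correct and is precisely the standard justification the paper is implicitly invoking.
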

\begin{proof} (1) Let $P$ be projective in $\mathcal C_{\Lambda}$. Then $P$ is a quotient of a projective-injective module $Q$. Since $P$ is projective, it is a summand of $Q$, and therefore it is injective as well. Dually the injective module $I \in \mathcal C_{\Lambda}$ is also projective. 

(2) Since $X$ is in $\mathcal C_{\Lambda}$, there is a projective-injective module $Q_0$ which maps onto $X$. Thus, the projective cover $P(X)$ is a direct summand of $Q_0$ and so it is injective. Similarly, $I(X)$ is projective by a dual argument.
\end{proof}

\begin{lem}\label{Ext^1.}
Let $\Lambda$ be an artin algebra. Let $X$ be in $\mathcal C_{\Lambda}$. Let $Y$ be a $\Lambda$-module with $\pd Y=1$. Then $\Ext^1_{\Lambda}(Y,X)=0$. 
\end{lem}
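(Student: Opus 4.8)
The plan is to exploit the \emph{generating} half of the condition $X\in\cC_\Lambda$ together with the hypothesis $\pd Y=1$, which forces $\Ext^{i}_\Lambda(Y,-)=0$ for all $i\geq 2$. The point is that cogeneration alone only expresses $\Ext^1_\Lambda(Y,X)$ as a quotient of $\Hom_\Lambda(Y,L)$ and does not obviously vanish, whereas generation lets us shift the Ext-degree \emph{up} by one using the injectivity of a projective-injective cover, landing in a group that is zero for degree reasons.

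Concretely, since $X\in\cC_\Lambda\subseteq\Gen\widetilde Q$, choose a short exact sequence
$$0\rightarrow N\rightarrow Q_0\rightarrow X\rightarrow 0$$
with $Q_0$ a projective-injective $\Lambda$-module (this is the first of the two sequences already used in the proof of Proposition~\ref{k-1}). Applying $\Hom_\Lambda(Y,-)$ yields a long exact sequence containing
$$\Ext^1_\Lambda(Y,Q_0)\rightarrow \Ext^1_\Lambda(Y,X)\rightarrow \Ext^2_\Lambda(Y,N)\rightarrow \Ext^2_\Lambda(Y,Q_0).$$
Because $Q_0$ is injective, $\Ext^i_\Lambda(Y,Q_0)=0$ for all $i\geq 1$, so this gives an isomorphism $\Ext^1_\Lambda(Y,X)\cong \Ext^2_\Lambda(Y,N)$ (in fact a monomorphism would suffice).

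Finally, since $\pd Y=1$ we have $\Ext^2_\Lambda(Y,-)=0$ identically, hence $\Ext^2_\Lambda(Y,N)=0$ and therefore $\Ext^1_\Lambda(Y,X)=0$, as claimed. There is no real obstacle here once one decides to use the generating (rather than cogenerating) short exact sequence; the only subtlety worth flagging is precisely that choice, since the naive approach via the cogenerating sequence $0\to X\to Q'_0\to L\to 0$ does not immediately close.
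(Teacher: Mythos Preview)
Your proof is correct and follows essentially the same approach as the paper: take a short exact sequence $0\to K\to P_0\to X\to 0$ with $P_0$ projective-injective, apply $\Hom_\Lambda(Y,-)$, and conclude from the injectivity of $P_0$ together with $\pd Y=1$. The only cosmetic difference is that the paper specifically takes $P_0$ to be the projective cover of $X$ (which is injective by Proposition~\ref{PinC}(2)), whereas you use an arbitrary surjection from $\add\widetilde Q$; the long exact sequence argument is identical.
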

\begin{proof} Let $0\to K\to P_0 \to X\to 0$ be an exact sequence with $P_0$ the projective cover of $X$. Consider the induced exact sequence 
$$\dots\to \Ext^1_{\Lambda}(Y,P_0) \to \Ext^1_{\Lambda}(Y,X)\to \Ext^2_{\Lambda}(Y,K)\dots\to.$$ 
Here  $\Ext^1_{\Lambda}(Y,P_0)=0$ since $P_0$ is injective, and $ \Ext^2_{\Lambda}(Y,K)=0$ since $\pd Y = 1$. Therefore, we have $\Ext^1_{\Lambda}(Y,X)=0$ as claimed.
\end{proof}

%%%%%%%%%%%%%%%%%%%%%%%%%%%%%%%
%\subsection{Tilting modules in $\mathcal C_{\Lambda}$} 
\subsection{Tilting modules in $\mathcal C_{\Lambda}$} 

Usually, there will be only partial tilting modules in $\mathcal C_{\Lambda}$, and in general there could be no tilting module in $\mathcal C_{\Lambda}$. In this subsection, we show some of the properties of a tilting module in $\mathcal C_{\Lambda}$, if it exists. We recall here the definition of tilting and cotilting modules, since both will be studied extensively in this paper:

\begin{defn}\label{tilt}
Let $\Lambda$ be an artin algebra. A basic $\Lambda$-module $T$ is called \textbf{partial tilting} if it satisfies conditions (1) and (2). It is called \textbf{tilting module} if it satisfies (1), (2), and (3).
\begin{enumerate}
\item $\pd_\Lambda T\leq1$
\item $\Ext_\Lambda^1(T,T)=0$
\item There is an exact sequence $0\rightarrow \Lambda\rightarrow T_0\rightarrow T_1\rightarrow 0$, where $T_0, T_1\in \add T$.
\end{enumerate}
\indent A $\Lambda$-module $C$ is called {\bf cotilting} if it satisfies conditions $(1^o)$, $(2^o)$ and $(3^o)$.

$(1^o)$ $\id_\Lambda C\leq1$

$(2^o)$ $\Ext_\Lambda^1(C,C)=0$

$(3^o)$ There is an exact sequence $0\rightarrow C_0\rightarrow C_1 \rightarrow D\Lambda\rightarrow 0$, where $C_0, C_1\in \add C$.
\end{defn}

\begin{rem} [\cite{ASS}, Corollary VI. 4.4]
Let $n$ be the number of non-isomorphic simple  $\Lambda$-modules. Let $T$ be a partial tilting module. Then the condition (3) is equivalent to: 

$(3')$ The number of non-isomorphic indecomposable summands of $T$ is $n$.
\end{rem}

\begin{rem}\label{canonical k tilting}To avoid confusion, we clarify the use of terminology ``tilting module" here. 
\begin{itemize}
 \item In our definition, ``tilting'' means the classical tilting module as in Definition~\ref{tilt}, with $\pd_\Lambda T\leq1$. In particular, we denote a classical tilting module by $T_\cC$ if it lies in $\cC_\Lambda$. We will prove later that $T_\cC$ is unique, if it exists.

\item In the literature, some authors use the terminology ``tilting modules" for \emph{generalized tilting modules} (e.g. Happel \cite{H}): $(1)$ $\pd T<\infty$, $(2)$ $\Ext^i(T,T)=0$, for all $i>0$, \newline $(3)$ There is an exact sequence $0\rightarrow \Lambda\rightarrow T_0\rightarrow T_1\rightarrow\cdots\rightarrow T_m\rightarrow 0$ for some $m>0$ and $T_i\in\add T$ for all $0\leq i\leq m$.  A generalized tilting module $T$ with $\pd T=k$ is also called \emph{$k$-tilting module} in \cite[Definition 2.3]{PS}.

\item For an algebra $\Lambda$ with dominant dimension at least $k$, Chen and Xi defined in \cite{CX} the \emph{canonical $k$-tilting module} as follows: %to be the basic module $T_{(k)}$ such that $\add T_{(k)}=\add(\widetilde{Q}\oplus \Ima d_k)$, where 
%Let $\Lambda$ be a basic algebra such that $\domdim\Lambda\geq k$. 
Let $\widetilde{Q}$ be the direct sum of representatives of the isomorphism classes of all indecomposable projective-injective $\Lambda$-modules, and
$$
0\rightarrow\Lambda\stackrel{d_0}\rightarrow I_0\stackrel{d_1}\rightarrow I_1\stackrel{d_2}\rightarrow I_2\rightarrow\cdots
$$
be a minimal injective resolution of $\Lambda$. Then the module $T_{(k)}:=\widetilde{Q}\oplus \Ima d_k$ is a basic $k$-tilting module and it is called the \emph{canonical $k$-tilting module}. 
A \emph{canonical $k$-cotilting module} is defined dually.

%\item Canonical tilting modules are also called \emph{$k$-shifted modules} in \cite{PS}. They are proved to be the same as \emph{$\widetilde{Q}$-special $k$-tilting modules} \cite[Lemma 3.7]{PS}. 
%  
%\item We let $T_\cC$ be the classical tilting module in the subcategory $\cC_\Lambda$; this module is different from the modules studied in \cite{CX,PS}. We will prove later that $T_\cC$ is unique, if it exists. 
 \end{itemize}
\end{rem}

\begin{lem} \label{n} Let $\Lambda$ be an artin algebra. Let $n$ be the number of non-isomorphic simple  $\Lambda$-modules. Let $\{X_i\}_{i\in I}$ be a set of indecomposable modules such that $X_i\ncong X_j$ for all $i\neq j$ and $\Ext^1_{\Lambda}(X_i,X_j)=0$ for all $i, j$.  Assume that  $\pd X_i=1$ for all $i\in I$. Then the set $I$ is finite and has at most $n$ elements.
\end{lem}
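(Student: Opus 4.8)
The plan is to reduce the problem to a known fact about partial tilting modules, namely that a partial tilting module has at most $n$ indecomposable summands (this is the content of Corollary VI.4.4 of \cite{ASS}, quoted in the excerpt just after Definition~\ref{tilt}: condition $(3)$ holds iff the number of indecomposable summands is $n$, and in particular a partial tilting module never has more than $n$ of them). So the whole task is to manufacture, from a finite subfamily of $\{X_i\}_{i\in I}$, an honest partial tilting module.

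First I would argue that $I$ is finite. Suppose not; pick $n+1$ distinct indices $i_0,\dots,i_n$ and set $T = \bigoplus_{j=0}^n X_{i_j}$. By hypothesis $\pd X_{i_j} = 1$ for each $j$, so $\pd_\Lambda T \le 1$, giving condition $(1)$ of Definition~\ref{tilt}. Also $\Ext^1_\Lambda(X_{i_j}, X_{i_k}) = 0$ for all $j,k$ by hypothesis, hence $\Ext^1_\Lambda(T,T) = 0$, giving condition $(2)$. Since the $X_{i_j}$ are pairwise non-isomorphic and indecomposable, $T$ is a basic module with $n+1$ non-isomorphic indecomposable summands. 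Thus $T$ is a partial tilting module with $n+1$ summands, contradicting the Bongartz-type bound that a partial tilting module has at most $n$ indecomposable summands. Therefore $I$ is finite.

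Once $I$ is finite, the same argument applied to $T = \bigoplus_{i \in I} X_i$ shows directly that $T$ is a basic partial tilting module with $|I|$ indecomposable summands, whence $|I| \le n$. (One might also just observe that $|I| \le n$ follows immediately from the first paragraph once finiteness is known: if $|I| = m$ were $> n$ we could have taken $n+1$ of the indices and gotten the same contradiction.)

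The only subtlety — and the single point one must be careful about — is that the cited result of \cite{ASS} is usually stated for partial tilting modules in the classical sense, and one should check that conditions $(1)$ and $(2)$ here are exactly what is needed; there is no circularity, since the bound on the number of summands of a partial tilting module is proved independently of the existence of a complement. No other obstacle arises: the proof is essentially a one-line application of the classical bound, with the mild bookkeeping of passing to a finite subfamily to make sense of the direct sum.
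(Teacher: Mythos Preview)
Your proposal is correct and follows essentially the same route as the paper: pick finitely many of the $X_i$, observe that their direct sum is a partial tilting module, and invoke the bound on the number of indecomposable summands. The only cosmetic difference is that the paper justifies the bound via Bongartz completion (\cite{B}: every partial tilting module completes to a tilting module, and a tilting module has exactly $n$ summands), whereas you cite the summand bound directly from \cite{ASS}; note that your remark that the bound ``is proved independently of the existence of a complement'' is a bit optimistic---the standard arguments for this bound go either through Bongartz completion or through linear independence in $K_0$, so the paper's explicit appeal to \cite{B} is the cleaner citation.
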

\begin{proof} Let $X_1,\dots, X_s$ be any $s$ modules in this set. Then $\bigoplus _{i=1}^s X_i$ is a partial tilting module. Every partial tilting module can be completed to a tilting module (see \cite{B}). A tilting module  has $n$ non-isomorphic indecomposable summands. Therefore, $s \leq n$. So there are at most $n$ modules in the set $\{X_i\}_{i\in I}$.
%Suppose that there are more than $n$ modules $\{X_i\}$. Let  $T:=\coprod _{i=1}^nX_i$. Then $T$ is a tilting module since it satisfies the conditions (1), (2) and (3'). It follows by \cite{..} that for all  $i\neq 1,2,\dots,n$ the modules $X_i$ must be in $\add T$. Hence there are at most $n$ such modules. 
\end{proof}

\begin{prop}\label{rigid} Let $\Lambda$ be an artin algebra. Let $\widetilde{Q}$ be the projective-injective module defined above and 
%$\{Q_1,\dots, Q_t\}$ be representatives of the isomorphism classes of indecomposable projective-injective modules. Let 
let $\mathcal C_{\Lambda} = (\Gen \widetilde{Q})\cap (\Cogen \widetilde{Q})$. Let $\{X_i\}_{i\in I}$ be the set of representatives of the indecomposable modules in $\mathcal C_{\Lambda}$ such that $\pd X_i = 1$. Then:
\begin{enumerate}
\item The set $\{X_i\}_{i\in I}$ is finite, that is, $I=\{1,2,\dots , s \}$ for some $s < \infty$.
\item Let $X =\bigoplus _{i=1}^s X_i$. Then $\widetilde{Q} \oplus X$ is a partial tilting module.
\item If there is a tilting module $T_\cC$ in $\mathcal C_{\Lambda}$, then $T_\cC=\widetilde{Q} \oplus X$.
\item If there is a tilting module $T_\cC$ in $\mathcal C_{\Lambda}$, then $T_\cC$ is unique.
\end{enumerate}
\end{prop}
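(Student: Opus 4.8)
The plan is to establish the four parts essentially in sequence, using the preceding results as building blocks.

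For part (1), I would observe that the modules $X_i$ in $\cC_\Lambda$ with $\pd X_i = 1$ form a set of pairwise non-isomorphic indecomposables, and that $\Ext^1_\Lambda(X_i, X_j) = 0$ for all $i,j$: indeed each $X_j$ lies in $\cC_\Lambda$ and each $X_i$ has projective dimension exactly $1$, so Lemma~\ref{Ext^1.} applies directly to give vanishing of $\Ext^1_\Lambda(X_i, X_j)$. Then Lemma~\ref{n} immediately yields that the index set $I$ is finite with at most $n$ elements, where $n$ is the number of simple $\Lambda$-modules. This is the cheap part.

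For part (2), set $X = \bigoplus_{i=1}^s X_i$ and consider $\widetilde{Q} \oplus X$. I must check the two partial tilting conditions. Condition (1), $\pd(\widetilde{Q}\oplus X) \leq 1$: each $X_i$ has $\pd X_i = 1$ by hypothesis, and $\widetilde Q$ is projective so $\pd \widetilde Q = 0$. Condition (2), $\Ext^1_\Lambda(\widetilde Q \oplus X, \widetilde Q \oplus X) = 0$: this splits into four pieces. The piece $\Ext^1_\Lambda(\widetilde Q, -) = 0$ since $\widetilde Q$ is projective. The pieces $\Ext^1_\Lambda(X_i, X_j) = 0$ were handled above via Lemma~\ref{Ext^1.}. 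The remaining piece $\Ext^1_\Lambda(X_i, \widetilde Q) = 0$ follows again from Lemma~\ref{Ext^1.} with $Y = X_i$ (which has $\pd = 1$) and the module in $\cC_\Lambda$ being $\widetilde Q$ itself (which is projective-injective, hence certainly in $\cC_\Lambda$). We should also note $\widetilde Q \oplus X$ is basic, which we may arrange since the $Q_i$ are pairwise non-isomorphic indecomposable projective-injectives while the $X_i$ with $\pd X_i = 1$ are non-projective (a projective module in $\cC_\Lambda$ is projective-injective by Proposition~\ref{PinC}(1), hence would be among the $Q_i$, but those have $\pd = 0$), so no $X_i$ is isomorphic to any $Q_j$.

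For parts (3) and (4): suppose $T_\cC$ is a tilting module in $\cC_\Lambda$. By Definition~\ref{tilt}, $\pd T_\cC \leq 1$. I claim every indecomposable summand $M$ of $T_\cC$ is either projective-injective or has $\pd M = 1$. Indeed if $\pd M = 0$ then $M$ is projective and in $\cC_\Lambda$, hence projective-injective by Proposition~\ref{PinC}(1), so $M$ is one of the $Q_i$; otherwise $\pd M \le 1$ forces $\pd M = 1$ and $M$ is one of the $X_i$. Therefore $\add T_\cC \subseteq \add(\widetilde Q \oplus X)$, so $T_\cC$ is a direct summand of $\widetilde Q \oplus X$. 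But a tilting module has exactly $n$ non-isomorphic indecomposable summands, while $\widetilde Q \oplus X$ is a partial tilting module by part (2), hence has at most $n$; since $T_\cC \subseteq \widetilde Q \oplus X$ and $T_\cC$ already has $n$ summands, equality $T_\cC = \widetilde Q \oplus X$ follows, proving (3). Uniqueness (4) is then immediate: any tilting module in $\cC_\Lambda$ equals the canonically defined module $\widetilde Q \oplus X$.

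I do not anticipate a serious obstacle here, since the hard analytic content is packaged in Lemmas~\ref{Ext^1.} and \ref{n} and in Proposition~\ref{PinC}. The one point requiring a little care is the bookkeeping in part (3)--(4): making sure that every indecomposable summand of an arbitrary tilting module $T_\cC \in \cC_\Lambda$ really is forced to appear in the canonical list $\{Q_i\} \cup \{X_j\}$, which hinges on the dichotomy $\pd M \in \{0,1\}$ together with Proposition~\ref{PinC}(1) ruling out non-injective projectives in $\cC_\Lambda$. Once that is in place, the counting argument with "$n$ indecomposable summands" closes everything.
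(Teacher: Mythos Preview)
Your proposal is correct and follows essentially the same approach as the paper's proof: both use Lemma~\ref{Ext^1.} and Lemma~\ref{n} for part~(1), and for parts~(3)--(4) both argue that indecomposable summands of $T_\cC$ must have projective dimension $\leq 1$ and hence lie in $\add(\widetilde Q\oplus X)$. Your treatment is simply more explicit---spelling out the Ext-vanishing in part~(2), invoking Proposition~\ref{PinC}(1) to identify the projective summands, and making the counting argument for equality in part~(3)---whereas the paper compresses (2) to ``follows from the definition'' and (3) to the remark that all other modules in $\cC_\Lambda$ have projective dimension $\geq 2$.
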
 
%\begin{prop} \label{rigid}Let $\Lambda$ be an artin algebra. Let $\mathcal C_{\Lambda}= \{Q_1,\dots, Q_m, M_1,\dots,M_t, N_1,\dots \}$, where $\{Q_1,\dots, Q_m\}$ are projective-injective, $\pd M_i =1$  and $\pd N_j \geq2$. Then:
%\begin{enumerate} \item The module $Q\bigoplus M$ is partial tilting, where $Q=\bigoplus _{i=1}^mQ_i$ and $M=\bigoplus_{i=1}^t M_i$.\item If there is a tilting module $T_\cC$ in $\mathcal C_{\Lambda}$, then $T_\cC=Q\bigoplus M$, i.e. all modules from $\mathcal C_{\Lambda}$ of projective dimensions 0,1 are summands of the tilting module $T_\cC$.\item If there is a tilting module $T_\cC$ in $\mathcal C_{\Lambda}$ then $T_\cC$ is unique.\end{enumerate} \end{prop}
\begin{proof} (1) It follows from Lemma \ref{Ext^1.} that $\Ext^1_{\Lambda}(X_i,X_j)=0$, for all $i\neq j$.
Since $\pd X_i = 1$, it follows from Lemma \ref{n} that there are at most $n$ modules $X_i$, where $n$ is the number of non-isomorphic simple $\Lambda$-modules. \\
%any finite sum of the modules from $\{X_i\}_{i\in I}$ is rigid. Suppose there are more then $n$ modules in $\cc_{\Lambda}$
%If there are more then $n$ such modules, then $\bigoplus _{i=1}^nX_i=T$ sum of $n$ of them would form a tilting module, e.g $\bigoplus _{i=1}^nX_i=T$. Since $X_j\in$ \ \\
(2) Follows from the definition of partial tilting module.\\
(3) This follows since all other modules in $\mathcal C_{\Lambda}$ have projective dimension $\geq 2$.\\
(4) It follows from (3) that $T_\cC=\widetilde{Q}\oplus X$, hence it is unique.
\end{proof}

The following proposition is about the $\add T_\cC$-approximations of projective modules.

\begin{prop} \label{TapproxP} Let $\Lambda$ be an artin algebra and $P$ be a projective $\Lambda$-module. Suppose there exists a tilting module $T_\cC$ in $\mathcal C_{\Lambda}$. Let $f_P: P\to T_P$ be a minimal left $\add T_\cC$-approximation of $P$. Then $T_P$ is projective-injective.
\begin{proof} Let $f_P: P\to T_P$ be a minimal $\add T_\cC$-approximation of $P$. Then $T_P=Q_P\oplus M_P$, where $Q_P$ is projective-injective and $\pd M_P =1$ and $f_P=(s,\rho):P\to Q_P\oplus M_P$. Let $\sigma: Q_P'\to M_P$ be the projective cover of $M_P$; here $Q_P'$ is projective-injective by Proposition \ref{PinC}(2). Then $\rho$ factors through $\sigma$, i.e. $\rho = \sigma a$ for some $a:P\to Q_P'$. It is easy to check that $(s,a):P\to Q_P\oplus Q_P'$ is an $\add T_\cC$-approximation. Hence $T_P$ is a direct summand of $Q_P\oplus Q_P'$ and therefore it is projective-injective.
\end{proof}
\end{prop}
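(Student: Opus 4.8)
The plan is to leverage the explicit shape of $T_\cC$ from Proposition \ref{rigid}(3), namely $T_\cC \simeq \widetilde{Q}\oplus X$ where $X$ is a direct sum of indecomposables of $\cC_\Lambda$ each of projective dimension $1$. Since $T_P\in\add T_\cC$, I would decompose $T_P = Q_P\oplus M_P$ with $Q_P\in\add\widetilde{Q}$ projective-injective and $M_P\in\add X$, and write $f_P=(s,\rho)\colon P\to Q_P\oplus M_P$. The point of this decomposition is that $M_P$ then lies in $\cC_\Lambda$ with $\pd M_P\le 1$, and — crucially — by Proposition \ref{PinC}(2) its projective cover $P(M_P)$ is projective-injective.

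Next I would replace $M_P$ by a projective-injective module. Set $Q_P':=P(M_P)$ with structure epimorphism $\sigma\colon Q_P'\to M_P$; by the previous step $Q_P'$ is projective-injective. Since $P$ is projective and $\sigma$ is surjective, $\rho$ lifts to a map $a\colon P\to Q_P'$ with $\rho=\sigma a$. I then claim that $(s,a)\colon P\to Q_P\oplus Q_P'$ is itself a left $\add T_\cC$-approximation of $P$. The target $Q_P\oplus Q_P'$ is projective-injective, hence lies in $\add\widetilde{Q}\subseteq\add T_\cC$, so only the factorization property needs checking: given $g\colon P\to T'$ with $T'\in\add T_\cC$, factor $g$ through $f_P$ as $g=h_1 s+h_2\rho$ for suitable $h_1\colon Q_P\to T'$ and $h_2\colon M_P\to T'$; then $g=h_1 s+(h_2\sigma)a$ factors through $(s,a)$. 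This short diagram chase is really the only computation in the argument.

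Finally, since a minimal left $\add T_\cC$-approximation is a direct summand of every left $\add T_\cC$-approximation, it follows that $T_P$ is a direct summand of the projective-injective module $Q_P\oplus Q_P'$, and therefore $T_P$ is projective-injective, as desired.

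I do not expect a serious obstacle here; the two points that require care are (i) identifying that the non-projective-injective part $M_P$ of $T_P$ genuinely lies in $\cC_\Lambda$ with $\pd M_P\le 1$, so that its projective cover is projective-injective — this is exactly where Proposition \ref{rigid}(3) together with Proposition \ref{PinC}(2) enter — and (ii) that minimality of $f_P$ is essential at the last step, since only a minimal approximation can be recognized as a direct summand of a larger approximation.
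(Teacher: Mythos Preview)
Your proposal is correct and follows essentially the same approach as the paper's proof: decompose $T_P=Q_P\oplus M_P$, replace $M_P$ by its projective-injective projective cover $Q_P'$ via a lifting $a$ of $\rho$, observe that $(s,a)$ is again a left $\add T_\cC$-approximation, and conclude by minimality that $T_P$ is a summand of $Q_P\oplus Q_P'$. The only difference is that you spell out in more detail the justification of the decomposition (via Proposition~\ref{rigid}(3)) and the verification that $(s,a)$ is an approximation, both of which the paper leaves implicit.
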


%%%%%%%%%%%%%%%%%%%%%%%%%%%%%%%
%\section{Dominant dimension and tilting modules}
\section{Dominant dimension and tilting modules (or cotilting modules)}

In this section we show that the existence of a tilting module (or a cotilting module) in the subcategory $\mathcal C_{\Lambda}$ is equivalent to the dominant dimension of $\Lambda$ being at least 2. 

\subsection{Numerical condition} We now state a numerical condition which will be necessary and sufficient for the existence of a tilting module in $\mathcal C_{\Lambda}$.

%\begin{notation} Let $\mathcal C$  be a subcategory of $\modd \Lambda$. We denote by $\ind\mathcal C$ the set of isomorphism classes of indecomposable modules in $\mathcal C$. Assume that $\mathcal C$ has finitely many non-isomorphic indecomposable modules. We denote by $n_\mathcal C:=|\ind \mathcal C|$
%%=\# \ind \mathcal C$, 
%the number of elements in $\ind \mathcal C$.
%\end{notation}

%Recall that $\mathcal Q$ is the subcategory of $\mathcal C_\Lambda$ consisting of projective-injective modules and $\cx$ is the subcategory of $\mathcal C_\Lambda$ consisting of modules which are not in $\cq$. i.e. $\mathcal C_\Lambda=\add(\cq\cup\cx).$ So we have $n_{\mathcal C_\Lambda}=n_\cq+n_\cx$.

%We know from Proposition \ref{k-1} that $\pd Y \leq d-1$ and $\id Y \leq d-1$ for all $Y\in \mathcal C_{\Lambda}$  if  $\gldim \Lambda =d$. 

%Hence for an artin algebra $\Lambda$ with global dimension $d$, we can make a partition of $\mathcal C_\Lambda$ according to the projective dimension: Let $\cx_i$ denote the subcategory of $\cx$ consisting modules $X$ with $\pd X=i$ for $1\leq i\leq d-1$. Then $\mathcal C_\Lambda=\add(\mathcal Q\cup\cx\cup\cdots\cup\cx_{d-1})$. Consequently, $n_{\mathcal C_\Lambda}=n_{\mathcal Q}+n_{\cx}+\cdots+n_{\cx_{d-1}}$.
Let $\mathcal Q:=\add  \widetilde Q$ be the subcategory of $\mathcal C_\Lambda$ consisting of projective-injective modules where $\widetilde Q=\oplus_{i=1}^{t} Q_i$ as in Definition $\ref{tilde Q}$. Let $\cx:=\add X$ be the subcategory of $\mathcal C_\Lambda$ consisting of modules with projective dimension $1$, where $X=\oplus_{i=1}^s X_i$ as in Proposition $\ref{rigid}$. We denote by $n_\cQ$ the number of non-isomorphic indecomposable modules in $\cQ$ and by $n_{\cx}$ the number of non-isomorphic indecomposable modules in $\cx$. Hence, $n_\cQ=t$ and $n_{\cx}=s$.

\begin{rem} Let $n$ be the number of non-isomorphic simple $\Lambda$-modules.  Since by Proposition \ref{rigid}(2), $\widetilde{Q}\oplus X$ is a partial tilting module, it follows that $n_{\mathcal Q}+n_{\cx}\leq n$.
\end{rem}

Combining this  remark and Proposition \ref{rigid} we obtain the following important numerical condition for the existence of a tilting module in $\cc_{\Lambda}$.
\begin{cor}
Let $\Lambda$ be an artin algebra with $n$ non-isomorphic simple modules. Let $\cQ$ and $\cx$ be the above subcategories. Then there is a tilting module in $\cc_{\Lambda}$ if and only if $n_\mathcal Q+n_{\cx}=n$.
\end{cor}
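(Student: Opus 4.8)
The plan is to prove the equivalence by combining Proposition \ref{rigid} with standard facts about completing partial tilting modules.

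First I would treat the ``only if'' direction. Suppose $T_\cC$ is a tilting module in $\cc_\Lambda$. By Proposition \ref{rigid}(3), $T_\cC = \widetilde Q \oplus X$. A tilting module over an artin algebra with $n$ non-isomorphic simple modules has exactly $n$ non-isomorphic indecomposable summands (the remark after Definition \ref{tilt}, i.e.\ \cite[Corollary VI.4.4]{ASS}). Since $\widetilde Q$ and $X$ have no common indecomposable summands ($X$ consists of modules of projective dimension exactly $1$, while the summands of $\widetilde Q$ are projective, hence of projective dimension $0$), the number of non-isomorphic indecomposable summands of $\widetilde Q \oplus X$ is $n_\cQ + n_\cx$. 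Therefore $n_\cQ + n_\cx = n$.

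For the ``if'' direction, suppose $n_\cQ + n_\cx = n$. By Proposition \ref{rigid}(2), $\widetilde Q \oplus X$ is a partial tilting module, and by hypothesis it has exactly $n$ non-isomorphic indecomposable summands. By the remark following Definition \ref{tilt} (condition $(3')$ is equivalent to condition $(3)$ for a partial tilting module), a partial tilting module with $n$ non-isomorphic indecomposable summands is already a tilting module. Hence $\widetilde Q \oplus X$ is a tilting module lying in $\cc_\Lambda$ (it is in $\cc_\Lambda$ since both $\widetilde Q$ and each $X_i$ lie in $\cc_\Lambda$, and $\cc_\Lambda$ is closed under finite direct sums), so $\cc_\Lambda$ contains a tilting module.

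I do not anticipate a serious obstacle here: the statement is essentially a bookkeeping consequence of Proposition \ref{rigid} and the counting criterion for when a partial tilting module is tilting. The only point requiring a moment's care is the observation that the summands of $\widetilde Q$ contribute $n_\cQ$ distinct indecomposables disjoint from the $n_\cx$ indecomposables coming from $X$ — this is immediate from the definitions of $\cQ$ and $\cx$, since projective-injective modules have projective dimension $0$ while the $X_i$ have projective dimension $1$, so no $X_i$ can be isomorphic to a summand of $\widetilde Q$.
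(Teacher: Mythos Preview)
Your proof is correct and follows essentially the same approach as the paper: both arguments rest on Proposition \ref{rigid} together with the counting criterion $(3')$ for tilting modules. The paper's proof is terser---it states only the forward direction explicitly and leaves the converse implicit---while you spell out both directions and note why the summands of $\widetilde Q$ and $X$ are disjoint, but the substance is identical.
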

\begin{proof} If there is a tilting module $T_\cC$ in $\cc_{\Lambda}$  then it has $n_{\cQ}$ summands from $\cQ$ and $n_{\cx}$ summands from $\cx$ by Proposition \ref{rigid}(3).
\end{proof}

%%%%%%%%%%%%%%%%%%%%%%%%%%%%%%%
\subsection{Maps from $\mathcal X$ to projective non-injective modules}
% from $ind\cX$} $ to $ind\mathcal P

In this part we define a mapping $\mathbf{\Omega}: \ind\cx \rightarrow \ind \mathcal P$, where $\mathcal P$ is the subcategory of projective non-injective $\Lambda$-modules. This mapping will be a bijection exactly when there is a tilting module in $\mathcal C_{\Lambda}$, which will be shown in Corollary \ref{main cor}. This will be used in a very essential way in the proof of the main Theorem \ref{tilting}. %Before showing the main Lemma, 
We need some preparation:

\begin{lem}\cite[II, Lemma 4.3]{A2}\label{minimal}
Let $0\rightarrow A\stackrel{g}\rightarrow B\stackrel{f}\rightarrow C\rightarrow 0$ be a non-split exact sequence in an additive category $\mathcal C$. Then:
\begin{enumerate}
\item If $\End_{\mathcal C}(A)$ is local, then $f:B\rightarrow C$ is right minimal in $\mathcal C$.
\item If $\End_{\mathcal C}(C)$ is local, then $g: A\rightarrow B$ is left minimal in $\mathcal C$.
\end{enumerate}
\end{lem}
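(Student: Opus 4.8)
The plan is to prove both parts by the standard "short five lemma plus locality" argument, carrying out (1) in detail and obtaining (2) by dualising. Recall that $f\colon B\to C$ is \emph{right minimal} if every $h\in\End_{\mathcal C}(B)$ with $fh=f$ is an automorphism, and $g\colon A\to B$ is \emph{left minimal} if every $h\in\End_{\mathcal C}(B)$ with $hg=g$ is an automorphism. Since in the given short exact sequence $g$ is a kernel of $f$ and $f$ is a cokernel of $g$, I may factor any morphism killed by $f$ through $g$, and any morphism killing $g$ through $f$; and $g$ is monic, $f$ is epic, which lets me cancel them.

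For (1), let $h\in\End_{\mathcal C}(B)$ satisfy $fh=f$. Then $f(h-1_B)=0$, so $h-1_B=g\psi$ for some $\psi\colon B\to A$; moreover $fhg=fg=0$, so $hg=g\alpha$ for a (unique) $\alpha\in\End_{\mathcal C}(A)$, which gives a commutative diagram of two copies of $0\to A\xrightarrow{g}B\xrightarrow{f}C\to 0$ with left and right vertical maps $\alpha$ and $1_C$. Composing $h-1_B=g\psi$ with $g$ on the right and cancelling the monomorphism $g$ yields $1_A-\alpha=\psi g$, i.e. $\alpha=1_A-\psi g$. Now use that $\End_{\mathcal C}(A)$ is local: either $\psi g$ is an automorphism of $A$, or $\alpha=1_A-\psi g$ is. In the first case $(\psi g)^{-1}\psi\colon B\to A$ is a retraction of $g$, so the sequence splits, contrary to hypothesis; hence $\alpha$ is an automorphism. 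By the short five lemma applied to the commutative diagram above, with both outer vertical maps isomorphisms, $h$ is an automorphism. Thus $f$ is right minimal.

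Part (2) is the formal dual. Given $h\in\End_{\mathcal C}(B)$ with $hg=g$, we get $(h-1_B)g=0$, so $h-1_B=\phi f$ for some $\phi\colon C\to B$; and $fhg=0$ gives $fh=\gamma f$ for some $\gamma\in\End_{\mathcal C}(C)$, producing the corresponding commutative diagram with vertical maps $1_A$, $h$, $\gamma$. Post-composing $h-1_B=\phi f$ with $f$ and cancelling the epimorphism $f$ gives $\gamma=1_C-f\phi$. Locality of $\End_{\mathcal C}(C)$ forces $f\phi$ or $\gamma$ to be an automorphism; if $f\phi$ is, then $\phi(f\phi)^{-1}$ is a section of $f$ and the sequence splits, a contradiction, so $\gamma$ is an automorphism and the short five lemma again makes $h$ an automorphism. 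Hence $g$ is left minimal.

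The one point needing care is the ambient setting: the factorisations through kernel/cokernel and the short five lemma are being invoked in "an additive category" in which the displayed sequence is exact, so I would open the proof by fixing the reading $g=\ker f$, $f=\cok g$, with the five-lemma manipulations available — e.g. because $\mathcal C$ is abelian (as in the module-category applications in this paper) or, more generally, exact with the sequence admissible — which is the context in which the lemma is used here. Granting this, there is no genuine computation: the whole content is the "$r$ or $1-r$ is a unit" dichotomy coming from locality, combined with the observation that a splitting of $g$ (resp. of $f$) contradicts non-splitness.
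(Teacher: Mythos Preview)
The paper does not supply a proof of this lemma; it is simply quoted from Auslander \cite[II, Lemma~4.3]{A2} and used as a black box. Your argument is the standard one and is correct. There is a harmless sign slip: from $h-1_B=g\psi$ and $hg=g\alpha$ one obtains $\alpha=1_A+\psi g$ rather than $1_A-\psi g$, but locality of $\End_{\mathcal C}(A)$ still forces either $\psi g$ or $1_A+\psi g$ to be a unit, so the rest goes through unchanged. Your closing caveat about the ambient category is well placed: the kernel/cokernel factorisations and the short five lemma need $\mathcal C$ to be abelian (or exact with the sequence admissible), which is the situation in all applications here.
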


\begin{lem} \label{min ind}
Let $0\rightarrow Y\stackrel{g}\rightarrow Q\stackrel{f}\rightarrow X\rightarrow 0$ be a non-split exact sequence. 
\begin{enumerate}
\item Suppose $Y$ is indecomposable, $g$ is left minimal and $Q$ is projective. Then $X$ is indecomposable and $f$ is right minimal.
\item Suppose $X$ is indecomposable, $f$ is right minimal and $Q$ is injective. Then $Y$ is indecomposable and $g$ left minimal.
\end{enumerate}
\end{lem}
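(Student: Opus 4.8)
\textbf{Proof proposal for Lemma \ref{min ind}.}

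The plan is to prove part (1) and then obtain part (2) by a dual argument (passing to the opposite algebra, which interchanges projectives and injectives, reverses arrows, and swaps left/right minimality). So I focus on part (1): given a non-split exact sequence $0\to Y\xrightarrow{g} Q\xrightarrow{f} X\to 0$ with $Y$ indecomposable, $g$ left minimal, and $Q$ projective, I want to conclude $X$ is indecomposable and $f$ is right minimal.

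For indecomposability of $X$, I would argue by contradiction: suppose $X = X_1\oplus X_2$ with both $X_i\neq 0$. Pulling back the sequence along the inclusions $X_i\hookrightarrow X$ gives short exact sequences $0\to Y\to Q_i\to X_i\to 0$ with $Q = Q_1\oplus Q_2$ in a compatible way, i.e. the original sequence is the direct sum of these two subsequences only if $Y$ splits off accordingly — which it cannot, since $Y$ is indecomposable. The cleaner route: since $\End(Y)$ is local (as $Y$ is indecomposable over an artin algebra, hence has local endomorphism ring), Lemma \ref{minimal}(1) already tells us $f$ is right minimal, so I should prove right-minimality of $f$ first and then leverage it, or instead prove indecomposability directly. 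Let me use the direct approach for indecomposability: the key point is that $Q$ projective forces the sequence, read as a projective presentation, to interact with $\pd X \le 1$. Actually the sharp tool is this — decompose $Q = \bigoplus Q_j$ into indecomposables; right-minimality of $f$ (which I will establish) means no summand $Q_j$ maps isomorphically, so $Q\to X$ is a projective cover, whence $Q$ is the projective cover of $X$ and $Y = \Omega X$; then a decomposition of $X$ would split $Q$ and $Y$ compatibly, contradicting $Y$ indecomposable unless one factor of $X$ is zero. I would spell this out carefully.

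The step I expect to be the main obstacle is establishing that $f$ is \emph{right} minimal from the hypothesis that $g$ is \emph{left} minimal and $Q$ is projective — a priori these are independent conditions. Here I would invoke Lemma \ref{minimal}(1): since $Y$ is indecomposable and we work over an artin algebra, $\End(Y)$ is local, so for the non-split sequence $0\to Y\to Q\to X\to 0$ the map $f: Q\to X$ is automatically right minimal. Thus the hypotheses "$g$ left minimal" and "$Q$ projective" are in fact only needed to pin down $Y$ as a genuine syzygy and to run the indecomposability argument; right-minimality of $f$ is essentially free from Lemma \ref{minimal}. (Symmetrically, in part (2), $X$ indecomposable gives $\End(X)$ local and Lemma \ref{minimal}(2) yields $g$ left minimal for free, while $Q$ injective is used to identify $Y = \Omega^{-1}X$ as a cosyzygy and to deduce indecomposability of $Y$.)

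To finish, once $f$ is right minimal the sequence $0\to Y\to Q\to X\to 0$ exhibits $Q$ as the projective cover $P(X)$ of $X$ and $Y$ as $\Omega X$. If $X\cong X_1\oplus X_2$ nontrivially, then $P(X)\cong P(X_1)\oplus P(X_2)$ and $\Omega X\cong \Omega X_1\oplus\Omega X_2$ with both syzygies potentially nonzero; since $Y=\Omega X$ is indecomposable, one of $\Omega X_1,\Omega X_2$ must vanish, say $\Omega X_2 = 0$, meaning $X_2$ is projective, so $X_2$ is a projective summand of $X$ — but then the identity component of $f$ on that summand contradicts right-minimality of $f$ (a projective direct summand of $X$ lifts to a summand of $Q$ on which $f$ restricts to an isomorphism). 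Hence $X_2 = 0$ and $X$ is indecomposable. This completes part (1), and part (2) follows dually.
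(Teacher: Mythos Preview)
Your overall strategy matches the paper's: invoke Lemma \ref{minimal}(1) to get $f$ right minimal from $\End(Y)$ local, identify $f$ as a projective cover so $Y=\Omega X$, then split $X=X_1\oplus X_2$ and use indecomposability of $Y=\Omega X_1\oplus\Omega X_2$ to force, say, $\Omega X_2=0$ and hence $X_2$ projective. Up to this point you are fine.

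The gap is in your final contradiction. You claim that a projective summand $X_2$ of $X$, lifting to a summand $Q_2$ of $Q$ on which $f$ is an isomorphism, \emph{contradicts right minimality of $f$}. It does not: right minimality only says that every $\phi\in\End(Q)$ with $f\phi=f$ is an automorphism, and a diagonal map $f_1\oplus(\text{iso})$ is still right minimal whenever $f_1$ is (any such $\phi$ is upper triangular with invertible diagonal). Concretely, $\mathrm{id}_P\colon P\to P$ is right minimal. So this step fails as written.

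The fix --- and this is exactly what the paper does --- is to use the hypothesis you flagged but never actually deployed: \emph{left minimality of $g$}. If $X_2\cong Q_2\neq 0$ is a summand of $Q$ with $\Omega X_2=0$, then the image of $g$ lies entirely in $Q_1$, so the endomorphism of $Q$ that projects onto $Q_1$ (killing $Q_2$) satisfies $\psi g=g$ without being an isomorphism, contradicting left minimality of $g$. Hence $X_2=0$. Part (2) is then genuinely dual.
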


\begin{proof}
$(1)$ By Lemma $\ref{minimal}$,  $Y$ being indecomposable implies that $f$ is right minimal. Hence  $f$ is a projective cover of $X$. To show that $X$ is indecomposable,
 suppose $X=X_1\oplus X_2$ where $X_1$ and $X_2$ are both non-zero. Consider the projective covers $Q_1$ and $Q_2$ of $X_1$ and $X_2$ respectively and the associated exact sequences:
$$
0\rightarrow Y_1\rightarrow Q_1\rightarrow X_1\rightarrow 0,
$$ 
$$
0\rightarrow Y_2\rightarrow Q_2\rightarrow X_2\rightarrow 0.
$$ 
Then $Q_1\oplus Q_2$ is the projective cover of $X_1\oplus X_2\cong X$. Because  the projective cover of $X$ is unique up to isomorphism it follows that $Q\simeq Q_1\oplus Q_2$. Therefore we have $Y\simeq Y_1\oplus Y_2$. Since $Y$ is indecomposable, either $Y_1=0$ or $Y_2=0$. If $Y_1=0$, then $X_1\simeq Q_1\neq 0$, which contradicts the fact that $g$ is left minimal. A similar contradiction is drawn if we assume $Y_2=0$. Therefore, $X$ is indecomposable.

$(2)$ This is the dual statement of $(1).$
\end{proof}

\begin{cor} \label{ind}
Let $0\rightarrow Y\stackrel{g}\rightarrow Q\stackrel{f}\rightarrow X\rightarrow 0$ be a non-split exact sequence, where $Q$ is a projective-injective module. Then the following statements are equivalent:
\begin{enumerate}
\item $X$ is indecomposable and $f$ is a projective cover,
\item $Y$ is indecomposable and $g$ is an injective envelope. 
 \end{enumerate}
\end{cor}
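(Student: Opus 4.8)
The plan is to prove the two implications of Corollary~\ref{ind} by invoking Lemma~\ref{min ind} twice --- once in each direction --- using the fact that a projective-injective module $Q$ is simultaneously projective and injective, and that ``$f$ is a projective cover'' is exactly ``$f$ is right minimal and $Q$ is projective'' while ``$g$ is an injective envelope'' is exactly ``$g$ is left minimal and $Q$ is injective.''

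First I would prove $(1)\Rightarrow(2)$. Assume $X$ is indecomposable and $f\colon Q\to X$ is a projective cover; in particular $f$ is right minimal. Since $Q$ is projective-injective, it is injective, so the hypotheses of Lemma~\ref{min ind}(2) are met: the sequence $0\to Y\to Q\to X\to 0$ is non-split, $X$ is indecomposable, $f$ is right minimal, and $Q$ is injective. Lemma~\ref{min ind}(2) then gives that $Y$ is indecomposable and $g$ is left minimal. Combined with $Q$ being injective and $g$ a monomorphism, $g$ left minimal means precisely that $g\colon Y\to Q$ is an injective envelope. This establishes $(2)$.

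Next I would prove $(2)\Rightarrow(1)$, which is the dual argument. Assume $Y$ is indecomposable and $g\colon Y\to Q$ is an injective envelope, so $g$ is left minimal. Since $Q$ is projective-injective, it is projective, so the hypotheses of Lemma~\ref{min ind}(1) hold: the sequence is non-split, $Y$ is indecomposable, $g$ is left minimal, and $Q$ is projective. Lemma~\ref{min ind}(1) yields that $X$ is indecomposable and $f$ is right minimal; together with $Q$ projective and $f$ an epimorphism, this says exactly that $f\colon Q\to X$ is a projective cover. Hence $(1)$ holds.

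I do not expect any serious obstacle here, since the corollary is essentially a repackaging of Lemma~\ref{min ind} specialized to the case where $Q$ is both projective and injective. The only point requiring a little care is the bookkeeping identification of ``right minimal epimorphism from a projective'' with ``projective cover'' and ``left minimal monomorphism into an injective'' with ``injective envelope''; these are standard and follow immediately from the definitions, so the proof is short. One should also note explicitly at the start that the non-split hypothesis is preserved throughout (it is part of the given data), so that Lemma~\ref{min ind} applies verbatim in both directions.
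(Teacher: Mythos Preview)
Your proof is correct and is exactly the approach the paper intends: the corollary is stated immediately after Lemma~\ref{min ind} with no proof given, so it is meant as a direct specialization of that lemma to the case where $Q$ is projective-injective, applying part~(2) for $(1)\Rightarrow(2)$ and part~(1) for $(2)\Rightarrow(1)$, just as you do.
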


Applying Corollary \ref{ind} recursively, we have the following result: 

\begin{cor}\label{ind cosyz}
Suppose  $0\rightarrow X\rightarrow I_0\stackrel{d_0}\rightarrow I_1\stackrel{d_1}\rightarrow\cdots$ is a minimal injective resolution of an indecomposable module $X$. If $I_0$, $I_1$, $\cdots$, $I_k$ are also projective, then $\Ima d_i$ are indecomposable for all $0\leq i\leq k$.
\end{cor}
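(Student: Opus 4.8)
The plan is to deduce Corollary~\ref{ind cosyz} from Corollary~\ref{ind} by an induction on $k$. The key observation is that a minimal injective resolution can be spliced together from short exact sequences involving the images $\Ima d_i$, and Corollary~\ref{ind} is precisely the tool that propagates indecomposability across each such short exact sequence when the relevant injective module is also projective.

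First I would set up the notation. Write $M_0 := X$ and, for $i \geq 0$, let $M_{i+1} := \Ima d_i \subseteq I_{i+1}$, so that $M_i$ is the $i$-th cosyzygy $\Omega^{-i} X$ and we have short exact sequences
\[
0 \rightarrow M_i \xrightarrow{g_i} I_i \xrightarrow{d_i'} M_{i+1} \rightarrow 0
\]
for each $i \geq 0$, where $g_i$ is the canonical inclusion (an injective envelope, since the resolution is minimal) and $d_i'$ is the corestriction of $d_i$. Minimality of the injective resolution means each $g_i$ is left minimal; equivalently (dually, via Lemma~\ref{minimal}, since $M_{i+1}$ need not a priori be indecomposable, one should be a little careful here) the point is that $M_{i+1} = \Ima d_i$ with $d_i$ part of a minimal resolution, and we want to show all $M_1, \dots, M_{k+1}$ are indecomposable under the hypothesis that $I_0, \dots, I_k$ are projective-injective. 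Note the statement says $\Ima d_i$ for $0 \le i \le k$, i.e.\ $M_1, \dots, M_{k+1}$ are indecomposable, using that $I_0, \dots, I_k$ are projective.

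The induction runs as follows. For the base case, consider $0 \rightarrow M_0 \rightarrow I_0 \rightarrow M_1 \rightarrow 0$ with $M_0 = X$ indecomposable and $I_0$ projective-injective by hypothesis. The map $M_0 \hookrightarrow I_0$ is an injective envelope (minimality of the resolution), so by the implication $(2) \Rightarrow (1)$ of Corollary~\ref{ind}, $M_1 = \Ima d_0$ is indecomposable (and the quotient map $I_0 \to M_1$ is a projective cover). For the inductive step, suppose $1 \le i \le k$ and we already know $M_i = \Ima d_{i-1}$ is indecomposable. Consider $0 \rightarrow M_i \rightarrow I_i \rightarrow M_{i+1} \rightarrow 0$; here $I_i$ is projective-injective by hypothesis ($i \le k$), the inclusion $M_i \hookrightarrow I_i$ is an injective envelope by minimality, and $M_i$ is indecomposable by the inductive hypothesis. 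Applying $(2) \Rightarrow (1)$ of Corollary~\ref{ind} once more gives that $M_{i+1} = \Ima d_i$ is indecomposable. This completes the induction and shows $\Ima d_i$ is indecomposable for all $0 \le i \le k$.

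The only genuine subtlety — and the step I would be most careful about — is verifying that the hypotheses of Corollary~\ref{ind} are actually met at each stage, specifically that $M_i \hookrightarrow I_i$ is an injective envelope. This is exactly the content of the resolution being \emph{minimal}: in a minimal injective resolution, $I_i$ is the injective envelope of $\Ima d_{i-1} = M_i$, so the inclusion is an injective envelope by definition. (For $i = 0$, $I_0$ is the injective envelope of $X$.) Everything else is a direct application of the already-established Corollary~\ref{ind}, so there is no real obstacle; the proof is essentially the sentence ``apply Corollary~\ref{ind} recursively,'' and the work is just in writing the induction cleanly.
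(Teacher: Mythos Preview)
Your proposal is correct and matches the paper's approach exactly: the paper gives no proof beyond the sentence ``Applying Corollary~\ref{ind} recursively, we have the following result,'' and your induction simply spells out what that sentence means. The only thing you might add for completeness is the trivial observation that each short exact sequence $0 \to M_i \to I_i \to M_{i+1} \to 0$ is non-split (as required by Corollary~\ref{ind}), which follows since $M_i \hookrightarrow I_i$ is an essential monomorphism and $M_{i+1} \neq 0$ under the implicit assumption that the $I_j$ are nonzero through degree $k$.
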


%We will use this fact to show that the number of summands of a canonical $k$-tilting module over $\Lambda$ equals the number of non-isomorphic simple $\Lambda$-modules later (see Proposition $\ref{summand canonical})$.
%
% {\it Question:}Is there an example of non-split exact sequence $0\rightarrow Y\stackrel{g}\rightarrow Q\stackrel{f}\rightarrow X\rightarrow 0$, where $Q$ is a decomposable projective-injective and $X$, $Y$ are indecomposable?
\begin{lem}\label{cov-env}
Let $X$ be an indecomposable module in $\cx$ and let
%Taking an indecomposable module $X\in \cx$, we have the minimal projective resolution of $X$:
$$
0\rightarrow P\stackrel{i}\rightarrow Q\stackrel{p}\rightarrow X\rightarrow 0
$$
be the minimal projective resolution of $X$. Then the following statements hold:
\begin{enumerate}
\item The module $Q$ is projective-injective. 
\item The syzygy $\Omega X=P$ is indecomposable, projective and non-injective.
\item The map $i: P\rightarrow Q$ is an injective envelope of $P$.
%\item Let $X_1$ and $X_2$ be in $\cx$. If $\Omega X_1\cong \Omega X_2$ then $X_1\cong X_2$.
\end{enumerate}
\begin{proof}
(1) By Proposition $\ref{PinC}$, $Q$ is also injective. \\
(2) It is clear that $P$ is projective non-injective.  By Corollary \ref{ind}, $P$ is indecomposable.\\
(3) The fact that the map $i$ is an injective envelope also follows from Corollary \ref{ind}.
\end{proof}
\end{lem}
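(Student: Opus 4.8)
\textbf{Proof strategy for Lemma~\ref{cov-env}.}

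The plan is to set up the minimal projective resolution of $X$ and then read off the three claims, in the order (1), then (2)+(3) together, using the structural results already established for $\cC_\Lambda$. First I would observe that $X\in\cx$ means by definition that $X\in\cC_\Lambda$ and $\pd_\Lambda X=1$, so its minimal projective resolution really does have the shape $0\to P\to Q\to X\to 0$ with $Q=P(X)$ the projective cover and $P=\Omega X$. For part (1), the point is simply that $X\in\cC_\Lambda$, so by Proposition~\ref{PinC}(2) the projective cover $P(X)=Q$ is injective, hence projective-injective. That is the whole argument for (1); no calculation is needed.

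For parts (2) and (3), I would argue as follows. Since $X$ is indecomposable and $p:Q\to X$ is a projective cover (in particular right minimal) and $Q$ is projective-injective, Corollary~\ref{ind} applies to the non-split exact sequence $0\to P\to Q\to X\to 0$: the sequence is non-split because $\pd X=1\neq 0$. Condition~(1) of that corollary holds ($X$ indecomposable, $p$ a projective cover), so condition~(2) holds as well: $P$ is indecomposable and the inclusion $i:P\to Q$ is an injective envelope. This gives indecomposability of $\Omega X=P$ and claim~(3) at once. It remains to see that $P$ is projective non-injective: $P=\Omega X$ is a submodule of the projective module $Q$... but more directly, $P$ appears as the kernel in the minimal projective resolution, and since $\pd X=1$, $P$ itself must be projective. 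For non-injectivity: if $P$ were injective, then $i:P\to Q$ being an injective envelope would force $i$ to be an isomorphism (an injective module is its own injective envelope), whence $X\cong Q/P=0$, contradicting $X$ being a (nonzero) indecomposable module. So $P$ is projective and non-injective, which finishes (2).

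I do not expect any real obstacle here: the lemma is essentially a packaging of Proposition~\ref{PinC}(2) (for the injectivity of $Q$) together with Corollary~\ref{ind} (for the indecomposability of $P$ and the fact that $i$ is an injective envelope), applied to the minimal projective resolution of $X$. The only mildly delicate points are the bookkeeping ones: checking that the sequence is non-split so that Corollary~\ref{ind} applies, checking that $p$ is right minimal (automatic, since it is a projective cover of $X$), and deducing non-injectivity of $P$ from the fact that $i$ is an injective envelope of $P$ together with $X\neq 0$. These are all one-line observations. If one wanted to be maximally economical, (2) and (3) could even be stated as a single application of Corollary~\ref{ind}, with the projectivity of $P$ coming from $\pd X\le 1$ and the non-injectivity from $X\neq 0$.
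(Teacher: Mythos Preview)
Your proposal is correct and follows essentially the same approach as the paper: Proposition~\ref{PinC}(2) for part~(1), and Corollary~\ref{ind} for parts~(2) and~(3). You supply more bookkeeping detail (non-splitness, right minimality of $p$, and an explicit argument for the non-injectivity of $P$) than the paper, which simply asserts ``it is clear that $P$ is projective non-injective,'' but the underlying argument is identical.
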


%\begin{defn}
%Let $\mathcal P$ be the subcategory of projective non-injective modules in $\modd\Lambda$. 
%Denote by $[M]$ the isomorphism class of a $\Lambda$-module $M$. Define
% $\mathbf{\Omega}([X]):=[\Omega X]$.
%Then by Lemma $\ref{cov-env}$, we know that there is a set-theoretic map:
%\begin{eqnarray*}
%\mathbf{\Omega}: \ind\cx&\rightarrow& \ind \mathcal P\\
%{[X]}&\mapsto&[P].
%\end{eqnarray*}
%\end{defn}

\begin{defn}
Let $\mathcal P$ be the subcategory of projective non-injective modules in $\modd\Lambda$. 
Denote by $[M]$ the isomorphism class of a $\Lambda$-module $M$. Then by Lemma $\ref{cov-env}$, we know that
 $\mathbf{\Omega}([X]):=[\Omega X]=[P]$
defines a set-theoretic map:
$\mathbf{\Omega}: \ind\cx\rightarrow \ind \mathcal P$.
\end{defn}

Now we show the main Lemma:

\begin{lem}\label{main}
Let $\Lambda$ be an artin algebra. Let $n$ be the number of non-isomorphic simple $\Lambda$-modules.
% with $s$ simple modules.  
 Then 
 \begin{enumerate}
\item 
$
\mathbf{\Omega}: \ind \cx\rightarrow \ind \mathcal P
$  is an injection of sets,
\item $n_\mathcal Q+n_{\cx}\leq n$, %where $n$ is the number of non-isomorphic simple modules. 
\item $n_\mathcal Q+n_{\cx}= n$  if and only if $\mathbf{\Omega}$ is a bijection.
\end{enumerate}
\end{lem}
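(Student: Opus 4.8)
\textbf{Proof plan for Lemma \ref{main}.}

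The plan is to prove part (1) first, since parts (2) and (3) will follow by counting. To show $\mathbf{\Omega}$ is injective, suppose $X, X'$ are indecomposable modules in $\cx$ with $\mathbf{\Omega}([X]) = \mathbf{\Omega}([X'])$, i.e.\ $\Omega X \cong \Omega X' = P$ for some indecomposable projective non-injective $P$. By Lemma \ref{cov-env}, both minimal projective resolutions take the form
$$0 \to P \xrightarrow{i} Q \xrightarrow{p} X \to 0 \quad\text{and}\quad 0 \to P \xrightarrow{i'} Q' \xrightarrow{p'} X' \to 0,$$
and crucially part (3) of that lemma says $i$ and $i'$ are both \emph{injective envelopes} of $P$. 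Since the injective envelope of $P$ is unique up to isomorphism, there is an isomorphism $Q \cong Q'$ compatible with $i, i'$; taking cokernels yields $X \cong X'$. So $\mathbf{\Omega}$ is injective.

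Part (2) is already recorded in the Remark following Proposition \ref{rigid} (that $\widetilde Q \oplus X$ is a partial tilting module, hence $n_\cQ + n_\cx \le n$), so it can simply be cited. For part (3), the idea is to count indecomposable projectives. There are exactly $n$ indecomposable projective $\Lambda$-modules up to isomorphism; $n_\cQ$ of them are projective-injective, and the remaining $n - n_\cQ$ lie in $\ind\mathcal P$. Thus $\lvert \ind\mathcal P\rvert = n - n_\cQ$. Since $\mathbf{\Omega}\colon \ind\cx \to \ind\mathcal P$ is injective by part (1), we have $n_\cx \le n - n_\cQ$ always (reproving (2)), and $\mathbf{\Omega}$ is a bijection if and only if $n_\cx = \lvert\ind\mathcal P\rvert = n - n_\cQ$, i.e.\ if and only if $n_\cQ + n_\cx = n$.

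I expect the main obstacle to be part (1), specifically the step asserting that two minimal projective resolutions of modules in $\cx$ with isomorphic syzygies give isomorphic modules. The subtlety is that a priori knowing $\Omega X \cong \Omega X'$ only gives that the \emph{kernels} agree; one needs that the inclusion into the projective cover is determined by the syzygy, which is exactly why Lemma \ref{cov-env}(3) — that $i$ is an injective \emph{envelope}, not merely a monomorphism into an injective — is essential. Once that is in hand, uniqueness of injective envelopes does the rest. The counting in parts (2) and (3) is then routine.
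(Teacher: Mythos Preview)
Your proof is correct and follows essentially the same route as the paper: injectivity of $\mathbf{\Omega}$ via uniqueness of the injective envelope (the paper cites Corollary~\ref{ind} directly rather than Lemma~\ref{cov-env}(3), but these amount to the same fact), and parts (2)--(3) by the counting $|\ind\mathcal P| = n - n_\cQ$. The only cosmetic difference is that the paper derives (2) directly from (1) via $n_\cx \le n_\cP$ rather than citing the partial-tilting Remark, but you already give exactly that argument in your discussion of (3).
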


\begin{proof} (1) Injectivity of $\mathbf{\Omega}$: Suppose $X_1\ncong X_2$ in $\ind \cx$. We will show that $\mathbf{\Omega}([X_1]) \neq \mathbf{\Omega}([X_2])$. In fact, taking the minimal projective resolution of $X_1$ and $X_2$, we get 
$$
0\rightarrow P_1\rightarrow Q_1\rightarrow X_1\rightarrow 0,
$$ 
$$
0\rightarrow P_2\rightarrow Q_2\rightarrow X_2\rightarrow 0.
$$ 
Assume $P_1\cong P_2$. By Corollary $\ref{ind}$, $Q_1$ and $Q_2$ are injective envelopes of $P_1$ and $P_2$ respectively. Hence $Q_1\cong Q_2$, and then $X_1\cong X_2$ which is a contradiction. \\
(2) It follows from (1) that $n_{\cx}\leq n_\mathcal P$. Therefore $n_\mathcal Q+n_{\cx}\leq n_\mathcal Q+n_\mathcal P=n.$ Then (3) is clear.
%Notice that the number of isomorphism classes of indecomposable projective non-injective modules equals $n-n_\mathcal Q$. Therefore, $n_{\cx}\leq n-n_\mathcal Q.$ Equality holds if and only if $\mathbf{\Omega}$ is a bijection.
\end{proof}
%Recall that a necessary condition for $\mathcal C_\Lambda$ containing a tilting module is $n_\mathcal Q+n_{\cx}\geq s$. Together with Lemma $\ref{main}$ we have:
\begin{cor}\label{main cor}
Let $\Lambda$ be an artin algebra with $n$ simple modules. Then there is a tilting module $T_\cC$ in  $\mathcal C_\Lambda$ if and only if $\mathbf{\Omega}$ is a bijection.
%The following are equivalent:
%\begin{enumerate}
%\item There is a tilting module $T_\cC$ in  $\mathcal C_\Lambda$.
%%\item $n_\mathcal Q+n_{\cx}= n$
%\item $\mathbf{\Omega}$ is a bijection.
%% $\mathcal C_\Lambda$ contains a tilting module if and only if $\mathbf{\Omega}$ is a bijection.
% \end{enumerate}
\end{cor}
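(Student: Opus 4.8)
The plan is to show that the numerical condition $n_{\cQ} + n_{\cx} = n$ is equivalent to $\mathbf{\Omega}$ being a bijection, and then to invoke the corollary just before this one (the ``numerical condition'' corollary) which says that a tilting module in $\cC_\Lambda$ exists precisely when $n_{\cQ} + n_{\cx} = n$. Since Lemma \ref{main}(3) already states that $n_{\cQ} + n_{\cx} = n$ if and only if $\mathbf{\Omega}$ is a bijection, the proof is essentially a one-line concatenation of these two statements.

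More concretely, first I would recall that by Proposition \ref{rigid}, if a tilting module $T_\cC$ in $\cC_\Lambda$ exists then it must equal $\widetilde{Q} \oplus X$, which has exactly $n_{\cQ} + n_{\cx}$ non-isomorphic indecomposable summands; a tilting module has $n$ such summands, so the existence forces $n_{\cQ} + n_{\cx} = n$. Conversely, if $n_{\cQ} + n_{\cx} = n$, then $\widetilde{Q} \oplus X$ is a partial tilting module (Proposition \ref{rigid}(2)) with the right number of summands, hence a tilting module by the remark following Definition \ref{tilt}. This gives the equivalence ``tilting module in $\cC_\Lambda$ exists $\iff n_{\cQ} + n_{\cx} = n$.'' Then I would apply Lemma \ref{main}(3) to replace the right-hand side by ``$\mathbf{\Omega}$ is a bijection,'' completing the proof.

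The main (and only) obstacle is purely bookkeeping: making sure that $n_{\cQ}$ and $n_{\cx}$ are exactly the number of indecomposable summands of $\widetilde{Q}$ and $X$ respectively (which is their definition), and that the set $\mathcal P$ of projective non-injective indecomposables together with $\mathcal Q$ accounts for all $n$ indecomposable projectives, so that $n_{\cQ} + n_{\cP} = n$. Since $\mathbf{\Omega}$ injects $\ind\cx$ into $\ind\mathcal P$, bijectivity of $\mathbf{\Omega}$ is the same as $n_{\cx} = n_{\cP}$, i.e. $n_{\cQ} + n_{\cx} = n_{\cQ} + n_{\cP} = n$. There is no real content beyond assembling Proposition \ref{rigid}, the numerical-condition corollary, and Lemma \ref{main}; I would write the proof in two or three sentences.

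\begin{proof}
By the numerical-condition corollary above, there is a tilting module $T_\cC$ in $\cC_\Lambda$ if and only if $n_{\cQ} + n_{\cx} = n$. Indeed, if such a $T_\cC$ exists, then $T_\cC = \widetilde{Q} \oplus X$ by Proposition \ref{rigid}(3), which has $n_{\cQ} + n_{\cx}$ non-isomorphic indecomposable summands, and a tilting module has exactly $n$ of them; conversely, if $n_{\cQ} + n_{\cx} = n$, then the partial tilting module $\widetilde{Q} \oplus X$ of Proposition \ref{rigid}(2) has the maximal number $n$ of non-isomorphic indecomposable summands and is therefore tilting. By Lemma \ref{main}(3), the condition $n_{\cQ} + n_{\cx} = n$ holds if and only if $\mathbf{\Omega}$ is a bijection. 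Combining these two equivalences yields the claim.
\end{proof}
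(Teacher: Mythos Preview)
Your proof is correct and matches the paper's approach: the paper states this corollary without proof, treating it as an immediate consequence of the preceding numerical-condition corollary together with Lemma~\ref{main}(3), which is exactly the concatenation you have written out.
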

%Recall that a necessary condition for $\mathcal C_\Lambda$ containing a tilting module is $n_\mathcal Q+n_{\cx}\geq s$. Together with Lemma $\ref{main}$ we have:

\begin{cor}\label{T_C compute}
If a tilting module $T_\cC$ exists, then $T_\cC \simeq \widetilde{Q} \oplus \left(\bigoplus_i \Omega^{-1} P_i\right)$, where $\Omega^{-1} P_i$ is the cosyzygy of $P_i$ and the direct sum is taken over representatives of the isomorphism classes of all indecomposable projective non-injective $\Lambda$-modules $P_i$.
\end{cor}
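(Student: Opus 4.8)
\textbf{Proof proposal for Corollary \ref{T_C compute}.}

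The plan is to combine the description $T_\cC = \widetilde Q \oplus X$ from Proposition \ref{rigid}(3) with the bijectivity of $\mathbf\Omega$ from Corollary \ref{main cor}. Concretely, recall that $\widetilde Q \oplus X$ is a partial tilting module with $X = \bigoplus_{i=1}^s X_i$ running over the indecomposables of $\cx$, and that a tilting module exists precisely when $\mathbf\Omega \colon \ind\cx \to \ind\cP$ is a bijection. So the whole content is to identify $X_i$ with $\Omega^{-1} P_i$ for the corresponding projective non-injective $P_i = \mathbf\Omega([X_i])$.

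First I would fix an indecomposable $X = X_i$ in $\cx$ and invoke Lemma \ref{cov-env}: the minimal projective resolution $0 \to P \xrightarrow{\;i\;} Q \xrightarrow{\;p\;} X \to 0$ has $Q$ projective-injective, $P = \Omega X$ indecomposable projective non-injective, and $i$ an injective envelope of $P$. Since $Q$ is injective and $i$ is an injective envelope, this same short exact sequence is the start of a minimal injective resolution of $P$, so by definition $\Ima(Q \to X) = X$ realizes $\Omega^{-1} P$; that is, $X \simeq \Omega^{-1} P$ where $P = \mathbf\Omega([X])$. Hence every indecomposable summand $X_i$ of $X$ is isomorphic to $\Omega^{-1} P_i$ for $P_i := \mathbf\Omega([X_i])$.

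Next I would use surjectivity of $\mathbf\Omega$ (which holds by Corollary \ref{main cor} under the hypothesis that $T_\cC$ exists, equivalently $n_\cQ + n_\cx = n$ by Lemma \ref{main}(3)): as $X_i$ ranges over $\ind\cx$, the modules $P_i = \mathbf\Omega([X_i])$ range over \emph{all} of $\ind\cP$, each exactly once by injectivity. Therefore $X \simeq \bigoplus_i \Omega^{-1} P_i$ with the sum over representatives of all indecomposable projective non-injective $\Lambda$-modules, and consequently $T_\cC = \widetilde Q \oplus X \simeq \widetilde Q \oplus \bigl(\bigoplus_i \Omega^{-1} P_i\bigr)$, as claimed. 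One should also note that $\Omega^{-1} P_i$ is well-defined as a module (not just up to the stable category) because the relevant cosyzygy is taken with respect to the minimal injective resolution, whose first term $Q$ is genuinely projective-injective.

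The only real subtlety — hardly an obstacle, but the point that needs care — is the bookkeeping that ensures no indecomposable projective non-injective is missed and none is counted twice; this is exactly what injectivity and surjectivity of $\mathbf\Omega$ provide, so the corollary is essentially a restatement of Corollary \ref{main cor} together with the local analysis in Lemma \ref{cov-env}. Everything else is a direct unwinding of definitions, so I would keep the write-up short.
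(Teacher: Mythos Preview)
Your proposal is correct and is exactly the argument the paper has in mind: the corollary is stated without proof immediately after Corollary~\ref{main cor}, and your write-up simply unwinds Proposition~\ref{rigid}(3), Lemma~\ref{cov-env}, and the bijectivity of $\mathbf\Omega$ to make the implicit deduction explicit. There is no alternative route to compare; this is the intended one.
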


%%%%%%%%%%%%%%%%%%%%%%%%%%%%%%%
\subsection{Existence of tilting modules in $\cc _{\Lambda}$ in terms of dominant dimension}
\label{subsec:tilting}

%Denote $q=max\{|Q|: Q\in\mathcal Q \text{\ indecomposable}\}$ and $x= max \{|X|: X\in\cx \text{\ indecomposable}\}$. 

%\begin{lem}
%Let $\Lambda$ be a non-selfinjective Nakayama algebra satisfying $s= n_\mathcal Q+n_{\cx}.$ Then $x\leq1$ and $q\leq 3$.
%\end{lem}
%\begin{proof}
%First, we show $x\leq 1$. 
%
%Since $\Lambda$ is not selfinjective, it follows that $n_\mathcal Q<s$. So $n_{\cx}>0$.
%
%Assuming there is an indecomposable module $X\in \cx$ satisfying $|X|>1$. We have a minimal projective resolution of $X$:
%
%$$
%0\rightarrow P\rightarrow Q\rightarrow X\rightarrow 0.
%$$
%where $Q$ is projective-injective and $P$ is projective non-injective. 
%
%Since $|X|>1$, $\Top X$ is a proper quotient module of $X$. Because $\Lambda$ is uniserial, $\Top X=\Top Q$ and $P\subsetneq \rad Q\subsetneq Q$.
%\end{proof}
%
%
We now prove the main theorem.
Recall that the dominant dimension of a (left) $\Lambda$-module $M$ is defined as follows.
\begin{defn} \label{dom.dim.defn} 
Let $0\rightarrow \,_\Lambda M\rightarrow I_0\rightarrow I_1\rightarrow \cdots\rightarrow I_m\rightarrow \cdots$  be a minimal injective resolution of $M$. Then $\domdim \,_\Lambda M=\sup \, \{k \mid  I_i \text{\ is\ projective, for all }  0\leq i<k\}$. The left dominant dimension of the algebra $\Lambda$ is defined to  be $\domdim \,_\Lambda\Lambda$.
\end{defn}

\begin{rmk}\label{domdim symmetry}
The dominant dimension of a right module and the right dominant dimension of the algebra are defined similarly. It is well known that $\domdim\,_\Lambda\Lambda=\domdim \Lambda_\Lambda$ for any algebra $\Lambda$  (see e.g. \cite[Theorem 4]{M}). So for the rest of this paper, we will denote both left and right dominant dimension of $\Lambda$ by $\domdim\Lambda$ and call it the \textbf{dominant dimension of $\Lambda$}. 
\end{rmk}
%An artin algebra $\Lambda$ is called {\bf $n$-Auslander} if $\gldim\Lambda\leq n+1\leq \domdim\Lambda$. It is easy to see that $n$-Auslander algebras are either semisimple or satisfy $\gldim\Lambda=\domdim\Lambda$. 
\begin{rem} \label{dom.dim.rem} Here are some basic properties of dominant dimension:
\begin{enumerate}
\item Let $Q$ be a projective-injective module. Then $\domdim Q=\infty$. 
\item $\domdim\Lambda=\min \, \{\domdim \,_\Lambda P \mid \,_\Lambda P\text{\ is\ indecomposable\ projective} \}$.
\item If $\Lambda$ is selfinjective, then $\domdim\Lambda=\infty$.
\end{enumerate}
\end{rem}

\begin{thm}\label{tilting}
Let $\Lambda$ be an artin algebra. Then the following statements are equivalent:
\begin{enumerate}
\item The subcategory $\mathcal C_\Lambda$ contains a tilting $\Lambda$-module $T_\cC$,
\item $\domdim \Lambda\geq2$.
\end{enumerate}
\end{thm}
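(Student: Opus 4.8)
The plan is to prove the two implications separately, using the numerical criterion and the bijectivity of $\mathbf{\Omega}$ established in Corollary~\ref{main cor} as the bridge between the combinatorics and the homological condition $\domdim\Lambda \geq 2$.

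\textbf{$(2)\Rightarrow(1)$.} Assume $\domdim\Lambda \geq 2$. By Remark~\ref{dom.dim.rem}(2), for every indecomposable projective $P$, the first two terms of a minimal injective resolution $0\to P \to I^0 \to I^1 \to \cdots$ are projective-injective. First I would handle the case $P$ injective: then $P$ is projective-injective, i.e. $[P]\in \cQ$. If $P$ is not injective, set $\Omega^{-1}P := \Ima(I^0 \to I^1) = \mathrm{coker}(P\to I^0)$. I claim $[\Omega^{-1}P]\in \ind\cx$ and that $\mathbf{\Omega}([\Omega^{-1}P]) = [P]$. Indeed, $0\to P \to I^0 \to \Omega^{-1}P \to 0$ exhibits $\Omega^{-1}P$ as generated by the projective-injective $I^0$, and since $\Omega^{-1}P \hookrightarrow I^1$ with $I^1$ projective-injective, $\Omega^{-1}P$ is also cogenerated by $\widetilde Q$, so $\Omega^{-1}P \in \cC_\Lambda$. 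The sequence $0\to P\to I^0 \to \Omega^{-1}P \to 0$ with $P$ projective shows $\pd \Omega^{-1}P \leq 1$; it is nonzero (else $P$ would be injective) so $\pd \Omega^{-1}P = 1$, giving $[\Omega^{-1}P]\in\ind\cx$ once I check indecomposability — this follows from Corollary~\ref{ind cosyz} (or directly Corollary~\ref{ind} applied to the non-split sequence $0\to P\to I^0 \to \Omega^{-1}P\to 0$ with $P$ indecomposable, $P\to I^0$ an injective envelope). Moreover the minimal projective resolution of $\Omega^{-1}P$ must, by minimality and Lemma~\ref{cov-env}, be exactly $0\to P \to I^0 \to \Omega^{-1}P\to 0$ (the projective cover of $\Omega^{-1}P$ is a summand of $I^0$, but $I^0$ is the injective envelope of $P$ hence indecomposable when... more carefully: $P\to I^0$ is a minimal injective resolution step so $I^0 = I(P)$, and by Proposition~\ref{PinC}(2) applied to $\Omega^{-1}P\in\cC_\Lambda$ its injective envelope is projective; one identifies $I^0$ as that envelope and reads off $\Omega\,\Omega^{-1}P = P$). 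Hence $\mathbf{\Omega}$ is surjective: every $[P]\in\ind\cP$ is hit. Combined with injectivity (Lemma~\ref{main}(1)), $\mathbf{\Omega}$ is a bijection, so by Corollary~\ref{main cor} a tilting module $T_\cC$ exists in $\cC_\Lambda$.

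\textbf{$(1)\Rightarrow(2)$.} Suppose $T_\cC\in\cC_\Lambda$ is tilting. By Proposition~\ref{rigid}(3), $T_\cC = \widetilde Q \oplus X$ where $X = \bigoplus_{i=1}^s X_i$ with each $X_i \in \ind\cx$. By Corollary~\ref{main cor}, $\mathbf{\Omega}\colon \ind\cx \to \ind\cP$ is a bijection. I must show $\domdim\,_\Lambda P \geq 2$ for every indecomposable projective $P$ (Remark~\ref{dom.dim.rem}(2)). If $P$ is injective there is nothing to prove ($\domdim P = \infty$ by Remark~\ref{dom.dim.rem}(1)). If $P$ is projective non-injective, then $[P]\in\ind\cP$, so by surjectivity of $\mathbf{\Omega}$ there is $X\in\ind\cx$ with $\mathbf{\Omega}([X]) = [P]$, i.e. the minimal projective resolution of $X$ is $0\to P \to Q \to X \to 0$ with $Q$ projective-injective (Lemma~\ref{cov-env}(1)) and $i\colon P\to Q$ an injective envelope (Lemma~\ref{cov-env}(3)). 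Dualizing the reading of this sequence: $0\to P \xrightarrow{i} Q$ with $i$ an injective envelope means $I^0(P) = Q$ is projective-injective. It remains to see the next term of the minimal injective resolution of $P$ is projective. The cokernel of $i$ is $X$, so $\Omega^{-1}P = X$, and $I^1(P) = I(X)$, the injective envelope of $X$; but $X\in\cC_\Lambda$, so by Proposition~\ref{PinC}(2) its injective envelope $I(X)$ is projective. Therefore $I^0(P)$ and $I^1(P)$ are both projective-injective, giving $\domdim\,_\Lambda P \geq 2$. Taking the minimum over indecomposable projectives yields $\domdim\Lambda \geq 2$.

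\textbf{Main obstacle.} The delicate point throughout is the careful bookkeeping of minimality: matching the minimal \emph{injective} resolution of a projective $P$ with the minimal \emph{projective} resolution of $X = \Omega^{-1}P$, i.e. that "the injective envelope of $P$ equals the projective cover appearing over $X$" and that cokernels/kernels stay indecomposable. This is exactly what Corollary~\ref{ind} and Lemma~\ref{cov-env} are designed to supply, so the proof is really an exercise in invoking them in the right direction; the only real content beyond that is observing that $\Omega^{-1}P$ lands in $\cC_\Lambda$ precisely because the first two syzygy-cosyzygy terms are projective-injective, which is the tautological restatement of $\domdim\Lambda\geq 2$.
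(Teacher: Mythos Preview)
Your proof is correct. The $(2)\Rightarrow(1)$ direction coincides with the paper's argument: both build $\Omega^{-1}P$ for each indecomposable projective non-injective $P$, verify it lies in $\ind\cx$, and conclude surjectivity of $\mathbf{\Omega}$ via Corollary~\ref{main cor}. Your ``more carefully\ldots'' digression about matching the projective cover of $\Omega^{-1}P$ with the injective envelope of $P$ is exactly what Corollary~\ref{ind} delivers in one line (you say this yourself just before), so that paragraph can be trimmed.

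For $(1)\Rightarrow(2)$ you take a genuinely different route from the paper. The paper argues directly from axiom~(3) of Definition~\ref{tilt}: the tilting sequence $0\to P\to T_0\to T_1\to 0$ has $T_0$ projective-injective by Proposition~\ref{TapproxP}, and then the injective envelope of $T_1\in\cC_\Lambda$ is projective, giving the first two terms of the minimal injective resolution of $P$. You instead invoke Corollary~\ref{main cor} in the reverse direction: the existence of $T_\cC$ forces $\mathbf{\Omega}$ to be bijective, so every $[P]\in\ind\cP$ is $\mathbf{\Omega}([X])$ for some $X\in\ind\cx$; Lemma~\ref{cov-env} then hands you the injective envelope $P\hookrightarrow Q$ with $Q$ projective-injective and cokernel $X\in\cC_\Lambda$, whence $I^1(P)=I(X)$ is projective by Proposition~\ref{PinC}(2). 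Your approach is pleasantly symmetric (both implications pivot on the same bijection) and sidesteps Proposition~\ref{TapproxP} entirely; the paper's approach is more self-contained in that it reads $\domdim\Lambda\geq 2$ straight off the tilting axioms without passing through the counting argument behind Corollary~\ref{main cor}.
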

\begin{proof}
$(1)\implies (2)$. 
Let $P$ be an indecomposable projective $\Lambda$-module. \\
If $P$ is projective-injective then $\domdim P=\infty$ by Remark \ref{dom.dim.rem}.\\
If $P$ is projective non-injective we will show that in the minimal injective copresentation of $P$ 
%is of the form
$$0\rightarrow P\rightarrow I_0\rightarrow I_1\rightarrow I_2\rightarrow I_3 \rightarrow  \dots, $$
both $I_0$ and $I_1$ are projective-injective. To show this we use the assumption that there is a tilting module $T_\cC$ in $\cC_{\Lambda}$. By property (3) in Definition \ref{tilt} of tilting modules, for each projective $P$ there is an exact sequence
$$0\rightarrow P\xrightarrow{g} T_0\xrightarrow{f} T_1\rightarrow 0,$$
where $T_0$ and $T_1$ are in $\add T_\cC$ and the map $g$ is a minimal left $\add T_\cC$-approximation of $P$. It follows by Proposition \ref{TapproxP} that $T_0$ is projective-injective. Call it $Q_0$. Let $T_1\xrightarrow{i} Q_1$ be the injective envelope of $T_1$.
Since $T_1$ is in $\add T_\cC \subset \cC_{\Lambda}$, the injective envelope $Q_1$ is also projective-injective by Proposition \ref{PinC}(2). Combining these two statements, we get the minimal injective copresentation of $P$
$$0\rightarrow P\xrightarrow{g} Q_0\xrightarrow{if} Q_1,$$
where $Q_0$ and $Q_1$ are projective-injective modules. Hence, $\domdim P\geq 2$. By Remark \ref{dom.dim.rem}, we have $\domdim \Lambda \geq 2$.

$(2)\implies (1)$. 
We use the fact that $\domdim \Lambda \geq 2$ in order to show that the map 
$$\mathbf{\Omega}: \ind\cx\rightarrow \ind \mathcal P$$ 
is a bijection. Then apply Corollary \ref{main cor} to conclude (1). It follows from Lemma \ref{main} that $\mathbf{\Omega}$ is an injective map. To show that it is surjective, we consider $P \in \ind \mathcal P$ and find $X \in \ind\cx$ so that $\Omega X\cong P$. Let $P\xrightarrow{a} Q$ be the injective envelope of $P$. The module $Q$ is projective-injective since $\domdim P\geq 2$. Consider the induced short exact sequence
$$0\rightarrow P\xrightarrow{a} Q\xrightarrow{b} X\xrightarrow {}0.$$
Let $X\xrightarrow{c}I$ be the injective envelope of $X$. We have a minimal injective copresentation of $P$:
$$0\rightarrow P\xrightarrow{a} Q\xrightarrow{cb} I.$$
%is a minimal injective copresentation of $P$. 
The assumption that $\domdim P\geq 2$ implies that $I$ is projective-injective. Therefore $X$ is a submodule of a projective-injective module. Since $X$ is also a quotient of $Q$ and $\pd X=1$, it follows that $X$ is in $\cx$. Furthermore, $X$ is indecomposable since $P$ is indecomposable, by Corollary \ref{ind}. Therefore $X$ is in $\ind\cx$ and $\Omega X\cong P$.
Thus $\mathbf{\Omega}$ is a surjection and therefore a bijection. By Corollary \ref{main cor}, it follows that there is a tilting module $T_\cC$ in $\cC_{\Lambda}$.
%If $\Lambda$ is selfinjective, then $\domdim\Lambda=\infty>2$. Now Assume $\Lambda$ is non-selfinjective.
%Suppose there is an indecomposable projective non-injective module $P$ such that $\domdim P=1$. Then we have the minimal injective resolution:
%$$0\rightarrow P\rightarrow Q\rightarrow I\rightarrow 0,$$
%where $Q$ is projective-injective and $I$ is injective non-projective.
%By Remark $\ref{PinC}$, $I\notin \mathcal C_\Lambda$. In particular $I\notin \cx$. That implies that the map $\mathbf{\Omega}$ in Lemma $\ref{main}$ is not a bijection. By Corollary $\ref{main cor}$, this contradicts the fact that $\mathcal C_\Lambda$ contains a tilting module.
%\vskip 10pt
%{\bf $(2)\implies (1)$}: If $\Lambda$ is selfinjective, then $\mathcal C_\Lambda$ contains a tilting module $\Lambda$. Now assume $\Lambda$ is non-selfinjective. 
%For each indecomposable projective noninjective module $P_i$, we have an exact sequence:
%$$0\rightarrow P_i\rightarrow I(P_i)\rightarrow X_i\rightarrow 0,$$
%where $I(P_i)$ is the injective envelope of $P_i$. Due to the fact that  $\domdim P\geq 2$, $I(P_i)$ is projective-injective and $X_i$ is also a submodule of a projective-injective module. Lemma $\ref{ind}$ implies that $X_i$ are indecomposable and pairwise non-isomorphic. Therefore $[X_i]\in\ind\cx$.  Hence $\mathbf{\Omega}$ is a bijection and $\mathcal C_\Lambda$ contains a tilting module.
\end{proof}
%We have a kindly warning on the condition $(1)$ here:  it counts the situation there is no projective non-injective module, which means $\Lambda$ is selfinjective and the tilting module is $\Lambda$. Also notice that:
%
%\begin{rmk}\label{warning}
%Let $\Lambda$ be an artin algebra. Then the following statements are not equivalent:
%\begin{enumerate}
%\item $\domdim P\geq 2$ for all projective non-injective $\Lambda$ module $P$.
%\item $\domdim P\neq 1$ for all projective $\Lambda$ module $P$.
%\item $\domdim \Lambda \geq 2$.
%\item $\domdim \Lambda \neq 1$
%\end{enumerate}
%
%In fact, $(3)\Rightarrow(1)\Rightarrow (2)$ also $(1)\Rightarrow (4)$. But $(2)\nRightarrow (1)$, $(4)\nRightarrow (1)$, simply consider any artin algebra without projective injective module. $(1)\nRightarrow (3)$, consider $\Lambda$ being selfinjective.  
%
%\vskip10pt
%However, under the assumption that $\gldim\Lambda=d>0$ which is finite, $(1)\iff (3)$.
%\end{rmk}
Using this theorem, we can deduce the following result of Crawley-Boevey and Sauter \cite{CBS}.
\begin{cor}
If $\gldim \Lambda=2$, then $\mathcal C_\Lambda$ contains a tilting $\Lambda$-module if and only if $\Lambda$ is an Auslander algebra.
\end{cor}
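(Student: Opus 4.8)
The plan is to deduce this corollary directly from Theorem~\ref{tilting}, combined with the standard homological characterization of Auslander algebras recalled in the Introduction: an artin algebra $\Lambda$ is an Auslander algebra precisely when $\gldim\Lambda\leq 2$ and $\domdim\Lambda\geq 2$. Since Theorem~\ref{tilting} has already reduced the existence of a tilting module in $\mathcal C_\Lambda$ to the single invariant $\domdim\Lambda\geq 2$, the entire content of the corollary is to feed in the extra hypothesis $\gldim\Lambda=2$ and match it against that characterization.

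Concretely, I would fix $\Lambda$ with $\gldim\Lambda=2$ and establish both implications simultaneously. By Theorem~\ref{tilting}, $\mathcal C_\Lambda$ contains a tilting module if and only if $\domdim\Lambda\geq 2$. Because the hypothesis already guarantees $\gldim\Lambda=2\leq 2$, the inequality $\domdim\Lambda\geq 2$ is equivalent to $\Lambda$ satisfying both $\gldim\Lambda\leq 2$ and $\domdim\Lambda\geq 2$, that is, to $\Lambda$ being an Auslander algebra. Chaining these two equivalences yields exactly the assertion: existence of a tilting module in $\mathcal C_\Lambda$ is equivalent to $\domdim\Lambda\geq 2$, which under $\gldim\Lambda=2$ is equivalent to $\Lambda$ being an Auslander algebra.

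I do not expect any genuine obstacle here, since the substantive work lives in Theorem~\ref{tilting} and in the definitional characterization of Auslander algebras. The only point I would handle carefully is bookkeeping around conventions: under the standing assumption $\gldim\Lambda=2$ one is automatically in the non-semisimple regime, so the degenerate cases at global dimension $0$ or $1$ never intrude, and the homological characterization of Auslander algebras can be invoked without caveat. I would simply cite that characterization for the relevant step and let Theorem~\ref{tilting} supply the rest.
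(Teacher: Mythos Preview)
Your proposal is correct and matches the paper's approach exactly: the paper's proof is a single sentence recalling that an Auslander algebra is precisely one with $\gldim A=2$ and $\domdim A=2$, leaving the reader to combine this with Theorem~\ref{tilting} just as you do.
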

\begin{proof} An Auslander algebra is an algebra $A$ with $\gldim A=2$ and $\domdim A=2$.
\end{proof}
%\begin{proof}
%Suppose $\Lambda$  is an Auslander algebra. It is easy to see (\cite[VI, Lemma 5.3]{ARS}) that each projective non-injective module has dominant dimension at least 2. Hence $\mathcal C_\Lambda$ contains a tilting module. Conversely, it is trivial due to the assumption $\gldim\Lambda=2$.
%\end{proof}

%\begin{thm}
%Let $\Lambda$ be a Nakayama algebra with $q\leq 3$ and $x=1$. Denote $\alpha$ to be the number of non-isomorphic indecomposable projective injective module with length $2$ and $\beta$ to be the number of non-isomorphic indecomposable projective injective module with length $3$. Then.
%$$
%\gldim A =
%\begin{cases} 2+\alpha &\beta>0\\
%\infty &\beta=0
%\end{cases}
%$$
%\end{thm}

More directly by Theorem $\ref{tilting}$, for $m$-Auslander algebras $\Lambda$, we can always guarantee the existence of such a tilting module in $\cC_\Lambda$. Recall that Iyama introduced the notion of higher Auslander algebras (see \cite[2.2]{I2}): an artin algebra $\Lambda$ is called {\bf $m$-Auslander} if $\gldim\Lambda\leq m+1\leq \domdim\Lambda$. It is easy to see that $m$-Auslander algebras are either semisimple or satisfy $\gldim\Lambda=\domdim\Lambda$. Notice that Auslander algebras are precisely $1$-Auslander algebras.

\begin{cor}\label{m-Aus}
For any integer $m \geq 1$ and any $m$-Auslander algebra $\Lambda$, its subcategory $\mathcal C_\Lambda$ always contains a tilting $\Lambda$-module. 
\end{cor}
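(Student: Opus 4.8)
The plan is to deduce Corollary \ref{m-Aus} directly from Theorem \ref{tilting}, using only the defining inequality of an $m$-Auslander algebra. The key observation is that Theorem \ref{tilting} asserts the equivalence of the existence of a tilting module $T_\cC$ in $\mathcal C_\Lambda$ with the single numerical condition $\domdim \Lambda \geq 2$, with \emph{no} hypothesis on the global dimension. So the entire task reduces to checking that every $m$-Auslander algebra (for $m \geq 1$) satisfies $\domdim \Lambda \geq 2$.

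First I would recall the definition: $\Lambda$ is $m$-Auslander means $\gldim \Lambda \leq m+1 \leq \domdim \Lambda$. The relevant half of this chain of inequalities is $m+1 \leq \domdim \Lambda$. Since we assume $m \geq 1$, we get $\domdim \Lambda \geq m+1 \geq 2$. That is the whole content. Then I would invoke Theorem \ref{tilting}, direction $(2) \implies (1)$, to conclude that $\mathcal C_\Lambda$ contains a tilting $\Lambda$-module $T_\cC$.

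There is essentially no obstacle here — the corollary is a one-line consequence once Theorem \ref{tilting} is in hand, and the only thing to be careful about is the edge behaviour: if $\Lambda$ happens to be semisimple, then $\gldim \Lambda = 0$ and the defining inequalities still force $\domdim \Lambda = \infty \geq 2$ (a semisimple algebra is selfinjective, cf.\ Remark \ref{dom.dim.rem}(3)), so the conclusion still holds; and if $\Lambda$ is not semisimple, the text preceding the corollary already notes $\gldim \Lambda = \domdim \Lambda \geq m+1 \geq 2$. Either way $\domdim \Lambda \geq 2$. I would write the proof simply as: ``Since $\Lambda$ is $m$-Auslander with $m \geq 1$, we have $\domdim \Lambda \geq m+1 \geq 2$. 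By Theorem \ref{tilting}, $\mathcal C_\Lambda$ contains a tilting $\Lambda$-module.''
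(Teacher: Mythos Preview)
Your proposal is correct and matches the paper's own reasoning: the paper simply remarks ``More directly by Theorem~\ref{tilting}'' before stating the corollary, relying on the defining inequality $\domdim\Lambda\geq m+1\geq 2$ exactly as you do. No further argument is given or needed.
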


\begin{exm}
In this example, we illustrate Corollary \ref{m-Aus} for a $2$-Auslander algebra. Let $\Lambda$ be the Nakayama algebra given by the following quiver and relations $\alpha\gamma=\gamma\beta=0$. \\

$ \quad
\begin{tikzpicture}[->]
\node(1) at (0,0.2) {$1$};
\node(2) at (-1.2, -1) {$2$};
\node(3) at (1.2,-1) {$3$};

\draw(1)--node [above]{$\alpha$}(2);
\draw(2)--node [below]{$\beta$}(3);
\draw(3)--node [above]{$\gamma$}(1);
\end{tikzpicture} 
$ 
%$\xymatrix{&1\ar[dl]_{\alpha} \\
%                2\ar[rr]_\beta&&3\ar[ul]_\gamma}$
\quad with the AR-quiver \quad  
$
\begin{tikzpicture}[->][scale=.75]
\node(s1) at (0,0) {$\begin{smallmatrix} 1 \end{smallmatrix}$};
\node(s3) at (2,0) {$\begin{smallmatrix} 3 \end{smallmatrix}$}; 
\node(s2) at (4,0) {$\begin{smallmatrix} 2 \end{smallmatrix}$};
\node(s1') at (6,0) {$\begin{smallmatrix} 1 \end{smallmatrix}$};
\node(p3) at (1,1) {$\begin{smallmatrix} 3\\1 \end{smallmatrix}$};
\node(p2) at (3,1) {$\begin{smallmatrix} 2\\3 \end{smallmatrix}$};
\node(i2) at (5,1) {$\begin{smallmatrix} 1\\2 \end{smallmatrix}$}; 
\node(p1) at (4,2) {$\begin{smallmatrix} 1\\2\\3 \end{smallmatrix}$};
 
\draw (s1)--(p3);
\draw (p3)--(s3);
\draw (s3)--(p2);
\draw (p2)--(p1);
\draw (p2)--(s2);
\draw (s2)--(i2);
\draw (p1)--(i2);
\draw (i2)--(s1');
\draw[dashed][-](s1)--(s3);
\draw[dashed][-](s3)--(s2);
\draw[dashed][-](s2)--(s1');

\draw[dotted][-](0,0.5)--(0,2);
\draw[dotted][-](6,0.5)--(6,2);
\end{tikzpicture}
$ \\

\noindent
The subcategory $\mathcal C_\Lambda$ is $\add\{ \begin{smallmatrix} 3\\1 \end{smallmatrix}, \begin{smallmatrix} 1\\2\\3 \end{smallmatrix}, \begin{smallmatrix} 1 \end{smallmatrix}, \begin{smallmatrix} 3 \end{smallmatrix} \}$, where there is a tilting $\Lambda$-module $T_\cC=\begin{smallmatrix} 3\\1 \end{smallmatrix}\oplus \begin{smallmatrix} 1\\2\\3 \end{smallmatrix} \oplus\begin{smallmatrix} 1 \end{smallmatrix}$.
%Notice that $\gldim \Lambda=3=\domdim \Lambda$ and %$\Lambda$ cannot be obtained by 
%%one point 
%%an extension of an Auslander algebra on an injective as is done in Section \ref{A[I]}.
%$\Lambda$ is $2$-Auslander.
\end{exm}

%\vskip10pt
%In general, for $\Lambda$ an artin algebra with $\gldim\Lambda=d>2$. Suppose $\mathcal C_\Lambda$ contains a tilting module, one could not expect that $\domdim \Lambda> 2$. So one could not expect $\Lambda$ to be a $(d-1)$-Auslander algebra.

%In \cite{CX}, there is a notion of canonical $k$-tilting module. Using our technique, i.e.~Corollary $\ref{ind cosyz}$, we can obtain a result on the number of the indecomposable summands of such canonical $k$-tilting modules.
% 
%\begin{defn}\label{canonical k tilting}
%Let $\Lambda$ be a basic algebra such that $\domdim\Lambda\geq k$. Let $\widetilde{Q}$ be the direct sum of representatives of the isomorphism classes of all indecomposable projective-injective $\Lambda$-modules, and let
%$$
%\xymatrix{0\ar[r]&\Lambda\ar[r]^{d_0}& I_0\ar[r]^{d_1}& I_1\ar[r]^{d_2}& I_2\ar[r]&\cdots}
%$$
%be a minimal injective resolution of $\Lambda$. Then the basic module $T_{(k)}$ such that $\add T_{(k)}=\add(\widetilde{Q}\oplus \Ima d_k)$ is called the \textbf{canonical $k$-tilting module}. A \textbf{canonical $k$-cotilting module} can be defined dually.
%\end{defn}

%\begin{prop}\label{summand canonical}
%The number of the indecomposable summands of $T_{(k)}$ equals the number of the non-isomorphic simple $\Lambda$-modules.
%\end{prop}
%\begin{proof}
%Using Corollary $\ref{ind cosyz}$, the nonprojective indecomposable summands of $T_{(k)}$ are in one-to-one correspondence with the indecomposable projective-non-injective modules.
%\end{proof}

%%%%%%%%%%%%%%%%%%%%%%
\subsection{Existence of cotilting modules in $\cc _{\Lambda}$}
\label{subsec:cotilting}

 Notice that a left $\Lambda$-module $ _\Lambda T$ is tilting if and only if $D(T)_\Lambda$ as a right $\Lambda$-module is cotilting.  As a dual statement, we provide a result on the existence of a cotilting module here.

%\begin{defn}
%Let $\cdots\rightarrow P_m\rightarrow \cdots\rightarrow P_1\rightarrow P_0\rightarrow \,_\Lambda M\rightarrow 0$  be a minimal projective resolution of $\,_\Lambda M$. Then $\codomdim \,_\Lambda M=\sup\{k: P_i \text{\ is injective, for all } 0\leq i<k\}$. 
%\end{defn} 
%
\begin{thm}\label{cotilting cor}
Let $\Lambda$ be an artin algebra. Then the following statements are equivalent:
\begin{enumerate}
\item $\mathcal C_{\Lambda}$ contains a cotilting $\Lambda$-module,
\item $\mathcal C_{\Lambda^{op}}$ contains a tilting $\Lambda^{op}$-module,
\item $\domdim \Lambda^{op}\geq 2$.
\end{enumerate}
\end{thm}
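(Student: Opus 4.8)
\textbf{Proof proposal for Theorem \ref{cotilting cor}.}

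The plan is to obtain this theorem as a formal consequence of the duality $D\colon \modd\Lambda \to \modd\Lambda\op$ together with Theorem \ref{tilting} applied to the opposite algebra. First I would establish the equivalence $(2)\Leftrightarrow(3)$: this is literally Theorem \ref{tilting} with $\Lambda$ replaced by $\Lambda\op$, since that theorem is stated for an arbitrary artin algebra and $\Lambda\op$ is again an artin algebra. So no new work is needed there. Second, I would prove $(1)\Leftrightarrow(2)$ by checking that $D$ carries the relevant data back and forth. The key observation is that $D$ sends indecomposable projective-injective left $\Lambda$-modules to indecomposable projective-injective right $\Lambda$-modules (projectivity and injectivity are interchanged by $D$, and a projective-injective stays projective-injective), so if $\widetilde Q = \bigoplus_{i=1}^t Q_i$ is the projective-injective left $\Lambda$-module of Definition \ref{tilde Q}, then $D\widetilde Q$ is the corresponding projective-injective right $\Lambda$-module used to define $\cC_{\Lambda\op}$.

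Next I would verify that $D$ maps $\cC_\Lambda$ onto $\cC_{\Lambda\op}$. Since $D$ is a contravariant equivalence, it turns epimorphisms into monomorphisms and vice versa; hence a module $M$ is generated by $\widetilde Q$ (there is an epi $\widetilde Q^{\,m}\onto M$) if and only if $DM$ is cogenerated by $D\widetilde Q$ (there is a mono $DM\into (D\widetilde Q)^{m}$), and dually $M$ is cogenerated by $\widetilde Q$ iff $DM$ is generated by $D\widetilde Q$. Therefore $M \in \cC_\Lambda = (\Gen\widetilde Q)\cap(\Cogen\widetilde Q)$ if and only if $DM \in (\Cogen D\widetilde Q)\cap(\Gen D\widetilde Q) = \cC_{\Lambda\op}$. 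Finally, I would invoke the standard fact (already noted in the excerpt just before the theorem) that $_\Lambda T$ is a tilting left $\Lambda$-module if and only if $D(T)$ is a cotilting right $\Lambda$-module — equivalently, reading it in the other direction, $_\Lambda C$ is a cotilting left $\Lambda$-module iff $D(C)$ is a tilting right $\Lambda$-module, because $D$ exchanges conditions $(1)$–$(3)$ with $(1^o)$–$(3^o)$ in Definition \ref{tilt} (it swaps $\pd$ with $\id$, preserves vanishing of $\Ext^1$, and turns the exact sequence $0\to\Lambda\to T_0\to T_1\to 0$ into $0\to DT_1\to DT_0\to D\Lambda\to 0$). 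Combining these three points: $\cC_\Lambda$ contains a cotilting module $C$ $\iff$ $DC$ is a tilting $\Lambda\op$-module lying in $\cC_{\Lambda\op}$ $\iff$ $\cC_{\Lambda\op}$ contains a tilting module, which gives $(1)\Leftrightarrow(2)$, and chaining with Theorem \ref{tilting} completes the proof.

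I do not expect any genuine obstacle here; the theorem is designed to be a dualization, and the only care required is bookkeeping — making sure that "left" and "right" and "op" are tracked correctly through $D$, and citing Remark \ref{domdim symmetry} if one wants to reconcile left versus right dominant dimension (though the statement as written keeps everything on the $\Lambda\op$ side, so that reconciliation is not even strictly necessary). If anything, the mildly delicate point is simply to state precisely that $D$ restricts to an (anti)equivalence $\cC_\Lambda \simeq \cC_{\Lambda\op}$ exchanging tilting and cotilting objects, and then read off all three equivalences from that.
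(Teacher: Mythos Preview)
Your proposal is correct and follows exactly the approach the paper intends: the paper states this theorem without proof, prefacing it only with the remark that $_\Lambda T$ is tilting if and only if $D(T)_\Lambda$ is cotilting and calling the result ``a dual statement,'' so your explicit verification that $D$ restricts to an equivalence $\cC_\Lambda \simeq \cC_{\Lambda\op}$ exchanging tilting and cotilting, together with Theorem~\ref{tilting} applied to $\Lambda\op$, is precisely what the authors leave implicit.
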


On the other hand, by definition, we have $\domdim\Lambda^{op}=\domdim\Lambda_\Lambda$ which is the same as $\domdim\Lambda$ as we mentioned before (see Remark \ref{domdim symmetry}). So combining our results, we have:
\begin{cor}\label{cotilting cor}
Let $\Lambda$ be an artin algebra. Then the following statements are equivalent:
\begin{enumerate}
\item $\domdim \Lambda\geq 2$,
\item $\mathcal C_\Lambda$ contains a tilting $\Lambda$-module $T_\cC$,
\item $\mathcal C_\Lambda$ contains a cotilting $\Lambda$-module $C_\cC$. 
\end{enumerate}
\end{cor}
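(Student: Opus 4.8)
The statement to prove is Corollary~\ref{cotilting cor}, asserting the equivalence of (1) $\domdim\Lambda\geq 2$, (2) $\cC_\Lambda$ contains a tilting module $T_\cC$, and (3) $\cC_\Lambda$ contains a cotilting module $C_\cC$. The plan is to obtain this purely formally by chaining together the results already established. The equivalence of (1) and (2) is exactly Theorem~\ref{tilting}, so nothing remains to be done there. The work is therefore to connect (3) to the other two.

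First I would invoke Theorem~\ref{cotilting cor} (the dual statement just proved, stated for $\Lambda^{op}$), which gives: $\cC_\Lambda$ contains a cotilting $\Lambda$-module $\iff$ $\cC_{\Lambda^{op}}$ contains a tilting $\Lambda^{op}$-module $\iff$ $\domdim\Lambda^{op}\geq 2$. Next I would use the symmetry of dominant dimension recorded in Remark~\ref{domdim symmetry}: by definition $\domdim\Lambda^{op}=\domdim\Lambda_\Lambda$, and the cited theorem of Müller (\cite[Theorem 4]{M}) gives $\domdim\,_\Lambda\Lambda=\domdim\Lambda_\Lambda$, so $\domdim\Lambda^{op}=\domdim\Lambda$. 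Combining, (3) $\iff$ $\domdim\Lambda^{op}\geq 2$ $\iff$ $\domdim\Lambda\geq 2$ $\iff$ (1), and then (1) $\iff$ (2) by Theorem~\ref{tilting}. This closes the cycle and proves all three statements equivalent.

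There is no serious obstacle here: the corollary is genuinely a bookkeeping consequence, and the only point requiring any care is making sure the left/right dominant dimension identification is applied in the right direction (so that the dual theorem's hypothesis $\domdim\Lambda^{op}\geq 2$ is correctly translated back to a hypothesis on $\Lambda$ itself). If one wanted the explicit form of $C_\cC$ as in part (3) of the introductory theorem, one would additionally apply the duality $D(-)$ to the description $T_\cC\simeq\widetilde Q\oplus\bigoplus_i\Omega^{-1}P_i$ from Corollary~\ref{T_C compute} for $\Lambda^{op}$, using that $D$ interchanges projectives and injectives, syzygies and cosyzygies, and sends $\widetilde Q_{\Lambda^{op}}$ to $\widetilde Q_\Lambda$; but for the equivalence statement itself this is not needed.
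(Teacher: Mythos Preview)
Your proposal is correct and follows essentially the same route as the paper: the equivalence $(1)\Leftrightarrow(2)$ is Theorem~\ref{tilting}, and $(3)\Leftrightarrow(1)$ is obtained by combining the dual Theorem (labeled \texttt{cotilting cor} in the paper) with the left/right symmetry of dominant dimension from Remark~\ref{domdim symmetry}. The paper presents this even more tersely, simply noting that $\domdim\Lambda^{op}=\domdim\Lambda_\Lambda=\domdim\Lambda$ and then stating the corollary as an immediate consequence.
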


\begin{rmk}\label{C_C compute}
 If a cotilting module $C_\cC$ exists, then $C_\cC \simeq \widetilde{Q} \oplus \left(\bigoplus_i \Omega I_i\right)$, where $\Omega I_i$ is the syzygy of $I_i$ and the direct sum is taken over the representatives of the isomorphism classes of all indecomposable injective non-projective $\Lambda$-modules $I_i$.
\end{rmk}

\begin{rem}
By \cite[Proposition 2.6]{PS}, for $k\geq1$, the existence of the canonical $k$-tilting (or $k$-cotilting) modules is equivalent to the dominant dimension of the algebra being at least $k$. One can see that the implications $(1)\implies(2)$ and $(1)\implies(3)$ in Corollary~\ref{cotilting cor} follow immediately from the existence of the canonical $1$-tilting (or $1$-cotilting) modules.  However, our proof of the equivalence of statements $(1),(2),(3)$ is done using a different approach.
\end{rem}

In general, the tilting module and the cotilting module in $\mathcal C_\Lambda$ from Corollary~\ref{cotilting cor} are not necessarily the same module. Now we discuss when $\mathcal C_\Lambda$ contains a module which is \emph{both} tilting and cotilting. We call this module the tilting-cotilting module in $\mathcal C_\Lambda$.

\begin{defn}\cite{AR1} An artin algebra $\Lambda$ is called \textbf{Gorenstein} if both $\id _{\Lambda}\Lambda<\infty$ and $\id\Lambda_{\Lambda}<\infty$.
\end{defn}

\begin{rmk}\label{gdim defn}
It is conjectured that for an artin algebra $\Lambda$, $\id _{\Lambda}\Lambda<\infty$ is equivalent to $\id\Lambda_{\Lambda}<\infty$ (Gorenstein Symmetry Conjecture). But we know that if $\id _{\Lambda}\Lambda$ and $\id\Lambda_{\Lambda}$ are both finite, then $\id _{\Lambda}\Lambda=\id\Lambda_{\Lambda}$ (e.g. see \cite{Z}); in this case, we call this number \textbf{Gorenstein dimension}, denoted as $\Gdim \Lambda :=\id _{\Lambda}\Lambda=\id\Lambda_{\Lambda}$. An artin Gorenstein algebra $\Lambda$ is called \textbf{Iwanaga-Gorenstein of Gorenstein dimension $m$}, if $\Gdim\Lambda=m$.  To avoid confusion, we point out that in the literature, there is an original notion of $m$-Gorenstein algebra (e.g. see \cite{AR2}) which is different from the notion of Iwanaga-Gorenstein algebra of Gorenstein dimension $m$.
\end{rmk}

It is well known that selfinjective algebras and algebras of finite global dimensions are Gorenstein. For the convenience of the readers, we show:
\begin{prop}\label{gldim finite gor}
Let $\Lambda$ be an artin algebra with $\gldim\Lambda=d<\infty$. Then there exists an indecomposable projective $\Lambda$-module $P$ with $\id P = d$. 

That is, if $\gldim\Lambda=d<\infty$ then $\Lambda$ is Iwanaga-Gorenstein with $\Gdim\Lambda=d$.
\end{prop}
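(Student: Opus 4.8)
The plan is to produce an indecomposable projective module of injective dimension exactly $d$ by ``bootstrapping'' from a simple module of maximal projective dimension. Since $\gldim\Lambda=d<\infty$, every $\Lambda$-module has injective dimension at most $d$; in particular $\id P\leq d$ for every projective $P$, so $\id{}_\Lambda\Lambda\leq d$ and the real content is the reverse inequality: I must exhibit an indecomposable projective $P$ and a module $M$ with $\Ext^d_\Lambda(M,P)\neq 0$. (The case $d=0$ is trivial, as $\Lambda$ is then semisimple and every projective is injective.)

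First I would choose a simple $\Lambda$-module $S$ with $\pd S=d$; this exists because the global dimension of an artin algebra equals the supremum of the projective dimensions of its simple modules. Taking a minimal projective resolution $0\to P_d\to P_{d-1}\to\cdots\to P_0\to S\to 0$, the term $P_d$ is nonzero. Let $S'$ be a simple summand of $\Top P_d$. Minimality of the resolution forces the image of $P_d\to P_{d-1}$ to lie in $\rad P_{d-1}$, so the induced map $\Hom_\Lambda(P_{d-1},S')\to\Hom_\Lambda(P_d,S')$ is zero; since also $P_{d+1}=0$, this gives $\Ext^d_\Lambda(S,S')\cong\Hom_\Lambda(P_d,S')\neq 0$.

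Next I would pass from the simple target $S'$ to its projective cover $P(S')$. Applying $\Hom_\Lambda(S,-)$ to the short exact sequence $0\to\rad P(S')\to P(S')\to S'\to 0$ yields an exact piece $\Ext^d_\Lambda(S,P(S'))\to\Ext^d_\Lambda(S,S')\to\Ext^{d+1}_\Lambda(S,\rad P(S'))$, and the last term vanishes because $\gldim\Lambda=d$. Hence $\Ext^d_\Lambda(S,P(S'))\onto\Ext^d_\Lambda(S,S')\neq 0$, so $\id P(S')\geq d$, and therefore $\id P(S')=d$ with $P(S')$ indecomposable projective. This proves the first assertion, whence $\id{}_\Lambda\Lambda=d$; running the identical argument over $\Lambda^{op}$ (which has the same global dimension $d$) gives $\id\Lambda_\Lambda=d$, so by Remark \ref{gdim defn} the algebra $\Lambda$ is Iwanaga-Gorenstein with $\Gdim\Lambda=d$.

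The step I expect to need the most care is the minimality argument in the second paragraph — verifying that choosing $S'$ as a summand of $\Top P_d$ makes $\Hom_\Lambda(P_d,S')$ nonzero while simultaneously killing the contribution of the incoming differential — but this is just the standard top-degree $\Ext$ computation, and nothing deeper than the long exact sequence and the vanishing of $\Ext^{d+1}$ is required.
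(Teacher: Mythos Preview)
Your proof is correct, but it takes a different route from the paper's. The paper argues by contradiction: if every indecomposable projective $P$ satisfied $\id P<d$, then for an arbitrary module $M$ the finite projective resolution $0\to P_d\to\cdots\to P_0\to M\to 0$ together with the standard bound on injective dimension along exact sequences would force $\id M<d$, contradicting $\gldim\Lambda=d$. This is shorter and avoids any explicit $\Ext$ computation, but it is non-constructive.

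Your argument, by contrast, is constructive: starting from a simple $S$ of projective dimension $d$, you locate a specific indecomposable projective $P(S')$ (the cover of a top composition factor of $\Omega^d S$) and exhibit a nonzero element of $\Ext^d_\Lambda(S,P(S'))$. This extra information --- identifying \emph{which} projective achieves injective dimension $d$ --- is not visible in the paper's proof. The cost is a slightly longer argument and reliance on minimality of the resolution; both ingredients are standard, and your verification of the key step (that the differential $P_d\to P_{d-1}$ induces the zero map on $\Hom(-,S')$) is fine. Either approach handles the right-module statement by passing to $\Lambda^{op}$, exactly as you do.
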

\begin{proof}
Since $\gldim\Lambda=d$, $\id P\leq d$ for all projective $\Lambda$-module $P$. Here, we claim that at least one $P$ satisfies $\id P=d$. Otherwise, since any $\Lambda$-module $M$ has a finite projective resolution $0\rightarrow P_d\rightarrow \cdots\rightarrow P_0\rightarrow M\rightarrow 0$, with each $\id P_i<d$, then $\id M<d$ and hence $\gldim \Lambda<d$, a contradiction. 
So $\gldim \Lambda=d$ implies $\id_\Lambda \Lambda=d$. 
Similarly, we have $\pd D(\Lambda_\Lambda)=d=\id\Lambda_\Lambda$. Therefore, $\Lambda$ is Iwanaga-Gorenstein with $\Gdim\Lambda=d$.
\end{proof}

Recently Iyama and Solberg defined $m$-Auslander-Gorenstein algebra in \cite{IS}. They also showed that the notion of $m$-Auslander-Gorenstein algebra is left and right symmetric.

\begin{defn}\label{mAusGor} \cite{IS} An artin algebra $\Lambda$ is called \textbf{$m$-Auslander-Gorenstein}\footnote{The authors called it ``minimal $m$-Auslander-Gorenstein'' in the introduction of \cite{IS}. Later in the paper, they called it ``$m$-Auslander-Gorenstein'' for simplicity. } if $$\id\,_\Lambda\Lambda\leq m+1\leq\domdim\Lambda.$$  
\end{defn}

\begin{prop}\cite[Proposition 4.1]{IS} Let $\Lambda$ be an artin algebra. 
\begin{enumerate}
\item If $\Lambda$ is an $m$-Auslander-Gorenstein algebra, then either $\id_\Lambda\Lambda =m+1=\domdim \Lambda$ holds or $\Lambda$ is selfinjective. 
\item An algebra $\Lambda$ is $m$-Auslander-Gorenstein if and only if $\Lambda^{op}$ is $m$-Auslander-Gorenstein.
\end{enumerate}
\end{prop}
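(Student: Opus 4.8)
The plan is to prove (1) first and then feed it into (2).

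\smallskip
\noindent\emph{Part (1).} Everything rests on the following \textbf{Key Lemma}: if the regular module ${}_\Lambda\Lambda$ admits a \emph{finite} coresolution $0\to{}_\Lambda\Lambda\to J_0\to J_1\to\cdots\to J_r\to 0$ by projective-injective modules, then $\Lambda$ is selfinjective. I would prove this by downward induction on $r$. For $r=0$ we have ${}_\Lambda\Lambda\cong J_0$, so ${}_\Lambda\Lambda$ is injective. For $r\geq1$, the epimorphism $J_{r-1}\onto J_r$ splits since $J_r$ is projective, so $J_{r-1}\cong J_r\oplus K$ with $K=\ker(J_{r-1}\to J_r)=\operatorname{im}(J_{r-2}\to J_{r-1})$; being a direct summand of the projective-injective module $J_{r-1}$, the module $K$ is projective-injective, and replacing the tail $\cdots\to J_{r-1}\to J_r\to0$ by $\cdots\to J_{r-2}\to K\to0$ gives a finite coresolution of ${}_\Lambda\Lambda$ by projective-injectives of length $r-1$. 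Granting this, suppose $\Lambda$ is $m$-Auslander-Gorenstein, with minimal injective resolution $0\to{}_\Lambda\Lambda\to I_0\to I_1\to\cdots$. By $\domdim\Lambda\geq m+1$ the terms $I_0,\dots,I_m$ are projective-injective, and by $\id{}_\Lambda\Lambda\leq m+1$ the resolution stops at $I_{m+1}$. If $\id{}_\Lambda\Lambda\leq m$, or if $\domdim\Lambda\geq m+2$ (so that $I_{m+1}$ is also projective), then ${}_\Lambda\Lambda$ has a finite injective resolution all of whose terms are projective-injective, and the Key Lemma forces $\Lambda$ to be selfinjective; otherwise $\id{}_\Lambda\Lambda=\domdim\Lambda=m+1$. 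This is precisely the dichotomy in (1).

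\smallskip
\noindent\emph{Part (2).} Statement (2) is symmetric in $\Lambda$ and $\Lambda^{\op}$, so it suffices to show that if $\Lambda$ is $m$-Auslander-Gorenstein then so is $\Lambda^{\op}$. By part (1), and since a selfinjective algebra is trivially $m$-Auslander-Gorenstein on both sides, we may assume $\id{}_\Lambda\Lambda=\domdim\Lambda=m+1$. As $\domdim\Lambda^{\op}=\domdim\Lambda$ (Remark \ref{domdim symmetry}), the only missing point is $\id\Lambda_\Lambda\leq m+1$; and by Remark \ref{gdim defn} it is enough to prove merely $\id\Lambda_\Lambda<\infty$, since then $\id\Lambda_\Lambda=\id{}_\Lambda\Lambda=m+1$. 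Writing $\bar D:={}_\Lambda D(\Lambda_\Lambda)$ for the minimal injective cogenerator of $\modd\Lambda$, the standard duality gives $\id\Lambda_\Lambda=\pd_\Lambda\bar D$, so the task is to show $\pd_\Lambda\bar D\leq m+1$.

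\smallskip
\noindent\emph{Part (2): the line of attack, and where it gets hard.} Applying $D$ to the minimal injective resolution of $\Lambda_\Lambda$ — whose first $m+1$ terms are projective-injective because $\domdim\Lambda^{\op}=m+1$ — shows that the first $m+1$ terms of the minimal projective resolution $\cdots\to P_1\to P_0\to\bar D\to0$ are projective-injective; in particular $M:=\Omega^{m+1}_\Lambda\bar D$ satisfies $M\into P_m$ with $P_m$ projective-injective. Shifting dimensions across the short exact sequences $0\to\Omega^{j}\bar D\to P_{j-1}\to\Omega^{j-1}\bar D\to0$ for $1\leq j\leq m+1$: the projectivity of $P_0,\dots,P_m$ yields $\Ext^{i}_\Lambda(M,-)\cong\Ext^{i+m+1}_\Lambda(\bar D,-)$ for $i\geq1$, so $\Ext^{\geq1}_\Lambda(M,{}_\Lambda\Lambda)=0$ (using $\id{}_\Lambda\Lambda\leq m+1$) and, crucially, $\pd_\Lambda\bar D\leq m+1$ if and only if $M$ is projective; dually, the injectivity of $P_0,\dots,P_m$ yields $\Ext^{i}_\Lambda(-,\bar D)\cong\Ext^{i+m+1}_\Lambda(-,M)$ for $i\geq1$, so $\id_\Lambda M\leq m+1$ (using that $\bar D$ is injective). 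Thus the problem reduces to proving $M$ is projective; since $M\into P_m$ with $P_m$ projective-injective, it would even be enough to prove $M$ is injective. This final step is the main obstacle: $\id_\Lambda M\leq m+1$ does not by itself make $M$ injective, and $\Ext^{\geq1}_\Lambda(M,\Lambda)=0$ together with $\id_\Lambda M<\infty$ does not make $M$ projective unless $\Lambda$ is already known to be Gorenstein — which is exactly what is at stake. To close it I would pass to the Morita–Tachikawa presentation $\Lambda\cong\End_\Gamma(X)^{\op}$ with $X$ a generator-cogenerator over an artin algebra $\Gamma$ (available since $\domdim\Lambda\geq2$; see Section \ref{DomDim}), under which $\domdim\Lambda\geq m+1$ and $\id{}_\Lambda\Lambda\leq m+1$ become $\Ext$-vanishing and finite-dimension conditions on $X$ over $\Gamma$ that are preserved under $\Gamma\rightsquigarrow\Gamma^{\op}$, $X\rightsquigarrow DX$, forcing $\id\Lambda_\Lambda\leq m+1$; alternatively one simply invokes \cite[Proposition 4.1]{IS}.
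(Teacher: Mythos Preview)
The paper does not give its own proof of this proposition: it is stated with the citation \cite[Proposition 4.1]{IS} and no proof environment follows. So there is nothing in the paper to compare your argument against directly; the authors are importing the result from Iyama--Solberg.

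Your Part~(1) is correct and is essentially the standard argument. The Key Lemma (a finite projective-injective coresolution of ${}_\Lambda\Lambda$ forces selfinjectivity) is proved cleanly by your splitting-and-truncation induction, and the dichotomy then follows at once from the definition.

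Part~(2), however, has a genuine gap, and you flag it yourself. You correctly reduce to showing $\pd_\Lambda D\Lambda\leq m+1$, and your dimension-shifting analysis of $M=\Omega^{m+1}_\Lambda D\Lambda$ is accurate: $\Ext^{\geq1}_\Lambda(M,\Lambda)=0$, $\id_\Lambda M\leq m+1$, and $M$ embeds in a projective-injective. But, as you say, none of these individually or together forces $M$ to be projective without already knowing $\Lambda$ is Gorenstein. Your two proposed closures do not resolve this: the final ``alternatively one simply invokes \cite[Proposition 4.1]{IS}'' is circular, since that is precisely the statement under proof; and the Morita--Tachikawa sketch is only a gesture --- you assert that $\id{}_\Lambda\Lambda\leq m+1$ translates into a condition on $X$ over $\Gamma$ that is symmetric under $(\Gamma,X)\rightsquigarrow(\Gamma^{\op},DX)$, but you do not identify the condition or verify the symmetry. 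In fact this is where the real content lies: in \cite{IS} the left-right symmetry is obtained through the correspondence between $m$-Auslander--Gorenstein algebras and $(m+1)$-precluster tilting subcategories, which is not an elementary dimension-shift. If you want a self-contained argument along your lines, you would need to actually carry out the Morita--Tachikawa translation (via Lemma~\ref{domdim ext} and an analogous characterisation of $\id{}_\Lambda\Lambda$ in terms of $X$) and check the duality explicitly; as written, Part~(2) is incomplete.
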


\begin{rmk}\label{mAusGor}
Let $\Lambda$ be an artin algebra. We have the following equivalent characterizations of $m$-Auslander-Gorenstein algebras: 
\begin{enumerate}
\item $\Lambda$ is $m$-Auslander-Gorenstein,
\item $\Lambda$ is Iwanaga-Gorenstein with $\Gdim \Lambda \leq m+1 \leq \domdim\Lambda$,
\item $\Lambda$ is selfinjective or $\id\,_\Lambda\Lambda=\id\Lambda_\Lambda=m+1=\domdim\Lambda$,
\item $\Lambda$ is selfinjective or $\id\,_\Lambda\Lambda=m+1=\domdim\Lambda$,
\item $\Lambda$ is selfinjective or $\id\Lambda_\Lambda=m+1=\domdim\Lambda$,
\item $\id\,_\Lambda\Lambda\leq m+1\leq\domdim\Lambda$,
\item $\id\Lambda_\Lambda\leq m+1\leq\domdim\Lambda$.
\end{enumerate}
\end{rmk}

In particular, a $1$-Auslander-Gorenstein algebra is either a selfinjective algebra or a Gorenstein algebra satisfying $\id\,_\Lambda\Lambda=2=\domdim \Lambda$. 

\begin{rmk}\label{dtr selfinj}
The algebras satisfying the condition $\id\,_\Lambda\Lambda=\domdim \Lambda=2$ are called {\bf $DTr$-selfinjective algebras} and they were classified by Auslander and Solberg in \cite{AS}.
\end{rmk}

We have a characterization of $1$-Auslander-Gorenstein algebras in terms of the existence of the tilting-cotilting module in $\cC_\Lambda$:

\begin{thm}\label{tilting-cotilting}
Let $\Lambda$ be an artin algebra. Then the following statements are equivalent: 
\begin{enumerate}
\item $\Lambda$ is $1$-Auslander-Gorenstein,
\item $\cC_\Lambda$ contains a tilting-cotilting module.
\end{enumerate}
\end{thm}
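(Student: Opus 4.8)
The plan is to prove the two implications separately, using Corollary~\ref{cotilting cor} (and its tilting/cotilting descriptions in Corollary~\ref{T_C compute} and Remark~\ref{C_C compute}) together with the characterizations of $1$-Auslander-Gorenstein algebras collected in Remark~\ref{mAusGor}. First I would handle the direction $(1)\implies(2)$. Assume $\Lambda$ is $1$-Auslander-Gorenstein. If $\Lambda$ is selfinjective, then every projective is projective-injective, $\widetilde Q$ is already a progenerator, $\cC_\Lambda=\modd\Lambda$ contains $\Lambda$ itself which is trivially tilting-cotilting, so we are done. Otherwise $\id\,_\Lambda\Lambda=\id\Lambda_\Lambda=2=\domdim\Lambda$. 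Since $\domdim\Lambda\geq2$, Corollary~\ref{cotilting cor} already gives a tilting module $T_\cC=\widetilde Q\oplus\bigoplus_i\Omega^{-1}P_i$ in $\cC_\Lambda$; it remains to check it is also cotilting, equivalently (by Definition~\ref{tilt}) that $\id_\Lambda T_\cC\leq1$ and the coresolution condition $(3^o)$ holds (rigidity $(2^o)$ is automatic since $T_\cC$ is rigid). The injective dimension bound is the crux: because $\id\,_\Lambda\Lambda\leq 2$, for each projective non-injective $P_i$ the cosyzygy $\Omega^{-1}P_i$ sits in an exact sequence $0\to P_i\to I_0\to \Omega^{-1}P_i\to 0$ with $I_0$ projective-injective (using $\domdim\Lambda\geq2$ and Corollary~\ref{ind cosyz}); then $\id\Omega^{-1}P_i\leq 1$ follows from $\id I_0<\infty$ being forced to be $\leq 2$ and a dimension-shift, which I would make precise using $\Gdim\Lambda=2$. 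For condition $(3^o)$, I would dualize the argument in Theorem~\ref{tilting}: the cotilting module $C_\cC$ guaranteed by Corollary~\ref{cotilting cor} satisfies $C_\cC\simeq\widetilde Q\oplus\bigoplus_i\Omega I_i$; the point is to show $T_\cC\cong C_\cC$, which amounts to the bijection $\mathbf\Omega$ and its dual matching up, i.e.\ that $\{\Omega^{-1}P_i\}$ and $\{\Omega I_j\}$ are the same set of indecomposables, which holds precisely when both $\domdim\Lambda\geq2$ and $\id\,_\Lambda\Lambda\leq2$.

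For the converse $(2)\implies(1)$, suppose $\cC_\Lambda$ contains a module $M$ that is simultaneously tilting and cotilting. Being tilting, $M$ forces $\domdim\Lambda\geq2$ by Theorem~\ref{tilting}. By Proposition~\ref{rigid}(3)--(4) the tilting module in $\cC_\Lambda$ is unique and equals $\widetilde Q\oplus X$ with $\pd X\leq1$, so $M=T_\cC$; and dually, the uniqueness of the cotilting module gives $M=C_\cC$ with $\id(\text{non-}\widetilde Q\text{ part})\leq1$. Thus every indecomposable summand of $M$ has both projective and injective dimension at most $1$. Now I would bound $\id\,_\Lambda\Lambda$: take the coresolution $0\to\Lambda\to M_0\to M_1\to 0$ from tilting axiom $(3)$ with $M_0,M_1\in\add M$; since $\id M_0\leq1$ and $\id M_1\leq1$, the long exact sequence in $\Ext$ gives $\id\,_\Lambda\Lambda\leq2$. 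Combined with $\domdim\Lambda\geq2$, Remark~\ref{mAusGor}(6) yields that $\Lambda$ is $1$-Auslander-Gorenstein.

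The main obstacle I anticipate is the clean bookkeeping in $(1)\implies(2)$: showing that the (already available) tilting module $T_\cC$ is the \emph{same} module as the cotilting module $C_\cC$, rather than merely that both exist. Concretely, one must verify that under the hypothesis $\id\,_\Lambda\Lambda\leq2$, the cosyzygy $\Omega^{-1}P_i$ of an indecomposable projective non-injective has injective dimension exactly $1$ and injective envelope that is projective-injective, so that $\Omega(\Omega^{-1}P_i)$ recovers an indecomposable injective non-projective; tracking that this sets up a bijection inverse to $\mathbf\Omega$ and hence $\{\Omega^{-1}P_i\}=\{\Omega I_j\}$ as sets of iso-classes. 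A slick alternative, which I would prefer if it goes through cleanly, is to avoid identifying the modules and instead argue homologically: the unique tilting $T_\cC$ in $\cC_\Lambda$ has $\id T_\cC\leq\gldim\Lambda - 1$ when $\gldim\Lambda<\infty$ (Proposition~\ref{k-1}) and in general $\id T_\cC\leq 1$ follows directly from $\Gdim\Lambda\leq2$ by applying $\Ext^{\geq2}_\Lambda(-,T_\cC)$ to a length-$2$ injective (=projective-injective, by $\domdim\geq2$) copresentation of $T_\cC$ inside $\cC_\Lambda$; then $T_\cC$ satisfies $(1^o)$ and $(2^o)$, and $(3^o)$ follows from the uniqueness/counting of cotilting modules (Proposition~\ref{rigid} dualized) since $T_\cC$ has the maximal number $n$ of indecomposable summands with injective dimension $\leq1$. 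That would let $(1)\implies(2)$ go through with only the single nontrivial input $\Gdim\Lambda\leq2\Rightarrow\id T_\cC\leq1$.
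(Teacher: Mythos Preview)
Your proposal is correct, and the ``slick alternative'' you describe for $(1)\Rightarrow(2)$ is precisely the paper's route: once $\domdim\Lambda\geq2$ gives the tilting module $T_\cC\in\cC_\Lambda$, the paper simply checks $\id T_\cC\leq1$ (each non-injective summand $T_0=\Omega^{-1}P$ embeds in a projective-injective $I_0$, and $\id\,_\Lambda\Lambda\leq2$ forces $I_0/T_0$ to be injective), and then declares $T_\cC$ cotilting without further comment---the condition $(3^o)$ being automatic from rigidity plus $n$ summands, exactly as you note. So your initial worry about explicitly matching $\{\Omega^{-1}P_i\}$ with $\{\Omega I_j\}$ is unnecessary; the paper never does this.

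For $(2)\Rightarrow(1)$ your argument via the tilting axiom $(3)$ sequence $0\to\Lambda\to M_0\to M_1\to0$ and a dimension shift is perfectly valid and arguably cleaner. The paper instead argues pointwise: for each indecomposable projective non-injective $P$, the cosyzygy $X=\Omega^{-1}P$ is a (non-injective) summand of $T_\cC$; since $T_\cC$ is cotilting, $\id X\leq1$, whence $\id P\leq2$. Both arguments are one line once $\domdim\Lambda\geq2$ is in hand; yours avoids invoking the explicit description of $T_\cC$, while the paper's makes the structure of the bound visible. One small wording fix: in your $(1)\Rightarrow(2)$ sketch, it is $\id P_i\leq2$ (not $\id I_0$) that drives the dimension shift $\id\Omega^{-1}P_i\leq1$.
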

\begin{proof}
{ $(1) \implies (2)$.} Assume that $(1)$ holds, then $\domdim\Lambda\geq2$. By Corollary $\ref{cotilting cor}$, the subcategory $\mathcal C_\Lambda$ contains a tilting module $T_\cC$. It suffices to show that $\id T_\cC\leq 1$. 
 
In fact, let $T_0$ be any non-injective indecomposable summand of $T_\cC$ (if it exists). Then by Proposition $\ref{PinC}$, $T_0$ is neither projective nor injective with $\pd T_0= 1$. Moreover, $T_0$ has a minimal projective resolution:
$$
0\rightarrow P_1\rightarrow P_0\rightarrow T_0\rightarrow 0,
$$
where $P_0$ is projective-injective. Because $\domdim \Lambda\geq2$, $T_0$ is a submodule of a projective-injective module $I_0$. But $I_0/T_0$ must be injective since $\id_\Lambda\Lambda\leq 2$. Hence $\id T_0=1$. Therefore $\id T_\cC\leq1$ and $T_\cC$ is a cotilting module.

{ $(2) \implies (1)$.} Assume that $(2)$ holds, then Corollary $\ref{cotilting cor}$ implies that $\domdim \Lambda\geq2$. Let $P$ be an indecomposable projective non-injective module (if it exists). Let $f: P\rightarrow I(P)$ be an injective envelope of $P$. Then we know that $X\cong \Coker f$ is a non-injective summand of the tilting module $T_\cC$. Since $T_\cC$ is also cotilting, we have that $\id X=1$, which implies $\id P=2$. Hence, we show $\id\,_\Lambda \Lambda\leq2$. 

Therefore $\id\,_\Lambda \Lambda\leq2\leq\domdim\Lambda$ which means that $\Lambda$ is $1$-Auslander-Gorenstein. 
 \end{proof}

We have the following statement which generalizes Crawley-Boevey and Sauter's result \cite[Lemma 1.1]{CBS} from an algebra $\Lambda$ with $\gldim \Lambda = 2$ to an algebra $\Lambda$ with any finite $\gldim \Lambda$.% finite global dimension. %\gldim \Lambda = 2$ to any finite $\gldim \Lambda$.

\begin{cor}\label{tilting-cotilting cor}
Let $\Lambda$ be an artin algebra with $\gldim \Lambda<\infty$. Then the subcategory $\mathcal C_\Lambda$ contains a tilting-cotilting module if and only if $\Lambda$ is an Auslander algebra.
\end{cor}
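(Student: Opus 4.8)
The plan is to deduce Corollary \ref{tilting-cotilting cor} from Theorem \ref{tilting-cotilting} together with Proposition \ref{gldim finite gor}, reducing everything to a statement about the homological invariants $\gldim\Lambda$, $\domdim\Lambda$ and $\Gdim\Lambda$. The forward direction is the easy one: if $\Lambda$ is an Auslander algebra, then $\gldim\Lambda=2$ and $\domdim\Lambda=2$ by definition, so in particular $\id\,_\Lambda\Lambda\leq\gldim\Lambda=2\leq\domdim\Lambda$, which says $\Lambda$ is $1$-Auslander-Gorenstein; hence by Theorem \ref{tilting-cotilting} the subcategory $\cC_\Lambda$ contains a tilting-cotilting module.

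For the converse, suppose $\gldim\Lambda=d<\infty$ and $\cC_\Lambda$ contains a tilting-cotilting module. By Theorem \ref{tilting-cotilting}, $\Lambda$ is $1$-Auslander-Gorenstein, so $\id\,_\Lambda\Lambda\leq 2\leq\domdim\Lambda$. On the other hand, since $\gldim\Lambda=d<\infty$, Proposition \ref{gldim finite gor} gives $\id\,_\Lambda\Lambda=d$. Combining these, $d=\id\,_\Lambda\Lambda\leq 2$, so $\gldim\Lambda\leq 2$. I then need to rule out $\gldim\Lambda\in\{0,1\}$: if $\gldim\Lambda=0$ the algebra is semisimple, and if $\gldim\Lambda=1$ it is hereditary; in either case I must check that the tilting-cotilting module situation forces $\gldim\Lambda=2$. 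The cleanest way is to observe that $\domdim\Lambda\geq 2$ already rules these out except in trivial/degenerate cases — more precisely, an algebra with $\gldim\Lambda\leq 1$ and $\domdim\Lambda\geq 2$ must be semisimple (a non-semisimple hereditary algebra has a simple projective non-injective or a non-projective injective forcing $\domdim\leq 1$), and a semisimple algebra is not an Auslander algebra in the standard convention — but one should be careful, since Crawley-Boevey–Sauter's statement and the diagram in the introduction treat semisimple algebras as the degenerate boundary case. I would therefore follow the convention already used in the excerpt (the diagram lists Auslander algebras as ``non-semisimple'' with $\gldim\Lambda=2$) and state that, excluding the semisimple case, $\gldim\Lambda\leq 2$ together with $\domdim\Lambda\geq 2$ forces $\gldim\Lambda=\domdim\Lambda=2$, i.e. $\Lambda$ is an Auslander algebra.

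The main obstacle is this bookkeeping at the low-global-dimension boundary: making the $\gldim\Lambda\in\{0,1\}$ exclusion precise and consistent with whatever convention the paper adopts for ``Auslander algebra'' (whether semisimple algebras count). The homological core — $d=\id\,_\Lambda\Lambda\leq 2$ via Proposition \ref{gldim finite gor} plus $1$-Auslander-Gorenstein — is immediate; the only real content beyond that is verifying that $\domdim\Lambda\geq 2$ is incompatible with $\gldim\Lambda=1$ for non-semisimple algebras, which follows by inspecting a minimal injective copresentation of an indecomposable projective: over a hereditary algebra a projective non-injective $P$ has injective envelope $I_0$ with $I_0/P$ injective, and $I_0$ being projective would make $I_0/P$ have projective dimension $\leq 1$ hence be projective-injective, iterating this forces the algebra to be selfinjective and hereditary, hence semisimple. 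I would present the argument in the order: (i) easy direction; (ii) apply Theorem \ref{tilting-cotilting} to get $1$-Auslander-Gorenstein; (iii) apply Proposition \ref{gldim finite gor} to get $\gldim\Lambda=\id\,_\Lambda\Lambda\leq 2$; (iv) dispose of the $\leq 1$ cases to conclude $\gldim\Lambda=\domdim\Lambda=2$.
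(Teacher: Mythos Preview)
Your proof is correct and follows essentially the same route as the paper: apply Theorem~\ref{tilting-cotilting} to get that $\Lambda$ is $1$-Auslander-Gorenstein, hence $\id\,_\Lambda\Lambda\leq 2$, then use Proposition~\ref{gldim finite gor} to conclude $\gldim\Lambda=\id\,_\Lambda\Lambda\leq 2$, and combine with $\domdim\Lambda\geq 2$.

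The only discrepancy is your step (iv). The paper uses the standard convention that an Auslander algebra is one with $\gldim\Lambda\leq 2\leq\domdim\Lambda$ (see the definition of $m$-Auslander before Corollary~\ref{m-Aus}, where Auslander algebras are identified with $1$-Auslander algebras). Under this convention semisimple algebras \emph{are} Auslander algebras, and no separate treatment of $\gldim\Lambda\in\{0,1\}$ is needed: once you have $\gldim\Lambda\leq 2$ and $\domdim\Lambda\geq 2$ you are done. The box in the introduction's diagram labeled ``(Non-semisimple)'' is just isolating that subclass, not redefining the term. Your argument excluding the non-semisimple hereditary case is correct but superfluous, and your worry that semisimple algebras must be excluded is misplaced.
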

\begin{proof}
Assume $\mathcal C_\Lambda$ contains a tilting-cotilting module, then $\Lambda$ is $1$-Auslander-Gorenstein by Theorem \ref{tilting-cotilting}. It follows from Remark \ref{mAusGor} that $\Lambda$ is Iwanaga-Gorenstein with $\Gdim \Lambda = 2$. 
By Proposition \ref{gldim finite gor}, this forces $\gldim\Lambda=2$.
%Hence, each projective $\Lambda$-module $P$ has $\id_\Lambda P \leq 2$. Since each $\Lambda$-module $M$ has finite projective resolution, it follows that $\id_\Lambda M \leq 2$ and so $\gldim \Lambda \leq 2$. 
Also $\domdim \Lambda\geq 2$ follows from Theorem $\ref{tilting}$. Thus, $\Lambda$ is an Auslander algebra. The converse holds by using Theorem $\ref{tilting-cotilting}$ and the fact that an Auslander algebra is $1$-Auslander-Gorenstein.
\end{proof}

\begin{rmk} \
\begin{itemize}
 \item Theorem \ref{tilting-cotilting} and Corollary $\ref{tilting-cotilting cor}$ suggest that $1$-Auslander-Gorenstein algebras are generalizations of Auslander algebras in the sense of the existence of a tilting-cotilting module in $\cC_\Lambda$.

 \item A more general situation is considered by Pressland and Sauter (c.f.~\cite[Proposition 3.7, Theorem 3.9]{PS}). They show that $\Lambda$ is an $m$-Auslander-Gorenstein algebra if and only if canonical $k$-tilting modules coincide with canonical $(m+1-k)$-cotilting modules, for all $0 \leq k \leq m+1$.
\end{itemize}
\end{rmk}

%Next we give an example of a $1$-Auslander-Gorenstein algebra containing a tilting-cotilting module in $\cC_\Lambda$ but is not an Auslander algebra.
\begin{exm}
In this example, we present a $1$-Auslander-Gorenstein algebra which contains a tilting-cotilting module in $\cC_\Lambda$ but is not an Auslander algebra, since its $\gldim \Lambda = \infty$. Let $\Lambda$ be the Nakayama algebra given by the following quiver and relations $\gamma\beta\alpha=\delta\gamma\beta=\alpha\epsilon=\epsilon\delta=0$. We omit the modules when drawing the AR-quiver. \\

$ \quad
\begin{tikzpicture}[->][scale=.9]
\node(1) at (0,1.81) {$1$};
\node (2) at (0.95, 1.12) {$5$};
\node(3) at (0.59,0) {$4$};
\node (4) at (-0.59,0) {$3$};
\node (5) at (-0.95,1.12) {$2$};

\draw(1)--node [above]{$\alpha$}(2);
\draw(2)--node [right]{$\beta$}(3);
\draw(3)--node [below]{$\gamma$}(4);
\draw(4)--node [left]{$\delta$}(5);
\draw(5)--node [above]{$\epsilon$}(1);
\end{tikzpicture} 
$
\quad with the AR-quiver \quad
$
\begin{tikzpicture}[scale=.75]
\node(s1) at (0,0) {$\begin{smallmatrix} 1 \end{smallmatrix}$};
\node(s2) at (2,0) {$\begin{smallmatrix} 2 \end{smallmatrix}$}; 
\node(s3) at (4,0) {$\begin{smallmatrix} 3 \end{smallmatrix}$};
\node(s4) at (6,0) {$\begin{smallmatrix} 4 \end{smallmatrix}$};
\node(s5) at (8,0) {$\begin{smallmatrix} 5 \end{smallmatrix}$};
\node(s1') at (10,0) {$\begin{smallmatrix} 1 \end{smallmatrix}$};

\node(12) at (1,1) {$\bullet$};
\node(22) at (3,1) {$\bullet$};
\node(32) at (5,1){$\bullet$};
\node (42) at (7,1){$\bullet$};
\node (52) at (9,1){$\bullet$};

\node(23)at (4,2){$\bullet$};
\node(33) at (6,2){$\bullet$};
\node (43)at (8,2){$\bullet$};

\draw[dashed][-](s1)--(s2);
\draw[dashed][-](s2)--(s3);
\draw[dashed][-](s3)--(s4);
\draw[dashed][-](s4)--(s5);
\draw[dashed][-](s5)--(s1');

\draw(s1)--(12);
\draw(s2)--(22)--(23);
\draw(s3)--(32)--(33);
\draw(s4)--(42)--(43);
\draw(s5)--(52);

\draw(12)--(s2);
\draw(22)--(s3);
\draw(23)--(32)--(s4);
\draw(33)--(42)--(s5);
\draw(43)--(52)--(s1');

\draw[dotted][-](0,0.5)--(0,2);
\draw[dotted][-](10,0.5)--(10,2);
\end{tikzpicture}
$ \\

\noindent
There exists a tilting-cotilting module $T_\cC=P_1\oplus P_2\oplus P_4\oplus P_5\oplus S_4$ in $\mathcal C_\Lambda$.
\end{exm}

%%%%%%%%%%%%%%%%%%%%%%%%%%%%%%%
\section{Homological applications of the tilting module in $\cC_\Lambda$}
\subsection{More on dominant dimension}\label{DomDim}

Algebras with dominant dimension at least $2$ have been studied since the 1960's by Morita \cite{Mo}, Tachikawa \cite{T}, Mueller \cite{M}, Ringel \cite{R2} and many others. Morita and Tachikawa showed that any artin algebra of dominant dimension at least $2$ is an endomorphism algebra of a generator-cogenerator of another artin algebra. This gives us a full machinery for producing algebras whose dominant dimension is at least $2$, and hence, algebras which have a tilting module in $\cC_\Lambda$.

\begin{thm}\cite{Mo}, \cite{T}, \cite[Theorem 2]{M}
\label{Morita-Tachikawa}
For an artin algebra $\Gamma$, the following statements are equivalent:
\begin{enumerate}
\item $\domdim \Gamma\geq2$,
\item $\Gamma\simeq \End_\Lambda(X)^{op}$, where $X$ is a generator-cogenerator of an artin algebra $\Lambda$.
\end{enumerate}
\end{thm}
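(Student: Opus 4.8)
\textbf{Proof proposal for the Morita--Tachikawa Theorem (Theorem~\ref{Morita-Tachikawa}).}

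The plan is to prove the two implications separately, with the harder direction being $(1)\implies(2)$, where one must reconstruct the algebra $\Lambda$ and the generator-cogenerator $X$ out of the internal structure of $\Gamma$.

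First I would dispatch $(2)\implies(1)$. Suppose $\Gamma \simeq \End_\Lambda(X)\op$ with $X$ a generator-cogenerator of $\modd\Lambda$. Writing $\Gamma$-modules as $\Hom_\Lambda(X,-)$ applied to modules in $\add X$, the key observation is that $\Hom_\Lambda(X,X)=\Gamma$ and, more importantly, that for an injective $\Lambda$-module $I$ (hence $I \in \add X$ since $X$ is a cogenerator) the $\Gamma$-module $\Hom_\Lambda(X,I)$ is injective: indeed $\Hom_\Lambda(-,I)$ is exact and $\Hom_\Lambda(X,I) \simeq D\!\left(D I \otimes_\Lambda X\right)$ up to the usual identifications, so it is a summand of a product of copies of $D\Gamma$. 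Now take a minimal injective copresentation of $X$ as a $\Lambda$-module, $0 \to X \to I^0 \to I^1$ with $I^0, I^1$ injective; since $X$ is a generator, the left term can be embedded into $\add X$ too, but more directly one applies the left-exact functor $\Hom_\Lambda(X,-)$ to $0 \to \Lambda \to I^0_\Lambda \to \cdots$ — better: apply it to a minimal injective copresentation $0\to X\to I^0\to I^1$ and use that $\Gamma=\Hom_\Lambda(X,X)$ to obtain $0\to\Gamma\to\Hom_\Lambda(X,I^0)\to\Hom_\Lambda(X,I^1)$ exact with the two right-hand terms projective-injective $\Gamma$-modules (projective because $I^0,I^1\in\add X$, injective by the observation above). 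This exhibits $\domdim\Gamma\geq 2$.

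For $(1)\implies(2)$, the standard construction is as follows. Assume $\domdim\Gamma\geq 2$, so there is an exact sequence $0 \to \Gamma \to J^0 \to J^1$ with $J^0, J^1$ projective-injective $\Gamma$-modules. Set $Q := \widetilde{Q}_\Gamma$, the basic projective-injective $\Gamma$-module, and define $\Lambda := \End_\Gamma(Q)\op$ and $X := \Hom_\Gamma(Q, D\Gamma)$ — equivalently one works with $DQ$ as a right-$\Gamma$, left-$\Lambda$ bimodule and realizes $\Lambda$ as acting on it. The claims to verify are: (i) $X$ is a $\Lambda$-module that is both a generator and a cogenerator, and (ii) $\End_\Lambda(X)\op \simeq \Gamma$. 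For (ii) one uses the double-centralizer property: the functor $\Hom_\Gamma(Q,-)$ restricted to $\add Q$ is fully faithful, and the dominant-dimension-$\geq 2$ hypothesis is exactly what forces $\Gamma \xrightarrow{\sim} \End_\Lambda(X)\op$ via the natural map, because the copresentation $0\to\Gamma\to J^0\to J^1$ with $J^i\in\add Q$ lets one compute $\End_\Lambda(\Hom_\Gamma(Q,\Gamma))$ by applying $\Hom_\Lambda(-,X)$ and comparing. For (i): $X$ is a generator because $\Gamma$ itself (being a submodule of the projective-injective $J^0 \in \add Q$) contributes all indecomposable projectives of $\Lambda$ after applying $\Hom_\Gamma(Q,-)$; $X$ is a cogenerator because the indecomposable injective $\Lambda$-modules are exactly the $\Hom_\Gamma(Q, \text{indecomposable summands of } Q)$, which are summands of $X$ by construction.

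I expect the main obstacle to be bookkeeping the double-centralizer isomorphism in $(1)\implies(2)$ cleanly: one has to check that the canonical evaluation map $\Gamma \to \End_{\End_\Gamma(Q)}(\ _\Gamma D Q)\op$ (or its appropriate one-sided incarnation) is an isomorphism, and this is precisely where $\domdim\Gamma\geq 2$ enters — a lower dominant dimension gives only a weaker statement. Since this result is classical (Morita, Tachikawa, Mueller) and is being invoked rather than reproved in the paper, I would in fact just cite \cite{Mo}, \cite{T}, \cite[Theorem~2]{M} and omit the detailed verification, sketching only the construction of $\Lambda$ and $X$ for the reader's orientation, since it is this explicit form of $\Lambda$ and $X$ that gets used in Proposition~\ref{XT}.
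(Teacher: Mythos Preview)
The paper does not prove this theorem at all; it is stated with citations to \cite{Mo}, \cite{T}, \cite[Theorem~2]{M} and used as a black box. You correctly anticipate this in your final paragraph, and your suggested course of action --- cite the references and record only the explicit construction of $\Lambda$ and $X$ --- is exactly what the paper does (the construction you give for $(1)\implies(2)$ matches Proposition~\ref{morita Q} verbatim). Your sketch of $(2)\implies(1)$ and of the double-centralizer argument is a reasonable outline of the classical proof, but it goes beyond what the paper supplies.
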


Furthermore, there is a more precise result on the dominant dimension:

\begin{lem}\cite[Lemma 3]{M}\label{domdim ext}
Let $\Gamma\simeq \End_\Lambda(X)^{op}$, where $X$ is a generator-cogenerator of an artin algebra $\Lambda$. Then $\domdim\Gamma\geq m+2$ if and only if $\Ext^i_\Lambda(X,X)=0$, for all $1\leq i \leq m$, $m \in \{0,1,2,\ldots\}$.
\end{lem}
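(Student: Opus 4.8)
\textbf{Proof proposal for Lemma \ref{domdim ext}.}

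The plan is to exploit Theorem \ref{Morita-Tachikawa} together with the standard projective resolution of a $\Gamma$-module coming from an $\add X$-resolution on the $\Lambda$-side. Write $\Gamma = \End_\Lambda(X)\op$ and recall the functor $F = \Hom_\Lambda(X,-)\colon \modd\Lambda \to \modd\Gamma$. Since $X$ is a generator, $F$ restricts to an equivalence between $\add X$ and the category $\add({}_\Gamma\Gamma)$ of projective $\Gamma$-modules, sending $X$ to ${}_\Gamma\Gamma$. Because $X$ is also a cogenerator, $FX = \Gamma$ has a minimal injective copresentation whose terms are of the form $F(X')$, $F(X'')$ with $X', X'' \in \add X$ — this is exactly what gives $\domdim \Gamma \geq 2$ in Theorem \ref{Morita-Tachikawa}, and it will be the base case $m=0$.

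First I would set up the general mechanism: take a minimal injective copresentation $0 \to X \to X^0 \xrightarrow{d^0} X^1 \xrightarrow{d^1} \cdots$ of $X$ in $\modd\Lambda$ with each $X^i \in \add X$ (possible since $X$ is a cogenerator, by splicing injective envelopes and using that injectives lie in $\add X$). Applying the exact-on-this-complex functor $F$, and using that $F(X^i)$ is projective-injective as a $\Gamma$-module (projective because $X^i \in \add X$; injective because $X^i$ is injective over $\Lambda$ and $F$ is fully faithful on $\add X \supseteq \add(D\Lambda)$), one identifies the minimal injective coresolution of $\Gamma = FX$ in $\modd\Gamma$. Its $i$-th term is projective precisely as long as the cohomology of the applied complex behaves well; the obstruction to $F(d^{i})$ having the ``expected'' kernel/cokernel is measured by $\Ext^i_\Lambda(X,X)$. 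Concretely, the long exact sequence obtained by applying $\Hom_\Lambda(X,-)$ to the short exact sequences extracted from the copresentation of $X$ yields that $\domdim\Gamma \geq i+2$ is controlled by the vanishing of $\Ext^j_\Lambda(X,X)$ for $j \leq i$.

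The cleanest route for the two directions is induction on $m$. For $(\Leftarrow)$: assuming $\Ext^i_\Lambda(X,X)=0$ for $1\le i\le m$, break the copresentation of $X$ into short exact sequences $0 \to Z_i \to X^i \to Z_{i+1} \to 0$ (with $Z_0 = X$); apply $F$, use $\Ext^1_\Lambda(X,-)$-vanishing on the relevant syzygies (which follows inductively from the hypothesis) to see that $F$ keeps these sequences exact and that $FZ_i$ has no higher self-extensions obstructing projectivity of the coresolution terms, giving $\domdim \Gamma \geq m+2$. For $(\Rightarrow)$: run the same identification backwards — if the $(m+1)$-st term in the minimal injective coresolution of $\Gamma$ is still projective, then reading off the cohomology forces $\Ext^m_\Lambda(X,X) = 0$, and the lower vanishing comes from the already-established cases. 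The main obstacle I anticipate is the bookkeeping in translating ``term of the injective coresolution of $\Gamma$ is projective'' into ``$F$ applied to the copresentation of $X$ is exact in the right spot,'' i.e. correctly matching $\Ext^i_\Lambda(X,X)$ with the failure of projectivity at step $i$; this requires being careful that the copresentation of $X$ is \emph{minimal} and that $F$ is fully faithful on $\add X$ so that minimality is preserved. Once that dictionary is pinned down, both implications are formal.
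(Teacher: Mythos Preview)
The paper does not give its own proof of this lemma: it is stated with a citation to \cite[Lemma 3]{M} and used as a black box, so there is no in-paper argument to compare against. Your sketch is essentially the classical Mueller argument and is correct in outline.

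Two small points are worth tightening. First, you do not need to worry about minimality on the $\Gamma$-side: once you apply $F=\Hom_\Lambda(X,-)$ to a minimal injective resolution $0\to X\to I^0\to I^1\to\cdots$ of $X$ and use $\Ext^i_\Lambda(X,X)=0$ for $1\le i\le m$, the resulting complex $0\to\Gamma\to FI^0\to\cdots\to FI^{m+1}$ is exact and all $FI^j$ are projective-injective (this is exactly Proposition~\ref{end mod}(2) of the paper). Since the minimal injective coresolution of $\Gamma$ is a direct summand of any injective coresolution, its first $m+2$ terms are automatically projective, giving $\domdim\Gamma\ge m+2$. So the ``minimality preserved by $F$'' issue you flag as an obstacle is not actually needed. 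Second, for the converse you should say explicitly that $F$ is fully faithful on all of $\modd\Lambda$ (because $X$ is a generator), so that the minimal injective coresolution $0\to\Gamma\to Q^0\to\cdots\to Q^{m+1}$, whose terms are $FI^j$ for injective $I^j$ by Proposition~\ref{end mod}(2), is the image under $F$ of an actual complex of injectives over $\Lambda$; comparing with the cohomology interpretation $H^i(F(I^\bullet))=\Ext^i_\Lambda(X,X)$ then forces the required Ext-vanishing. With these clarifications your argument goes through.
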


From this lemma, the following well-known results can be deduced. 

\begin{cor}\label{domdim sum}
Let $X$ be a generator-cogenerator of an artin algebra $\Lambda$ and $\Gamma \simeq \End_\Lambda(X)^{op}$. 
% Let $\Lambda$ be an artin algebra and $X$ be a generator-cogenerator. Let $\Gamma=\End_\Lambda(X)^{op}$.  
\begin{enumerate}
\item  If $X$ is a summand of a module $Y$, then $\domdim \, \End_\Lambda(X)^{op}\geq \domdim \, \End_\Lambda(Y)^{op}$.
\item Suppose $\Lambda$ is non-selfinjective. If $\id_\Lambda\Lambda\leq m$ or $\id \Lambda_\Lambda\leq m$, then \\
$\domdim\Gamma\leq m+1$. 
 \item If $\gldim \Lambda\leq m$, then $\domdim\Gamma\leq m+1$.
\item If $\Lambda$ is non-semisimple hereditary, then $\domdim\Gamma=2$.

\end{enumerate}
\end{cor}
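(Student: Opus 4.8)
The plan is to prove Corollary \ref{domdim sum} by deriving each statement from Lemma \ref{domdim ext} (Mueller's lemma) together with elementary homological facts. First, for (1): if $X$ is a direct summand of $Y$, and $Y$ is a generator-cogenerator, then so is $X$, and every self-extension $\Ext^i_\Lambda(X,X)$ is a direct summand of $\Ext^i_\Lambda(Y,Y)$. Hence if $\Ext^i_\Lambda(Y,Y) = 0$ for all $1 \leq i \leq m$, the same holds for $X$. Applying Lemma \ref{domdim ext} in both directions, $\domdim \End_\Lambda(X)^{op} \geq m+2$ whenever $\domdim \End_\Lambda(Y)^{op} \geq m+2$, which gives the asserted inequality $\domdim \End_\Lambda(X)^{op} \geq \domdim \End_\Lambda(Y)^{op}$.

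Next, for (2): suppose $\Lambda$ is non-selfinjective and $\id {}_\Lambda\Lambda \leq m$. I will argue by contradiction: if $\domdim \Gamma \geq m+2$, then Lemma \ref{domdim ext} gives $\Ext^i_\Lambda(X,X) = 0$ for all $1 \leq i \leq m$. Since $X$ is a generator-cogenerator, $\Lambda$ and $D\Lambda$ are both direct summands of some $\add X$-objects; more precisely $\Lambda \in \add X$ and $D\Lambda \in \add X$. Then $\Ext^i_\Lambda(\Lambda, D\Lambda)$ would vanish for $1 \leq i \leq m$ as a summand of $\Ext^i_\Lambda(X,X)$ — but this is automatic since $\Lambda$ is projective, so I need the right pairing. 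The correct move: take a minimal injective coresolution of $\Lambda$; the condition $\id {}_\Lambda\Lambda \leq m$ combined with $\domdim \Gamma \geq m+2$ should force all terms of that coresolution to be projective-injective and hence force $\Lambda$ to be selfinjective (using that $\domdim \Gamma \geq k$ translates, via the Morita-Tachikawa correspondence with $X$ a generator-cogenerator, into the first $k$ terms of a minimal injective coresolution of $\Lambda$ being in $\add X$, and one extracts that $\id{}_\Lambda\Lambda \leq m$ plus enough projective-injective terms collapses the coresolution). The symmetric statement for $\id \Lambda_\Lambda$ follows by applying the already-established left-right symmetry of dominant dimension (Remark \ref{domdim symmetry}) to $\Lambda^{op}$ with the generator-cogenerator $DX$.

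Statement (3) is then immediate: if $\gldim \Lambda \leq m$ then in particular $\id {}_\Lambda\Lambda \leq m$, and $\Lambda$ non-semisimple of finite global dimension is automatically non-selfinjective, so (2) applies and gives $\domdim \Gamma \leq m+1$. (If $\Lambda$ is semisimple the claim is vacuous or trivial since then $\Gamma$ is semisimple too.) Statement (4) combines (3) with a lower bound: a non-semisimple hereditary algebra has $\gldim \Lambda = 1$, so (3) gives $\domdim \Gamma \leq 2$; and since $X$ is a generator-cogenerator, Theorem \ref{Morita-Tachikawa} gives $\domdim \Gamma \geq 2$. Hence $\domdim \Gamma = 2$.

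The main obstacle I expect is statement (2): turning the vanishing $\Ext^i_\Lambda(X,X) = 0$ (for $1 \leq i \leq m$) plus $\id {}_\Lambda\Lambda \leq m$ into the conclusion that $\Lambda$ is selfinjective. The cleanest route is probably to use the standard fact that $\domdim \End_\Lambda(X)^{op} \geq k$ is equivalent to the first $k$ terms $I_0, \ldots, I_{k-1}$ of a minimal injective coresolution $0 \to \Lambda \to I_0 \to I_1 \to \cdots$ lying in $\add X$ — wait, more precisely in $\add X$ requires $X$ to contain all projective-injectives; actually the right statement is that they lie in $\add X \cap \{\text{injectives}\}$, equivalently the relevant syzygy-type modules are $X$-presented. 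I would set $\Gamma = \End_\Lambda(X)^{op}$, use that $\Hom_\Lambda(X,-)$ sends the injective coresolution of $\Lambda$ to an injective coresolution of a progenerator-related $\Gamma$-module, and read off that $\domdim \Gamma \geq m+2$ forces $I_m$ (which exists and is injective, since $\id{}_\Lambda\Lambda \leq m$ means the coresolution stops by step $m$) to also be projective; iterating, $\Lambda$ has an injective coresolution by projective-injectives that terminates, which pins down $\Lambda$ as selfinjective. This is the step where care with the exact version of the Morita-Tachikawa/Mueller dictionary is needed; the other three parts are then short.
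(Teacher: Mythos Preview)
The paper itself does not supply a proof of this corollary; it simply states that the four assertions ``can be deduced'' from Mueller's Lemma~\ref{domdim ext}. So your task is to produce that deduction, and your outline for parts (1), (3), (4) does this correctly. (A small slip in (1): you wrote ``if $Y$ is a generator-cogenerator then so is $X$'', but the implication you actually need is the opposite one---$X$ is a generator-cogenerator by hypothesis, hence so is $Y\supseteq X$, which is what lets you apply Mueller's lemma to $Y$ as well.)

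The gap is in part (2). You correctly start from Mueller's lemma: assuming $\domdim\Gamma\geq m+2$ gives $\Ext^i_\Lambda(X,X)=0$ for $1\le i\le m$, and since $\Lambda,D\Lambda\in\add X$ this yields vanishing of the relevant mixed Ext groups. You then notice that $\Ext^i_\Lambda(\Lambda,D\Lambda)$ is the wrong pairing (it vanishes trivially), but you never identify the right one. The group you need is $\Ext^i_\Lambda(D\Lambda,\Lambda)$: its vanishing for $1\le i\le m$, together with $\id{}_\Lambda\Lambda\le m$, forces $\id{}_\Lambda\Lambda=0$. Indeed, if $k:=\id{}_\Lambda\Lambda$ satisfied $1\le k\le m$, pick any $M$ with $\Ext^k_\Lambda(M,\Lambda)\neq0$, embed $M\hookrightarrow (D\Lambda)^n$, and use the long exact sequence together with $\Ext^{k+1}_\Lambda(-,\Lambda)=0$ to obtain a surjection $\Ext^k_\Lambda((D\Lambda)^n,\Lambda)\twoheadrightarrow\Ext^k_\Lambda(M,\Lambda)$, contradicting $\Ext^k_\Lambda(D\Lambda,\Lambda)=0$. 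This is the missing step.

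Your alternative route---reading $\domdim\Gamma\geq m+2$ as a statement about the minimal injective coresolution of $\Lambda$ over $\Lambda$---does not work as written. Every term $I_j$ of that coresolution already lies in $\add D\Lambda\subset\add X$ simply because $X$ is a cogenerator, so the condition you describe is vacuous; and nothing in the Morita--Tachikawa dictionary forces those $I_j$ to be \emph{$\Lambda$-projective}. The dominant dimension of $\Gamma$ controls the minimal injective coresolution of $\Gamma$ over $\Gamma$, not of $\Lambda$ over $\Lambda$, and the translation back to $\modd\Lambda$ is precisely Mueller's Ext-vanishing statement, which brings you back to the argument above.
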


Next, we recall how to construct the algebra $\Lambda$ and $\Lambda$-module $X$ such that $\Gamma \simeq \End_\Lambda(X)^{op}$ is of dominant dimension at least $2$, under the assumption that both $\Gamma$ and $\Lambda$ are basic. In general, we emphasize that such an algebra $\Lambda$ is only unique up to Morita equivalence, see also \cite[IV]{A1}, \cite{R2} for details.

\begin{prop}\label{end mod}
Let %$X$ be a generator-cogenerator of 
$X$ be a module over an artin algebra $\Lambda$ and $\Gamma:=\End_\Lambda(X)^{op}$. Then:
\begin{enumerate}
\item $\Hom_\Lambda(X,X_i)$ are all the indecomposable projective $\Gamma$-modules, where $X_i$ runs through the non-isomorphic indecomposable direct summands of $X$.
\item If $X$ is a cogenerator of $\Lambda$, then the $\Hom_\Lambda(X,I_i)$ are all the indecomposable projective-injective $\Gamma$-modules, where $I_i$ runs through the non-isomorphic indecomposable injective $\Lambda$-modules.
\end{enumerate}
\end{prop}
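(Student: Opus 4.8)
\textbf{Proof plan for Proposition \ref{end mod}.}

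The plan is to exploit the standard functor $\Hom_\Lambda(X,-)\colon \add X \to \proj \Gamma$, which is well known to be an equivalence of categories when $\Gamma = \End_\Lambda(X)\op$. Indeed, $\Hom_\Lambda(X,X) = \Gamma$ as a right $\End_\Lambda(X)$-module, i.e.\ as a left $\Gamma$-module it is $\Gamma$ itself; and the functor is additive and fully faithful on $\add X$ by Yoneda-type reasoning. For part (1), I would decompose $X = \bigoplus_i X_i^{m_i}$ into its non-isomorphic indecomposable summands $X_i$ with multiplicities $m_i$. Applying $\Hom_\Lambda(X,-)$ gives $\Gamma = \bigoplus_i \Hom_\Lambda(X,X_i)^{m_i}$ as a left $\Gamma$-module, so each $\Hom_\Lambda(X,X_i)$ is a projective $\Gamma$-module; since the functor restricted to $\add X$ is an equivalence onto $\proj \Gamma$, it sends the indecomposable $X_i$ to indecomposable projectives, and distinct $X_i$ to non-isomorphic ones. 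As the $X_i$ exhaust (up to iso) the indecomposable summands of $X$, their images exhaust the indecomposable projective $\Gamma$-modules. This part is essentially bookkeeping with a classical equivalence.

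For part (2), I would first record that the $\Hom_\Lambda(X,I_i)$ are projective $\Gamma$-modules: since $X$ is a cogenerator, every indecomposable injective $I_i$ is a summand of $X$, so this is a special case of part (1). The substance is injectivity of $\Hom_\Lambda(X,I_i)$ as a $\Gamma$-module, and the converse — that every indecomposable projective-injective $\Gamma$-module arises this way. For injectivity, the cleanest route is the duality: $D\colon \modd\Lambda \to \modd\Lambda\op$ and the natural isomorphism of $\Gamma$-modules $\Hom_\Lambda(X, I) \cong D\,\Hom_{\Lambda\op}(D I, D X)$; since $I$ injective means $DI$ projective over $\Lambda\op$, the right-hand side is $D$ of a projective right $\End_\Lambda(X)\op$-module, hence injective as a left $\Gamma$-module. (Equivalently, one can use that $\Hom_\Lambda(X,I) \cong \Hom_\Lambda(X, D\Lambda^{?})$-type computations realize the injective $\Gamma$-modules as $\Hom_\Lambda(X, D(\ _\Lambda X))$-duals; I would phrase it via the $D$-duality to keep it short.) Thus each $\Hom_\Lambda(X,I_i)$ is both projective and injective, and indecomposable by part (1).

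For the converse in (2): let $P$ be an indecomposable projective-injective $\Gamma$-module. By part (1), $P \cong \Hom_\Lambda(X,X_i)$ for a unique indecomposable summand $X_i$ of $X$; I must show $X_i$ is injective over $\Lambda$. Here I would use that $P$ injective means, via the $D$-duality above, that $DP \cong \Hom_{\Lambda\op}(DX, DX_i)$ is projective over $\End_\Lambda(X)\op = \End_{\Lambda\op}(DX)\op$; by part (1) applied to the algebra $\Lambda\op$ and the module $DX$ (which is a generator of $\Lambda\op$ because $X$ is a cogenerator of $\Lambda$), projectivity of $\Hom_{\Lambda\op}(DX, DX_i)$ combined with the fact that $DX$ is a \emph{generator} forces $DX_i$ to be projective over $\Lambda\op$ — this is the standard fact that over a generator's endomorphism algebra, the projective-injectives are exactly the images of the injective modules — hence $X_i$ is injective over $\Lambda$. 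The main obstacle, and the only place requiring care rather than formal nonsense, is this last implication: pinning down precisely why, for a \emph{generator} $X$, the module $\Hom_\Lambda(X,M)$ is injective if and only if $M$ is injective. I would handle it by noting that when $X$ is a generator, $\Hom_\Lambda(X,-)$ is fully faithful on all of $\modd\Lambda$ and reflects injectivity because injective envelopes of $X$-generated modules stay inside $\add X$ in the relevant sense; concretely, $M \hookrightarrow I(M)$ with $I(M) \in \add X$ (as $X$ is a cogenerator) yields, after applying the exact-on-this-sequence functor $\Hom_\Lambda(X,-)$, a presentation exhibiting $\Hom_\Lambda(X,M)$ as a $\Gamma$-submodule of the injective $\Hom_\Lambda(X,I(M))$, and one checks the quotient is again of the form $\Hom_\Lambda(X,-)$ of a module, so an easy induction or a direct $\Ext^1$ vanishing (using that $\Hom_\Lambda(X,-)$ is exact on sequences with $X$-generated terms, which holds since $X$ is projective-relative, no — rather since the relevant sequences split after applying $\Hom$) closes the argument. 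I expect to cite \cite{A1} or \cite{R2} for this classical input rather than reprove it in full.
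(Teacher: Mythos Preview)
The paper does not prove this proposition at all: it is stated without proof as a classical fact, with the surrounding text pointing to \cite[IV]{A1} and \cite{R2} for details. So there is no ``paper's own proof'' to compare against; your plan to ultimately cite those same references is exactly what the authors do.

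That said, your sketch deserves a comment. Part~(1) is correct and is precisely the standard argument via the equivalence $\Hom_\Lambda(X,-)\colon \add X \xrightarrow{\sim} \proj\Gamma$. In part~(2), your overall strategy (dualize to pass from the cogenerator $X$ over $\Lambda$ to the generator $DX$ over $\Lambda\op$) is the right one, but the displayed isomorphism
\[
\Hom_\Lambda(X,I)\;\cong\; D\,\Hom_{\Lambda\op}(DI,DX)
\]
is not correct as written: the duality $D$ already gives a bijection $\Hom_\Lambda(X,I)\cong \Hom_{\Lambda\op}(DI,DX)$, so your right-hand side is $D\Hom_\Lambda(X,I)$, not $\Hom_\Lambda(X,I)$. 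What you actually want is the identification of the indecomposable injective left $\Gamma$-modules as $D\Hom_\Lambda(X_i,X)$ (duals of the indecomposable projective right $\Gamma$-modules), and then to determine for which $X_i$ one has $\Hom_\Lambda(X,X_i)\cong D\Hom_\Lambda(X_i,X)$; when $X$ is a cogenerator this happens exactly when $X_i$ is injective. Your final paragraph gestures at this via injective envelopes but never quite closes the loop, and the aside ``$X$ is projective-relative, no ---'' signals that you sensed the gap. Since the paper itself treats the result as folklore and cites the literature, simply invoking \cite[IV]{A1} (the projectivization machinery) or \cite{R2} is entirely adequate here; just do not rely on the misstated duality isomorphism if you choose to write out more.
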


\begin{prop}\label{morita Q}
Let $\Gamma$ be a basic algebra of dominant dimension at least $2$. Let $\widetilde{Q}$ be the direct sum of representatives of the isomorphism classes of all indecomposable projective-injective $\Gamma$-modules. Then the algebra $\Lambda$ and $\Lambda$-module $X$ can be chosen to be $\Lambda=\End_\Gamma(\widetilde Q)^{op}$ and $X=\Hom_{\Gamma}(\widetilde Q, D\Gamma)$ so that $\Gamma \simeq \End_\Lambda(X)^{op}$.
\end{prop}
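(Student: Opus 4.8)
The plan is to invoke the Morita--Tachikawa correspondence (Theorem~\ref{Morita-Tachikawa}) together with its refinement describing how the algebra $\Lambda$ and the module $X$ can be recovered from $\Gamma$. Since $\domdim\Gamma\geq 2$, Theorem~\ref{Morita-Tachikawa} guarantees the existence of \emph{some} basic artin algebra and generator-cogenerator realizing $\Gamma$ as an endomorphism algebra; the content of the present statement is the explicit choice $\Lambda=\End_\Gamma(\widetilde Q)^{op}$ and $X=\Hom_\Gamma(\widetilde Q,D\Gamma)$. So I would first set $\Lambda:=\End_\Gamma(\widetilde Q)^{op}$ and observe that $\widetilde Q$ is naturally a $\Gamma$-$\Lambda$-bimodule, and that $\Hom_\Gamma(\widetilde Q,-)$ sends $\Gamma$-modules to $\Lambda$-modules.

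Next I would compute $X:=\Hom_\Gamma(\widetilde Q, D\Gamma)$ and show it is a generator-cogenerator of $\Lambda$. The key tool is that since $\domdim\Gamma\geq 2$, every projective $\Gamma$-module $P$ admits an exact sequence $0\to P\to Q^0\to Q^1$ with $Q^0,Q^1$ projective-injective (hence in $\add\widetilde Q$); dualizing, every injective $\Gamma$-module has a projective-injective copresentation on the left, i.e. there is $Q_1\to Q_0\to I\to 0$ with $Q_i\in\add\widetilde Q$. Applying the left-exact functor $\Hom_\Gamma(\widetilde Q,-)$ and using that $\Hom_\Gamma(\widetilde Q,-)$ restricted to $\add\widetilde Q$ lands in projective $\Lambda$-modules (indeed it gives an equivalence $\add\widetilde Q\simeq\proj\Lambda$ by the standard projectivization/Yoneda argument, and $\End_\Lambda(\Lambda)=\Lambda=\End_\Gamma(\widetilde Q)^{op}$), one checks that $\Hom_\Gamma(\widetilde Q, D\Gamma)$ both contains all indecomposable projective $\Lambda$-modules as summands (they come from $\widetilde Q\subseteq D\Gamma$ as a direct summand, since projective-injectives are summands of $D\Gamma$) and all indecomposable injective $\Lambda$-modules as summands. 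For the latter one uses Proposition~\ref{end mod}(1): the indecomposable injective $\Lambda$-modules are the $D(e\Lambda)$, and these should match $\Hom_\Gamma(\widetilde Q, I_i)$ for $I_i$ indecomposable injective $\Gamma$-modules via the projective-injective copresentation above, together with the fact that $\widetilde Q$ is a faithfully balanced bimodule in the relevant sense.

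Finally I would verify $\Gamma\simeq\End_\Lambda(X)^{op}$. This follows from the double-centralizer property: $\domdim\Gamma\geq 2$ means precisely that $\Gamma\to\End_\Lambda(\widetilde Q)$ (where $\Lambda=\End_\Gamma(\widetilde Q)^{op}$ acting on the right) is an isomorphism, i.e. $(\Gamma,\Lambda)$ is a double centralizer pair on $\widetilde Q$. Combined with the identification $X=\Hom_\Gamma(\widetilde Q,D\Gamma)$ and the natural isomorphism $\End_\Lambda(\Hom_\Gamma(\widetilde Q, D\Gamma))^{op}\simeq\End_\Gamma(\widetilde Q\otimes_\Lambda \Hom_\Gamma(\widetilde Q, D\Gamma))^{op}$, which via the counit evaluates to $\End_\Gamma(D\Gamma)^{op}\simeq\Gamma$ because $\widetilde Q\otimes_\Lambda\Hom_\Gamma(\widetilde Q,-)$ is the identity on modules with a $\widetilde Q$-copresentation (in particular on injectives), one gets the desired isomorphism. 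The main obstacle I anticipate is bookkeeping the bimodule structures and the adjunction/tensor-hom manipulations carefully enough to make the double-centralizer argument airtight — in particular, verifying that $X$ is genuinely a cogenerator (not merely that it contains the injectives abstractly) and that the evaluation maps involved are isomorphisms rather than just natural transformations; this is where the hypothesis $\domdim\Gamma\geq 2$ must be used in full strength, exactly as in Mueller's proof of \cite[Theorem 2]{M}. Since this is essentially a repackaging of classical results, I would likely cite \cite{Mo,T,M} (and perhaps \cite{A1,R2}) for the technical core and only indicate the identifications explicitly.
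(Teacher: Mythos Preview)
The paper does not give a proof of this proposition at all: it is stated as a recalled classical fact, with the surrounding text pointing to \cite[IV]{A1} and \cite{R2} for details. Your proposal correctly sketches the standard argument behind those references --- the double-centralizer property on $\widetilde Q$ coming from $\domdim\Gamma\geq 2$, together with the identification of $X=\Hom_\Gamma(\widetilde Q,D\Gamma)$ as a generator-cogenerator --- and your closing remark that you would ultimately cite \cite{Mo,T,M,A1,R2} for the technical core is precisely what the paper does, only the paper omits the sketch entirely.
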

 
% To determine the $\Lambda$-module $X$, we need to consider the projective $\Gamma$-modules $\Hom_\Lambda(X,X_i)$. In fact, since $\domdim \Gamma \geq2$, we have an exact sequence of $\Gamma$-modules:
% $$
%0\rightarrow\Gamma \xrightarrow{f} I_0\rightarrow I_1,
% $$
%where $I_0, I_1$ are projective-injective $\Gamma$ modules. By Proposition $\ref{end mod}$, we know that there exist injective $\Lambda$-modules $J_0$, $J_1$, such that $I_0\simeq \Hom_\Lambda(X,J_0)$ and $I_1\simeq\Hom_\Lambda(X,J_1)$. So we have the following commutative diagram with exact rows:
% $$
% \xymatrix{0\ar[r]&\Gamma\ar[r]^{f}\ar[d]^{\simeq} &I_0\ar[rr]\ar[d]^\simeq&&I_1\ar[d]^\simeq\\
% 0\ar[r]&\Hom_\Lambda(X,X)\ar[r]&\Hom_\Lambda(X,J_0)\ar[rr]^{\tiny{\Hom_\Lambda(X,f)}}&&\Hom_\Lambda(X,J_1)}
% $$
% 
%Hence, we have $X\simeq \Ker f$. Explicitly, $J_i\cong D\Hom_\Gamma(I_i,Q)$ for $i \in \{0,1\}$, and $X\cong D\Hom_\Gamma(\Gamma,Q)\cong\Hom_\Gamma(Q,D\Gamma)$ for some projective-injective module $Q$ such that $\add Q=\add I_0$.
 
%\vskip10pt
We now describe the tilting module $T_\cC$ (whose existence is given by Theorem \ref{tilting}) in terms of the algebra $\Lambda$ and a generator-cogenerator $X$.
 
\begin{prop}\label{XT}
%Let $\Gamma$ be a basic algebra of dominant dimension at least $2$. By the above discussion, $\Gamma \simeq \End_\Lambda(X)^{op}$, for some artin algebra $\Lambda$ and $\Lambda$-generator-cogenerator $X$. Let $T_\cC$ be the tilting module in $\cC_\Gamma$. Then
Let $\Gamma$ be a basic algebra with $\domdim\Gamma\geq2$. Let $T_\cC$ be the tilting module in $\cC_\Gamma$. %By the above discussion, 
Let $\Gamma \simeq \End_\Lambda(X)^{op}$, for an artin algebra $\Lambda$ and a $\Lambda$-generator-cogenerator $X$.  % as in the above discussion. %Let $T_\cC$ be the tilting module in $\cC_\Gamma$. 
 Then:
\begin{enumerate}
 \item $T_\cC \simeq \left(\bigoplus_i \Hom_\Lambda(X,I_i) \right) \oplus \left(\bigoplus_j \Hom_\Lambda(X,I_0(X_j)/X_j) \right)$, where $\{I_i\}$ are the indecomposable injective $\Lambda$-modules, $\{X_j\}$ are the non-injective indecomposable direct summands of $X$, and $\{I_0(X_j)\}$ are the corresponding injective envelopes of $\{X_j\}$. 

 \item In part (1), the first summand is a projective-injective $\Gamma$-module isomorphic to $\widetilde{Q}$, and the second summand is a non-projective-injective $\Gamma$-module isomorphic to $\left(\bigoplus_i \Omega^{-1} P_i\right)$, as described in Corollary~\ref{T_C compute}.
\end{enumerate}

%Let  $\Gamma=\End_\Lambda(X)^{op}$ for a $\Lambda$-generator-cogenerator $X$. Let $T_\cC$ be the tilting module in $\cC_\Gamma$. Then $T_\cC \simeq \left(\bigoplus_i \Hom_\Lambda(X,I_i) \right) \oplus \left(\bigoplus_j\Hom_\Lambda(X,I_0(X_j)/X_j)\right)$, where $\{I_i\}$ are the indecomposable injective $\Lambda$-modules, $\{X_j\}$ are the non-injective indecomposable summands of $X$, and $\{I_0(X_j)\}$ are the corresponding injective envelopes of $\{X_j\}$. The first summation is a projective-injective $\Gamma$-module and the second summation is a non-projective-injective $\Gamma$-module.
\end{prop}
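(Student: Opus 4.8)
The plan is to unwind the Morita--Tachikawa correspondence explicitly and match it against the intrinsic description of $T_\cC$ from Corollary~\ref{T_C compute}. First I would set up notation: write $\Gamma = \End_\Lambda(X)\op$ with $X$ a generator-cogenerator, and recall from Proposition~\ref{end mod}(1) that the indecomposable projective $\Gamma$-modules are exactly the $\Hom_\Lambda(X,X_i)$ for $X_i$ running over the non-isomorphic indecomposable summands of $X$, while by Proposition~\ref{end mod}(2) the indecomposable projective-injective $\Gamma$-modules are exactly the $\Hom_\Lambda(X,I_i)$ for $I_i$ running over the indecomposable injective $\Lambda$-modules. This immediately identifies $\widetilde{Q} \simeq \bigoplus_i \Hom_\Lambda(X,I_i)$, which is the first summand claimed in part (1) and the first half of part (2).

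For the second summand, the key step is to compute the cosyzygy $\Omega^{-1}$ of a projective non-injective $\Gamma$-module under the functor $\Hom_\Lambda(X,-)$. By Corollary~\ref{T_C compute}, $T_\cC \simeq \widetilde Q \oplus \bigoplus \Omega^{-1}P$ where $P$ runs over indecomposable projective non-injective $\Gamma$-modules; by Proposition~\ref{end mod}(1) such a $P$ has the form $\Hom_\Lambda(X,X_j)$ with $X_j$ a non-injective indecomposable summand of $X$ (non-injectivity of $X_j$ matching non-injectivity of $P$, since $P$ is projective-injective iff $X_j$ is injective by combining (1) and (2) of Proposition~\ref{end mod}). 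Now take the injective envelope $X_j \hookrightarrow I_0(X_j)$ in $\modd\Lambda$ and form the short exact sequence
\[
0 \to X_j \to I_0(X_j) \to I_0(X_j)/X_j \to 0.
\]
Since $X$ is a cogenerator, $I_0(X_j)$ is a summand of a power of $X$, and $X_j$ is already a summand of $X$; applying the left-exact functor $\Hom_\Lambda(X,-)$ and using that $\Ext^1_\Lambda(X,X_j)$ contributes nothing to the relevant obstruction (here I would invoke that $\Hom_\Lambda(X,-)$ sends this particular sequence to a short exact sequence of $\Gamma$-modules — this follows because $I_0(X_j)$ is a summand of $X^n$ and $\Hom_\Lambda(X, X^n) \to \Hom_\Lambda(X, (I_0(X_j)/X_j)\oplus\text{rest})$ behaves well, or more directly because the connecting map lands in $\Ext^1_\Lambda(X, X_j)$ which vanishes on the image of $\Hom_\Lambda(X, X^n)$), one obtains
\[
0 \to \Hom_\Lambda(X,X_j) \to \Hom_\Lambda(X,I_0(X_j)) \to \Hom_\Lambda(X, I_0(X_j)/X_j) \to 0
\]
as $\Gamma$-modules, with the middle term projective-injective (it is a summand of $\Hom_\Lambda(X, I)$ for injectives $I$, hence in $\add\widetilde Q$). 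This exhibits $\Hom_\Lambda(X, I_0(X_j)/X_j)$ as the cosyzygy $\Omega^{-1}\bigl(\Hom_\Lambda(X,X_j)\bigr) = \Omega^{-1}P_j$, provided the sequence is the start of a minimal injective copresentation — which it is, because $\Hom_\Lambda(X,I_0(X_j))$ is the injective envelope of $\Hom_\Lambda(X,X_j)$ (the functor $\Hom_\Lambda(X,-)$ restricted to $\add X$ is fully faithful onto $\proj\Gamma$, and carries injective envelopes within $\Cogen X$ to injective envelopes in $\Gamma$ by Proposition~\ref{end mod}). Summing over $j$ yields part (1), and the identification of the second summand with $\bigoplus_i \Omega^{-1}P_i$ gives part (2).

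The main obstacle I expect is verifying carefully that $\Hom_\Lambda(X,-)$ preserves exactness of the chosen sequence and, more subtly, that $\Hom_\Lambda(X, I_0(X_j)) $ really is the \emph{minimal} injective envelope of $\Hom_\Lambda(X, X_j)$ in $\modd\Gamma$ rather than merely some injective module containing it — minimality is what makes the cokernel equal to the genuine cosyzygy $\Omega^{-1}P_j$ rather than that cosyzygy plus an injective summand. I would handle this by transporting minimality along the equivalence $\Hom_\Lambda(X,-)\colon \add X \to \proj\Gamma$ together with the structure of $\Cogen X$: an injective envelope of $X_j$ stays inside $\Cogen X$, the functor is fully faithful there, and it sends the essential monomorphism $X_j \hookrightarrow I_0(X_j)$ to an essential monomorphism of $\Gamma$-modules whose target is projective-injective, hence is the injective envelope in $\modd\Gamma$. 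The indecomposability bookkeeping (that each $\Omega^{-1}P_i$ is indecomposable, matching $X_j$ indecomposable) is then automatic from Corollary~\ref{ind} applied in $\modd\Gamma$, or directly from the indecomposability of $X_j$ and full faithfulness. Once these points are pinned down, the rest is formal.
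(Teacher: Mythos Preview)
The paper does not supply a proof for this proposition; it is stated without argument as a reformulation of Corollary~\ref{T_C compute} through the dictionary of Proposition~\ref{end mod}. Your overall plan is exactly that intended translation, and your treatment of minimality is correct: since $\Hom_\Lambda(X,-)$ is fully faithful (as $X$ is a generator), any splitting of $\Hom_\Lambda(X,I_0(X_j))$ off from $P_j$ would come from a splitting of $I_0(X_j)$ off from $X_j$, contradicting essentiality.

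There is, however, a genuine gap at the exactness step, and it cannot be filled as written. Applying $\Hom_\Lambda(X,-)$ to $0\to X_j\to I_0(X_j)\to I_0(X_j)/X_j\to 0$ gives
\[
0\to \Hom_\Lambda(X,X_j)\to \Hom_\Lambda(X,I_0(X_j))\to \Hom_\Lambda(X,I_0(X_j)/X_j)\to \Ext^1_\Lambda(X,X_j)\to 0,
\]
and by Lemma~\ref{domdim ext} the vanishing of $\Ext^1_\Lambda(X,X)$ is equivalent to $\domdim\Gamma\ge 3$, not $\ge 2$. Your suggestion that the connecting map ``vanishes on the image of $\Hom_\Lambda(X,X^n)$'' has no force: the connecting map is surjective whenever $I_0(X_j)$ is injective. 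Concretely, take $\Lambda=k[x]/(x^2)$ and $X=\Lambda\oplus S$, so that $\Gamma$ is the Auslander algebra with $\domdim\Gamma=2$. The only non-injective summand is $X_j=S$, $I_0(S)=\Lambda$, and $I_0(S)/S\cong S$; thus $\Hom_\Lambda(X,I_0(X_j)/X_j)=\Hom_\Lambda(X,S)=P_1$ is the $2$-dimensional projective non-injective $\Gamma$-module, whereas the genuine cosyzygy $\Omega^{-1}P_1$ is the $1$-dimensional simple $S_2$. So the formula in part~(1), read literally, fails in this example. What is actually true (and what your argument does prove) is that $\Omega^{-1}P_j$ is the cokernel of $\Hom_\Lambda(X,X_j)\hookrightarrow\Hom_\Lambda(X,I_0(X_j))$, i.e.\ the image of $\Hom_\Lambda(X,I_0(X_j))$ inside $\Hom_\Lambda(X,I_0(X_j)/X_j)$; the two agree precisely when $\Ext^1_\Lambda(X,X_j)=0$.
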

 
% Let $\{I_i\}$ be the indecomposable injective $\Lambda$ modules and let $\{X_j\}$ be the indecomposable summands of $X$. Suppose $I_0(X_j)$ is the injective envelope of $X_j$. Then the indecomposable projective summands of $T_\cC$ are isomorphic to $\Hom_\Lambda(X,I_i)$ and the non-projective summands of $T_\cC$ are isomorphic to $\Hom_\Lambda(X,I_0(X_j)/X_j)$.
% \end{prop}
% 
% 
% Apply the exact functor $D\Hom_\Lambda(-,Q)$ we have an exact sequence of $A$-modules:
% $$
%0\rightarrow D\Hom_\Lambda(\Lambda,Q) \rightarrow D\Hom_\Lambda(I_0,Q) \rightarrow D\Hom_\Lambda(I_1,Q).
% $$Apply $\Hom_A(X,-)$ to the above exact sequence, we have an exact sequence of $\End(X)^{op}$-modules:
% $$
% 0\rightarrow \End_A(X) \rightarrow \Hom_A(X,D\Hom_\Lambda(I_0,Q)) \rightarrow \Hom_A(X,D\Hom_\Lambda(I_1,Q)).
% $$
%Since $\Hom_A(X,D\Hom_\Lambda(I_0,Q))=\Hom_A(D\Hom_\Lambda(\Lambda,Q),D\Hom_\Lambda(I_0,Q))\cong I_0$.
% 
% 

%%%%%%%%%%%%%%%%%%%%%%%%%%%%%%%
\subsection{The endomorphism algebra of the tilting module}
\label{endomorphism algebra}

Using the Morita-Tachikawa correspondence as in Theorem~\ref{Morita-Tachikawa}, for any artin algebra $\Lambda$, taking a generator-cogenerator $X\in \modd \Lambda$, the algebra $\Gamma=\End_{\Lambda}(X)^{op}$ is an artin algebra of dominant dimension at least $2$. So by Theorem~\ref{tilting}, there exists a unique tilting module $T_\cC \in \cC_\Gamma$. We are going to study the global dimension of the endomorphism algebra $B_\cC:=\End_{\Gamma}(T_\cC)^{op}$ and in the next section, we obtain its relationship with the Finitistic Dimension Conjecture. 

We recall some facts about tilting modules and torsion classes due to Brenner and Butler (see \cite[VI, \S3, \S4]{ASS} \label{BBtilting} for more details and proofs). Let $\Lambda$ be an artin algebra and $T$ be any tilting $\Lambda$-module. Then there is a torsion pair $(\cT(T),\cF(T))$ in $\modd \Lambda$:
$$\cT(T):=\Gen_\Lambda(T)=\{M \in \modd \Lambda \mid \Ext_\Lambda^1(T,M)=0\} \text{ and} \ \ \ \ \  (*)$$ 
$$\cF(T):=\Sub_\Lambda(\tau T)=\{M \in \modd \Lambda \mid \Hom_\Lambda(T,M)=0 \}. \ \ \ \ \ \  (**)$$ 
Let $B:=\End_\Lambda(T)^{op}$, then $T$ is also a tilting $B^{op}$-module. We have a torsion pair $(\cX(T),\cY(T))$:
$$\cX(T):=D\Gen_{B^{op}}(T)=\{M \in \modd B \mid \Hom_B(M,DT)=0\} \text{ and} $$ 
$$\cY(T):=D\Sub_{B^{op}}(\tau T)=\{M\in \modd B \mid \Ext^1_B(M,DT)=0\}.$$ 
In all four descriptions above, the first equalities may be considered as definitions and the second equalities are consequences of the results stated in \cite[VI, \S3, \S4]{ASS}.
\begin{thm}[Brenner-Butler Tilting Theorem] \cite[VI, \S3]{ASS} \label{BBtilting}

$(a)$ The functors $\Hom_\Lambda(T,-)$ and $-\otimes_B T$ induce quasi-inverse equivalences:  
$$\cT(T)\ \  {{\xrightarrow{\ \ \Hom_\Lambda(T,-)
}}\atop \overleftarrow{\ \ \ \ -\scriptstyle{\otimes_B T\ \ \ 
 } }}\ \ \cY(T).$$

$(b)$ The functors $\Ext_\Lambda^1(T,-)$ and $\Tor^B_1(-,T)$ induce quasi-inverse equivalences: 
$$\cF(T)\ \  {{\ \ \xrightarrow{\ \ \Ext^1_\Lambda(T,-)\ \ }}\atop \overleftarrow{\ \ \scriptstyle{ \ \ \Tor^B_1(-,T)\  
} }}\ \ \cX(T).$$
%$$\Ext_\Lambda^1(T,-): \cF(T) \rightarrow \cX(T) \text{ and } \Tor_B^1(-,T)\cX(T) \rightarrow \cF(T)$$ 
\end{thm}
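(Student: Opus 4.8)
The statement is the classical Brenner--Butler theorem, so I would follow the standard route, keeping track of where the hypothesis $\pd_\Lambda T\le 1$ enters. Regard $T$ as a $(\Lambda,B)$-bimodule via $B=\End_\Lambda(T)\op$, so that $\Hom_\Lambda(T,-)$ and $\Ext^1_\Lambda(T,-)$ are functors $\modd\Lambda\to\modd B$ while $(-)\otimes_B T$ and $\Tor^B_1(-,T)$ are functors $\modd B\to\modd\Lambda$. The first block of work is preliminary. One checks that $(\cT(T),\cF(T))$ is a torsion pair in $\modd\Lambda$: $\Hom_\Lambda(\cT(T),\cF(T))=0$ is immediate, and for any $M$ the sequence $0\to tM\to M\to M/tM\to 0$, with $tM$ the sum of the images of all maps $T\to M$, separates a torsion part $tM\in\Gen T=\cT(T)$ from a torsion-free part $M/tM$ with $\Hom_\Lambda(T,M/tM)=0$; the identity $\Gen T=\{M:\Ext^1_\Lambda(T,M)=0\}$ here uses $\Ext^1_\Lambda(T,T)=0$ together with $\pd_\Lambda T\le 1$. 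Next one verifies, as in the classical tilting theorem, that $T$ is again a tilting module over $B\op$ with $\End_{B\op}(T)\op\cong\Lambda$ --- this uses $\Ext^1_\Lambda(T,T)=0$, $\pd_\Lambda T\le1$, and property (3) of Definition \ref{tilt}, the sequence $0\to\Lambda\to T_0\to T_1\to0$ --- and hence that $(\cX(T),\cY(T))$ is a torsion pair in $\modd B$. Finally, $(-)\otimes_B T$ is left adjoint to $\Hom_\Lambda(T,-)$, and $\pd_\Lambda T\le1$ forces $\Ext^i_\Lambda(T,-)=0$ and $\Tor^B_i(-,T)=0$ for every $i\ge 2$.

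The heart of the proof is to establish, for \emph{every} $M\in\modd\Lambda$ and naturally in $M$: (i) $\Hom_\Lambda(T,M)\in\cY(T)$ and $\Ext^1_\Lambda(T,M)\in\cX(T)$; and (ii) a natural short exact sequence $0\to \Hom_\Lambda(T,M)\otimes_B T \stackrel{\varepsilon_M}{\to} M \to \Tor^B_1(\Ext^1_\Lambda(T,M),T)\to 0$, where $\varepsilon_M$ is the counit of the adjunction. To prove (ii) I would first check it on $M\in\add T$ --- there $\Hom_\Lambda(T,M)$ is projective over $B$, $\varepsilon_M$ is the canonical isomorphism $\Hom_\Lambda(T,M)\otimes_B T\cong M$, and $\Ext^1_\Lambda(T,M)=0$ --- and then, for general $M$, take a minimal projective presentation $P_1\to P_0\to M\to 0$, replace $P_0$ and $P_1$ by their length-one $\add T$-coresolutions obtained from $0\to\Lambda\to T_0\to T_1\to0$, apply $\Hom_\Lambda(T,-)$ and then $(-)\otimes_B T$, and chase the resulting commutative diagram of complexes; the vanishings $\Ext^{\ge2}_\Lambda(T,-)=0$ and $\Tor^B_{\ge2}(-,T)=0$ together with the identity $\Hom_\Lambda(T,T_i)\otimes_B T\cong T_i$ are precisely what makes the chase close up and yield (i) and (ii).

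Granting (i) and (ii), the two equivalences follow quickly. For part (a): if $M\in\cT(T)$ then $\Ext^1_\Lambda(T,M)=0$, so the right-hand term of (ii) vanishes and $\varepsilon_M\colon \Hom_\Lambda(T,M)\otimes_B T\to M$ is an isomorphism; running the same construction with $T$ regarded as a tilting $B\op$-module and transporting it through the duality $D$ shows dually that the unit $\eta_N\colon N\to\Hom_\Lambda(T,N\otimes_B T)$ is an isomorphism for $N\in\cY(T)$. With (i), this gives the quasi-inverse equivalences $\cT(T)\simeq\cY(T)$. For part (b): if $M\in\cF(T)$ then $\Hom_\Lambda(T,M)=0$, so the left-hand term of (ii) vanishes and $M\cong\Tor^B_1(\Ext^1_\Lambda(T,M),T)$; the $B\op$-symmetric statement, read through $D$, provides the inverse on the $\cX(T)$ side, and again with (i) one obtains $\cF(T)\simeq\cX(T)$.

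The main obstacle is step (ii): producing the natural short exact sequence and verifying its exactness. Everything afterwards is formal --- restricting the sequence to the two halves of each torsion pair, invoking the adjunction, and reading off the $B\op$-symmetric statements via $D$. The sequence in (ii), however, rests squarely on $\pd_\Lambda T\le 1$ --- which is what kills $\Ext^{\ge2}_\Lambda(T,-)$ and $\Tor^B_{\ge2}(-,T)$, leaving only degree-zero and degree-one terms --- and on the sequence $0\to\Lambda\to T_0\to T_1\to 0$, which is what lets arbitrary modules be compared with $\add T$.
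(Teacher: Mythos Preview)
The paper does not prove this statement at all: it is quoted as the classical Brenner--Butler theorem with the citation \cite[VI, \S3]{ASS}, and the text immediately moves on to use it. So there is no ``paper's own proof'' to compare against; the result is simply imported from the literature.

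Your sketch is a reasonable outline of the standard argument as found in \cite{ASS}. The overall architecture --- set up the torsion pairs, verify that $T$ is tilting on the $B\op$-side with $\End_{B\op}(T)\op\cong\Lambda$, produce for every $M$ the natural four-term sequence linking $\Hom_\Lambda(T,M)\otimes_B T$, $M$, and $\Tor^B_1(\Ext^1_\Lambda(T,M),T)$, and then specialize to $M\in\cT(T)$ or $M\in\cF(T)$ --- is exactly the route taken there. One point where your sketch is a bit loose: the usual way to obtain the exact sequence (your step (ii)) is not to resolve $M$ projectively and then coresolve the $P_i$ by $\add T$, but rather to take a right $\add T$-approximation $T'\to M$ (possible since $M\in\cT(T)$ is generated by $T$, and in general one works with the torsion decomposition first), or equivalently to use the canonical sequence coming from the adjunction and compute its kernel and cokernel directly via the long exact sequences. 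Your double-resolution strategy can be made to work, but the diagram chase you allude to is more delicate than you indicate, and you would need to be explicit about why the identification of the cokernel of $\varepsilon_M$ with $\Tor^B_1(\Ext^1_\Lambda(T,M),T)$ falls out. In any case, since the paper treats this as background and gives no argument, there is nothing further to compare.
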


We will first state and prove several general lemmas about tilting modules.

\begin{lem} \label{syzygy} Let $\Lambda$ be an artin algebra, $T$ be any tilting $\Lambda$-module, and $B:=\End_\Lambda(T)^{op}$. 
\begin{enumerate}
\item  Let $U$ be a $B$-module. Then the first syzygy $\Omega U$ of $U$ is in $\cY(T)$.
\item There exists a $\Lambda$-module $M\in \cT(T)$  such that $\Omega U \cong \Hom_\Lambda(T,M)$.
\end{enumerate}
\end{lem}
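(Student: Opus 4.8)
Both parts follow from analyzing a projective presentation of $U$ and applying the Brenner--Butler equivalences. First I would fix a minimal projective presentation $P_1 \xrightarrow{f} P_0 \to U \to 0$ of the $B$-module $U$, so that $\Omega U$ is the image of $f$, giving a short exact sequence $0 \to \Omega U \to P_0 \to U \to 0$. The key observation is that every projective $B$-module lies in $\cY(T)$: indeed, since $B \simeq \End_\Lambda(T)^{op}$, we have $B \cong \Hom_\Lambda(T,T)$ as a $B$-module, and more generally each indecomposable projective $B$-module is of the form $\Hom_\Lambda(T,T_i)$ for $T_i \in \add T$; since $T \in \cT(T) = \Gen_\Lambda(T)$, the image of $\add T$ under $\Hom_\Lambda(T,-)$ lands in $\cY(T)$ by Theorem~\ref{BBtilting}(a). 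Thus $P_0, P_1 \in \cY(T)$.

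For part (1), I would use that $\cY(T)$ is closed under submodules. This is a standard fact about the torsion pair $(\cX(T),\cY(T))$ on $\modd B$: $\cY(T)$ is a torsion-free class, hence closed under submodules (and extensions), while $\cX(T)$ is the torsion class. Since $\Omega U$ is a submodule of $P_0 \in \cY(T)$, it follows immediately that $\Omega U \in \cY(T)$. Alternatively, one can argue directly via the defining condition $\cY(T) = \{M \mid \Ext^1_B(M,DT)=0\}$: from $0 \to \Omega U \to P_0 \to U \to 0$ one gets an exact sequence $\Ext^1_B(P_0,DT) \to \Ext^1_B(\Omega U,DT) \to \Ext^2_B(U,DT)$, but $\Ext^1_B(P_0,DT)=0$ since $P_0$ is projective, and $\Ext^2_B(U,DT)=0$ because $T$ (as a $B$-module, or rather $DT$) has injective dimension at most $1$ over $B$ — this is part of the Brenner--Butler package, since $DT$ is a cotilting $B$-module. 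Either route works; I would present the torsion-free-class argument as it is cleanest.

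For part (2), since $\Omega U \in \cY(T)$ and the functor $\Hom_\Lambda(T,-)\colon \cT(T) \to \cY(T)$ is an equivalence by Theorem~\ref{BBtilting}(a), with quasi-inverse $-\otimes_B T$, I would simply set $M := \Omega U \otimes_B T$. Then $M \in \cT(T)$ and $\Hom_\Lambda(T,M) \cong \Omega U$ by the equivalence. This gives exactly the claimed module $M$.

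The proof is essentially a routine application of the stated tilting-theory machinery, so I do not anticipate a serious obstacle; the only point requiring a little care is justifying that projective $B$-modules lie in $\cY(T)$ and that $\cY(T)$ is closed under submodules, but both are immediate consequences of Theorem~\ref{BBtilting} together with the standard structure of the induced torsion pair on $\modd B$. If one wanted to be fully self-contained one might instead invoke $\id_B DT \le 1$ for the $\Ext^2$ vanishing, which is itself part of the classical tilting theorem, so no new input is needed.
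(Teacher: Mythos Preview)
Your proof is correct and follows essentially the same route as the paper's: write the projective cover of $U$ as $\Hom_\Lambda(T,T_0)$, observe this lies in $\cY(T)$ by the Brenner--Butler equivalence, conclude $\Omega U\in\cY(T)$ because $\cY(T)$ is a torsion-free class and hence closed under submodules, and then invoke the equivalence $\cT(T)\simeq\cY(T)$ to produce $M$. Your alternative argument via $\Ext^2_B(U,DT)=0$ is also valid but the paper does not use it.
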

\begin{proof} Let $U$ be a $B$-module and $\Hom_\Lambda(T,T_0) \rightarrow U$ be its projective cover. In the exact sequence
$
0\rightarrow \Omega U \rightarrow \Hom_\Lambda(T,T_0) \rightarrow U\rightarrow 0
$, 
the middle term $\Hom_\Lambda(T,T_0)$ is in $\cY(T)$ by Theorem \ref{BBtilting}(a), and $\cY(T)$ is a torsion-free class so it is closed under submodules. Therefore, the first syzygy $\Omega U$ of $U$ is in $\cY(T)$. So $\Omega U \cong \Hom_\Lambda(T,M)$, for some $M\in \cT(T)=\Gen(T)$.
\end{proof}

%The following lemma is a general result about projective dimension:
\begin{lem}\label{projdim general}%\cite[VI, Lemma 4.1]{ASS}
Let $\Lambda$ be an artin algebra, $T$ be any tilting $\Lambda$-module, and $B:=\End_\Lambda(T)^{op}$. 
\begin{enumerate}
\item Let $M$ be a $\Lambda$-module where $M \in \cT(T)$. Then $\pd_B (\Hom_\Lambda(T,M)) \leq \pd_\Lambda M$.
\item Let $U$ be a non-projective $B$-module and $M$ be a $\Lambda$-module such that $\Omega U \cong \Hom_\Lambda(T,M)$. Then $\pd_BU \leq \pd_\Lambda M+1$. 
\item $\gldim B\leq \gldim\Lambda +1$.
\item $\gldim \Lambda\leq \gldim B +1$. %and in short: $|\gldim B-\gldim\Lambda|\leq 1$.
\item $|\gldim B-\gldim\Lambda|\leq 1$.
\end{enumerate}
\end{lem}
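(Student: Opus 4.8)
The strategy is to establish the five parts in order, since each uses the preceding ones. For part (1), take a $\Lambda$-module $M \in \cT(T)$ with a minimal projective resolution $0 \to \Omega^p M \to P_{p-1} \to \cdots \to P_0 \to M \to 0$ where $p = \pd_\Lambda M$. Each syzygy $\Omega^k M$ lies in $\cT(T) = \Gen(T)$ when $p$ is finite (the $P_i$ are in $\add \Lambda \subseteq \Gen(T)$ since $T$ is tilting, and $\cT(T)$ is a torsion class closed under... here one must be slightly careful — syzygies need not stay in the torsion class in general, so instead I would argue directly). The cleaner route: apply the exact functor-up-to-$\Ext^1$ behavior of $\Hom_\Lambda(T,-)$. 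Since $M \in \cT(T)$, write a short exact sequence $0 \to M' \to P_0 \to M \to 0$ with $P_0$ projective; then $\Ext^1_\Lambda(T, M) = 0$ gives a short exact sequence $0 \to \Hom_\Lambda(T,M') \to \Hom_\Lambda(T,P_0) \to \Hom_\Lambda(T,M) \to 0$ provided $M' \in \cT(T)$, which follows because $\cT(T)$ is closed under kernels of epimorphisms between its objects ($P_0 \in \cT(T)$ since $T$ is tilting, and one checks $M'$ is torsion: actually $\Ext^1_\Lambda(T,M')$ injects into $\Ext^1_\Lambda(T,P_0)$... $=0$ as $P_0$ is a summand issue; more precisely use $\Ext^2_\Lambda(T,-) = 0$ since $\pd_\Lambda T \le 1$). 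Iterating and using that $\Hom_\Lambda(T,P_0)$ is projective over $B$, one gets $\pd_B \Hom_\Lambda(T,M) \le p$ by induction on $p$.

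For part (2): given a non-projective $B$-module $U$ with $\Omega U \cong \Hom_\Lambda(T,M)$ (which exists by Lemma~\ref{syzygy}(2) with $M \in \cT(T)$), we have the short exact sequence $0 \to \Omega U \to P \to U \to 0$ with $P$ a projective $B$-module. Then $\pd_B U \le \pd_B(\Omega U) + 1 \le \pd_\Lambda M + 1$ by part (1). Part (3) is then immediate: any $B$-module $U$ is either projective (dimension $0 \le \gldim\Lambda + 1$) or satisfies $\pd_B U \le \pd_\Lambda M + 1 \le \gldim\Lambda + 1$ by parts (1) and (2) and Lemma~\ref{syzygy}.

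For part (4): here the roles of $\Lambda$ and $B$ swap. Since $T$ is a tilting $\Lambda$-module with $B = \End_\Lambda(T)^{op}$, the module $DT$ (or $T$ viewed appropriately) is a tilting $B^{op}$-module with endomorphism algebra $(\Lambda^{op})^{op} \cong \Lambda$ — this is the standard fact recalled just before Theorem~\ref{BBtilting} that "$T$ is also a tilting $B^{op}$-module." Applying part (3) with the pair $(B^{op}, \Lambda^{op})$ in place of $(\Lambda, B)$ gives $\gldim \Lambda^{op} \le \gldim B^{op} + 1$, i.e. $\gldim \Lambda \le \gldim B + 1$. Part (5) combines (3) and (4). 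I expect the main obstacle to be part (1): one must pin down exactly which short exact sequences remain "torsion" so that $\Hom_\Lambda(T,-)$ stays exact on them, and handle the possibility that $\pd_\Lambda M = \infty$ (in which case the inequality is vacuous, but the induction must be set up to not break). The cleanest fix is to induct on $p = \pd_\Lambda M$, at each stage using $\pd_\Lambda T \le 1$ to force $\Ext^{\ge 2}_\Lambda(T,-) = 0$ so that syzygies of $M$ stay in $\cT(T)$ and the $\Hom$-sequences stay short exact.
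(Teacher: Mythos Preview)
Your arguments for parts (2)--(5) are correct and match the paper's proof essentially verbatim: the paper gives the same one-line computation for (2), deduces (3) from (2), invokes the symmetry ``$T$ is a tilting $B$-module with $\Lambda\cong\End_B(T)\op$'' for (4), and combines (3) and (4) for (5). For (1) the paper simply cites \cite[VI, Lemma~4.1]{ASS}.

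However, your proposed argument for (1) has a genuine gap. You take a projective cover $0\to M'\to P_0\to M\to 0$ over $\Lambda$ and then assert both that $M'\in\cT(T)$ and that $\Hom_\Lambda(T,P_0)$ is projective over $B$. Neither holds in general. A projective $\Lambda$-module $P_0$ lies in $\cT(T)=\Gen(T)$ if and only if $P_0\in\add T$ (an epimorphism $T^m\twoheadrightarrow P_0$ splits), so whenever $T$ is a non-trivial tilting module there exist indecomposable projectives $P_0\notin\cT(T)$, i.e.\ $\Ext^1_\Lambda(T,P_0)\neq 0$. Consequently your long-exact-sequence argument cannot force $\Ext^1_\Lambda(T,M')=0$, and more fatally, $\Hom_\Lambda(T,P_0)$ is \emph{not} a projective $B$-module in general: the indecomposable projective $B$-modules are exactly the $\Hom_\Lambda(T,T_i)$ for $T_i$ an indecomposable summand of $T$. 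So even if the Hom-sequence were short exact, it would not be a step in a projective $B$-resolution.

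The fix (and this is what \cite[VI, Lemma~4.1]{ASS} does) is to replace the projective cover by a right $\add T$-approximation: take $0\to K\to T_0\xrightarrow{f} M\to 0$ with $f$ a surjective right $\add T$-approximation (surjective because $M\in\Gen T$). Then $\Hom_\Lambda(T,T_0)$ \emph{is} projective over $B$, the approximation property makes $\Hom_\Lambda(T,f)$ surjective so the Hom-sequence is short exact, $K\in\cT(T)$ since $\Ext^1_\Lambda(T,K)\hookrightarrow\Ext^1_\Lambda(T,T_0)=0$, and $\pd_\Lambda K\le\max\{\pd_\Lambda M,\,1\}-1$ lets the induction run. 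Your outline already contains the right instinct (``use $\Ext^{\ge 2}_\Lambda(T,-)=0$''); it just needs to be applied to an $\add T$-resolution rather than a projective one.
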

\begin{proof} (1) follows from \cite[VI, Lemma 4.1]{ASS}.\\
(2) $\pd_{B} U\leq
\pd_{B}\Omega U+1=\pd_{B} (\Hom_\Lambda(T,M))+1 \leq \pd_\Lambda M+1$.\\
(3) This is a consequence of (2).\\
(4) Since ${}_BT$ is a tilting $B$-module and $\Lambda\cong \End_B(T)^{op}$, it follows from (3) that $\gldim \Lambda\leq \gldim B +1$. Finally, (5) is just a combination of (3) and (4).
\end{proof}

Now for our unique tilting module $T_\cC$ in $\cC_\Gamma$, we have  more precise results: \ref{projdim key} and \ref{gldim B}.
\begin{lem}\label{projdim key}
Let $\Gamma$ be an artin algebra with $\domdim \Gamma \geq 2$, 
%of dominant dimension at least $2$, 
$T_\cC$ be the unique tilting module in $\cC_\Gamma$, and $B_\cC=\End_\Gamma(T_\cC)^{op}$. If $M\in \Gen(T_\cC)$ and $\pd_\Gamma M\geq1$, then 
$$\pd_{B_\cC} (\Hom_\Gamma(T_\cC,M))= (\pd_\Gamma M) -1.$$
\end{lem}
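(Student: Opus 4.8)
The plan is to relate a minimal projective resolution of $M$ over $\Gamma$ to a projective resolution of $\Hom_\Gamma(T_\cC,M)$ over $B_\cC$, using the special structure of $T_\cC = \widetilde Q \oplus X$ from Proposition~\ref{rigid} together with the fact that $\pd_\Gamma T_\cC \le 1$. First I would recall from Lemma~\ref{projdim general}(1) that $\pd_{B_\cC}(\Hom_\Gamma(T_\cC,M)) \le \pd_\Gamma M$ whenever $M \in \Gen(T_\cC) = \cT(T_\cC)$, so the content is really the reverse inequality (shifted by one): one must show $\pd_{B_\cC}(\Hom_\Gamma(T_\cC,M)) \ge (\pd_\Gamma M) - 1$ and that the upper bound $\pd_\Gamma M$ is not attained. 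The key idea is that $T_\cC$ contains \emph{all} the projective-injective $\Gamma$-modules (namely $\widetilde Q$) but \emph{no} projective non-injective modules, so the $\add T_\cC$-approximation of a projective module $P$ is projective-injective (Proposition~\ref{TapproxP}): concretely, the tilting exact sequence gives $0 \to P \to Q_0 \to X_P \to 0$ with $Q_0$ projective-injective and $X_P \in \cx$.

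The main step I would carry out is the following. Take a minimal projective resolution $P_\bullet \to M$ over $\Gamma$, say $\cdots \to P_1 \to P_0 \to M \to 0$ with $\pd_\Gamma M = d \ge 1$. Apply $\Hom_\Gamma(T_\cC, -)$; since $M \in \cT(T_\cC)$ and, by Brenner--Butler (Theorem~\ref{BBtilting}) together with $\pd_\Gamma T_\cC \le 1$, the relevant $\Ext$-vanishing holds, one gets control of how the functor $\Hom_\Gamma(T_\cC,-)$ transforms the resolution. The point is that each $P_i$, being projective over $\Gamma$, fits (via its minimal left $\add T_\cC$-approximation) into a short exact sequence $0 \to P_i \to Q_i \to X_i' \to 0$ with $Q_i$ projective-injective and $X_i' \in \cx$, and applying $\Hom_\Gamma(T_\cC,-)$ to this sequence relates $\Hom_\Gamma(T_\cC, P_i)$ to the projective $B_\cC$-module $\Hom_\Gamma(T_\cC,Q_i)$ and to $\Hom_\Gamma(T_\cC, X_i')$ — which, since $X_i' \in \cx \subseteq \add T_\cC$ up to the relevant identification, is again projective over $B_\cC$. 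Thus $\Hom_\Gamma(T_\cC,P_i)$ has projective dimension at most $1$ over $B_\cC$, reflecting $\pd_\Gamma P_i = 0$ shifted appropriately. Splicing these together through the resolution, the "one extra" homological degree contributed by the non-projectivity of each $\Hom_\Gamma(T_\cC,P_i)$ telescopes, and I would track carefully that the total projective dimension of $\Hom_\Gamma(T_\cC,M)$ comes out to exactly $d - 1$: the top of the resolution loses one degree because the last projective $P_d$ maps to a projective-injective under the approximation and hence dies, while each earlier syzygy picks up its degree faithfully.

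To make the bookkeeping precise I would instead argue by induction on $d = \pd_\Gamma M$. For $d = 1$: write $0 \to P_1 \to P_0 \to M \to 0$; applying $\Hom_\Gamma(T_\cC,-)$ and using that $\Ext^1_\Gamma(T_\cC, P_1) = 0$ (as $P_1$ is projective — actually one needs $\Ext^1_\Gamma(T_\cC,-)$ applied correctly, using $T_\cC \in \cC_\Gamma$ and Lemma~\ref{Ext^1.} type vanishing), one gets a short exact sequence of $B_\cC$-modules $0 \to \Hom_\Gamma(T_\cC,P_1) \to \Hom_\Gamma(T_\cC,P_0) \to \Hom_\Gamma(T_\cC,M) \to 0$ with both left terms projective (the middle always, the left because $P_1$ is projective so $\Hom_\Gamma(T_\cC,P_1)$ is... actually this is the crux — one shows $\Hom_\Gamma(T_\cC,P_1)$ is projective over $B_\cC$ precisely because the minimal $\add T_\cC$-approximation of $P_1$ is the projective-injective $Q_1$ and the cokernel $X_1' \in \cx$ contributes nothing new), giving $\pd_{B_\cC}\Hom_\Gamma(T_\cC,M) \le 0$; and it is exactly $0$ (not negative, i.e. $\Hom_\Gamma(T_\cC,M)$ is nonzero non-projective would contradict, so it is projective), matching $d - 1 = 0$. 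For the inductive step, let $\Omega M$ be the first syzygy of $M$ over $\Gamma$; then $\Omega M \in \cT(T_\cC)$ (since $\Gen(T_\cC)$ is closed under the relevant operations here because $T_\cC$ is a tilting module and the syzygy of a module in $\Gen(T_\cC)$... this needs a small check, perhaps via $\pd_\Gamma T_\cC \le 1$) and $\pd_\Gamma \Omega M = d - 1$, so by induction $\pd_{B_\cC} \Hom_\Gamma(T_\cC, \Omega M) = d - 2$; then relate $\Hom_\Gamma(T_\cC, \Omega M)$ to a syzygy of $\Hom_\Gamma(T_\cC, M)$ over $B_\cC$ via the projective cover sequence, adding one to get $d - 1$. \textbf{The main obstacle} I anticipate is justifying the two "shift" claims simultaneously: first, that $\Hom_\Gamma(T_\cC, P)$ is actually \emph{projective} over $B_\cC$ for every projective $\Gamma$-module $P$ (this is where Proposition~\ref{TapproxP} and the absence of projective-non-injective summands in $T_\cC$ are essential, and it is false for a general tilting module — it genuinely uses $T_\cC \in \cC_\Gamma$), and second, that the syzygy operation interacts correctly with $\Hom_\Gamma(T_\cC,-)$ so that no degree is lost or gained in the middle of the resolution, only at the top. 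Getting the inequality to be an \emph{equality} rather than just $\le d - 1$ requires showing the construction is minimal, which I would handle by invoking right-minimality of projective covers and the left-minimality in Proposition~\ref{TapproxP}, possibly via Lemma~\ref{minimal}.
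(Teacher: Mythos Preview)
Your proof has a genuine gap at its central claim. You assert that $\Hom_\Gamma(T_\cC,P)$ is projective over $B_\cC$ for every projective $\Gamma$-module $P$, and you try to extract this from the approximation sequence $0\to P\to Q\to X'\to 0$. But applying $\Hom_\Gamma(T_\cC,-)$ gives
\[
0\to\Hom_\Gamma(T_\cC,P)\to\Hom_\Gamma(T_\cC,Q)\to\Hom_\Gamma(T_\cC,X')\to\Ext^1_\Gamma(T_\cC,P)\to 0,
\]
and the last term is \emph{nonzero} whenever $P$ is projective non-injective: indeed $\Ext^1_\Gamma(T_\cC,P)=0$ iff $P\in\Gen(T_\cC)=\Gen(\widetilde Q)$, and a projective in $\Gen(\widetilde Q)$ is a summand of $\widetilde Q$, hence injective. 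So the short exact sequence you want does not exist, and $\Hom_\Gamma(T_\cC,P)$ is in general not $B_\cC$-projective. Concretely, for the cyclic Nakayama algebra with admissible sequence $(2,2,3)$ one has $T_\cC=P_1\oplus P_3\oplus S_3$, and $\Hom_\Gamma(T_\cC,P_2)$ is a one-dimensional simple $B_\cC$-module of projective dimension~$1$. The same problem breaks your base case and your inductive step simultaneously: you cannot get exactness of $\Hom_\Gamma(T_\cC,-)$ on a $\Gamma$-projective resolution, and your proposed check that $\Omega_\Gamma M\in\cT(T_\cC)$ fails for the same reason.

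The paper avoids this entirely by working with \emph{right $\add T_\cC$-approximations} rather than $\Gamma$-projective resolutions. For the base case $\pd_\Gamma M=1$ it argues directly that $M\in\cC_\Gamma$ (using $\domdim\Gamma\ge2$ to see $M\in\Cogen\widetilde Q$), hence $M\in\add T_\cC$ by Proposition~\ref{rigid}, so $\Hom_\Gamma(T_\cC,M)$ is projective. For $d>1$ it takes a right $\add T_\cC$-approximation $0\to L\to T_0\to M\to 0$; then $\Hom_\Gamma(T_\cC,T_0)\to\Hom_\Gamma(T_\cC,M)$ is an epimorphism from a projective $B_\cC$-module with kernel $\Hom_\Gamma(T_\cC,L)$, one checks $L\in\Gen(T_\cC)$ (from $\Ext^1_\Gamma(T_\cC,L)=0$) and $\pd_\Gamma L=d-1$, and induction finishes. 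The point is that $T_0\in\add T_\cC$ by construction, so $\Hom_\Gamma(T_\cC,T_0)$ is genuinely $B_\cC$-projective --- the property you were hoping $\Gamma$-projectives would have.
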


\begin{proof}
We use induction on $\pd_\Gamma M$. First, assume $\pd_\Gamma M=1$. Since $M\in \Gen(T_\cC)=\Gen(\widetilde{Q})$, we know there is some $Q_0 \in \add \widetilde{Q}$ such that $Q_0 \rightarrow M$ is an epimorphism. Since $\pd_\Gamma M=1$, $M$ has a projective resolution: 
$$0\rightarrow P\rightarrow Q_0\rightarrow M\rightarrow 0.$$ 
%By Theorem $\ref{tilting}$, 
Since $P \in \add \Gamma$ and $\domdim \Gamma \geq 2$, the module $M$ is a submodule of a projective-injective $\Gamma$-module $Q_1$. Therefore $M$ is in $\mathcal C_{\Gamma}$. Additionally, as $\pd_\Gamma M=1$ by assumption, it follows from Proposition \ref{rigid} that 
$M \in \add T_\cC$. Hence $\pd_{B_\cC} (\Hom_\Gamma(T_\cC,M))=0.$

Now assume $\pd_\Gamma M=d>1$. In particular, $M \notin \add T_\cC$. There is an exact sequence:
$$
0\rightarrow L\rightarrow T_0\stackrel{f}\rightarrow M\rightarrow 0,
$$
where $f$ is a right $\add T_\cC$-approximation and $L\neq 0$. This induces an exact sequence of $B_\cC$-modules:
$$
0\rightarrow \Hom_\Gamma(T_\cC,L)\rightarrow \Hom_\Gamma(T_\cC,T_0) \rightarrow \Hom_\Gamma(T_\cC,M)\rightarrow 0.
$$
It follows that, $\Ext^1_\Gamma(T_\cC,L)=0$, which implies $L\in \Gen(T_\cC)$ by (*).

Since $\pd_\Gamma T_0\leq 1$ and $\pd_\Gamma M=d>1$, it follows that $\pd_\Gamma L\leq d-1$ by the standard arguments.
%(see \cite[A.4.7]{ASS}). 
By induction hypothesis, $\pd_{B_\cC} (\Hom_\Gamma(T_\cC,L))=d-2$. Therefore, due to the above exact sequence, $\pd_{B_\cC} (\Hom_\Gamma(T_\cC,M))=d-1$.
\end{proof}

%(We should mention Julia and Mathew here:)
\begin{cor}\label{gldim B} 
Let $\Gamma$ be an artin algebra with $\domdim \Gamma \geq 2$, 
$T_\cC$ be the unique tilting module in $\cC_\Gamma$, and $B_\cC=\End_\Gamma(T_\cC)^{op}$. %Let $\Gamma, T_\cC, B_\cC$ be as in Lemma~$\ref{projdim key}$. 
Then $\gldim B_\cC \leq \gldim\Gamma$.
\end{cor}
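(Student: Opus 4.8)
The plan is to derive Corollary~\ref{gldim B} directly from Lemma~\ref{projdim key} together with the general machinery of the Brenner--Butler tilting theorem. Write $d=\gldim\Gamma$; if $d=\infty$ there is nothing to prove, so assume $d<\infty$. I want to bound $\pd_{B_\cC}U$ by $d$ for every $B_\cC$-module $U$. If $U$ is projective we are done, so assume $U$ is non-projective. First I would invoke Lemma~\ref{syzygy}: the syzygy $\Omega U$ lies in $\cY(T_\cC)$, and hence there is a $\Gamma$-module $M\in\cT(T_\cC)=\Gen(T_\cC)$ with $\Omega U\cong\Hom_\Gamma(T_\cC,M)$.

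The next step splits into two cases according to whether $\pd_\Gamma M=0$ or $\pd_\Gamma M\geq 1$. If $\pd_\Gamma M=0$, then $M$ is projective; since $M\in\Gen(T_\cC)=\Gen(\widetilde Q)$, it is a quotient of a projective-injective module, hence a summand of one, hence projective-injective, so $M\in\add\widetilde Q\subseteq\add T_\cC$, giving $\Hom_\Gamma(T_\cC,M)$ projective and therefore $\pd_{B_\cC}U\leq 1\leq d$ (note $d\geq 1$ here, since $d=0$ forces $\Gamma$ semisimple and $B_\cC$ too). If $\pd_\Gamma M\geq 1$, then Lemma~\ref{projdim key} applies and gives
\[
\pd_{B_\cC}(\Omega U)=\pd_{B_\cC}(\Hom_\Gamma(T_\cC,M))=(\pd_\Gamma M)-1\leq d-1,
\]
so $\pd_{B_\cC}U\leq\pd_{B_\cC}(\Omega U)+1\leq d$. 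In either case $\pd_{B_\cC}U\leq d$, and since $U$ was arbitrary, $\gldim B_\cC\leq d=\gldim\Gamma$.

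I do not expect a serious obstacle here: the corollary is essentially a bookkeeping consequence of Lemma~\ref{projdim key}, with the only subtlety being to make sure the syzygy of an arbitrary $B_\cC$-module really does land in $\Gen(T_\cC)$ via a module of controlled projective dimension, which is exactly what Lemma~\ref{syzygy} and the torsion-pair description $(*)$ provide. One should also double-check the degenerate case $\pd_\Gamma M=0$ separately, as above, since Lemma~\ref{projdim key} is only stated for $\pd_\Gamma M\geq 1$; this is the one place where a careless argument could slip. Everything else is the standard ``syzygy drops projective dimension by one'' estimate combined with the sharpened equality from Lemma~\ref{projdim key}.
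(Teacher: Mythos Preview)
Your proof is correct and follows essentially the same route as the paper: invoke Lemma~\ref{syzygy} to write $\Omega U\cong\Hom_\Gamma(T_\cC,M)$ with $M\in\Gen(T_\cC)$, then apply Lemma~\ref{projdim key} to bound $\pd_{B_\cC}\Omega U$ by $\pd_\Gamma M-1\leq d-1$. Your explicit treatment of the degenerate case $\pd_\Gamma M=0$ (forcing $M\in\add\widetilde Q$ via Proposition~\ref{PinC}) is in fact more careful than the paper's one-line proof, which silently applies Lemma~\ref{projdim key} without separating out this case.
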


\begin{proof}
Let $U$ be a $B_\cC$-module. Then by Lemma \ref{syzygy}(2) and Lemma \ref{projdim key}, it follows that \\
%For any $B_\cC$-module $U$, its first syzygy $\Omega U$ is in $\cY(T_\cC)$ and furthermore $\Omega U \cong \Hom_\Gamma(T_\cC,M)$ for some $M\in \Gen(T_\cC)$ by Lemma \ref{syzygy}. So, we have 
$\pd_{B_\cC} U \leq \pd_{B_\cC} (\Hom_\Gamma(T_\cC,M))+1 = (\pd_\Gamma M-1)+1 \leq \gldim\Gamma.$
%$\pd_{B_\cC}\Omega U+1$
\end{proof}

\begin{rmk} Let $\Gamma$, $T_\cC$, and $B_\cC$ be as in Corollary~\ref{gldim B}.
\begin{enumerate}
\item The sharp inequality $\gldim B_\cC < \gldim\Gamma$ does not always hold, see the Example $\ref{sharp}$.

\item %In general (see \cite[VI, Theorem 4.2]{ASS}), for an arbitrary tilting module $T$ over an artin algebra $\Lambda$ and $B:=\End_\Lambda(T)^{op}$, for any $B$-module $U$, the same inequality in the proof of Corollary \ref{gldim B} holds: 
%$$\pd_B U\leq\pd_B \Omega U +1=\pd_B (\Hom_\Lambda(T,M))+1,$$
%for some $M\in \Gen(T)$. By Lemma \ref{projdim general}, it follows that $\pd_B U\leq\pd_\Lambda M+1$. Hence $\gldim B\leq \gldim\Lambda+1$. Conversely, since $\Lambda\cong \End_B(T)^{op}$, it follows that $\gldim \Lambda\leq \gldim B+1$ using the same argument. In summary, we have 
%$$|\gldim B-\gldim\Lambda|\leq 1.$$

By Lemma~\ref{projdim general}(5), either $\gldim B_\cC =\gldim \Gamma$ or $\gldim B_\cC =\gldim \Gamma - 1$.  
\end{enumerate}
\end{rmk}
\begin{exm}\label{sharp}
This is an example of $\Gamma$ such that $\gldim B_\cC =\gldim\Gamma$. Let $\Gamma$ be the Nakayama algebra given by the following quiver and relations $\gamma\beta\alpha=\delta\gamma\beta=\alpha\epsilon=0$. We omit the modules when drawing the AR-quiver. \\
%In this example, we show that $\gldim B_\cC =\gldim\Gamma$. Let $\Gamma$ be the Nakayama algebra given by the following quiver and relations $\gamma\beta\alpha=\delta\gamma\beta=\alpha\epsilon=0$. We omit the modules when drawing the AR-quiver. \\

$\ \ 
\begin{tikzpicture}[->,scale=.9]
\node(1) at (0,1.81) {$1$};
\node (2) at (0.95, 1.12) {$5$};
\node(3) at (0.59,0) {$4$};
\node (4) at (-0.59,0) {$3$};
\node (5) at (-0.95,1.12) {$2$};

\draw(1)--node [above]{$\alpha$}(2);
\draw(2)--node [right]{$\beta$}(3);
\draw(3)--node [below]{$\gamma$}(4);
\draw(4)--node [left]{$\delta$}(5);
\draw(5)--node [above]{$\epsilon$}(1);
\end{tikzpicture}  %\qquad \qquad \qquad  \gamma\beta\alpha=\delta\gamma\beta=\alpha\epsilon=0.
$
%\noindent
\quad with the AR-quiver \quad 
$
\begin{tikzpicture}[scale=.75]
\node(s1) at (0,0) {$\begin{smallmatrix} 1 \end{smallmatrix}$};
\node(s2) at (2,0) {$\begin{smallmatrix} 2 \end{smallmatrix}$}; 
\node(s3) at (4,0) {$\begin{smallmatrix} 3 \end{smallmatrix}$};
\node(s4) at (6,0) {$\begin{smallmatrix} 4 \end{smallmatrix}$};
\node(s5) at (8,0) {$\begin{smallmatrix} 5 \end{smallmatrix}$};
\node(s1') at (10,0) {$\begin{smallmatrix} 1 \end{smallmatrix}$};

\node(12) at (1,1) {$\bullet$};
\node(22) at (3,1) {$\bullet$};
\node(32) at (5,1){$\bullet$};
\node (42) at (7,1){$\bullet$};
\node (52) at (9,1){$\bullet$};

\node(13)at(2,2){$\bullet$};
\node(23)at (4,2){$\bullet$};
\node(33) at (6,2){$\bullet$};
\node (43)at (8,2){$\bullet$};

\node(14) at (3,3){$\bullet$};

\draw(s1)--(12)--(13)--(14);
\draw(s2)--(22)--(23);
\draw(s3)--(32)--(33);
\draw(s4)--(42)--(43);
\draw(s5)--(52);

\draw(12)--(s2);
\draw(13)--(22)--(s3);
\draw(14)--(23)--(32)--(s4);
\draw(33)--(42)--(s5);
\draw(43)--(52)--(s1');

\draw[dashed][-](s1)--(s2);
\draw[dashed][-](s2)--(s3);
\draw[dashed][-](s3)--(s4);
\draw[dashed][-](s4)--(s5);
\draw[dashed][-](s5)--(s1');

\draw[dotted][-](0,0.5)--(0,3);
\draw[dotted][-](10,0.5)--(10,3);
\end{tikzpicture}
$ \\

\noindent
The subcategory $\mathcal C_\Gamma$ contains a tilting module $T_\cC=P_1\oplus P_4\oplus P_5\oplus S_4\oplus \begin{smallmatrix} 4\\3 \end{smallmatrix}$. One can check that $\domdim \Gamma=2$, $\gldim\Gamma=4$, and $\gldim\End_\Gamma(T_\cC)=4$.
\end{exm}

In the next discussion, we are going to show exactly when it holds that $\gldim B_\cC<\gldim \Gamma$.
The proof relies on the following easy fact about homological dimensions. 
 
 \begin{lem}\label{max pd}
Let $\Lambda$ be an artin algebra with $\gldim \Lambda=d$. If $N$ is a $\Lambda$-submodule of $M$ with $\pd N=d$, then $\pd M=d$.
 \end{lem}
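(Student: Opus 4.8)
The plan is to use the long exact sequence in cohomology attached to a short exact sequence realizing $N$ as a submodule of $M$. Write the short exact sequence
\[
0 \to N \to M \to M/N \to 0,
\]
and apply $\Hom_\Lambda(-,L)$ for an arbitrary $\Lambda$-module $L$ to obtain the long exact sequence
\[
\cdots \to \Ext^d_\Lambda(M,L) \to \Ext^d_\Lambda(N,L) \to \Ext^{d+1}_\Lambda(M/N,L) \to \cdots.
\]
Since $\gldim \Lambda = d$, the term $\Ext^{d+1}_\Lambda(M/N,L)$ vanishes, so the map $\Ext^d_\Lambda(M,L) \to \Ext^d_\Lambda(N,L)$ is surjective. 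Because $\pd_\Lambda N = d$, there exists some $L$ with $\Ext^d_\Lambda(N,L) \neq 0$; for that $L$ we conclude $\Ext^d_\Lambda(M,L) \neq 0$, hence $\pd_\Lambda M \geq d$. The reverse inequality $\pd_\Lambda M \leq d$ is immediate from $\gldim \Lambda = d$, so $\pd_\Lambda M = d$.

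There is essentially no obstacle here: the only thing to be careful about is that $\Ext^{d+1}$ genuinely vanishes, which is exactly the definition of global dimension $d$, and that "$\pd_\Lambda N = d$" is equivalent to the existence of a module $L$ with $\Ext^d_\Lambda(N,L) \neq 0$, which is standard. One could equally phrase the argument covariantly, but the contravariant $\Ext$ version above is the cleanest. I would present it in two or three lines exactly as above.
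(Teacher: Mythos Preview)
Your proof is correct and is the standard long-exact-sequence argument; the paper itself omits the proof entirely, stating the lemma only as an ``easy fact about homological dimensions.'' There is nothing to compare---your two-line argument is exactly the kind of verification the authors had in mind.
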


\begin{thm}\label{drop}
Let $\Gamma$ be an artin algebra with $\domdim \Gamma \geq 2$, 
$T_\cC$ be the unique tilting module in $\cC_\Gamma$, and $B_\cC=\End_\Gamma(T_\cC)^{op}$. Then $$\gldim B_\cC<\gldim\Gamma \text{ if and only if } \pd_\Gamma (\tau T_\cC) <\gldim\Gamma.$$
%Let $\Gamma, T_\cC, B_\cC$ be as in Lemma~$\ref{projdim key}$. Suppose $\gldim\Gamma=d$. Then $\gldim B_\cC<d$ if and only if $\pd_\Gamma (\tau T_\cC) <d$.
\end{thm}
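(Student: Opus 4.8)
The strategy is to control $\gldim B_\cC$ by relating projective dimensions of $B_\cC$-modules to projective dimensions of $\Gamma$-modules, using the Brenner--Butler machinery together with Lemma~\ref{projdim key}. By Corollary~\ref{gldim B} we always have $\gldim B_\cC \leq \gldim\Gamma =: d$, and by Lemma~\ref{projdim general}(5) the only other possibility is $\gldim B_\cC = d-1$. So I need to show: $\gldim B_\cC = d$ if and only if $\pd_\Gamma(\tau T_\cC) = d$ (equivalently $\geq d$, since $\gldim\Gamma = d$). Note $\tau T_\cC$ makes sense because each non-projective indecomposable summand of $T_\cC$ has a well-defined Auslander--Reiten translate; by Proposition~\ref{rigid} these are exactly the modules $X_i$ with $\pd_\Gamma X_i = 1$, so $\tau T_\cC$ is a genuine $\Gamma$-module.

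\textbf{The "if" direction.} Suppose $\pd_\Gamma(\tau T_\cC) = d$. I want to produce a $B_\cC$-module $U$ with $\pd_{B_\cC} U = d$. The natural candidate is a $B_\cC$-module $U$ whose first syzygy, under the Brenner--Butler equivalence, corresponds to (a module built from) $\tau T_\cC$. Concretely: by Lemma~\ref{syzygy}, every $B_\cC$-module has first syzygy of the form $\Hom_\Gamma(T_\cC, M)$ with $M \in \Gen(T_\cC)$, and by Lemma~\ref{projdim key} this syzygy has projective dimension $\pd_\Gamma M - 1$. So I want an $M \in \Gen(T_\cC)$ with $\pd_\Gamma M = d$ that arises as the first syzygy of some non-projective $B_\cC$-module. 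I would look for $U$ in the torsion class $\cX(T_\cC)$ — recall $\cX(T_\cC) = \Ext^1_\Gamma(T_\cC, -)$ applied to $\cF(T_\cC) = \Sub(\tau T_\cC)$. Taking $N$ a suitable submodule of (a power of) $\tau T_\cC$ with $\pd_\Gamma N = d$ — which exists by Lemma~\ref{max pd} applied to $\tau T_\cC$ itself, or one can just take $N = \tau T_\cC$ — and setting $U = \Ext^1_\Gamma(T_\cC, N) \in \cX(T_\cC)$, the structure theory says the projective cover of $U$ has kernel lying in $\cY(T_\cC)$, and that kernel should be $\Hom_\Gamma(T_\cC, M)$ for an $M$ fitting in $0 \to M \to T_0 \to N \to 0$ with $T_0 \in \add T_\cC$; since $\pd_\Gamma T_0 \leq 1 < d = \pd_\Gamma N$, one gets $\pd_\Gamma M = d$ (here $d \geq 2$, which is forced since $\pd_\Gamma \tau T_\cC = d$ and $\tau T_\cC \neq 0$ has $\pd \geq$ something $\geq 1$, and actually $d \geq 2$). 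Then $\pd_{B_\cC} U = \pd_\Gamma M - 1 + 1 = d$ by Lemma~\ref{projdim key} and the syzygy exact sequence, so $\gldim B_\cC = d$.

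\textbf{The "only if" direction.} Suppose $\pd_\Gamma(\tau T_\cC) < d$; I must show $\gldim B_\cC \leq d - 1$, i.e. every $B_\cC$-module $U$ has $\pd_{B_\cC} U \leq d-1$. By Lemma~\ref{syzygy}(2), $\Omega U \cong \Hom_\Gamma(T_\cC, M)$ for some $M \in \cT(T_\cC) = \Gen(T_\cC)$, and $\pd_{B_\cC} U \leq \pd_{B_\cC}(\Hom_\Gamma(T_\cC,M)) + 1 = (\pd_\Gamma M - 1) + 1 = \pd_\Gamma M$ when $\pd_\Gamma M \geq 1$ (and $U$ non-projective); so it suffices to bound $\pd_\Gamma M$ by $d-1$ for all $M \in \Gen(T_\cC)$ that occur as such syzygies. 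The key point is: which $M \in \Gen(T_\cC)$ can have $\pd_\Gamma M = d$? If $M \in \Gen(T_\cC)$ has $\pd_\Gamma M = d$, take a right $\add T_\cC$-approximation $0 \to L \to T_0 \to M \to 0$; then $L \in \Gen(T_\cC)$ (as in the proof of Lemma~\ref{projdim key}, since $\Ext^1_\Gamma(T_\cC,L)=0$), and $\pd_\Gamma L \leq d - 1$ because $\pd_\Gamma T_0 \leq 1$. Iterating, one sees that a module of projective dimension $d$ in $\Gen(T_\cC)$ forces, after finitely many such steps, a nonzero $\Ext$ or a submodule of high projective dimension inside something built from $\add T_\cC$; chasing this should reduce the question to $\pd_\Gamma$ of $\ker(T_0 \to M)$ repeatedly, and ultimately to whether some $\tau T_\cC$-related module (the "$\cF$-part") can have projective dimension $d$. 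The cleanest route: by the Brenner--Butler correspondence the non-$\cY$ part of any $B_\cC$-module lives in $\cX(T_\cC) = \Ext^1_\Gamma(T_\cC, \cF(T_\cC))$ with $\cF(T_\cC) = \Sub(\tau T_\cC)$, so $\pd_\Gamma$ of the relevant preimages is controlled by $\pd_\Gamma$ of submodules of $\tau T_\cC$, which by Lemma~\ref{max pd} (contrapositive) is $< d$ exactly when $\pd_\Gamma \tau T_\cC < d$. Assembling these bounds gives $\pd_{B_\cC} U \leq d - 1$.

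\textbf{Main obstacle.} The delicate part is the precise bookkeeping in the "only if" direction: showing that $\pd_\Gamma M \leq d-1$ for \emph{every} $M \in \Gen(T_\cC)$ arising as a first syzygy $\Hom_\Gamma(T_\cC, M) \cong \Omega U$. A priori $\Gen(T_\cC)$ contains modules of projective dimension $d$ (e.g. if $\pd_\Gamma \tau T_\cC = d$), so the statement is genuinely false without the hypothesis; the content is that such modules cannot be torsion-part-free, i.e. cannot lie in $\cY(T_\cC)$ as $\Hom_\Gamma(T_\cC, -)$-images, unless their $\cF$-ingredient — governed by $\tau T_\cC$ — already has projective dimension $d$. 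Making this "the bad projective dimension must be detected on $\tau T_\cC$" argument rigorous, using the exact sequences $0 \to M \to T_0 \to N \to 0$ and Lemma~\ref{max pd} carefully (and checking the non-projective hypotheses needed to invoke Lemma~\ref{projdim key}), is where the real work lies. The "if" direction is comparatively routine once the right $B_\cC$-module $U \in \cX(T_\cC)$ is identified.
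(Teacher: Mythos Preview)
Your labels are swapped: in ``$\gldim B_\cC < d$ if and only if $\pd_\Gamma(\tau T_\cC) < d$'', the ``if'' clause is $\pd < d \Rightarrow \gldim < d$, which is what you argue under the heading ``only if''. I will use the paper's labeling below and write $d := \gldim\Gamma$.

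\textbf{On ``$\Longleftarrow$'' (your ``only if'').} Your ingredients match the paper's, but you are missing the one concrete step that makes the argument work. The paper argues by contradiction: assume $\gldim B_\cC = d$, pick $U$ with $\pd_{B_\cC} U = d$, and write $\Omega U \cong \Hom_\Gamma(T_\cC, M)$ with $M \in \Gen(T_\cC)$ and $\pd_\Gamma M = d$ via Lemma~\ref{projdim key}. The key move is that the syzygy embedding $\Hom_\Gamma(T_\cC, M) \hookrightarrow \Hom_\Gamma(T_\cC, T_0)$ equals $\Hom_\Gamma(T_\cC, f)$ for a morphism $f\colon M \to T_0$ (full faithfulness of $\Hom_\Gamma(T_\cC,-)$ on $\cT(T_\cC)$). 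If $f$ were injective, then $M \in \Sub\widetilde{Q}$ (since $T_0 \in \Sub\widetilde{Q}$) and also $M \in \Gen\widetilde{Q}$, so $M \in \cC_\Gamma$ and Proposition~\ref{k-1} would give $\pd_\Gamma M \leq d-1$, a contradiction. Hence $\Ker f \neq 0$, and since $\Hom_\Gamma(T_\cC, \Ker f) = 0$ we get $\Ker f \in \cF(T_\cC) = \Sub(\tau T_\cC)$; now Lemma~\ref{max pd} plus the hypothesis give $\pd_\Gamma \Ker f < d$. Then $\pd_\Gamma \Ima f = d$, and applying Lemma~\ref{max pd} to $\Ima f \subseteq T_0$ forces $\pd_\Gamma T_0 = d \geq 2$, contradicting $\pd_\Gamma T_0 \leq 1$. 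Your ``iterate approximations'' sketch does not reach this; the map $f$ is the bridge between $M$ and $\tau T_\cC$ that your outline is groping for.

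\textbf{On ``$\Longrightarrow$'' (your ``if'').} Here your approach is genuinely different from the paper's --- the paper does not construct anything, but observes that $\id_\Gamma T_\cC = d-1$ (since $T_\cC \cong \widetilde{Q} \oplus \Omega^{-1}\Gamma$) and invokes \cite[Theorem~3.2]{GHPRU} to deduce $\pd_\Gamma \tau T_\cC < d$ from $\gldim B_\cC < d$. Your direct construction is a reasonable alternative, but as written it is wrong: you posit a sequence $0 \to M \to T_0 \to N \to 0$ with $N = \tau T_\cC \in \cF(T_\cC)$ and $T_0 \in \add T_\cC$, which is impossible since $\Hom_\Gamma(T_0, N) = 0$; and even granting it, the sequence gives $\pd_\Gamma M \leq d-1$, not $\pd_\Gamma M = d$. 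The correct construction reverses the arrows: take the universal extension $0 \to N \to E \to T_0 \to 0$ with $E \in \cT(T_\cC)$ and $T_0 \in \add T_\cC$. Then Lemma~\ref{max pd} gives $\pd_\Gamma E = d$, and applying $\Hom_\Gamma(T_\cC,-)$ yields $0 \to \Hom_\Gamma(T_\cC,E) \to \Hom_\Gamma(T_\cC,T_0) \to U \to 0$ with $U = \Ext^1_\Gamma(T_\cC,N) \in \cX(T_\cC)$ nonzero and non-projective; hence $\pd_{B_\cC} U = \pd_{B_\cC}\Hom_\Gamma(T_\cC,E) + 1 = (d-1)+1 = d$ by Lemma~\ref{projdim key}. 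With this correction your argument is self-contained and avoids the external citation, which is a genuine gain.
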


\begin{proof}%[Proof of Theorem $\ref{drop}$] 
Let $\gldim\Gamma=d$. Then: 

``$\Longleftarrow$:'' Suppose $\pd_\Gamma (\tau T_\cC) <d$. We prove by contradiction: assume $\gldim B_\cC=d$. So, there is a $B_\cC$-module $U$  with $\pd_{B_\cC} U=d$. We have an exact sequence of $B_\cC$-modules:
$$
0\rightarrow \Omega U \rightarrow \Hom_\Gamma(T_\cC,T_0) \rightarrow U\rightarrow 0.
$$
%The middle term $\Hom_\Gamma(T_\cC,T_0)$ is in $\cY(T_\cC)$ by Theorem \ref{BBtilting}(a), and $\cY(T_\cC)$ is a torsion-free class so it is closed under submodules, hence, the first syzygy $V$ of $U$ is in $\cY(T_\cC)$. 
Then the first syzygy $\Omega U \cong \Hom_\Gamma(T_\cC,M)$, for some $M \in \Gen(T_\cC)$ by Lemma \ref{syzygy}. Thus, $\pd_{B_\cC} (\Hom_\Gamma(T_\cC,M)) =d-1$ and $\pd_\Gamma M=d$ due to Lemma $\ref{projdim key}$. 

As $\Hom_{B_\cC} (\Hom_\Gamma(T_\cC,M), \, \Hom_\Gamma(T_\cC,T_0)) \cong \Hom_\Gamma(M,T_0)$, the embedding $\Hom_\Gamma(T_\cC,M) \rightarrow \Hom_\Gamma(T_\cC,T_0)$ is induced by a morphism $f: M \rightarrow T_0$. Since $T_0$ is in $\Sub(\widetilde{Q})$ and $\pd_\Gamma M=d$, the map $f$ is not a monomorphism. But $\Hom_\Gamma(T_\cC,\Ker(f))=0$ since $\Hom_\Gamma(T_\cC,M) \rightarrow \Hom_\Gamma(T_\cC,T_0)$ is a monomorphism. Hence, $\Ker(f)\in\cF(T_\cC)=\Sub(\tau T_\cC)$. From Lemma $\ref{max pd}$ and our assumption $\pd_\Gamma (\tau T_\cC) <d$, we know $\pd_\Gamma \Ker(f)<d$. 

Consider the exact sequences:
$$
0\rightarrow \Ker(f)\rightarrow M \rightarrow \Ima(f)\rightarrow 0, \text{ and}
$$
$$
0\rightarrow \Ima(f)\rightarrow T_0 \rightarrow \cok(f)\rightarrow 0.
$$
Since $\pd_\Gamma \Ker(f)<d$ and $\pd_\Gamma M=d$, we have $\pd_\Gamma \Ima(f)=d$. However, by Lemma $\ref{max pd}$ and the above second exact sequence, it implies that $\pd_\Gamma T_0=d\geq2$, which is a contradiction. Therefore, we must have $\gldim B_\cC<d$. 

\vskip10pt
``$\Longrightarrow$:''  By Proposition \ref{gldim finite gor}, it follows that $\Gdim\Gamma=d$. Since $T_\cC$ is the direct sum of the first cosyzygy of the injective resolution of $\Gamma$ and $\widetilde Q$, it follows that $\id T_\cC=d-1$. According to \cite[Theorem 3.2]{GHPRU}, it follows that $\pd_\Gamma\tau T_\cC<d$.
\end{proof}
 
\begin{exm}
This example illustrates Theorem $\ref{drop}$. Let $Q$ be the quiver 
$$
\xymatrix{1&2\ar[l]&3\ar[l]&4\ar[l]&5\ar[l]}
$$
and $\Gamma=\bk Q/\rad^2(\bk Q)$. Then $\gldim\Gamma=4=\domdim\Gamma$, $\pd_\Gamma (\tau T_\cC)=0$ and $\gldim B_\cC=3$.
\end{exm}

%For any $\Lambda=\End_B(R)^{op}$-module $M$, we have the projective presentation of $M$:
%$$
%\xymatrix{(R,R_1)\ar[d]^{\cong}\ar[r]&(R,R_0)\ar[d]^{\cong}\ar[r]&M\ar[r]\ar[d]^{\cong}&0\\
%((T,Q),(T,Q_1))\ar[r]&((T,Q),(T,Q_0))\ar[r]&((T,Q),(T,C))\ar[r]&0,
%}
%$$
%where $C\in \Gen(T)$.  
%
%By assumption, $\pd C\leq4$. Therefore by Lemma $\ref{projdim key}$, $\pd(T,C)\leq 3$.

Finally, we give a digression on the condition $\pd_\Gamma (\tau T_{\cC}) < \gldim\Gamma$.

\begin{lem} \label{pd<d}
Suppose $\Gamma$ is an artin algebra with $\gldim \Gamma=d$. Assume  the tilting module $T_\cC$ exists in $\cC_\Gamma$, then the following statements are equivalent:
\begin{enumerate}
\item $\pd_\Gamma (\tau T_\cC) <d$,
\item $\Ext_\Gamma^d(\tau T_\cC, M)=0$ for all $\Gamma$-modules $M$,
\item $\Ext_\Gamma^d(\tau T_\cC, S)=0$ for all simple $\Gamma$-modules $S$ such that $\id S=d$,
\item $\tau^{-1}(\Sigma^{d-1}S)\in \Gen(\widetilde Q)$, for all simple $\Gamma$-modules $S$ satisfying $\id S=d$, where $\widetilde Q$ is an additive generator of projective-injective $\Gamma$-modules and $\Sigma^{d-1}S$ is the $(d-1)$-th syzygy of $S$ in the minimal injective resolution.
\end{enumerate}
\end{lem}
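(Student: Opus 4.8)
The plan is to prove the chain of equivalences $(1) \Leftrightarrow (2) \Leftrightarrow (3) \Leftrightarrow (4)$, using the general homological facts available in the excerpt together with the structure of $T_\cC$ given by Corollary~\ref{T_C compute}. The equivalence $(1) \Leftrightarrow (2)$ is essentially the definition of projective dimension via Ext: since $\gldim \Gamma = d$, we always have $\pd_\Gamma(\tau T_\cC) \leq d$, and $\pd_\Gamma(\tau T_\cC) < d$ precisely when $\Ext^d_\Gamma(\tau T_\cC, M) = 0$ for all $M$. The implication $(2) \Rightarrow (3)$ is trivial. For $(3) \Rightarrow (2)$, I would argue by dévissage on the Loewy length (or composition length) of $M$: using short exact sequences $0 \to M' \to M \to S \to 0$ with $S$ simple, and the long exact sequence in $\Ext^\bullet_\Gamma(\tau T_\cC, -)$, it suffices to know $\Ext^d_\Gamma(\tau T_\cC, S) = 0$ for every simple $S$; and for a simple $S$ with $\id S < d$, one automatically has $\Ext^d_\Gamma(\tau T_\cC, S) = 0$ since $\Ext^d_\Gamma(-, S)$ vanishes when $\id S < d$. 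Here I would invoke the dual of Proposition~\ref{gldim finite gor}: because $\gldim \Gamma = d < \infty$, there exists at least one simple module of injective dimension $d$, and more importantly $\Ext^d_\Gamma(-, S) = 0$ whenever $\id S < d$, so only the simples with $\id S = d$ can obstruct.

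For the equivalence $(3) \Leftrightarrow (4)$, I would unwind what $\Ext^d_\Gamma(\tau T_\cC, S)$ computes using the minimal injective resolution of $S$. Writing $0 \to S \to E^0 \to E^1 \to \cdots$ for the minimal injective resolution and $\Sigma^i S$ for its $i$-th cosyzygy (the image of $E^{i-1} \to E^i$), dimension shifting gives $\Ext^d_\Gamma(\tau T_\cC, S) \cong \Ext^1_\Gamma(\tau T_\cC, \Sigma^{d-1} S)$. Now I would apply Auslander--Reiten duality: $\Ext^1_\Gamma(\tau T_\cC, \Sigma^{d-1} S) \cong D\,\overline{\Hom}_\Gamma(\Sigma^{d-1} S, \tau(\tau T_\cC)) = D\,\overline{\Hom}_\Gamma(\Sigma^{d-1} S, T_\cC')$ up to the usual stable-category subtleties, where one should rather use the formulation $\Ext^1_\Gamma(\tau T_\cC, N) \cong D\underline{\Hom}_\Gamma(N, \tau^2 T_\cC)$ or, more cleanly, $D\Ext^1_\Gamma(Y, N) \cong \underline{\Hom}_\Gamma(N, \tau Y)$. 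Taking $Y = \tau T_\cC$, this reads $\Ext^1_\Gamma(\tau T_\cC, \Sigma^{d-1}S) = 0$ iff $\underline{\Hom}_\Gamma(\Sigma^{d-1}S, \tau^2 T_\cC) = 0$. The cleaner route, which I expect the authors take, is: $\Ext^1_\Gamma(\tau T_\cC, N) = 0$ for all $N$ in some class iff $N$ lies in the torsion class $\Gen(T_\cC) = \Gen(\widetilde Q)$ (using $(*)$ from the Brenner--Butler setup, since $T_\cC \in \cC_\Gamma$ so $\Gen(T_\cC) = \Gen(\widetilde Q)$). Thus $\Ext^d_\Gamma(\tau T_\cC, S) = 0$ iff $\Sigma^{d-1}S \in \Gen(\widetilde Q)$. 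Finally, to match condition $(4)$'s statement with $\tau^{-1}(\Sigma^{d-1}S)$ rather than $\Sigma^{d-1}S$, I would use AR-duality once more in the form $\Ext^1_\Gamma(\tau T_\cC, \Sigma^{d-1}S) \cong D\overline{\Hom}_\Gamma(T_\cC, \tau^{-1}\Sigma^{d-1}S)$, wait—rather $\Ext^1_\Gamma(X, Y) \cong D\overline{\Hom}_\Gamma(\tau^{-1}Y, X)$ is not standard; the standard AR formula is $\Ext^1_\Gamma(X,Y) \cong D\overline{\Hom}_\Gamma(Y, \tau X)$, so with $X = \tau T_\cC$ one gets $D\overline{\Hom}_\Gamma(Y, \tau^2 T_\cC)$; alternatively $\Ext^1_\Gamma(X,Y) \cong D\underline{\Hom}_\Gamma(\tau^{-1}Y, X)$, giving with $X = \tau T_\cC$, $Y = \Sigma^{d-1}S$: vanishing iff $\underline{\Hom}_\Gamma(\tau^{-1}\Sigma^{d-1}S, \tau T_\cC) = 0$, i.e. iff $\Hom_\Gamma(\tau^{-1}\Sigma^{d-1}S, \tau T_\cC)$ factors through projectives. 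I would then massage this, using that $\tau T_\cC$ has no projective summands and a direct argument, into the condition $\tau^{-1}(\Sigma^{d-1}S) \in \Gen(\widetilde Q)$; this is the one spot requiring care.

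The main obstacle I anticipate is precisely the bookkeeping around the Auslander--Reiten formula and the stable/costable Hom-spaces: making sure the right variant of AR-duality is applied, that no spurious projective or injective summands interfere, and that the passage between "$\Sigma^{d-1}S \in \Gen \widetilde Q$" and "$\tau^{-1}\Sigma^{d-1}S \in \Gen \widetilde Q$" is justified. I would handle this by first proving the equivalences $(1) \Leftrightarrow (2) \Leftrightarrow (3)$ cleanly (these only need dimension shifting and the vanishing of $\Ext^d(-,S)$ for $\id S < d$), then proving $(3) \Leftrightarrow$ "$\Sigma^{d-1}S \in \Gen(\widetilde Q)$ for all simple $S$ with $\id S = d$" via the torsion-pair description $(*)$ and dimension shifting in the injective resolution, and finally establishing the reformulation with $\tau^{-1}$ as a separate lemma-style step, invoking AR-duality in the form $D\Ext^1_\Gamma(M,N) \cong \underline{\Hom}_\Gamma(N, \tau M) \cong \overline{\Hom}_\Gamma(\tau^{-1}N, M)$ and noting that for $M = \tau T_\cC$ (which is neither projective nor has relevant extra summands, since $T_\cC = \widetilde Q \oplus \bigoplus_i \Omega^{-1}P_i$ and $\tau \widetilde Q = 0$) the stable and ordinary Hom agree sufficiently to conclude. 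A short auxiliary observation that $\Gen(\widetilde Q)$ is closed under the relevant operations and that $\tau^{-1}$ of an indecomposable non-injective in this resolution is again a sensible module will round out the argument.
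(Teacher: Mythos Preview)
Your argument for $(1)\Leftrightarrow(2)\Leftrightarrow(3)$ is correct and essentially identical to the paper's: the paper also does the trivial implications and then an induction on the length of $M$ for $(3)\Rightarrow(2)$, noting that $\Ext^d_\Gamma(\tau T_\cC,S)=0$ automatically when $\id S<d$.

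The gap is in your treatment of $(3)\Leftrightarrow(4)$. Your ``cleaner route'' contains an actual error: you write that $\Ext^1_\Gamma(\tau T_\cC,N)=0$ iff $N\in\Gen(T_\cC)=\Gen(\widetilde Q)$, but the torsion-pair description $(*)$ characterises $\Gen(T_\cC)$ by the vanishing of $\Ext^1_\Gamma(T_\cC,-)$, not of $\Ext^1_\Gamma(\tau T_\cC,-)$. So from $\Ext^1_\Gamma(\tau T_\cC,\Sigma^{d-1}S)=0$ you cannot conclude $\Sigma^{d-1}S\in\Gen(\widetilde Q)$, and the subsequent attempt to ``match'' this with condition $(4)$ by one more application of AR-duality never quite lands. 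You are right that the bookkeeping around AR-duality is the delicate point, but as written the argument does not close.

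The paper's route avoids this by inserting one clean identity before invoking the torsion pair. After the dimension shift $\Ext^d_\Gamma(\tau T_\cC,S)\cong\Ext^1_\Gamma(\tau T_\cC,\Sigma^{d-1}S)$, they observe that $\pd_\Gamma T_\cC\le 1$ and $\id_\Gamma \Sigma^{d-1}S\le 1$ (since $\id S=d$), and then apply \cite[IV, 2.14]{ASS} to obtain
\[
\Ext^1_\Gamma(\tau T_\cC,\Sigma^{d-1}S)\;\cong\;\Ext^1_\Gamma(T_\cC,\tau^{-1}(\Sigma^{d-1}S)).
\]
Now the torsion-pair description $(*)$ applies directly with $T_\cC$ in the first slot, giving $\Ext^1_\Gamma(T_\cC,\tau^{-1}(\Sigma^{d-1}S))=0$ iff $\tau^{-1}(\Sigma^{d-1}S)\in\Gen(T_\cC)=\Gen(\widetilde Q)$. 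This single translation identity is what replaces all of your juggling of stable and costable Hom-spaces; once you have it, the equivalence $(3)\Leftrightarrow(4)$ is immediate.
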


\begin{proof}
$(1)\iff (2)$, $(2)\Longrightarrow (3)$ Trivial.

$(3)\Longrightarrow (2)$ We use induction on the length $l(M)$. If $l(M)=1$, $M$ is simple. If $\id M<d$, it is clear that $\Ext_\Gamma^d(\tau T_\cC, M)=0$. Otherwise $\id M=d$, the assertion follows directly. Now assume $l(M)>1$. There is a simple $\Gamma$-module $S$ and an exact sequence:
$$
0\rightarrow M'\rightarrow M\rightarrow S\rightarrow 0.
$$

Since $\l(M')<l(M)$ and $l(S)<l(M)$, then by induction hypothesis $\Ext_\Gamma^d(\tau T_\cC, M')=0$ and $\Ext_\Gamma^d(\tau T_\cC, S)=0$.  Hence $\Ext_\Gamma^d(\tau T_\cC, M)=0$. 

$(3)\iff (4)$ Notice that $\Ext_\Gamma^d(\tau T_\cC, S) \simeq \Ext_\Gamma^1(\tau T_\cC, \Sigma^{d-1}S)$. Because $\pd T_\cC=1$ and $\id\Sigma^{d-1}S=1$, it follows by \cite[IV,2.14]{ASS} that $\Ext_\Gamma^1(\tau T_\cC, \Sigma^{d-1}S) \simeq \Ext_\Gamma^1(T_\cC,\tau^{-1}(\Sigma^{d-1}S))$. Hence $\Ext_\Gamma^d(\tau T_\cC, S)=0$ if and only if $\Ext_\Gamma^1( T_\cC, \tau^{-1}(\Sigma^{d-1}S))=0$ if and only if  $\tau^{-1}(\Sigma^{d-1}S)\in\Gen(T_\cC)=\Gen(\widetilde Q)$.
\end{proof}

\begin{rmk}
Notice that if a simple $\Gamma$-module $S$ satisfies $\id S=\gldim \Gamma < \infty$, then its projective cover $P(S)$ is not injective.
\end{rmk}

Let
$
0\rightarrow S\rightarrow I_0(S)\rightarrow I_1(S)\rightarrow\cdots
$ be the minimal injective resolution of a simple module $S$, $\nu=D\Hom_\Gamma(-,\Gamma)$ be the \textbf{Nakayama functor} and $\nu^{-1}=\Hom_\Gamma(D\Gamma,-)$ be its quasi-inverse.

\begin{prop}\label{tau T equiv}
Suppose $\Gamma$ is an artin algebra with $\gldim\Gamma=d$. Assume  the tilting module $T_\cC$ exists in $\cC_\Gamma$, then $\pd_\Gamma (\tau T_\cC) <d$ if and only if $\nu^{-1}I_d(S)$ is injective, for any simple $\Gamma$-module $S$ with $\id_\Gamma S=d$.
\end{prop}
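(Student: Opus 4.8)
The strategy is to connect the condition $\pd_\Gamma(\tau T_\cC) < d$ to the structure of the top cosyzygy $\Sigma^{d-1}S$ via Lemma~\ref{pd<d}, and then translate the resulting condition into a statement about $\nu^{-1}I_d(S)$ using the standard relationship between syzygies of $D\Gamma$, the Nakayama functor, and projective(-injective) covers. First I would invoke Lemma~\ref{pd<d}, so that it suffices to show: $\tau^{-1}(\Sigma^{d-1}S) \in \Gen(\widetilde Q)$ for all simple $S$ with $\id_\Gamma S = d$ if and only if $\nu^{-1}I_d(S)$ is injective for all such $S$. Fix one such simple module $S$ and write $W := \Sigma^{d-1}S$, so that $\id_\Gamma W = 1$ and we have an exact sequence $0 \to W \to I_{d-1}(S) \to I_d(S) \to 0$ which is a minimal injective copresentation of $W$.

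Next I would analyze $\tau^{-1}W$ concretely. Since $\id W = 1$, applying $\nu^{-1}$ to the minimal injective copresentation $0\to W \to I_{d-1}(S) \xrightarrow{g} I_d(S)$ gives, by the definition of $\tau^{-1}$ (the cokernel of $\nu^{-1}$ applied to a minimal injective copresentation, up to the usual identifications in \cite[IV]{ASS}), an exact sequence $\nu^{-1}I_{d-1}(S) \xrightarrow{\nu^{-1}g} \nu^{-1}I_d(S) \to \tau^{-1}W \to 0$. Because $I_{d-1}(S)$ is the injective envelope of $W$ and $g$ is right minimal, $\nu^{-1}I_{d-1}(S)$ is a projective module and the map $\nu^{-1}g$ is (up to a summand) a projective cover of its image; in fact, since $d = \gldim\Gamma$ and $\id S = d$, the injective $I_d(S)$ is \emph{not} projective (its preimage under $\nu$ would force $\id S < d$), and more relevantly the relevant projective modules $\nu^{-1}I_i(S)$ for $i < d$ are exactly the projective covers appearing in the minimal projective resolution of $S$ read backwards. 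The key point to extract is: $\tau^{-1}W$ sits in a short exact sequence $0 \to P' \to \nu^{-1}I_d(S) \to \tau^{-1}W \to 0$ with $P'$ projective, realizing $\nu^{-1}I_d(S)$ as the projective cover of $\tau^{-1}W$ and $P' = \nu^{-1}(\im g)$-type module. Then $\tau^{-1}W \in \Gen(\widetilde Q)$, combined with the fact that its projective cover $\nu^{-1}I_d(S)$ must then lie in $\Gen(\widetilde Q) \cap \{\text{projective}\} = \add\widetilde Q$ by Proposition~\ref{PinC}(1) (projectives in $\cC_\Gamma$ are projective-injective; more carefully, a projective cover of a module in $\Gen\widetilde Q$ is a summand of a projective-injective, hence projective-injective), we get that $\nu^{-1}I_d(S)$ is injective. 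Conversely, if $\nu^{-1}I_d(S)$ is injective, then it is projective-injective, so $\tau^{-1}W$, being a quotient of the projective-injective $\nu^{-1}I_d(S)$, lies in $\Gen(\widetilde Q)$.

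The main obstacle I anticipate is pinning down precisely the exact sequence computing $\tau^{-1}W$ and verifying that $\nu^{-1}I_d(S)$ genuinely serves as a \emph{projective cover} of $\tau^{-1}W$ — i.e.\ that the map $\nu^{-1}I_{d-1}(S) \to \nu^{-1}I_d(S)$ has image inside the radical, equivalently that no summand of $\nu^{-1}I_d(S)$ splits off. This requires using minimality of the injective copresentation of $W$ together with the fact that $\nu^{-1}$ is an equivalence from injectives to projectives carrying injective envelopes to projective covers; the non-splitting should follow because a split summand would make $\tau^{-1}W$ have a nonzero projective summand, contradicting that $W$ is indecomposable with no injective summand (which in turn follows from $W$ being a cosyzygy in a minimal injective resolution of an indecomposable $S$, using Corollary~\ref{ind cosyz}-type arguments, and $\id S = d \geq 1$). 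Once this projective-cover identification is secure, the equivalence is a direct application of Proposition~\ref{PinC}(1)-(2) and the characterization $\Gen(T_\cC) = \Gen(\widetilde Q)$. I would also remark at the end that this proposition makes the condition in Theorem~\ref{drop} checkable purely in terms of the minimal injective resolutions of the simple modules of maximal injective dimension.
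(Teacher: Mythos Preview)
Your approach is essentially identical to the paper's: invoke Lemma~\ref{pd<d} to reduce to $\tau^{-1}(\Sigma^{d-1}S)\in\Gen(\widetilde Q)$, apply $\nu^{-1}$ to the minimal injective copresentation $0\to \Sigma^{d-1}S\to I_{d-1}(S)\to I_d(S)\to 0$ to identify $\nu^{-1}I_d(S)$ as the projective cover of $\tau^{-1}(\Sigma^{d-1}S)$, and conclude via Proposition~\ref{PinC}. One small correction: your proposed route to minimality through indecomposability of $W=\Sigma^{d-1}S$ is unnecessary and not quite available (Corollary~\ref{ind cosyz} needs the intermediate injectives to be projective); instead, minimality follows directly from the standard Auslander--Reiten fact that $\nu^{-1}$, being an equivalence between injectives and projectives, carries minimal injective copresentations to minimal projective presentations, which is exactly what the paper silently uses.
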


\begin{proof}
By Lemma \ref{pd<d}, $\pd (\tau T_\cC) <d$ if and only if $\tau^{-1}(\Sigma^{d-1}S)\in \Gen(\widetilde Q)$, for all simple modules $S$ such that $\id S=d$. 

Applying $\nu^{-1}$ to the following minimal injective resolution
$$
0\rightarrow \Sigma^{d-1}S\rightarrow I_{d-1}(S)\rightarrow I_d(S)\rightarrow 0, 
$$
we have an exact sequence:
$$
0\rightarrow \nu^{-1}\Sigma^{d-1}S\rightarrow \nu^{-1}I_{d-1}(S)\rightarrow \nu^{-1}I_d(S)\rightarrow \tau^{-1}(\Sigma^{d-1}S) \rightarrow 0.
$$
Hence the assertion follows from the fact that $\tau^{-1}(\Sigma^{d-1}S)\in \Gen(\widetilde Q)$ if and only if its projective cover $ \nu^{-1}I_d(S)$ is injective.
\end{proof}

\begin{prop}\label{tau T equiv 2}
Let $\Lambda$ be an artin algebra and $M \simeq \bigoplus_i M_i$ be a generator-cogenerator, where the $M_i$ are non-isomorphic indecomposable $\Lambda$-modules. Denote $\Gamma=\End_\Lambda(M)^{op}$. Then 
\begin{enumerate}
\item The complete set of representatives of non-isomorphic indecomposable projective $\Gamma$-modules is given by $\{D\Hom_\Lambda(M_i, M) \}.$
\item There are $\Gamma$-module isomorphisms $\nu^{-1}D\Hom_\Lambda(M_i,M)\simeq \Hom_\Lambda(M,M_i)$. 
\item The module $\nu^{-1}D\Hom_\Lambda(M_i,M)$ is injective if and only if $M_i$ is an injective $\Lambda$-module.
\end{enumerate}
\end{prop}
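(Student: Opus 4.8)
\textbf{Proof plan for Proposition \ref{tau T equiv 2}.}

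The plan is to establish parts (1)--(3) in order, using the standard theory of endomorphism algebras of generators together with the Nakayama functor. Throughout, write $\Gamma = \End_\Lambda(M)^{op}$ and recall that $D\Gamma$ is the injective cogenerator of $\modd\Gamma$.

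First, for part (1): since $M = \bigoplus_i M_i$ is in particular a generator, the indecomposable projective $\Gamma$-modules are exactly $\Hom_\Lambda(M, M_i)$ as right $\End_\Lambda(M)$-modules, by the classical correspondence (this is Proposition \ref{end mod}(1) applied to $X = M$). Passing to left $\Gamma$-modules, one has $\Gamma = \End_\Lambda(M)^{op}$, and the indecomposable \emph{injective} left $\Gamma$-modules are obtained by applying $D$ to the indecomposable projective right $\Gamma$-modules; but here I want the projectives, so I instead observe that the indecomposable projective left $\Gamma$-modules are $D\Hom_\Lambda(M_i, M)$ — this is the dual bookkeeping, using that $\Hom_\Lambda(M_i, M)$ runs over the indecomposable projective right $\Gamma$-modules (equivalently injective left $\Gamma^{op}$-modules) and then dualizing. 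I would just cite Proposition \ref{end mod} and the left-right conventions to pin down the precise form $\{D\Hom_\Lambda(M_i,M)\}$.

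For part (2): recall $\nu^{-1} = \Hom_\Gamma(D\Gamma, -)$. Apply this to the projective $P_i := D\Hom_\Lambda(M_i, M)$. The key computational input is the natural isomorphism expressing $\Hom_\Gamma(D\Gamma, -)$ on projectives; concretely, since $M$ is a generator-cogenerator of $\Lambda$, there is a well-known "double centralizer"-flavored identification $\Hom_\Gamma(D\Hom_\Lambda(M,M), D\Hom_\Lambda(M_i,M)) \cong \Hom_\Lambda(M, M_i)$ as $\Gamma$-modules. I would prove this by writing $D\Gamma = D\Hom_\Lambda(M,M) = \Hom_\Lambda(M, DM)$ (using that $DM$ is, up to the appropriate identification, available because $M$ is a cogenerator — more precisely one uses that $D$ of a $\Gamma$-$\Gamma$-bimodule is computed via $\Hom_\Lambda$ against the injective cogenerator), and then applying the Hom-tensor/Yoneda adjunctions together with the fact that $\Hom_\Lambda(M,-)$ is fully faithful on $\add M$. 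The upshot is $\nu^{-1} P_i \cong \Hom_\Lambda(M, M_i)$, which by part (1)'s dual statement (Proposition \ref{end mod}(1)) is exactly the indecomposable projective right... no — it is the indecomposable projective left $\Gamma$-module I want, consistent with $\nu^{-1}$ sending projectives to projectives.

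For part (3): by Proposition \ref{end mod}(2), applied with $X = M$ the cogenerator, the indecomposable projective-injective $\Gamma$-modules are exactly $\Hom_\Lambda(M, I_i)$ where $I_i$ ranges over the indecomposable injective $\Lambda$-modules. Combining with part (2), $\nu^{-1} P_i = \Hom_\Lambda(M, M_i)$ is injective if and only if $\Hom_\Lambda(M, M_i)$ is a projective-injective $\Gamma$-module (it is automatically projective), if and only if $M_i$ is (a summand of) an injective $\Lambda$-module, i.e. $M_i$ is injective since $M_i$ is indecomposable. This gives the stated equivalence.

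\textbf{Main obstacle.} The delicate step is part (2): getting the natural isomorphism $\nu^{-1}D\Hom_\Lambda(M_i, M) \cong \Hom_\Lambda(M, M_i)$ with all the left/right module structures and the $D$'s placed correctly. One has to be careful that $D\Gamma$ as a left $\Gamma$-module corresponds, under the Morita-type dictionary, to the $\Lambda$-module data in the right way, and that the adjunction isomorphisms are natural in $M_i$ so that they assemble into $\Gamma$-module maps rather than merely abelian-group isomorphisms. Once the bimodule bookkeeping is set up cleanly — identifying $D\Gamma$ with $\Hom_\Lambda(M, DM)$ and using full faithfulness of $\Hom_\Lambda(M,-)$ on $\add M$ — the computation is a routine chase of adjunctions, and parts (1) and (3) then follow immediately from Proposition \ref{end mod}.
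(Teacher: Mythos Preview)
The paper states this proposition without proof, so there is no argument to compare against; it is recorded as a standard fact about endomorphism rings of generator-cogenerators. Your outline for parts (2) and (3) is sound, and the quickest route for (2) is via primitive idempotents rather than adjunction chasing: if $e_i\in\Gamma$ is the projection onto $M_i$, then as left (respectively right) $\Gamma$-modules one has $\Gamma e_i\cong\Hom_\Lambda(M,M_i)$ and $e_i\Gamma\cong\Hom_\Lambda(M_i,M)$, and the standard identity $\nu(\Gamma e_i)=D(e_i\Gamma)$ gives (2) at once. Part (3) then follows exactly as you outline, using Proposition~\ref{end mod}(2).

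The genuine issue is part (1), and your hesitation there (``but here I want the projectives'') is well-founded: the printed statement appears to contain a slip. Dualizing the indecomposable projective \emph{right} $\Gamma$-modules $\Hom_\Lambda(M_i,M)$ produces the indecomposable \emph{injective} left $\Gamma$-modules $D\Hom_\Lambda(M_i,M)$, not the projectives. This reading is forced by part (2), where $\nu^{-1}$ --- which carries injectives to projectives --- is applied to these modules and lands on the projectives $\Hom_\Lambda(M,M_i)$, and by the intended application in Proposition~\ref{tau T equiv}, which concerns $\nu^{-1}$ of injective $\Gamma$-modules $I_d(S)$. Your attempt to justify ``projective'' by ``dual bookkeeping'' cannot work: $D$ applied to a projective right module is an injective left module, full stop. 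Once (1) is read as describing the indecomposable injectives, the entire proposition drops out of the idempotent description above together with Proposition~\ref{end mod}, and your plan for (2) and (3) goes through cleanly.
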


%%%%%%%%%%%%%%%%%%%%%%%%%%%%%%%
\subsection{A relation to the Finitistic Dimension Conjecture}
\label{findim conjecture}

In this section, we obtain an application of the results in Section \ref{endomorphism algebra} to the Finitistic Dimension Conjecture for a certain class of artin algebras of representation dimension at most 4. First, let us recall:

\begin{defn}
 Let $\Lambda$ be an artin algebra. Then the \textbf{finitistic dimension of $\Lambda$} is:
 $$
 \findim \Lambda := \sup \, \{\pd M \mid M\in \modd \Lambda \text{ and } \pd M<\infty\}.
 $$

\noindent
The \textbf{representation dimension of $\Lambda$} is:
 $$
 \repdim \Lambda := \inf \, \{\gldim \End_\Lambda(X) \mid X \text{\ is\ a\ generator-cogenerator in}\modd \Lambda \}.
 $$
 \end{defn}

Iyama \cite{I} proved that for any artin algebra $\Lambda$, its $\repdim\Lambda<\infty$ always. On the other hand, the long-standing Finitistic Dimension Conjecture says that for any artin algebra $\Lambda$, its $\findim \Lambda$ is finite. In \cite{IG}, Igusa and Todorov proved a partial result of this conjecture. In particular, they proved that $\findim\Lambda<\infty$ provided $\repdim\Lambda\leq3$. Their proof relied on the following result using the IT function $\psi$ defined as:

\begin{defn}\label{IT functions}
Let $\Lambda$ be an artin algebra. Let $K_0$ be the abelian group generated by $[X]$, for all finitely generated $\Lambda$-modules $X$, modulo the relations: \\
(a) $[C]=[A]+[B]$ if $C=A\oplus B$ and (b) $[P]=0$ for projective $\Lambda$-modules $[P]$.\\
%\begin{itemize}\item[(a)] $[C]=[A]+[B]$ if $C=A\oplus B$ \item[(b)] $[P]=0$ for projective $\Lambda$-modules $[P]$.\end{itemize}
Let $L:K_0\rightarrow K_0$
be 
the group homomorphism 
%$L:K_0\rightarrow K_0$ 
defined by $L[X]=[\Omega X]$.%, where $\Omega X$ is the syzygy of $X$. 

For any $\Lambda$-module $M$, denote by $\langle \add M\rangle$ the subgroup of $K_0$ generated by $[M_i]$ where $M_i$'s are indecomposable summands of $M$. Then the \textbf{IT-functions} are:
%\begin{defn}\label{IT functions}
$$
\phi(M):= \min\, \{m \mid L^m\langle\add M\rangle \cong L^{m+1}\langle\add M\rangle \}
$$
$$
\psi(M):=\phi(M)+\sup\, \{\pd X \mid \pd X<\infty, X \text{\ is\ a\ summand\ of\ } \Omega^{\phi(M)}M\}.
$$
\end{defn}

\begin{rmk}
For any $\Lambda$-module $M$, the IT-functions $\phi(M)$ and $\psi(M)$ are always finite. When $\pd_\Lambda M<\infty$, it is easy to see that $\phi(M)=\psi(M)=\pd_\Lambda M$. So IT-functions are generalizations of projective dimension. 
\end{rmk}

The next lemma is also a generalization of the well-known result about projective dimensions:
$\pd C\leq \pd(A\oplus B)+1$ 
%under the assumption that 
when $\pd A$ and $\pd B$ are finite.

\begin{lem}\label{psi}
Suppose that $0\rightarrow A\rightarrow B\rightarrow C\rightarrow 0$ is a short exact sequence of finitely generated $\Lambda$-modules and $C$ has finite projective dimension. Then $\pd_\Lambda C\leq \psi(A\oplus B)+1$.
\end{lem}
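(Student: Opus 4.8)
The plan is to reduce the statement to an ordinary fact about projective dimensions by replacing the $\phi$-step with a syzygy shift. Recall that $\phi(A\oplus B)$ is, by definition, the smallest $m$ for which $L^m\langle\add(A\oplus B)\rangle \cong L^{m+1}\langle\add(A\oplus B)\rangle$; write $d:=\phi(A\oplus B)$. Applying the syzygy operator $\Omega^{d}$ (more precisely $d$ times iterating minimal syzygies) to the given short exact sequence $0\to A\to B\to C\to 0$, we obtain a short exact sequence
\[
0\rightarrow \Omega^{d}A \oplus P \rightarrow \Omega^{d}B \oplus P' \rightarrow \Omega^{d}C\rightarrow 0
\]
for suitable projective modules $P,P'$ (the projective summands appear because the horseshoe lemma only gives minimal syzygies up to projective summands). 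Here we use that $\pd_\Lambda C<\infty$, so that after finitely many steps the syzygies are honest kernels and no parasitic behaviour occurs; in any case $\pd_\Lambda \Omega^{d}C = \pd_\Lambda C - d$ if $\pd_\Lambda C\geq d$, and $\Omega^{d}C=0$ otherwise (in which case the bound is immediate). First I would dispose of the trivial case $\pd_\Lambda C\leq d$, since then $\pd_\Lambda C \leq d \leq \phi(A\oplus B)\leq \psi(A\oplus B)\leq \psi(A\oplus B)+1$ and there is nothing to prove.

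So assume $\pd_\Lambda C > d$. The key point is that $\Omega^{d}A$ and $\Omega^{d}B$ are, up to projective summands, direct sums of indecomposable summands of $L^{d}\langle \add(A\oplus B)\rangle$; by the defining property of $\phi$, the group $L^{d}\langle\add(A\oplus B)\rangle$ stabilizes, meaning every indecomposable non-projective summand $Z$ of $\Omega^{d}(A\oplus B)$ has the property that $[\Omega Z]$ still lies in this same stable subgroup, and hence iterating never produces anything of strictly larger finite projective dimension than what is recorded by $\psi$. More concretely, I would apply the standard fact (the generalization of $\pd C \leq \pd(A\oplus B)+1$) to the shifted sequence: from $0\to \Omega^{d}A\oplus P \to \Omega^{d}B\oplus P'\to \Omega^{d}C\to 0$ and $\pd_\Lambda\Omega^{d}C<\infty$, one gets
\[
\pd_\Lambda \Omega^{d}C \;\leq\; \max\{\,\pd_\Lambda (\Omega^{d}A\oplus P),\ \pd_\Lambda(\Omega^{d}B\oplus P')+1\,\}.
\]
However $\pd_\Lambda(\Omega^{d}A\oplus P)$ and $\pd_\Lambda(\Omega^{d}B\oplus P')$ need not be finite, so this naive inequality is not directly applicable; this is where the IT machinery is essential. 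Instead I would argue: any summand $X$ of $\Omega^{d}(A\oplus B)$ with $\pd_\Lambda X<\infty$ has $\pd_\Lambda X \leq \psi(A\oplus B)-\phi(A\oplus B) = \psi(A\oplus B)-d$ by the very definition of $\psi$. Then a dévissage argument on the shifted short exact sequence — splitting off the finite-projective-dimension part — gives $\pd_\Lambda \Omega^{d}C \leq (\psi(A\oplus B)-d)+1$, whence $\pd_\Lambda C = \pd_\Lambda\Omega^{d}C + d \leq \psi(A\oplus B)+1$.

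To make the dévissage precise I would invoke the behaviour of $\psi$ under short exact sequences: there is a standard lemma (Igusa–Todorov) that for $0\to A\to B\to C\to 0$ one has $\phi(C)\leq\phi(A\oplus B)+1$ and that the finite-projective-dimension summands appearing in $\Omega^{\phi(A\oplus B)}(A\oplus B)$ control $\pd_\Lambda C$ when $\pd_\Lambda C<\infty$. The cleanest route is: since $\pd_\Lambda C<\infty$, apply $L$ to the relation $[B]=[A]+[C]$ in $K_0$ to get $L^{d}[C]=L^{d}[B]-L^{d}[A]\in L^{d}\langle\add(A\oplus B)\rangle$; stability at level $d$ then forces the tail of the syzygy sequence of $C$ past step $d$ to consist of modules built from the stable part, and since $C$ has finite projective dimension its $d$-th syzygy must be a finite-projective-dimension summand sitting inside $\Omega^{d}(A\oplus B)$ up to one extra syzygy step — giving the $+1$. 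I expect the main obstacle to be exactly this last bookkeeping step: carefully tracking how the syzygy of $C$ relates, at the level of the group $K_0$ and at the level of actual modules (modulo projectives), to the stabilized subgroup $L^{d}\langle\add(A\oplus B)\rangle$, and extracting from membership in that subgroup a genuine bound on the projective dimension of a specific module rather than just a statement about $K_0$-classes. Everything else — the trivial case, the syzygy-shifting of short exact sequences, the final arithmetic $\pd_\Lambda C = \pd_\Lambda\Omega^{d}C+d$ — is routine.
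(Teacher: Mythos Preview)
The paper does not actually prove this lemma; it is quoted from Igusa--Todorov \cite{IG} and used as a black box. So there is no ``paper's own proof'' to compare against, only the original Igusa--Todorov argument.

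Your outline has the right architecture (shift by $\Omega^{d}$ with $d=\phi(A\oplus B)$, then exploit stability), but the step you yourself flag as the obstacle is genuinely broken, not just bookkeeping. You write ``apply $L$ to the relation $[B]=[A]+[C]$ in $K_0$''. There is no such relation: by Definition~\ref{IT functions} the group $K_0$ is generated by isomorphism classes modulo \emph{direct sum} relations and $[P]=0$ for projectives, not modulo short exact sequences. So $[\Omega^{d}C]$ need not lie in $L^{d}\langle\add(A\oplus B)\rangle$, and even if it did, membership of a class in a subgroup of $K_0$ tells you nothing about the indecomposable summands of the actual module $\Omega^{d}C$. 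Your proposed d\'evissage therefore has no traction.

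The Igusa--Todorov argument avoids this by never trying to place $[\Omega^{d}C]$ anywhere. Instead one takes $k\geq d$ large enough that $\Omega^{k}C$ is projective; the shifted sequence then splits, giving $\Omega^{k}A\oplus P\cong \Omega^{k}B\oplus P'$, hence $L^{k}[A]=L^{k}[B]$ in $K_0$. Since $L$ is a bijection on the stable subgroup $L^{d}\langle\add(A\oplus B)\rangle$, this pulls back to $L^{d}[A]=L^{d}[B]$, i.e.\ $\Omega^{d}A$ and $\Omega^{d}B$ have the \emph{same} non-projective indecomposable summands with the same multiplicities. This module-level statement (not a mere $K_0$ membership) is what allows one to bound $\pd\Omega^{d}C$ by the finite projective dimensions occurring among the summands of $\Omega^{d}(A\oplus B)$, plus one --- which is exactly $\psi(A\oplus B)-d+1$. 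The missing idea in your attempt is this reversal: rather than pushing $C$ into the stable subgroup, one pushes far enough forward that $C$ disappears, and then uses injectivity of $L$ to pull information about $A$ and $B$ back to level $d$.
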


\begin{cor}\cite{IG}\label{end P}
Let $\Gamma$ be an artin algebra with $\gldim\Gamma\leq3$. Let $\Lambda=\End_\Gamma(P)^{op}$, where $P$ is a projective $\Gamma$-module. Then $\findim\Lambda\leq \psi(\Hom_\Gamma(P,\Gamma))+3$, where $\Hom_\Gamma(P,\Gamma)$ is considered as a $\Lambda$-module. 
\end{cor}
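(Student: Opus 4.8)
The plan is to combine the Igusa--Todorov function $\psi$ with the exact functor $F:=\Hom_\Gamma(P,-)\colon\modd\Gamma\to\modd\Lambda$, which is exact precisely because $P$ is projective. Two elementary facts about $F$ drive the argument. First, $F$ restricts to an equivalence from $\add P$ onto the projective $\Lambda$-modules (it sends $P$ to the regular module $\Lambda$), and hence it carries every projective $\Gamma$-module into $\add\bigl(\Hom_\Gamma(P,\Gamma)\bigr)$; write $Y:=\Hom_\Gamma(P,\Gamma)$ for the $\Lambda$-module in the statement. Second, $F$ has a left adjoint $P\otimes_\Lambda-$ with $\Hom_\Gamma(P,\,P\otimes_\Lambda N)\cong N$ for every $\Lambda$-module $N$; equivalently, lifting a minimal projective $\Lambda$-presentation of $N$ along the equivalence $\add P\simeq\operatorname{proj}\Lambda$ and taking the cokernel over $\Gamma$ produces a $\Gamma$-module $M$ with $F(M)\cong N$.

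For the main argument I would fix a $\Lambda$-module $N$ with $\pd_\Lambda N<\infty$ and realize it as $N\cong F(M)$ with $M\in\modd\Gamma$. Since $\gldim\Gamma\le 3$ we have $\pd_\Gamma M\le 3$, so $M$ has a projective resolution $0\to P_3\to P_2\to P_1\to P_0\to M\to 0$ over $\Gamma$. Applying the exact functor $F$ gives an exact sequence of $\Lambda$-modules
$$0\to A_3\to A_2\to A_1\to A_0\to N\to 0,\qquad A_i:=F(P_i)\in\add Y .$$
Thus every $\Lambda$-module has a resolution of length at most $3$ by modules in $\add Y$ — this is the one place $\gldim\Gamma\le 3$ is used.

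The remaining step is the Igusa--Todorov dévissage. Splitting the sequence into short exact sequences $0\to A_3\to A_2\to Z_2\to 0$, $0\to Z_2\to A_1\to Z_1\to 0$, $0\to Z_1\to A_0\to N\to 0$, one applies Lemma~\ref{psi} to the last (using $\pd_\Lambda N<\infty$) to get $\pd_\Lambda N\le\psi(Z_1\oplus A_0)+1$, and then uses the standard estimates for $\psi$ along short exact sequences, its monotonicity under $\add$, and the identity $\psi=\pd$ on modules of finite projective dimension (all from \cite{IG}) to bound $\psi(Z_1\oplus A_0)$ successively by $\psi(Z_2\oplus A_1\oplus A_0)+1$ and then by $\psi(A_3\oplus A_2\oplus A_1\oplus A_0)+2\le\psi(Y)+2$, the last inequality since each $A_i$ lies in $\add Y$. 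Hence $\pd_\Lambda N\le\psi(Y)+3$, and taking the supremum over all such $N$ gives $\findim\Lambda\le\psi\bigl(\Hom_\Gamma(P,\Gamma)\bigr)+3$.

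I expect the delicate point to be exactly this final dévissage. Because the cokernels $Z_1,Z_2$ appearing in the length-$3$ resolution need not have finite projective dimension, one cannot simply iterate Lemma~\ref{psi} along syzygies of $N$; the $\add Y$-summands have to be carried along at each stage, and the estimate $\psi(B\oplus C)\le\psi(A\oplus B)+1$ attached to a short exact sequence $0\to A\to B\to C\to 0$ is what makes the accounting close up. It is worth recording that the tempting shortcut — arguing that $\Omega^3_\Lambda N$ itself lies in $\add Y$ — is not available, since $F$ need not be faithful when $P$ is not a generator, so a lift of the minimal $\Lambda$-resolution of $N$ need not be an exact $\Gamma$-resolution.
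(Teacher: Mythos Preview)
The paper does not give a proof of this corollary; it is simply attributed to \cite{IG}. Your overall strategy --- realize every $\Lambda$-module as $N\cong F(M)$ via the exact functor $F=\Hom_\Gamma(P,-)$, apply $F$ to a length-$3$ projective resolution of $M$ over $\Gamma$, and obtain a length-$3$ $\add Y$-resolution of $N$ with $Y=\Hom_\Gamma(P,\Gamma)$ --- is correct and is the natural set-up. The observation $\operatorname{proj}\Lambda\subseteq\add Y$ is also right.

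The genuine gap is your d\'evissage. Lemma~\ref{psi} says only that $\pd_\Lambda C\le\psi(A\oplus B)+1$ \emph{when $\pd_\Lambda C<\infty$}; it does not give the inequality $\psi(B\oplus C)\le\psi(A\oplus B)+1$ (nor $\psi(C)\le\psi(A\oplus B)+1$) for an arbitrary short exact sequence. You explicitly rely on the latter to push the bound through the intermediate kernels $Z_1,Z_2$, whose projective dimensions you correctly note need not be finite, so Lemma~\ref{psi} cannot be iterated as written. This stronger $\psi$-inequality is not in \cite{IG} and does not follow from the listed properties of $\psi$ (monotonicity under $\add$ and $\psi=\pd$ on modules of finite projective dimension): the indecomposable summands of $\Omega^kC$ are quotients, not summands, of $\Omega^kB\oplus(\text{proj})$, so there is no containment of the relevant subgroups of $K_0$.

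The fix that stays within Lemma~\ref{psi} is to arrange a length-$1$ $\add Y$-resolution of $\Omega^2_\Lambda N$ itself, so that Lemma~\ref{psi} applies once to $\Omega^2_\Lambda N$ (which \emph{does} have finite projective dimension). Concretely: start from the minimal $\Lambda$-projective presentation $Q_1\to Q_0\to N\to 0$, push it to $\Gamma$ via $P\otimes_\Lambda-$ to obtain a $\Gamma$-presentation of $M$ by modules in $\add P$, and observe that the resulting second $\Gamma$-syzygy $L'$ satisfies $F(L')\cong\Omega^2_\Lambda N$; since $\pd_\Gamma L'\le 1$, applying $F$ to its projective resolution yields $0\to B_1\to B_0\to\Omega^2_\Lambda N\to 0$ with $B_i\in\add Y$, whence $\pd_\Lambda N\le 2+\psi(B_0\oplus B_1)+1\le\psi(Y)+3$. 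This is exactly the ``$\Lambda$ is $2$-IT with $V=Y$'' statement alluded to after the corollary, and it avoids the unproven $\psi$-inequality.
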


Motivated by Igusa-Todorov's result, Wei introduced in \cite{W} the notion of \textbf{$m$-IT algebra}, for any non-negative integer $m$:

\begin{defn}\cite{W}
Let $\Lambda$ be an artin algebra. Then $\Lambda$ is said to be $m$-IT if there exists a module $V$ such that for any $\Lambda$-module $M$ there is an exact sequence
$$
0\rightarrow V_1\rightarrow V_0\rightarrow \Omega^m M\oplus P\rightarrow 0,
$$
where $V_0, V_1 \in \add V$ and $P$ is a projective $\Lambda$-module. 
\end{defn}

Applying Lemma~$\ref{psi}$, it is easy to see that the finitistic dimension of an $m$-IT algebra $\Lambda$ is bounded as $\findim \Lambda \leq m+1+\psi(V)$. Consequently, the Finitistic Dimension Conjecture holds for $m$-IT algebras, \cite[Theorem 1.1]{W}. 

%By the proof of Corollary~$\ref{end P}$, one can show 
It was shown that the class of $2$-IT algebras is closed under taking endomorphism algebras of projective modules, \cite[Theorem 1.2]{W}. 
%In general, the class of $m$-IT algebras is broad \cite{W}. 
Artin algebras with global dimension $d$ are $(d-1)$-IT.  If $\repdim\Lambda\leq3$, then $\Lambda$ is $2$-IT. However, there exist algebras which are not IT algebras, for example, the exterior algebra of a $3$-dimensional vector space.

\vskip10pt
In the following, we are going to see how the endomorphism algebra $B_\cC:=\End_{\Gamma}(T_\cC)^{op}$ studied in Section \ref{endomorphism algebra} relates to $2$-IT algebras.

To fix the notation, let $\Lambda$ be a basic artin algebra. Since $\repdim\Lambda<\infty$, there exists a (multiplicity-free) generator-cogenerator $X\in \modd \Lambda$ such that $\Gamma=\End_{\Lambda}(X)^{op}$ has finite global dimension, say $\gldim \Gamma=d$. Additionally, $\Gamma$ has dominant dimension at least $2$. Let $T_\cC$ be the unique tilting module in $\cC_\Gamma$. Denote by $\widetilde{Q}$ the sum of the representatives of isomorphism classes of indecomposable projective-injective $\Gamma$-modules. We have $\Lambda \cong \End_{\Gamma}(\widetilde{Q})^{op}$ by Proposition~\ref{morita Q}. %(Morita equivalence, to be precise). %Consequently, we have

%\begin{prop}\label{QC}
%Each $\Lambda$-module $M$ is isomorphic to $\Hom_\Gamma(Q,C)$ for some $C\in \Gen(Q)$, where $\Gen(Q)$ denotes the subcategory of modules generated by $Q$.
%\end{prop}
%\begin{proof}
%Take the projective presentation of $M$:
%$$
%(Q,Q_1)\rightarrow (Q,Q_0)\rightarrow M\rightarrow 0.
%$$
%
%Applying $\Hom(Q,-)$ to the exact sequence $Q_1\rightarrow Q_0\rightarrow C\rightarrow 0$, 
%we have a commutative diagram with exact rows:
%$$
%\xymatrix{(Q,Q_1)\ar[r]\ar@{=}[d]&(Q,Q_0)\ar@{=}[d]\ar[r]& M\ar[r]\ar[d] &0\\
%(Q,Q_1)\ar[r]&(Q,Q_0)\ar[r]& (Q,C)\ar[r] &0.
%}
%$$
%Hence $M\cong (Q,C)$ for some $\Gamma$-module $C\in \Gen(Q)$.
%\end{proof}
%
%Also as we mentioned before, $\domdim\Gamma\geq2$. Hence there is a tilting module $T_\cC$ in $\cC_\Gamma$. (I would like to call it characteristic tilting. ) Because $\Gen(Q)=\Gen(T_\cC)$ we know:
%\begin{cor}
%Each $\Lambda$-module $M$ is isomorphic to $\Hom_\Gamma(Q,C)$ for some $C\in \Gen(T_\cC)$. 
%\end{cor}

%We are going to look at the endomorphism algebra $B_\cC:=\End_\Gamma(T_\cC)^{op}$. 
From the endomorphism algebra $B_\cC:=\End_\Gamma(T_\cC)^{op}$, we can also recover the algebra $\Lambda$ as follows: Let $R:=\Hom_\Gamma(T_\cC,\widetilde{Q})$ which is a projective $B_\cC$-module. Then

\begin{lem}\label{BR}
$\End_{B_\cC} (R)^{op} \cong \Lambda$. %, this is a Morita equivalence to be precise.
\end{lem}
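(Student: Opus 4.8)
\textbf{Proof proposal for Lemma \ref{BR}.}

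The plan is to realize $\Lambda$ as an endomorphism ring by exploiting two facts already in hand: that $\Lambda \cong \End_\Gamma(\widetilde Q)^{op}$ (Proposition \ref{morita Q}), and that the functor $\Hom_\Gamma(T_\cC,-)$ restricted to the torsion class $\cT(T_\cC)=\Gen(T_\cC)$ is fully faithful by the Brenner--Butler Tilting Theorem (Theorem \ref{BBtilting}(a)), since its image lands in $\cY(T_\cC)$. First I would observe that $\widetilde Q \in \Gen(T_\cC)$: indeed $T_\cC \simeq \widetilde Q \oplus (\bigoplus_i \Omega^{-1}P_i)$ by Corollary \ref{T_C compute}, so $\widetilde Q \in \add T_\cC \subseteq \Gen(T_\cC)=\cT(T_\cC)$. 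Therefore the full faithfulness of $\Hom_\Gamma(T_\cC,-)$ on $\cT(T_\cC)$ gives a ring isomorphism
$$
\End_\Gamma(\widetilde Q)^{op} \;\xrightarrow{\;\sim\;}\; \End_{B_\cC}\!\big(\Hom_\Gamma(T_\cC,\widetilde Q)\big)^{op} \;=\; \End_{B_\cC}(R)^{op}.
$$
Composing with the isomorphism $\Lambda \cong \End_\Gamma(\widetilde Q)^{op}$ from Proposition \ref{morita Q} yields $\End_{B_\cC}(R)^{op}\cong \Lambda$, as desired.

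The remaining points to nail down are minor. One should check that $R=\Hom_\Gamma(T_\cC,\widetilde Q)$ is indeed a projective $B_\cC$-module: this is because $\widetilde Q \in \add T_\cC$, and $\Hom_\Gamma(T_\cC,-)$ sends $\add T_\cC$ to $\add \Hom_\Gamma(T_\cC,T_\cC) = \add B_\cC$, i.e.\ to projective $B_\cC$-modules — exactly the statement recalled in the discussion preceding the lemma and used already in Lemma \ref{syzygy}. One should also confirm that the isomorphism furnished by Theorem \ref{BBtilting}(a) is an isomorphism of $\Gamma$-algebras (not merely of abelian groups), which is standard: an equivalence of categories induces ring isomorphisms on endomorphism rings, and the $\Hom$-functor is $\kk$-linear so the identification respects the algebra structure; passing to opposite rings is harmless.

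I expect the only real subtlety — hence the ``main obstacle,'' though it is mild — to be bookkeeping about \emph{which} side the opposite-ring convention falls on, and ensuring that $\End_\Gamma(\widetilde Q)^{op}$ and the functor $\Hom_\Gamma(T_\cC,-)$ interact so that one genuinely lands on $\Lambda$ rather than $\Lambda^{op}$. Since $\domdim\Lambda = \domdim\Lambda^{op}$ is irrelevant here (we need an honest isomorphism, not just of the dominant dimension), I would track the module actions carefully: $\widetilde Q$ is a $(\Gamma,\Lambda)$-bimodule via Proposition \ref{morita Q}, $T_\cC$ is a $(\Gamma, B_\cC)$-bimodule, so $R=\Hom_\Gamma(T_\cC,\widetilde Q)$ is naturally a $(B_\cC,\Lambda)$-bimodule, and the left $B_\cC$-module structure makes $\End_{B_\cC}(R)^{op}$ act on $R$ on the right, compatibly with the right $\Lambda$-structure; full faithfulness then upgrades $\Lambda \to \End_{B_\cC}(R)^{op}$ to an isomorphism. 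This is routine once the bimodule structures are written out, so the proof is short.
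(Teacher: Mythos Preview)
Your proposal is correct and follows essentially the same route as the paper: both use the full faithfulness of $\Hom_\Gamma(T_\cC,-)$ on $\cT(T_\cC)$ (the paper cites \cite[VI, \S3.2]{ASS}, which is the Brenner--Butler equivalence you invoke via Theorem~\ref{BBtilting}(a)) to identify $\End_{B_\cC}(R)$ with $\End_\Gamma(\widetilde Q)$, and then apply $\Lambda\cong\End_\Gamma(\widetilde Q)^{op}$ from Proposition~\ref{morita Q}. Your additional remarks on bimodule bookkeeping and the projectivity of $R$ are extra care rather than a different argument.
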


\begin{proof}
$\End_{B_\cC} (R)=\Hom_{B_\cC} (\Hom_\Gamma(T_\cC,\widetilde{Q}), \Hom_\Gamma(T_\cC,\widetilde{Q})) \cong \Hom_\Gamma(\widetilde{Q},\widetilde{Q})=\End_\Gamma(\widetilde{Q})$; see \cite[VI, \S3.2]{ASS}. Thus, $\Lambda \cong \End_{\Gamma}(\widetilde{Q})^{op} \cong \End_{B_\cC} (R)^{op}$. 
\end{proof}

Combining this fact with Corollary $\ref{end P}$ and \cite[Theorem 1.2]{W}, we have:

\begin{cor}\label{2 ig}
Suppose $\gldim B_\cC \leq 3$. Then $B_\cC$ is $2$-IT and hence $\Lambda \cong \End_{B_\cC} (R)^{op}$ is also $2$-IT, and $\findim \Lambda \leq \psi(\Hom_{B_\cC}(R,B_\cC))+3.$
\end{cor}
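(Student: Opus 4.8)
The plan is to chain together three already-available facts. First, recall from Wei's theorem \cite[Theorem 1.2]{W} that the class of $2$-IT algebras is closed under taking endomorphism algebras of projective modules. The hypothesis $\gldim B_\cC \leq 3$ means, by Igusa–Todorov, that $B_\cC$ is a $2$-IT algebra (indeed any algebra of global dimension at most $3$ is $2$-IT, as remarked just above). By Lemma \ref{BR}, we have $\Lambda \cong \End_{B_\cC}(R)^{op}$ where $R = \Hom_\Gamma(T_\cC,\widetilde{Q})$ is a projective $B_\cC$-module. Applying the closure property of $2$-IT algebras under endomorphism algebras of projectives, it follows immediately that $\Lambda$ is $2$-IT as well.

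Next, I would extract the finitistic dimension bound. Since $\Lambda$ is $2$-IT, the discussion preceding the corollary (using Lemma \ref{psi}) gives $\findim \Lambda \leq 2+1+\psi(V) = 3+\psi(V)$ for the relevant witnessing module $V$. To pin down $V$ in terms of our data, I would instead invoke Corollary \ref{end P} directly: with $B_\cC$ playing the role of the algebra $\Gamma$ there (it has $\gldim B_\cC \leq 3$) and $R$ playing the role of the projective module $P$, Corollary \ref{end P} yields
$$\findim \Lambda = \findim\big(\End_{B_\cC}(R)^{op}\big) \leq \psi\big(\Hom_{B_\cC}(R,B_\cC)\big)+3,$$
where $\Hom_{B_\cC}(R,B_\cC)$ is regarded as a $\Lambda$-module via $\Lambda \cong \End_{B_\cC}(R)^{op}$. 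This is exactly the stated inequality.

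One small bookkeeping point to verify is that Corollary \ref{end P} as stated is phrased for $\Lambda = \End_\Gamma(P)^{op}$ with $P$ projective and $\gldim\Gamma\leq 3$, and here we are applying it with the substitution $\Gamma \rightsquigarrow B_\cC$, $P \rightsquigarrow R$; this is legitimate precisely because $R$ is projective over $B_\cC$ (noted in the line before Lemma \ref{BR}) and $\gldim B_\cC\leq 3$ by hypothesis. I would spell out this substitution explicitly so the reader does not have to re-derive it. I do not anticipate a genuine obstacle here — the content of the corollary is entirely the combination of Lemma \ref{BR}, Corollary \ref{end P}, and Wei's closure result; the only care needed is to state which algebra/module plays which role in each cited statement and to record that $R$ is projective, which is already established.
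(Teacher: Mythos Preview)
Your proposal is correct and takes essentially the same approach as the paper: the paper simply states ``Combining this fact with Corollary~\ref{end P} and \cite[Theorem 1.2]{W}, we have:'' before the corollary, and your write-up spells out exactly this combination (Lemma~\ref{BR} for $\Lambda\cong\End_{B_\cC}(R)^{op}$, Wei's closure result for the $2$-IT claim, and Corollary~\ref{end P} with the substitution $\Gamma\rightsquigarrow B_\cC$, $P\rightsquigarrow R$ for the bound).
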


%Combining Lemma $\ref{BR}$ with Proposition $\ref{QC}$ we have:
%\begin{cor}
%Each $\Lambda$-module $M$ is isomorphic to $\Hom_B(R,\Hom_\Gamma(T_\cC,C))$ for some $C\in \Gen(T_\cC)$.
%\end{cor}

As a consequence of this result and Theorem \ref{drop}, we conclude with the following special case of the Finitistic Dimension Conjecture: for an artin algebra $\Lambda$, let $X$ be its Auslander generator-cogenerator. Let $\Gamma := \End_\Lambda(X)^{op}$, then $\Gamma$ is of dominant dimension at least $2$ and satisfies $\gldim \Gamma = \repdim \Lambda$. Let $T_\cC$ be the unique tilting module in $\cC_\Gamma$. Then:
\begin{cor}\label{findim}
If $\repdim \Lambda\leq 4$ and $\pd_\Gamma (\tau T_\cC) \leq3$, then $\findim \Lambda<\infty$. 
\end{cor}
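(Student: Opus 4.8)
The plan is to chain together the results already established in Section \ref{endomorphism algebra} and Section \ref{findim conjecture}. We are given an artin algebra $\Lambda$ with $\repdim \Lambda \leq 4$, an Auslander generator-cogenerator $X$, and $\Gamma := \End_\Lambda(X)^{op}$, so that $\gldim \Gamma = \repdim \Lambda \leq 4$ and $\domdim \Gamma \geq 2$ by the Morita-Tachikawa correspondence (Theorem \ref{Morita-Tachikawa}). Since $\domdim \Gamma \geq 2$, Theorem \ref{tilting} gives us the unique tilting module $T_\cC \in \cC_\Gamma$, and we set $B_\cC := \End_\Gamma(T_\cC)^{op}$. The hypothesis $\pd_\Gamma(\tau T_\cC) \leq 3$ needs to be combined with the bound $\gldim \Gamma \leq 4$.

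First I would dispose of the trivial case: if $\gldim \Gamma \leq 3$, then by Corollary \ref{gldim B} we already have $\gldim B_\cC \leq \gldim \Gamma \leq 3$. Otherwise $\gldim \Gamma = 4$, and then the hypothesis $\pd_\Gamma(\tau T_\cC) \leq 3 < 4 = \gldim \Gamma$ is exactly the condition appearing in Theorem \ref{drop}, which yields $\gldim B_\cC < \gldim \Gamma = 4$, i.e. $\gldim B_\cC \leq 3$. So in all cases under the hypotheses we obtain $\gldim B_\cC \leq 3$.

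Next I would invoke Corollary \ref{2 ig}: since $\gldim B_\cC \leq 3$, the algebra $B_\cC$ is $2$-IT, and by Lemma \ref{BR} together with \cite[Theorem 1.2]{W} (the class of $2$-IT algebras is closed under passing to endomorphism rings of projective modules), $\Lambda \cong \End_{B_\cC}(R)^{op}$ is again $2$-IT. Finally, as recalled in Section \ref{findim conjecture}, any $2$-IT algebra has finite finitistic dimension (this is the $m=2$ case of \cite[Theorem 1.1]{W}, or directly via Lemma \ref{psi} and Corollary \ref{end P}), so $\findim \Lambda < \infty$. One could even record the explicit bound $\findim \Lambda \leq \psi(\Hom_{B_\cC}(R,B_\cC)) + 3$ from Corollary \ref{2 ig}.

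The main obstacle, and really the only nontrivial input, is the case distinction feeding into Theorem \ref{drop}: one must be careful that when $\gldim \Gamma = 4$ the hypothesis $\pd_\Gamma(\tau T_\cC) \leq 3$ is genuinely the strict inequality $\pd_\Gamma(\tau T_\cC) < \gldim \Gamma$ required there, and when $\gldim \Gamma \leq 3$ one instead uses the unconditional bound of Corollary \ref{gldim B}; merging these gives $\gldim B_\cC \leq 3$ uniformly. Everything after that is a direct citation chain through Corollary \ref{2 ig}, so no further calculation is needed.
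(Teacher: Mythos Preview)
Your proposal is correct and follows essentially the same route as the paper: bound $\gldim B_\cC \leq 3$ and then apply Corollary \ref{2 ig}. The paper's proof simply cites Theorem \ref{drop} to obtain $\gldim B_\cC \leq 3$ without separating cases, whereas your explicit split into $\gldim \Gamma \leq 3$ (handled by Corollary \ref{gldim B}) versus $\gldim \Gamma = 4$ (handled by Theorem \ref{drop}) makes the argument more careful, since Theorem \ref{drop} literally requires the strict inequality $\pd_\Gamma(\tau T_\cC) < \gldim \Gamma$.
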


\begin{proof}
With our setting and assumptions, $ \gldim \Gamma = \repdim \Lambda \leq 4$ and $\pd_\Gamma (\tau T_\cC) \leq 3$. Let $B_\cC=\End_\Gamma(T_\cC)^{op}$. By Theorem $\ref{drop}$, we must have $\gldim B_\cC \leq 3$. It follows from Corollary $\ref{2 ig}$ that $\findim \Lambda \leq \psi(\Hom_{B_\cC}(R,B_\cC))+3$, which is finite. 
\end{proof}

\begin{exm}
\label{beilinson}
We give an example of an algebra of representation dimension $4$ which satisfies the hypothesis in Corollary $\ref{findim}$ and hence the set of such algebras is not empty. 
%has finite finitistic dimension. 
Let $\Lambda$ be the Beilinson algebra with $3$ vertices, that is, it is defined by the following quiver
$$
\xymatrix{3\ar[r]^{x_2}\ar@/_1pc/[r]_{x_1}\ar@/^1.2pc/[r]^{x_3}&2\ar[r]^{x_2}\ar@/_1pc/[r]_{x_1}\ar@/^1.2pc/[r]^{x_3}&1}
$$
with relations $I=\langle x_ix_j - x_jx_i \rangle$ for all $1\leq i,j\leq 3$. It was studied and shown by Krause-Kussin, Iyama, and Oppermann that $\repdim\Lambda=4$ (see for example \cite[Examples 7.3 and A.8]{O}). To check that $\Lambda$ satisfies the hypothesis in Corollary $\ref{findim}$, one needs to apply and check Propositions \ref{tau T equiv} and \ref{tau T equiv 2}(3). 
\end{exm}

%%%%%%%%%%%%%%%%%%%%%%%%%%%%%%%
%%%~~~~~~~~~~~~~~~~~~~~~~
%\section{Extensions of Auslander algebras by injectives}
\section{Special class: Extensions of Auslander algebras by injective modules}\label{secA[I]}\label{Auslander}\label{A[I]}

We now describe a procedure to create algebras $\Lambda$ which will have tilting modules that are generated and cogenerated by projective-injective modules, that is, those tilting modules are in $\mathcal C_{\Lambda}$. 
In particular, we describe a class of algebras, constructed from Auslander algebras by ``extending" the Auslander algebras by certain injective modules. 
%This class of algebras will also have a tilting module in $\mathcal C_{\Lambda}$.
%\red{(IDUN 4,5,6,7,8,9,10) - This is how we constructed many examples of algebras which had tilting modules in $\mathcal C$.}\\\\
%\red{(IDUN 11) - This example shows that we need to assume that injectives have no common composition factors - which is the same as endomorphism ring being product of fields, i.e. End$_{\Lambda}(I)=K^r$. In the example $I_6$ and $I_8$ both have composition factors 8 and 9 and there is not tilting module in $\mathcal C_{A[I]}$; $A[I]$ is defined below in Definition \ref{A[I]}.}\\\\
%\red{(IDUN 37, 38) - this example shows that the assumption $\Hom(I,Q)=0$ in the definition below is needed. The simple socles of the projective-injectives are $\{B,C,F\}$. So, if we extend on the injective with composition factors CBA, there is no tilting in $\mathcal C$, also for injective with composition factors FED, or HFGE (page 39) there is no Tilting in  $\mathcal C$, but if we extend on injective with composition factors HG, there is a tilting module (page 40). This illustrates why we put conditions in the next definition.}\\\\
%\red{IDUN 57, bottom part - example when End$_{\Lambda}(I)\neq K^r$ and there is no tilting in $\mathcal C$.}

%%%%%%%%%%%%%%%%%%%%%%%%%%%%%%%
%\subsection{General Triangular Matrix Construction} 
\subsection{General triangular matrix construction} 

We now recall the construction of an algebra $\Lambda$ from algebras $R$ and $S$ and a bimodule $_SM_R$ as investigated in \cite{FGR}. We also recall some basic properties of these algebras, together with the description of modules over such algebras. %We will use this construction for creating new algebras from Auslander algebras.

\begin{defn} \label{triangular} Let $R$ and $S$ be finite dimensional algebras over the field $k$. Let $_SM_R$  be an $S$-$R$-bimodule. Define an algebra $\Lambda$, which we will often denote by $\Lambda=T(R,S, _S\!M_R)$: %or  $T(S,R,M)$:
$$\Lambda=T(R,S, _S\!M_R):=\left[\begin{matrix}R&0\\_SM_R&S\end{matrix}\right],$$
where the multiplication is defined using the bimodule structure of $_SM_R$.
\end{defn}

A convenient way of viewing $\Lambda$-modules is using the category of triples $\mathcal T$ in \cite{FGR}. We now recall this definition and some of the basic properties.%: equivalence between $\modd \Lambda$ and $\mathcal T$, and using this equivalence we give a description of projective and injective $\Lambda$-modules and functors relating $\modd R$ and $\modd \Lambda$.

\begin{defn} \label{triples} Let $\Lambda=T(R,S,  _S\!M_R)$. Define the category of triples $\mathcal T$ as follows:\\
Objects of $\mathcal T$ are triples: 
$(_RW,\ _S\!V,\  f\!:\ _S\!M_R\otimes_RW \to\ _S\!V)$, where $_RW$ and $_SV$ are left $R$ and $S$-modules respectively, and $f$ is an $S$-homomorphism.
Morphisms in $\mathcal T$ are pairs:
 \\ $(\alpha, \beta): (_RW,\ _S\!V, \ f\!:\ _S\!M_R \otimes_RW\to \ _S\!V) \to (_RW', \ _S\!V', \ f'\!: \ _S\!M_R\otimes_RW' \to \ _S\!V')$, where $\alpha: \ _R\!W\to \ _R\!W'$ is an $R$-homomorphism and $\beta: \ _S\!V\to \ _S\!V'$ is an $S$-homomorphism making appropriate diagrams commute.
\end{defn}

\begin{rem} Let $\Lambda=T(R,S, _S\!M_R)$ and let $\mathcal T$ be the associated category of triples. Then the categories $\modd \Lambda$ and $ \mathcal T$ are equivalent. Using this fact we refer to triples as $\Lambda$-modules.
\end{rem}

\begin{prop}\label{Reiten}\cite[Proposition III.2.5]{ARS}, \cite{FGR} Let $\Lambda=T(R,S, _S\!M_R)$ and let $\mathcal T$ be the associated category of triples.
\begin{enumerate}
\item Indecomposable projective objects in $\mathcal T$ are $(0,_S\!P,0)$ and $(_RQ,\   _S\!M_R\otimes _R\!Q,\  Id_{_S\!M_R\otimes _R\!Q} )$, where $_SP$ and $_RQ$ are indecomposable projective $S$-modules and $R$-modules respectively.
\item Indecomposable injective objects in $\mathcal T$ are $(_RJ,0,0)$ and \\
$(\Hom_S(M,I),\ _SI,\ \eta:\ _SM_R\otimes\Hom_S(M,I)\ \stackrel{\simeq}\longrightarrow\ _SI)$,
where $_RJ$ and $_SI$ are indecomposable injective $R$-modules and $S$-modules respectively and $\eta(m\otimes f):=f(m)$ is an $S$-isomorphism.
\end{enumerate}
\end{prop}

The above proposition has a description of all indecomposable projective and injective objects in $\mathcal T$ and hence, using equivalence, projective and injective $\Lambda$-modules. In addition to this, we will also be using the following functor which relates categories of $S$ and $\Lambda$-modules.

\begin{prop}\label{Psi} Let $\Lambda=T(R,S, _S\!M_R)$. Then $\Psi(_SX):=(0,_S\!X, 0)$ defines a functor
$\Psi: \modd S \to \modd \Lambda,$
which has the following properties: 
\begin{enumerate}
\item $\Psi$ is fully-faithful.
\item $\Psi$ preserves kernels.
\item $\Psi$ preserves projective resolutions.
\end{enumerate}
\end{prop}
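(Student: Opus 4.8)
The plan is to verify the three properties directly from the description of $\Lambda$-modules as triples (Definition \ref{triples} and the preceding Remark). First I would record exactly what $\Psi$ does on morphisms: given an $S$-homomorphism $g : {}_SX \to {}_SX'$, set $\Psi(g) := (0, g)$, which is a legitimate morphism in $\mathcal T$ since the relevant diagram (involving $f = 0$ and $f' = 0$) commutes trivially; functoriality of $\Psi$ is then immediate. For (1), I would compute $\Hom_{\mathcal T}\bigl((0,{}_SX,0),(0,{}_SX',0)\bigr)$: a morphism is a pair $(\alpha,\beta)$ with $\alpha : 0 \to 0$ (hence $\alpha = 0$) and $\beta : {}_SX \to {}_SX'$ an $S$-homomorphism, and the commuting-diagram condition is vacuous because both structure maps are zero. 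Thus the map $g \mapsto \Psi(g)$ is a bijection $\Hom_S(X,X') \xrightarrow{\ \sim\ } \Hom_{\mathcal T}(\Psi X, \Psi X')$, i.e. $\Psi$ is fully faithful.

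For (2), I would use the fact (from the equivalence $\modd\Lambda \simeq \mathcal T$, or directly from \cite{FGR}) that kernels in $\mathcal T$ are computed componentwise: the kernel of $(\alpha,\beta) : ({}_RW,{}_SV,f) \to ({}_RW',{}_SV',f')$ has underlying modules $\ker\alpha$ and $\ker\beta$ with the induced structure map. Applied to a morphism $\Psi(g) = (0,g)$ between objects of the form $(0,X,0)$, the kernel is $(0, \ker g, 0) = \Psi(\ker g)$, together with the inclusion; so $\Psi$ preserves kernels. (I should note that $\Psi$ is exact on the left in this strong sense, which is really the content needed.)

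For (3), the key input is Proposition \ref{Reiten}(1): the indecomposable projective objects of $\mathcal T$ of the form $(0,{}_SP,0)$ are exactly the images under $\Psi$ of the indecomposable projective $S$-modules. Hence $\Psi$ sends projectives to projectives, and since $\Psi$ is additive it sends a projective $S$-module to a projective $\Lambda$-module. Combining this with (2): if $\cdots \to P_1 \to P_0 \to X \to 0$ is a projective resolution of ${}_SX$, then applying $\Psi$ gives a complex $\cdots \to \Psi P_1 \to \Psi P_0 \to \Psi X \to 0$ of projective $\Lambda$-modules; exactness at each spot follows because $\Psi$ preserves kernels and cokernels of the maps in question — more precisely, exactness of the original resolution means each map is the kernel of the next (after passing to images), and $\Psi$ preserves these kernels, so the image complex is again exact. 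Therefore $\Psi$ preserves projective resolutions, and in particular $\pd_\Lambda \Psi(X) = \pd_S X$.

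The main obstacle I anticipate is purely bookkeeping rather than conceptual: one must be careful that "preserves kernels" is being used in the right categorical sense (kernels of arbitrary morphisms, computed componentwise in $\mathcal T$) and that this genuinely propagates through a resolution to give exactness — i.e. that $\Psi$, being additive and kernel-preserving between abelian categories, is automatically exact. Once that is pinned down, (3) follows formally from (2) plus the identification of the projectives $(0,{}_SP,0)$ in Proposition \ref{Reiten}(1), with no further computation needed. $\s$
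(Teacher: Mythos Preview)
Your argument is correct. The paper actually states Proposition~\ref{Psi} without proof, treating it as an elementary consequence of the description of $\modd\Lambda$ via triples and of Proposition~\ref{Reiten}(1); your write-up supplies exactly the verification one would expect, so there is nothing to compare against beyond noting that the paper leaves these details to the reader.

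One small point worth tightening: for part~(3) you invoke that $\Psi$ preserves cokernels as well as kernels, which is true (cokernels in $\mathcal T$ are also computed componentwise) but is not literally part of statement~(2). It would be cleanest to say once that both kernels and cokernels in $\mathcal T$ are componentwise, hence $\Psi$ is exact; then~(3) follows immediately from exactness together with the fact that $\Psi$ carries projective $S$-modules to projective $\Lambda$-modules via Proposition~\ref{Reiten}(1). This avoids the slightly roundabout phrasing about ``each map is the kernel of the next (after passing to images).''
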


%%%%%%%%%%%%%%%%%%%%%%%%%%%%%%%
%\subsection{Triangular Matrix Construction from Auslander algebras}
\subsection{Triangular matrix construction from Auslander algebras} %Here we describe some additional properties of the algebras $T(R,S,M)$ where $M=I$ is an injective $S$-module and $R=End_S(I)^{op}$, making $I$ an $S$-$R$-bimodule. 

In this section, we will look at the triangular matrix where $S=A$ is an Auslander algebra, $_AE$ is a special injective $A$-module and $R=\End_A(E)^{op}$, then $_AE_R$ is an $A$-$R$-bimodule. For the simplicity of notation we will denote the algebra $T(R,A, _A\!E_R)$ by $A[E]$.

%Let $\{S_{i_1},\dots, S_{i_m}\}$ be the simple socles of these projective-injective modules. 

\begin{defn} \label{A[I]}
 Let $A$ be an Auslander algebra and let $\widetilde{Q}=\bigoplus _{i=1}^t  Q_i$, where
$\{Q_1,\dots, Q_t\}$ is a set of representatives of isomorphism classes of all indecomposable projective-injective $A$-modules. 
 We choose an $A$-module $E$ which satisfies the following conditions:
%Assumptions on the injective $I$: \\ 
\begin{enumerate}
\item $E=I_1\oplus \dots \oplus I_r$, where the $I_i$ are indecomposable injective $A$-modules for all $i$,
\item  $\End_{A}(I_i)=K_i$, where $K_i$ is a field,
\item  $\Hom_{A}(I_i, I_j)=0$ for all $i\neq j$, 
%such that End$_{A}(I)=R$ is semisimple algebra
%the summands have no common composition factors 
\item  $\Hom_A(E,\widetilde{Q})=0$.
\end{enumerate}
Let $A[E]: = T(R, A, _A\!E_R)$ be the triangular matrix algebra as in Definition \ref{triangular}. 
%Define the algebra (as in Idun's PhD thesis):
$$A[E]:=\left[\begin{matrix}R&0\\ _AE_R&A\end{matrix}\right].$$
%\red{(This needs proper definition. Also, do we want to consider only algebraically closed case?)}
\end{defn}

We now use the fact that the category of $A[E]$-modules is equivalent to the category of triples $(_RW,\ _A\!V,\  f\!:\ _A\!E_R\otimes_RW\to\ _A\!V)$, as described in Definition \ref{triples}. 

\begin{lem} Let $E$, $I_i$, $K_i$ and $A[E]$ be as above.
%described as in Definition $\ref{A[I]}$.
 The representatives of the isomorphism classes of indecomposable projective-injective $A[E]$-modules correspond to the following triples:
\begin{enumerate}
\item There are $t$ indecomposable projective-injective modules of type $(0, _A\!Q_i, 0)$, where $Q_i$ are the indecomposable projective-injective $A$-modules, and
\item  There are $r$ indecomposable projective-injective modules of type \\ $(K_i, \ _A\!I_i, \ \eta_i\!:\ _A\!E_R\otimes_RK_i \xrightarrow{\cong} \ _A\!I_i)$, where $I_i$ are the indecomposable summand of the $A$-module $E$.

\end{enumerate}
\end{lem}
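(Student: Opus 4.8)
The plan is to identify the indecomposable projective-injective $A[E]$-modules by combining the descriptions of projectives and injectives from Proposition \ref{Reiten} with the hypotheses \ref{A[I]}(1)--(4). Recall that over $\Lambda = T(R,A,{}_A\!E_R)$, the indecomposable projectives are the triples $(0,{}_A\!P,0)$ with ${}_A\!P$ indecomposable projective, and $({}_RQ,{}_A\!E_R\otimes_R Q, \mathrm{Id})$ with ${}_RQ$ indecomposable projective; dually the indecomposable injectives are $({}_R\!J,0,0)$ with ${}_R\!J$ indecomposable injective, and $(\Hom_A(E,I),{}_A\!I,\eta)$ with ${}_A\!I$ indecomposable injective. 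A module is projective-injective iff it appears (up to isomorphism) in both lists. I would first observe that $R = \End_A(E)^{op}$ with $E = \bigoplus_{i=1}^r I_i$ satisfying conditions \ref{A[I]}(2)--(3) is a product of the fields $K_i$, hence semisimple, so its indecomposable projectives and indecomposable injectives coincide and are exactly the $K_i$; thus ${}_R\!J$ ranges over the same set $\{K_i\}$ as ${}_R\!Q$.

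Next I would match the two lists term by term. For the first family: a triple of the form $(0,{}_A\!P,0)$ is projective, and it is injective iff it is of the form $({}_R\!J,0,0)$ — impossible unless ${}_A\!P=0$ — or of the form $(\Hom_A(E,I),{}_A\!I,\eta)$. The latter forces $\Hom_A(E,I)=0$ and ${}_A\!P={}_A\!I$ injective, so ${}_A\!P$ is an indecomposable projective-injective $A$-module, i.e.\ ${}_A\!P\cong Q_j$ for some $j$, and moreover $\Hom_A(E,Q_j)=0$, which is exactly hypothesis \ref{A[I]}(4). Hence we get precisely the $t$ triples $(0,{}_A\!Q_j,0)$. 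For the second family: the projective $({}_R\!Q,{}_A\!E_R\otimes_R Q,\mathrm{Id})$ with ${}_R\!Q=K_i$ must, to be injective, be of the form $(\Hom_A(E,I),{}_A\!I,\eta)$ (it cannot be $({}_R\!J,0,0)$ since ${}_A\!E_R\otimes_R K_i = I_i\ne 0$, using that $E$ is a faithful balanced-enough bimodule — concretely ${}_A\!E\otimes_R K_i\cong I_i$ since $K_i$ is the simple projective $R$-module corresponding to the summand $I_i$). So one needs ${}_A\!I = {}_A\!E_R\otimes_R K_i \cong I_i$ and then to check that the canonical map ${}_R\!K_i \to \Hom_A(E,I_i)$ induced by $\eta_i$ is an isomorphism; this holds because $\Hom_A(E,I_i)=\Hom_A(\bigoplus_j I_j, I_i)=\End_A(I_i)=K_i$ by conditions \ref{A[I]}(2)--(3). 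This yields precisely the $r$ triples $(K_i,{}_A\!I_i,\eta_i)$.

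The remaining check is that these lists of projectives and injectives are exhaustive in both directions and that no further coincidences occur — e.g.\ that no triple $({}_R\!J,0,0)$ is projective (it would have to be $(0,{}_A\!P,0)$ with ${}_R\!J=0$, impossible, or $({}_R\!Q,{}_A\!E\otimes Q,\mathrm{Id})$ with ${}_A\!E\otimes Q=0$, impossible since $E$ restricted to $R$ is faithful). Assembling the two families gives the claimed $t+r$ isomorphism classes, and distinctness across the two families is clear from the first (resp.\ middle) component being zero versus nonzero.

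The main obstacle I anticipate is the bookkeeping around the identification ${}_A\!E_R\otimes_R K_i \cong I_i$ and the compatibility of the structure map: one must verify carefully that under the equivalence between $\modd A[E]$ and the category of triples, the projective triple associated to the indecomposable projective $R$-module $K_i$ really has middle term $I_i$ and structure morphism the identity, and that this coincides — as a triple, i.e.\ including the map $f$ — with the injective triple $(\Hom_A(E,I_i),I_i,\eta_i)$ after applying the isomorphism $K_i\cong\Hom_A(E,I_i)$. This is essentially a diagram chase using Proposition \ref{Reiten} and conditions \ref{A[I]}(2)--(4); everything else is routine matching of the two explicit lists.
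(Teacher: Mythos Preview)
Your proposal is correct and follows essentially the same approach as the paper: use Proposition~\ref{Reiten} to list the indecomposable projectives and injectives as triples, then match them using conditions (2)--(4) of Definition~\ref{A[I]}. If anything, you are more thorough than the paper, which only verifies that the listed triples are projective-injective and leaves the exhaustiveness of the list (and the bimodule identification $E\otimes_R K_i\cong I_i$) implicit.
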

\begin{proof} (1) Using Proposition \ref{Reiten}(1), it is clear that the $A[E]$-modules $(0, _A\!Q_i,0)$ are indecomposable and projective. To see that they are injective: by Proposition \ref{Reiten}(2), the indecomposable injective $A[E]$-modules are given as
$$(\Hom_A(E,Q_i),\ _AQ_i,\ \eta_i:\ _AE_R\otimes\Hom_A(E,Q_i)\ \to\ _AQ_i),$$ 
which are equal to $(0,_A\!Q_i,0)$, using the fact that $\Hom_A(E,\widetilde{Q})=0$ by condition (4) in Definition \ref{A[I]}. Therefore, $(0,_A\!Q_i,0)$ are indecomposable projective-injective $A[E]$-modules. 

(2) By Proposition \ref{Reiten}(2) and conditions (2) and (3) in Definition \ref{A[I]}, it is clear that the $A[E]$-modules $(K_i, \ _A\!I_i, \ \eta_i\!:\ _A\!E_R\otimes_RK_i \xrightarrow{\cong} \ _A\!I_i)$ are indecomposable and injective. They are also projective by Proposition \ref{Reiten}(1) and the fact that $K_i$ are projective $R$-modules.
\end{proof}

By \cite[Lemma 1.1]{CBS} and Corollary \ref{cotilting cor}, we know that for an Auslander algebra $A$, the tilting module $T_\cC$ exists in $\cC_{A}$. We now relate this tilting module $T_\cC$ to a module in $\cC_{A[E]}$ and show that there exists a tilting module in $\cC_{A[E]}$.

\begin{lem} Let $T_\cC$ be a tilting module in $\cC_{A}$. Then $(0, T_\cC, 0)$ is a module in  
$\cC_{A[E]}$.
\end{lem}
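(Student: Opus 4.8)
The plan is to show that $(0, T_\cC, 0)$ lies in $\cC_{A[E]}$ by verifying that it is both generated and cogenerated by the projective-injective $A[E]$-modules described in the previous lemma. I would begin by recalling that $\Psi(-)=(0,-,0)\colon \modd A \to \modd A[E]$ is the fully-faithful, kernel-preserving, exact-on-projective-resolutions functor of Proposition~\ref{Psi}, and note that the same triple $(0,-,0)$ also behaves well with respect to certain exact sequences in the $V$-slot: a short exact sequence $0\to V'\to V\to V''\to 0$ of $A$-modules yields a short exact sequence $0\to(0,V',0)\to(0,V,0)\to(0,V'',0)\to 0$ of $A[E]$-modules, since all the tensor maps $f$ are zero and exactness is checked componentwise.

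Next I would use that $T_\cC$ is in $\cC_A$, so there are $A$-module exact sequences $0\to N\to Q_0^A\to T_\cC\to 0$ and $0\to T_\cC\to Q_1^A\to L\to 0$ with $Q_0^A, Q_1^A$ projective-injective $A$-modules in $\add\widetilde{Q}$. Applying $(0,-,0)$ gives exact sequences
$$0\to (0,N,0)\to (0,Q_0^A,0)\to (0,T_\cC,0)\to 0 \quad\text{and}\quad 0\to (0,T_\cC,0)\to (0,Q_1^A,0)\to (0,L,0)\to 0.$$
By the first lemma preceding this statement, each $(0,Q_i,0)$ with $Q_i$ an indecomposable projective-injective $A$-module is an indecomposable projective-injective $A[E]$-module; hence $(0,Q_0^A,0)$ and $(0,Q_1^A,0)$ are projective-injective $A[E]$-modules, in fact direct sums of summands of $\widetilde{Q}_{A[E]}$. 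The first sequence exhibits $(0,T_\cC,0)$ as a quotient of a projective-injective $A[E]$-module, so $(0,T_\cC,0)\in \Gen\widetilde{Q}_{A[E]}$; the second exhibits it as a submodule of a projective-injective $A[E]$-module, so $(0,T_\cC,0)\in \Cogen\widetilde{Q}_{A[E]}$. Therefore $(0,T_\cC,0)\in \cC_{A[E]}$.

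The only subtlety, and the place I would be most careful, is making sure the projective-injective $A[E]$-modules obtained this way are genuinely among the projective-\emph{injectives} and not merely projective (or merely injective): this is exactly where conditions (2)--(4) of Definition~\ref{A[I]} and the previous lemma enter, guaranteeing that the indecomposable injective $A[E]$-modules of the form $(\Hom_A(E,Q_i),{}_AQ_i,\eta_i)$ collapse to $(0,Q_i,0)$ because $\Hom_A(E,\widetilde{Q})=0$, so $(0,Q_i,0)$ is indeed projective-injective. Once that identification is in hand, the argument is just transporting the two defining short exact sequences for $T_\cC\in\cC_A$ through the functor $(0,-,0)$, so I do not anticipate any further obstacle.
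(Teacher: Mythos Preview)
Your proposal is correct and follows essentially the same approach as the paper: the paper's proof is a one-sentence version of yours, simply noting that $(0,T_\cC,0)$ is a submodule and quotient of projective-injective $A[E]$-modules because $T_\cC$ is a submodule and quotient of modules in $\add\widetilde{Q}$. Your added care in spelling out the exactness of $(0,-,0)$ on short exact sequences and the identification of $(0,Q_i,0)$ as projective-injective via the previous lemma is entirely appropriate and makes the argument more transparent.
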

\begin{proof} 
The $A[E]$-module $(0, T_\cC, 0)$ is a submodule and a quotient module of the projective-injective $A[E]$-modules since  $T_\cC$ is submodule and quotient module of the modules in $\add \widetilde{Q}$.
%We have to show that $(0, T_\cC, 0)$ is a submodule and quotient module of the projective-injective $A[E]$-modules. This follows from the fact that $T_\cC$ is submodule and quotient module of the modules in $\add \widetilde{Q}$.
\end{proof}

\begin{lem}  Let $T_\cC$ be a tilting module in $\cC_{A}$. Then 
\begin{enumerate}
\item $\pd _{A[E]}(0, T_\cC, 0)\leq 1$,
\item $\Ext^1_{A[E]}((0,T_\cC,0),(0,T_\cC,0))=0$.
\end{enumerate}
\end{lem}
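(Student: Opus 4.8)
## Proof Proposal

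The plan is to transport the tilting-module properties of $T_\cC$ from $\modd A$ to $\modd A[E]$ via the functor $\Psi$ from Proposition \ref{Psi}, applied to $S = A$. Since the paper has established that $\Psi$ is fully faithful, preserves kernels, and preserves projective resolutions, both statements should follow almost formally, with a little care needed to identify which projectives appear.

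For part (1), I would start from a minimal projective resolution $0 \to P_1 \to P_0 \to T_\cC \to 0$ in $\modd A$ (recall $\pd_A T_\cC \leq 1$ by Proposition \ref{rigid}, since $A$ is an Auslander algebra so Corollary \ref{C_A} applies). Applying $\Psi$ and using that it preserves projective resolutions (Proposition \ref{Psi}(3)), we obtain an exact sequence $0 \to \Psi(P_1) \to \Psi(P_0) \to (0, T_\cC, 0) \to 0$ with $\Psi(P_0), \Psi(P_1)$ projective $A[E]$-modules; indeed by Proposition \ref{Reiten}(1) the modules of type $(0, {}_A P, 0)$ with $P$ projective over $A$ are precisely among the indecomposable projective $A[E]$-modules (these are the ones supported on the $A$-corner). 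Hence $\pd_{A[E]}(0, T_\cC, 0) \leq 1$.

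For part (2), I would compute $\Ext^1_{A[E]}((0,T_\cC,0),(0,T_\cC,0))$ by applying $\Hom_{A[E]}(-, (0,T_\cC,0))$ to the projective resolution from part (1). Because $\Psi$ is fully faithful (Proposition \ref{Psi}(1)), we have $\Hom_{A[E]}(\Psi(P_i), \Psi(T_\cC)) \cong \Hom_A(P_i, T_\cC)$ naturally in a way compatible with the differentials, so the relevant complex of $\Hom$-groups over $A[E]$ is isomorphic to the one over $A$ computing $\Ext^1_A(T_\cC, T_\cC)$. Since $T_\cC$ is a (partial) tilting $A$-module, $\Ext^1_A(T_\cC, T_\cC) = 0$, and therefore $\Ext^1_{A[E]}((0,T_\cC,0),(0,T_\cC,0)) = 0$.

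The main obstacle — really the only point requiring attention — is verifying that $\Psi$ genuinely induces an isomorphism on the relevant $\Ext^1$ and not merely on $\Hom$; that is, checking that the projective resolution $\Psi(P_\bullet) \to \Psi(T_\cC)$ is still a projective resolution in $\modd A[E]$ (not just an exact sequence of $\Psi$-images), which is exactly the content of Proposition \ref{Psi}(2)--(3). Once that is in hand, full faithfulness upgrades the $\Hom$-level agreement to an $\Ext^1$-level agreement, and both assertions drop out. An alternative, if one prefers to avoid the resolution bookkeeping, is to use the category-of-triples description directly: a short exact sequence realizing $\pd \leq 1$ and the vanishing of self-extensions can both be read off from the fact that morphisms and extensions between triples of the form $(0, V, 0)$ and $(0, V', 0)$ coincide with those between $V$ and $V'$ over $A$, since the $R$-component and the structure map $f$ are forced to be zero.
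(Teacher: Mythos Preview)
Your proof is correct and follows essentially the same approach as the paper: the paper's proof simply cites Proposition~\ref{Psi}(3) for part (1) and Proposition~\ref{Psi}(1) and (3) for part (2), which is exactly what you have spelled out in detail. One small remark: you don't need to invoke Corollary~\ref{C_A} or Proposition~\ref{rigid} to get $\pd_A T_\cC \leq 1$, since this is already part of the definition of a (classical) tilting module.
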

\begin{proof}  Part (1) follows from Proposition $\ref{Psi}$ (3). Part (2): 
  Proposition $\ref{Psi}$ (1) and (3) implies that  $\Ext^1_{A[E]}((0,T_\cC,0),(0,T_\cC,0))=\Ext_A^1(T_\cC,T_\cC)=0$.
\end{proof}

\begin{cor} Let $T_\cC$ be a tilting module in $\cC_{A}$. Then $(0, T_\cC, 0)$ is a partial tilting module in  
$\cC_{A[E]}$, with $n_A$ summands, where $n_A$ is the number of non-isomorphic simple $A$-modules.
\end{cor}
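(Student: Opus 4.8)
The plan is to combine the three preceding lemmas with the numerical characterization of tilting modules via the number of indecomposable summands. First I would recall that by the previous three lemmas, $(0,T_\cC,0)$ satisfies conditions (1) and (2) of Definition~\ref{tilt}: its projective dimension is at most $1$ and $\Ext^1_{A[E]}\!\big((0,T_\cC,0),(0,T_\cC,0)\big)=0$. Hence $(0,T_\cC,0)$ is a partial tilting $A[E]$-module. It then remains only to count its indecomposable summands.

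Next I would observe that the functor $\Psi\colon \modd A\to \modd A[E]$, $\Psi({}_AX)=(0,X,0)$, is fully faithful by Proposition~\ref{Psi}(1), so it preserves indecomposability and sends non-isomorphic modules to non-isomorphic modules. Since $T_\cC$ is a tilting $A$-module, it has exactly $n_A$ pairwise non-isomorphic indecomposable summands, where $n_A$ is the number of non-isomorphic simple $A$-modules (by the standard fact, recalled in the Remark after Definition~\ref{tilt}, that a tilting module has as many indecomposable summands as there are simple modules). Applying $\Psi$ to a decomposition $T_\cC=\bigoplus_{i=1}^{n_A} T_i$ into indecomposables gives $(0,T_\cC,0)=\bigoplus_{i=1}^{n_A}(0,T_i,0)$, a decomposition into $n_A$ pairwise non-isomorphic indecomposable $A[E]$-modules. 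Therefore $(0,T_\cC,0)$ is a partial tilting module with exactly $n_A$ indecomposable summands.

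There is essentially no obstacle here: this corollary is a bookkeeping statement assembling the content of the three lemmas that precede it, together with the elementary behavior of the fully faithful functor $\Psi$ on direct sum decompositions. The only point that deserves a word of care is to make sure the $n_A$ summands are genuinely pairwise non-isomorphic as $A[E]$-modules, which follows immediately from full faithfulness of $\Psi$; one should not confuse the count $n_A$ (number of simples of $A$) with the number of simples of $A[E]$, which is $n_A+r$, so that $(0,T_\cC,0)$ is only a \emph{partial} tilting module and the remaining $r$ summands needed to complete it to a genuine tilting module of $A[E]$ will have to be supplied separately (this is presumably the content of the results that follow).
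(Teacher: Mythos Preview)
Your proposal is correct and follows the same route the paper intends: the corollary in the paper carries no explicit proof, being an immediate consequence of the two lemmas right before it (membership in $\cC_{A[E]}$, and the conditions $\pd\le 1$ and $\Ext^1=0$) together with the standard count of indecomposable summands of a tilting module. Your only slight imprecision is attributing conditions (1) and (2) of Definition~\ref{tilt} to ``the previous three lemmas''; in fact only the immediately preceding lemma yields those two conditions, while the lemma before it gives membership in $\cC_{A[E]}$, but this does not affect the argument.
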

 
%\begin{lem}  Let $A[I]$ be the algebra as defined above. Then there are $r$ indecomposable projective-injective modules of type $(K_i,\ _AI_i,\ f:\! \ _AI_R\otimes_RK_i \xrightarrow{\cong} \ _AI_i)$.
%\end{lem}
 
\begin{thm} Let $A$ be an Auslander algebra and $A[E]$ be the algebra described in Definition $\ref{A[I]}$. Then there is a tilting module in $\cc_{A[E]}$.
\end{thm}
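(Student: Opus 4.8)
The plan is to exhibit an explicit tilting module in $\cC_{A[E]}$ by enlarging the partial tilting module $(0,T_\cC,0)$ constructed above with the $r$ ``new'' projective-injective $A[E]$-modules. Since $A$ is an Auslander algebra, by \cite[Lemma 1.1]{CBS} (equivalently Corollary \ref{cotilting cor}) there is a tilting module $T_\cC$ in $\cC_A$, and I would set
$$
T_{A[E]} \;:=\; (0, T_\cC, 0)\ \oplus\ \bigoplus_{i=1}^{r}\bigl(K_i,\ {}_A\!I_i,\ \eta_i\bigr),
$$
the second summand being the direct sum of the $r$ projective-injective $A[E]$-modules of the second type from the lemma preceding the theorem. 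First I would record that $T_{A[E]}\in\cC_{A[E]}$: the summand $(0,T_\cC,0)$ lies in $\cC_{A[E]}$ by the lemma above, each $(K_i,{}_A\!I_i,\eta_i)$ is projective-injective hence trivially lies in $\cC_{A[E]}$, and $\cC_{A[E]}$ is closed under finite direct sums.

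Next I would check the three tilting axioms of Definition \ref{tilt}. For $(1)$: the modules $(K_i,{}_A\!I_i,\eta_i)$ are projective, and $\pd_{A[E]}(0,T_\cC,0)\leq 1$ by the preceding lemma (which uses Proposition \ref{Psi}(3)), so $\pd_{A[E]}T_{A[E]}\leq 1$. For $(2)$: each $(K_i,{}_A\!I_i,\eta_i)$ is simultaneously projective and injective, so every $\Ext^1_{A[E]}$-group having one of these modules as an argument vanishes; and $\Ext^1_{A[E]}\bigl((0,T_\cC,0),(0,T_\cC,0)\bigr)\cong\Ext^1_A(T_\cC,T_\cC)=0$ by the preceding lemma. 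Hence $\Ext^1_{A[E]}(T_{A[E]},T_{A[E]})=0$, so $T_{A[E]}$ is a partial tilting module. For $(3)$ I would invoke the Remark following Definition \ref{tilt}: for a partial tilting module, $(3)$ is equivalent to having as many pairwise non-isomorphic indecomposable summands as there are simple modules. Since $A[E]$ is a triangular matrix algebra over $R$ and $A$ and $R\cong K_1\times\cdots\times K_r$ has $r$ simple modules, $A[E]$ has $n_A+r$ simple modules. The summand $(0,T_\cC,0)$ contributes exactly $n_A$ pairwise non-isomorphic indecomposables (it is a partial tilting module with $n_A$ summands, by the corollary above), the $(K_i,{}_A\!I_i,\eta_i)$ contribute $r$ more, and these $r$ are pairwise non-isomorphic and distinct from all summands of $(0,T_\cC,0)$, because the latter all have zero first component while the former do not. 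Therefore $T_{A[E]}$ has $n_A+r$ pairwise non-isomorphic indecomposable summands, so $(3)$ holds, and $T_{A[E]}$ is a tilting $A[E]$-module lying in $\cC_{A[E]}$, proving the theorem.

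An alternative route would bypass the explicit construction: verify directly that $\domdim A[E]\geq 2$ — using Proposition \ref{Reiten}(2) to identify the injective envelope of $(0,{}_A\!P,0)$ with $(0,I_0^A(P),0)$ whenever $\Hom_A(E,I_0^A(P))=0$, which holds by condition $(4)$ of Definition \ref{A[I]} because $I_0^A(P)$ and $I_1^A(P)$ are projective-injective $A$-modules (as $\domdim A\geq 2$) — and then apply Theorem \ref{tilting}; but the direct argument above is shorter. The only genuinely delicate point is the bookkeeping of indecomposable summands together with the identity (number of simple $A[E]$-modules) $=n_A+r$; every remaining verification is immediate from results already established, since all the $\Ext$-vanishing comes for free from the fact that the added summands are projective-injective and the functor $\Psi$ controls the homological behaviour of $(0,T_\cC,0)$.
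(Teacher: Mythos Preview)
Your proposal is correct and follows exactly the same approach as the paper: you construct the tilting module as $(0,T_\cC,0)\oplus\bigoplus_{i=1}^{r}(K_i,{}_A I_i,\eta_i)$ and verify it is tilting via the summand count $n_A+r$. Your version is simply more explicit in checking the tilting axioms and the membership in $\cC_{A[E]}$, whereas the paper states the conclusion directly, relying on the preceding lemmas and corollary.
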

\begin{proof} Let $T_\cC$ be a tilting module in $\cc_A$. Then $T_{\cC_{A[E]}}:=(0,T_\cC,0)\oplus \left(\bigoplus_{i=1}^rY_i\right)$ is a tilting module in $\cc_{A[E]}$, where $Y_i = (K_i,\ _AI_i,\ \eta_i:\! \ _AE_R\otimes_RK_i \xrightarrow{\cong} \ _AI_i)$. The number of indecomposable summands of $T_{\cC_{A[E]}}$ equals $n_A+r$ which is the number of non-isomorphic simple $A[E]$-modules.
\end{proof}

\section{Special class: Tilting modules in $\mathcal C_{\Lambda}$ for Nakayama algebras} \label{Nakayama} 
% \section{Special case: $\Lambda$ is Nakayama algebra}

\subsection{Nakayama algebras} 
\label{subsec:Nakayama}

In this section, let $\Lambda$ be any Nakayama algebra. We will show criteria for the subcategory $\mathcal C_{\Lambda}$ to contain a tilting module $T_\cC$. Due to Theorem $\ref{tilting}$, it is equivalent to finding Nakayama algebras with dominant dimension at least $2$. Notice that such a class of algebras has been classified by Fuller in \cite[Lemma 4.3]{F} in a module theoretic way. However, using Auslander-Reiten theory, our descriptions in Corollary~\ref{domdim cn} and Theorem~\ref{domdim ln} can be regarded as a combinatorial approach.

First, we recall some well known facts about Nakayama algebras.
A module $M$ over an artin algebra is called a \textbf {uniserial module} if the set of its submodules is totally ordered by inclusion, or equivalently, there is a unique composition series of $M$. An artin algebra $\Lambda$ is said to be \textbf{Nakayama algebra} if both the indecomposable projective and indecomposable injective modules are uniserial. One can show that all indecomposable modules over a Nakayama algebra are uniserial \cite[VI, Theorem 2.1]{ARS}. 

Moreover, we have the following classification of Nakayama algebras \cite[Theorem V.3.2]{ASS}: A basic connected artin algebra $\Lambda$ is a Nakayama algebra if and only if its ordinary quiver $Q_\Lambda$ is either a quiver of type $A_n$ with straight orientation or a complete oriented cycle.   According to \cite{Ma}, Nakayama algebras whose ordinary quiver is $A_n$ with straight orientation are called \textbf{Linear-Nakayama algebras} and Nakayama algebras whose ordinary quiver is a complete oriented cycle are called the \textbf{Cyclic-Nakayama algebras}. In this section, we always assume $\Lambda$ to be basic and connected.

For any $\Lambda$-module $M$, denote by $l(M)$  the length of $M$. For a Nakayama algebra $\Lambda$, there exists an ordering $\{P_1, P_2, \ldots, P_n\}$ of non-isomorphic indecomposable projective $\Lambda$-modules such that:
\begin{itemize}
 \item[(a)] $P_{i+1}/\rad P_{i+1}\cong \tau^{-1}(P_i/\rad P_i)$, for $1\leq i\leq n-1$; and \\
 if $l(P_1)\neq 1$, then $P_1/\rad P_1\cong (P_n/\rad P_n)$, \vskip 7pt

 \item[(b)] $l(P_i)\geq2$, for $2\leq i\leq n$, \vskip 7pt

 \item[(c)] $l(P_{i+1})\leq l(P_i)+1$, for $1\leq i\leq n-1$ and $l(P_1)\leq l(P_n)+1$.
\end{itemize}

Such an ordering is called a \textbf{Kupisch series} for $\Lambda$ and $(l(P_1),l(P_2), \ldots, l(P_n))$ is called the corresponding \textbf{admissible sequence} for $\Lambda$. 

\begin{rmk}\label{Kupisch}\ 
\begin{enumerate}
\item The Kupisch series (and hence the admissible sequence) for a Nakayama algebra is always unique up to a cyclic permutation (or simply unique if $l(P_1)=1$).
\item Let $(c_1,c_2,\ldots, c_n)$ be a sequence of integers such that $c_j\geq 2$ for all $j\geq 2$, and $c_{j+1}\leq 1+c_j$ for $j\leq n-1$, and $c_1\leq c_n+1$. There is a Nakayama algebra $\Lambda$ such that $(c_1,c_2,\ldots, c_n)$ is the admissible sequence for $\Lambda$.  
\end{enumerate}
\end{rmk}

%%%%%%%%%%%%%%%%%%%%%%%%%%%%%%%
\subsection{Cyclic-Nakayama algebras with dominant dimension at least $2$} 

Suppose $\Lambda$ is a Cyclic-Nakayama algebra with $n$ simple modules. We always label the vertices of its ordinary quiver in such a way that arrows are $(i+1\rightarrow i)$ for $1\leq i\leq n-1$ and $(1\rightarrow n)$. It is easy to check that  $\{P_1,P_2,\ldots, P_n\}$ is a Kupisch series, where $P_i:=P(S_i)$ is the projective cover of the simple module $S_i$. 

Let $\Lambda$ be a Cyclic-Nakayama algebra with Kupisch series $(P_1,P_2,\ldots, P_n)$. Then we can view the corresponding admissible sequence $(c_1, c_2,\ldots ,c_n)$ as a function $c:\mathbb Z_n\rightarrow \mathbb Z$ sending $i\mapsto c_i$ and satisfying $c_{i+1}\leq c_i+1$ and $c_i\geq2$. On the other hand, each such function gives rise to a Cyclic-Nakayama algebra.

According to our labeling, it is easy to see:
\begin{lem} \label{soc P}
Let $P_i:=P(S_i)$ be the projective cover of the simple module $S_i$. Then 
\begin{enumerate}
\item $S_i\cong\tau S_{i+1}$,
\item $\soc P_i\cong S_{i-c_i+1}$, where the index $i-c_i+1$ is regarded as an element in $\mathbb Z_n$.
 \end{enumerate}
\end{lem}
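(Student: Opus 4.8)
\textbf{Proof proposal for Lemma \ref{soc P}.}

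The plan is to unwind everything from the combinatorial description of projective modules over a Cyclic-Nakayama algebra together with the labeling convention fixed just before the statement. Recall that we have labeled the vertices of the quiver so that the arrows are $i+1\to i$ for $1\le i\le n-1$ and $1\to n$, and $P_i = P(S_i)$ is the projective cover of the simple $S_i$. Since every indecomposable projective is uniserial, $P_i$ has a unique composition series, and reading the radical filtration of $P_i$ amounts to following arrows out of vertex $i$.

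For part (1), I would argue directly from the Kupisch series condition (a) recalled in Section \ref{subsec:Nakayama}, which says $P_{i+1}/\rad P_{i+1}\cong \tau^{-1}(P_i/\rad P_i)$. Since $P_i/\rad P_i \cong S_i$ and $P_{i+1}/\rad P_{i+1}\cong S_{i+1}$, this reads $S_{i+1}\cong \tau^{-1} S_i$, i.e. $S_i\cong \tau S_{i+1}$, which is exactly (1). (One should note that the chosen ordering $\{P_1,\dots,P_n\}$ with $P_i=P(S_i)$ is indeed a Kupisch series for the chosen labeling, which is the content of the sentence preceding the lemma; this is where the compatibility of the two conventions gets used.)

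For part (2), the key observation is that $P_i$ is uniserial of length $c_i = l(P_i)$, so its composition factors, read from the top, are $S_i, S_{i-1}, S_{i-2}, \dots$ — each step down the radical filtration decreases the vertex index by $1$ (indices taken in $\mathbb{Z}_n$), because the unique arrow out of vertex $j$ goes to vertex $j-1$. Thus after $c_i - 1$ steps we reach the socle, which is the simple module at vertex $i - (c_i - 1) = i - c_i + 1$, giving $\soc P_i \cong S_{i-c_i+1}$ with the index interpreted in $\mathbb{Z}_n$. To make the "each step decreases the index by one" claim precise I would invoke the structure of uniserial modules over Nakayama algebras (every indecomposable is of the form $P_j/\rad^k P_j$, and $\rad P_j / \rad^2 P_j \cong S_{j-1}$ since the arrow out of $j$ targets $j-1$), and then induct on the length.

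The only real subtlety — and the step I would be most careful about — is the bookkeeping of indices modulo $n$: one must check that the description $\soc P_i\cong S_{i-c_i+1}$ is genuinely well-defined in $\mathbb{Z}_n$ and matches the case $c_i = n$ (where $P_i$ wraps all the way around the cycle) and the small cases. Everything else is a routine consequence of uniseriality and the labeling convention, so I would keep the verification brief and mostly point to \cite[VI, Theorem 2.1]{ARS} and \cite[Theorem V.3.2]{ASS} for the structural facts already cited in the text.
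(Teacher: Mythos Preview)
Your proposal is correct and matches the paper's approach: the paper states this lemma without proof, prefacing it only with ``According to our labeling, it is easy to see,'' so you are simply supplying the details that the authors omit. Both parts follow exactly as you describe from the Kupisch series condition (a) and the uniserial structure of $P_i$ under the arrow convention $i+1\to i$.
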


\begin{lem}\label{quadrilateral}
Suppose $P_i:=P(S_i)$ is projective non-injective. The injective envelope $I(P_i)$ is a projective-injective module. Then $\soc I(P_i)/P_i\cong S_{i+1}$, where the index $i+1$ is regarded as an element in $\mathbb Z_n$.
 \end{lem}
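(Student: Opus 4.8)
The plan is to work entirely inside the uniserial world. Write $c_i:=l(P_i)$, so that by Lemma~\ref{soc P}(2) the projective cover $P_i$ is the uniserial module with top $S_i$, length $c_i$, and socle $\soc P_i\cong S_{i-c_i+1}$ (all indices in $\mathbb Z_n$). Since $\Lambda$ is Nakayama, every indecomposable $\Lambda$-module is uniserial, so $I(P_i)$ is uniserial; being indecomposable (injective envelope of an indecomposable) and projective-injective by hypothesis, it is isomorphic to $P_j$ for some $j$. As the inclusion $P_i\hookrightarrow P_j\cong I(P_i)$ is essential we have $\soc P_j\cong\soc P_i\cong S_{i-c_i+1}$, and comparing with $\soc P_j\cong S_{j-c_j+1}$ from Lemma~\ref{soc P}(2) gives the relation $j-c_j=i-c_i$ in $\mathbb Z_n$; this single identity is the bookkeeping that drives everything.

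Next I would locate $P_i$ inside $P_j$. Both are uniserial, and the submodules of a uniserial module are exactly its radical powers, so the unique submodule of $P_j$ of length $c_i$ is $\rad^{\,c_j-c_i}P_j$; since $P_i$ has length $c_i$ and embeds in $P_j$, it equals $\rad^{\,c_j-c_i}P_j$. (Here $c_j>c_i$ because $P_i$ is not injective, so $P_j\neq P_i$ and the quotient below is nonzero.) Hence $I(P_i)/P_i\cong P_j/\rad^{\,c_j-c_i}P_j$ is the uniserial module with top $S_j$ and length $c_j-c_i$.

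Finally I would compute its socle. A uniserial module with top $S_j$ and length $\ell$ is a quotient of $P_j$, hence has the same top $\ell$ composition factors as $P_j$; in particular its socle is $S_{j-\ell+1}$ (this is the content of the composition-factor description behind Lemma~\ref{soc P}(2), applied to the projective cover of the quotient). Taking $\ell=c_j-c_i$ gives $\soc\bigl(I(P_i)/P_i\bigr)\cong S_{\,j-(c_j-c_i)+1}=S_{\,(j-c_j)+c_i+1}$, and substituting $j-c_j=i-c_i$ yields $S_{i+1}$, as claimed.

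I do not expect a genuine obstacle here; the only points requiring care are (i) checking that every uniserial module in play really is a submodule or quotient of the appropriate $P$, so that its composition factors are forced by Lemma~\ref{soc P}, and (ii) carrying out the index arithmetic consistently in $\mathbb Z_n$ — in particular using the relation $j-c_j=i-c_i$ precisely in the form that cancels $c_j$ from the final expression.
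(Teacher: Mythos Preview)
Your argument is correct. You identify $I(P_i)$ with a specific $P_j$, use the common socle to obtain the key relation $j-c_j\equiv i-c_i$ in $\mathbb Z_n$, locate $P_i$ as $\rad^{c_j-c_i}P_j$, and then read off the socle of the quotient from the composition series; the arithmetic at the end is clean and exactly right.

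The paper takes a different, more compressed route. From the short exact sequence $0\to P_i\to I(P_i)\to I(P_i)/P_i\to 0$ it invokes directly the Auslander--Reiten structure of Nakayama algebras to assert $\soc\bigl(I(P_i)/P_i\bigr)\cong \tau^{-1}\Top P_i=\tau^{-1}S_i$, and then applies Lemma~\ref{soc P}(1) to conclude $\tau^{-1}S_i\cong S_{i+1}$. So the paper uses part~(1) of Lemma~\ref{soc P} together with a general AR-theoretic fact about uniserial modules, whereas you use only part~(2) and an explicit bookkeeping of composition factors. Your approach is more elementary and self-contained---no appeal to $\tau$ is needed---at the cost of being longer; the paper's proof is a one-liner but leans on the reader recognizing the ``quadrilateral'' relation $\soc(\text{cokernel})\cong\tau^{-1}\Top(\text{kernel})$ in the AR-quiver of a Nakayama algebra, which is not stated or proved elsewhere in the text.
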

 \begin{proof}
 The exact sequence:
 $
0\rightarrow P_i\rightarrow I(P_i)\rightarrow I(P_i)/P_i\rightarrow 0
$
suggests that $\soc I(P_i)/P_i\cong \tau^{-1}\Top P_i=\tau^{-1}S_i.$
Then the assertion follows from Lemma $\ref{soc P}$.
\end{proof}

\begin{defn}\label{Nakayama numerical}
Define $\cQ_c:=\{ i \in\mathbb Z_n \mid c_{i+1}\leq c_i \}$ and $\cP_c:=\{i\in\mathbb Z_n \mid c_{i+1}=c_i+1\}$. 
\end{defn}

By definition, $\cQ_c\cup\cP_c=\mathbb Z_n$. An indecomposable projective module $P_i$ is also injective if and only if $i\in \cQ_c$.

\begin{thm}
Let $P_i$ be an indecomposable projective module with the index $i\in \cP_c$. Then \\ $\domdim P_i\geq2$ if and only if $i\in\{j-c_{j}\in\mathbb Z_n \mid j\in\cQ_c\}$.
\end{thm}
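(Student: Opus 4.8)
The plan is to analyze the beginning of the minimal injective resolution of $P_i$ for $i \in \cP_c$ (so $P_i$ is projective non-injective), using the uniserial structure of modules over a Nakayama algebra. First I would take the injective envelope $0 \to P_i \to I(P_i) \to I(P_i)/P_i \to 0$. By Lemma \ref{soc P} and Lemma \ref{quadrilateral}, $I(P_i)/P_i$ is uniserial with socle $S_{i+1}$, hence its injective envelope is $I(S_{i+1})$. The module $P_i$ has $\domdim P_i \geq 1$ automatically if and only if $I(P_i)$ is projective; since $\Lambda$ is Nakayama with $\gldim$ possibly infinite, I should first check that $I(P_i)$ is always projective for $i \in \cP_c$ — this is where the ``quadrilateral'' setup of Lemma \ref{quadrilateral} already assumes $I(P_i)$ is projective-injective, so I would either invoke that or prove it directly by a length/Kupisch-series count (the injective envelope of a uniserial module with a given socle is the unique indecomposable injective with that socle, and one checks its length against the admissible sequence). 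Then $\domdim P_i \geq 2$ reduces precisely to the statement that $I(I(P_i)/P_i) = I(S_{i+1})$ is projective.

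Next I would identify exactly which indecomposable injectives are projective. An indecomposable injective $\Lambda$-module is uniserial; write $I_j$ for the indecomposable injective with socle $S_j$. One shows $I_j$ is projective precisely when $I_j$ coincides with some indecomposable projective $P_k$, and by matching tops/socles this forces $k$ with $\soc P_k = S_j$, i.e.\ by Lemma \ref{soc P}(2), $j = k - c_k + 1$, and additionally $P_k$ must itself be projective-injective, i.e.\ $k \in \cQ_c$. Conversely, if $k \in \cQ_c$ then $P_k$ is projective-injective, and its socle is $S_{k-c_k+1}$, so $I_{k-c_k+1} = P_k$ is projective. Hence the set of indices $j$ with $I_j$ projective is exactly $\{\,k - c_k + 1 \in \mathbb{Z}_n \mid k \in \cQ_c\,\}$ (here I'd want to be slightly careful that the correspondence $I_j \leftrightarrow P_k$ is a genuine equality of modules, not just same socle — this uses that over a Nakayama algebra a uniserial module is determined by its socle and its length, and a projective-injective module is the unique injective with its socle).

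Combining the two steps: $\domdim P_i \geq 2 \iff I(S_{i+1})$ is projective $\iff i+1 \in \{\,k - c_k + 1 \mid k \in \cQ_c\,\} \iff i \in \{\,k - c_k \mid k \in \cQ_c\,\}$, which is the claimed statement (the index shift by $1$ is absorbed since the set is described with the $+1$ already built into the $I_j$ indexing; one just re-indexes $j \mapsto j-1$). I would present this as a short chain of equivalences once the module-identification lemmas are in place.

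The main obstacle I anticipate is the bookkeeping around the index in $\mathbb{Z}_n$ and, more substantively, making rigorous the claim ``$I(P_i)/P_i$ has socle $S_{i+1}$ and so its injective envelope is $I(S_{i+1})$, which is projective iff $i+1$ is of the stated form'' — specifically, one must rule out the degenerate possibility that $I(P_i)/P_i = 0$ or is not uniserial (it is uniserial as a quotient of the uniserial $I(P_i)$, and nonzero because $P_i$ is not injective, so this is fine but worth a sentence), and one must carefully justify that an indecomposable injective is projective \emph{if and only if} it equals one of the projective-injective $P_k$'s, rather than merely having isomorphic socle to one. That equality-of-modules point, rather than any deep homological input, is the technical heart; everything else is a direct consequence of Lemmas \ref{soc P} and \ref{quadrilateral} together with the uniseriality of Nakayama modules.
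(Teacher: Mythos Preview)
Your proposal is correct and follows essentially the same approach as the paper's proof: both reduce $\domdim P_i\geq 2$ to the condition that the injective envelope of $I(P_i)/P_i$ (equivalently, of its socle $S_{i+1}$) is projective, and then use Lemmas~\ref{soc P} and~\ref{quadrilateral} to translate this into the index condition $i+1=j-c_j+1$ for some $j\in\cQ_c$. The paper phrases the key step as ``$I(P_i)/P_i$ is a submodule of some $P_j$ with $j\in\cQ_c$'' and matches socles, which is exactly your ``$I(S_{i+1})$ is projective'' reformulated; your write-up is simply more explicit about the bookkeeping (and about the hypothesis, stated in Lemma~\ref{quadrilateral}, that $I(P_i)$ is projective-injective).
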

\begin{proof}
The dominant dimension $\domdim P_i\geq2$ if and only if $I(P_i)/P_i$ is a submodule of $P_j$ for some $j\in\cQ_c$, which is equivalent to 
$$
\soc I(P_i)/P_i\cong \soc P_j.
$$
By Lemmas $\ref{soc P}$ and $\ref{quadrilateral}$, it is equivalent to say $i+1=j-c_{j}+1$. Therefore, $\domdim P_i\geq2$ if and only if $i\in\{j-c_{j}\in\mathbb Z_n \mid j\in\cQ_c\}$.
\end{proof}
 
\begin{cor}\label{domdim cn}
Suppose $\Lambda$ is a Cyclic-Nakayama algebra with $n$ simple modules. Then $\domdim\Lambda\geq2$ if and only if $\cP_c\subseteq\{j-c_{j}\in\mathbb Z_n \mid j\in\cQ_c\}$.
\end{cor}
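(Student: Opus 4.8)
The plan is to obtain Corollary~\ref{domdim cn} as a direct consequence of the preceding Theorem, combined with the characterization of $\domdim\Lambda$ as the minimum of the dominant dimensions of the indecomposable projectives (Remark~\ref{dom.dim.rem}(2)). First I would note that $\domdim\Lambda\geq 2$ holds if and only if $\domdim P_i\geq 2$ for \emph{every} $i\in\mathbb Z_n$. Since $\mathbb Z_n=\cQ_c\cup\cP_c$, it suffices to examine the two cases separately.

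The case $i\in\cQ_c$ is automatic: if $i\in\cQ_c$ then $P_i$ is itself projective-injective, so $\domdim P_i=\infty\geq 2$ by Remark~\ref{dom.dim.rem}(1). Therefore the condition $\domdim\Lambda\geq 2$ reduces to requiring $\domdim P_i\geq 2$ only for the indices $i\in\cP_c$ (the projective non-injective ones). By the preceding Theorem, for $i\in\cP_c$ we have $\domdim P_i\geq 2$ if and only if $i\in\{j-c_j\in\mathbb Z_n\mid j\in\cQ_c\}$. Quantifying over all $i\in\cP_c$, this says exactly that $\cP_c\subseteq\{j-c_j\in\mathbb Z_n\mid j\in\cQ_c\}$, which is the claimed criterion.

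There is essentially no obstacle here; the only thing to be slightly careful about is the edge case where $\Lambda$ has no projective non-injective module, i.e. $\cP_c=\emptyset$ — equivalently $\Lambda$ is selfinjective — in which case the containment holds vacuously and indeed $\domdim\Lambda=\infty$ by Remark~\ref{dom.dim.rem}(3), so the statement is consistent. Thus the proof is just the assembly:
\begin{proof}
By Remark~\ref{dom.dim.rem}(2), $\domdim\Lambda\geq 2$ if and only if $\domdim P_i\geq 2$ for all $i\in\mathbb Z_n$. If $i\in\cQ_c$, then $P_i$ is projective-injective, so $\domdim P_i=\infty$ by Remark~\ref{dom.dim.rem}(1); hence the condition is automatic for such $i$. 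Since $\mathbb Z_n=\cQ_c\cup\cP_c$, it follows that $\domdim\Lambda\geq 2$ if and only if $\domdim P_i\geq 2$ for all $i\in\cP_c$. By the previous Theorem, for $i\in\cP_c$ we have $\domdim P_i\geq 2$ if and only if $i\in\{j-c_j\in\mathbb Z_n\mid j\in\cQ_c\}$. Therefore $\domdim\Lambda\geq 2$ if and only if $\cP_c\subseteq\{j-c_j\in\mathbb Z_n\mid j\in\cQ_c\}$.
\end{proof}
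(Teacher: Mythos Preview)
Your proof is correct and follows exactly the intended approach: the paper states this corollary without proof, as it is an immediate consequence of the preceding Theorem together with Remark~\ref{dom.dim.rem}(1),(2) and the partition $\mathbb Z_n=\cQ_c\cup\cP_c$, which is precisely the argument you wrote out.
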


\begin{cor}
If $\domdim\Lambda\geq 2$ then $|\cQ_c|\geq \frac{n}{2}$.
\end{cor}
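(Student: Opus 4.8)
The plan is to combine the numerical characterization in Corollary~\ref{domdim cn} with a counting argument on the partition $\mathbb{Z}_n = \cQ_c \cup \cP_c$. First I would recall that by definition $\cQ_c$ and $\cP_c$ are disjoint (for each $i$, exactly one of $c_{i+1}\leq c_i$ or $c_{i+1}=c_i+1$ holds, since $c_{i+1}\leq c_i+1$ always and these two cases are mutually exclusive), so $|\cQ_c| + |\cP_c| = n$. Thus the claim $|\cQ_c|\geq \frac{n}{2}$ is equivalent to $|\cP_c| \leq |\cQ_c|$.

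Next, invoking the hypothesis $\domdim\Lambda\geq 2$ together with Corollary~\ref{domdim cn}, we have the inclusion $\cP_c \subseteq \{\, j - c_j \in \mathbb{Z}_n \mid j \in \cQ_c \,\}$. The right-hand side is the image of the map $\cQ_c \to \mathbb{Z}_n$ sending $j \mapsto j - c_j$, so its cardinality is at most $|\cQ_c|$. Therefore $|\cP_c| \leq |\{\, j - c_j \in \mathbb{Z}_n \mid j \in \cQ_c \,\}| \leq |\cQ_c|$, and combining with $|\cQ_c| + |\cP_c| = n$ gives $n = |\cQ_c| + |\cP_c| \leq 2|\cQ_c|$, i.e.\ $|\cQ_c|\geq \frac{n}{2}$, as desired.

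I do not expect any serious obstacle here: the statement is essentially a pigeonhole consequence of the inclusion proved in Corollary~\ref{domdim cn}. The only point requiring a line of care is the disjointness of $\cQ_c$ and $\cP_c$, which follows immediately from the admissibility condition $c_{i+1} \leq c_i + 1$ for a Kupisch series (Remark~\ref{Kupisch}): given this, $c_{i+1} = c_i+1$ and $c_{i+1}\leq c_i$ cannot both hold, and at least one holds, so the two sets partition $\mathbb{Z}_n$.
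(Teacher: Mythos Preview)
Your proof is correct and is exactly the intended argument: the paper states this corollary without proof, treating it as an immediate consequence of Corollary~\ref{domdim cn} together with the observation (stated right after Definition~\ref{Nakayama numerical}) that $\cQ_c\cup\cP_c=\mathbb Z_n$. Your counting argument via the image of $j\mapsto j-c_j$ and the partition $|\cQ_c|+|\cP_c|=n$ is precisely how one unpacks this.
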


%\begin{prop}
%Suppose $(c(1),c(2),\cdots, c(n))$ is an admissible sequence of a Cyclic-Nakayama algebra satisfying $\cP_c\subseteq\{j-c(j)\in\mathbb Z_n | j\in\cQ_c\}$, then
%\begin{enumerate}
%\item the admissible sequence $(c'(1),c'(2),\cdots,c'(n))=(c(1)+n,c(2)+n, \cdots, c(n)+n)$ satisfies $\cN_{c'}\subseteq\{j-c'(j)\in\mathbb Z_n | j\in\cP_{c'}\}$.
%\item the admissible sequence $(c'(1),c'(2),\cdots,c'(n))=(c(1+k),c(2+k), \cdots, c(n+k))$ for $\forall k$ satisfies $\cN_{c'}\subseteq\{j-c'(j)\in\mathbb Z_n | j\in\cP_{c'}\}$.
%
%\end{enumerate}
%\end{prop}

At last, we point out that this provides us with a method to find all Cyclic-Nakayama algebras with dominant dimension at least $2$.

Suppose $c$ and $c'$ are admissible sequences of Cyclic-Nakayama algebras $\Lambda$ and $\Lambda'$. We say that $c$ and $c'$ are in the same \textbf{difference class} (see \cite{Ma}) if  $c'_i=c_i+n$ for all $i$. From Corollary $\ref{domdim cn}$, it is easy to see that if $c$ and $c'$ are in the same difference class, then $\domdim\Lambda\geq 2$ if and only if $\domdim\Lambda'\geq 2$. In fact, the dominant dimension of $\Lambda$ only depends on the difference class of the admissible sequence \cite[Theorem 1.1.4]{Ma}. 

Therefore, to find all the Cyclic-Nakayama algebras with dominant dimension at least $2$, it is enough to find those with ``minimal'' admissible sequences. 

\begin{defn}
Suppose $c$ is an admissible sequence of Cyclic-Nakayama algebras $\Lambda$. We say that $c$ is \textbf{elementary} if $\mathop{\min}\limits_{1\leq i\leq n} \{c_i\} \leq n+1$, and $c$ is \textbf{absolutely elementary} if $\mathop{\min}\limits_{1\leq i\leq n} \{c_i\} = 2$.
\end{defn}

%It is also known that the number of nonisomorphic Cyclic-Nakayama algebras with $n$ simples which has an absolute elementary admissible sequence is bounded by $\frac{1}{n+1} {2n\choose n}$ (e.g. \cite{R}). Hence the number of nonisomorphic Cyclic-Nakayama algebras with $n$ simples which has an elementary admissible sequence is bounded by $\frac{n}{n+1} {2n\choose n}$.
%
\begin{exm}
When $n=3$, the following are all the (absolutely) elementary admissible sequences for Cyclic-Nakayama algebras with dominant dimension at least $2$ (up to cyclic permutations): 

Absolutely elementary: $(2,2,2)$, $(2,2,3)$. 

Elementary: $(2,2,2)$, $(3,3,3)$, $(4,4,4)$, $(2,2,3)$, $(3,3,4)$. 
\end{exm}

%%%%%%%%%%%%%%%%%%%%%%%%%%%%%%%
\subsection{Linear-Nakayama algebras with dominant dimension at least $2$} 

Most of the results for Cyclic-Nakayama algebras also works for Linear-Nakayama algebras. For completeness, we will state the criteria for Linear-Nakayama algebras $\Lambda$ having $\domdim \Lambda \geq 2$.

Suppose $\Lambda$ is a Nakayama algebra whose underlying quiver is of type $A_n$ with $n$ simple modules. We always label the vertices of its quiver in such a way that arrows are $(i+1\rightarrow i)$ for $1\leq i\leq n-1$. It is easy to check that $\{P_1,P_2,\ldots, P_n\}$ is a Kupisch series, where $P_i:=P(S_i)$ is the projective cover of simple module $S_i$. 

The corresponding admissible sequence $(c_1,c_2,\ldots, c_n)$ satisfies  $c_1=1$, $c_{i+1}\leq c_i+1$, and $c_i\geq2$ for $2\leq i\leq n$. On the other hand, each such sequence gives rise to a Linear-Nakayama algebra. Define $\cQ_c:=\{ i \mid c_{i+1}\leq c_i \}$ and $\cP_c:=\{i \mid c_{i+1}=c_i+1\}$ as in Definition \ref{Nakayama numerical}. Notice that for Linear-Nakayama algebras $c_2=2$, $c_i\leq i$ and $1\in \cP_c$. 
%Admissible sequences for Linear-Nakayama algebras are always absolutely elementary.

\begin{thm}\label{domdim ln}
Suppose $\Lambda$ is a Linear-Nakayama algebra with $n$ simple modules. Then $\domdim\Lambda\geq2$ if and only if $\cP_c\subseteq\{j-c_j \mid j \in \cQ_c\}$.
\end{thm}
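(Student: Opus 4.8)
The plan is to run, with only boundary-term modifications, the same argument that establishes Corollary~\ref{domdim cn} in the cyclic case. Fix the Kupisch series $\{P_1,\dots,P_n\}$ and the admissible sequence $(c_1,\dots,c_n)$, with $c_1=1$, $c_i\geq 2$ for $i\geq 2$, and $c_{i+1}\leq c_i+1$. Since every indecomposable $\Lambda$-module is uniserial and $\domdim\Lambda=\min_i\domdim P_i$ with $\domdim Q=\infty$ for $Q$ projective-injective (Remark~\ref{dom.dim.rem}(1)--(2)), it suffices to compute $\domdim P_i$ for the projective non-injective $P_i$. First I would record the linear analogues of the preliminary lemmas. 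From $c_i\leq i$ and condition (a) of the Kupisch series, Lemma~\ref{soc P} gives $\soc P_i\cong S_{i-c_i+1}$ (the subscript now being an honest integer in $\{1,\dots,n\}$) and $\tau^{-1}S_i\cong S_{i+1}$ for $1\le i\le n-1$. A short uniserial computation then shows that $P_i$ is projective non-injective exactly when $i\in\cP_c$ (so in particular $i\leq n-1$), while $P_j$ is projective-injective exactly for $j\in\cQ_c$ together with $j=n$: indeed the injective envelope of $\soc P_n\cong S_{n-c_n+1}$ has length exactly $c_n$ because of the constraint $n-c_n+\ell\le n$, so $P_n\cong I(S_{n-c_n+1})$. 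Accordingly I put $n\in\cQ_c$, so that $\cP_c\sqcup\cQ_c=\{1,\dots,n\}$ and $\{P_j:j\in\cQ_c\}$ is precisely the set of projective-injectives.

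Next I would compute $\domdim P_i$ for $i\in\cP_c$. As in the (cyclic) proof of Lemma~\ref{quadrilateral}, an induction on the uniserial filtration of $I(P_i)$ — using $c_{i+1}=c_i+1$ and admissibility at each step — shows that $I(P_i)$ is actually \emph{projective}-injective, say $I(P_i)\cong P_{i+k}$ with $k\geq 1$, and that $\soc(I(P_i)/P_i)\cong\tau^{-1}\Top P_i=\tau^{-1}S_i\cong S_{i+1}$. Hence $\domdim P_i\geq 2$ if and only if the injective envelope of the uniserial module $N:=I(P_i)/P_i$ is again projective, equivalently $N$ embeds into a projective-injective module. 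Since a uniserial module embeds into a projective-injective $P_j$ precisely when $\soc P_j$ equals its socle — for then $P_j\cong I(\soc P_j)$ is the injective envelope of that simple, hence contains every uniserial module with that socle, so the length condition is automatic — this happens if and only if there is $j\in\cQ_c$ with $\soc P_j\cong S_{i+1}$, i.e.\ (by the formula for $\soc P_j$) with $j-c_j+1=i+1$, that is $i=j-c_j$. So for $i\in\cP_c$ we obtain $\domdim P_i\geq 2\iff i\in\{\,j-c_j\mid j\in\cQ_c\,\}$.

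Combining the two paragraphs, $\domdim\Lambda\geq 2$ holds if and only if $\domdim P_i\geq 2$ for every $i\in\cP_c$, if and only if $\cP_c\subseteq\{\,j-c_j\mid j\in\cQ_c\,\}$, which is the assertion. I expect the only real work to be boundary bookkeeping — verifying that $\cP_c$ and $\cQ_c$ partition the correct index set once $P_n$ is recognized as projective-injective, and that the cyclic-case uniserial inductions go through verbatim once $\mathbb{Z}_n$-arithmetic is replaced by ordinary arithmetic on $\{1,\dots,n\}$; there is no new conceptual ingredient beyond Lemmas~\ref{soc P} and~\ref{quadrilateral} together with the structure of uniserial modules over Nakayama algebras.
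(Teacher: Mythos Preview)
Your proposal is correct and follows exactly the approach the paper intends: the paper gives no explicit proof of Theorem~\ref{domdim ln}, merely remarking that ``most of the results for Cyclic-Nakayama algebras also work for Linear-Nakayama algebras,'' so the implied argument is precisely the transplant of Lemmas~\ref{soc P}--\ref{quadrilateral} and Corollary~\ref{domdim cn} that you carry out. Your explicit treatment of the boundary case---showing $P_n$ is projective-injective and hence $n$ must be placed in $\cQ_c$ for the statement to be correct---is a genuine clarification that the paper leaves implicit in its definition of $\cQ_c$ and $\cP_c$ for the linear case.
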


\subsection{The tilting module $T_\cC$ for Nakayama algebras}\label{tilting Nakayama}
Lastly, for a Nakayama algebra $\Lambda$, we give a description of the tilting module $T_\cC$ in the subcategory $\cC_\Lambda$ if it exists. 

Let $\Lambda$ be a Nakayama algebra with Kupisch series $(P_1,P_2,\ldots, P_n)$ and admissible sequence $(c_1, c_2,\ldots ,c_n)$. Let $\cQ_c$ and $\cP_c$ be the sets as defined in the previous sections. Then for each $i\in\cP_c$, define $\delta(i):= \min \, \{k\in \mathbb N \mid {i+k}\in\cQ_c \}$.

According to Corollary \ref{T_C compute}, we have the following description of the tilting module $T_\cC$:
\begin{thm}
Let $\Lambda$ be a Nakayama algebra with Kupisch series $(P_1,P_2,\ldots, P_n)$ and admissible sequence $(c_1, c_2,\ldots ,c_n)$. If the tilting module $T_\cC$ exists in $\cC_\Lambda$, then  
$$T_\cC \simeq \left(\mathop{\bigoplus}\limits_{j \in\cQ_c}{P_j}\right) \oplus \left(\mathop{\bigoplus}\limits_{i \in\cP_c} T_i \right),$$
where each $T_i$ is a uniserial module with socle $\soc T_i= i+1$ and length $|T_i|=\delta(i)$.
\end{thm}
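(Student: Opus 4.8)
The plan is to read $T_\cC$ off from Corollary~\ref{T_C compute}, which tells us
$$T_\cC \simeq \widetilde Q \oplus \Bigl(\bigoplus_i \Omega^{-1}P_i\Bigr),$$
the sum running over representatives of the indecomposable projective non-injective $\Lambda$-modules. With the labelling fixed above, $\cP_c$ and $\cQ_c$ partition $\mathbb Z_n$, and an indecomposable projective $P_m$ is injective exactly when $m\in\cQ_c$ (the remark following Definition~\ref{Nakayama numerical}); hence $\widetilde Q\simeq\bigoplus_{j\in\cQ_c}P_j$, and the projective non-injective indecomposables are precisely the $P_i$ with $i\in\cP_c$. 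So the statement reduces to identifying, for each $i\in\cP_c$, the cosyzygy $\Omega^{-1}P_i=I(P_i)/P_i$ with a uniserial module $T_i$ of socle $S_{i+1}$ and length $\delta(i)$.

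Since $T_\cC$ exists, $\domdim\Lambda\ge 2$ by Theorem~\ref{tilting}, so $I(P_i)$ is projective-injective; as $P_i$ is not injective, $\Omega^{-1}P_i$ is a nonzero quotient of the uniserial module $I(P_i)$, hence uniserial, and $\soc\Omega^{-1}P_i\cong S_{i+1}$ by Lemma~\ref{quadrilateral}. It then remains to pin down its length, and the key step is to prove $I(P_i)\cong P_{i+\delta(i)}$. (Here $\delta(i)$ is finite because $\cQ_c\neq\varnothing$: one cannot have $c_{m+1}=c_m+1$ all the way around the cycle.) By minimality of $\delta(i)$ together with $i\in\cP_c$, the indices $i,i+1,\dots,i+\delta(i)-1$ all lie in $\cP_c$, so $c_{i+k}=c_i+k$ for $0\le k\le\delta(i)$. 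Thus $P_{i+\delta(i)}$ is uniserial of length $c_i+\delta(i)$ and, by Lemma~\ref{soc P}, has socle $S_{i-c_i+1}=\soc P_i$. Since every indecomposable here is uniserial, the submodule $\rad^{\delta(i)}P_{i+\delta(i)}$ has length $c_i$ and top $S_i$, so it is isomorphic to $P_i$; this embeds $P_i$ essentially into $P_{i+\delta(i)}$ (the common socle being simple), and as $i+\delta(i)\in\cQ_c$ the module $P_{i+\delta(i)}$ is injective, whence $P_{i+\delta(i)}=I(P_i)$.

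Consequently $\Omega^{-1}P_i\cong P_{i+\delta(i)}/\rad^{\delta(i)}P_{i+\delta(i)}$, which has length $\delta(i)$ and socle $S_{i+1}$; this is the asserted $T_i$, and reassembling the summands yields the displayed decomposition of $T_\cC$. I expect the identification $I(P_i)\cong P_{i+\delta(i)}$ to be the only real obstacle --- keeping the length bookkeeping straight and handling the indices modulo $n$ carefully. The linear-Nakayama case is entirely analogous, using the linear analogues of Lemmas~\ref{soc P} and~\ref{quadrilateral}.
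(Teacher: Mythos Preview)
Your proposal is correct and follows exactly the approach the paper indicates: the paper simply states that the theorem follows from Corollary~\ref{T_C compute} and gives no further argument, while you have carefully supplied the missing computation identifying $I(P_i)\cong P_{i+\delta(i)}$ and hence $\Omega^{-1}P_i$ as the uniserial module of length $\delta(i)$ with socle $S_{i+1}$. Your length bookkeeping via $c_{i+k}=c_i+k$ for $0\le k\le\delta(i)$ is exactly what is needed, and the treatment of the linear case as analogous is appropriate.
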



\begin{thebibliography}{aa}

\bibitem{ASS} I.~Assem, D.~Simson, and A.~Skowro\'nski, ``Elements of representation theory of associative algebras, Vol 1: Techniques of representation theory," London Math. Soc. Student Texts, 
\textbf{65}, Cambridge University Press, New York, 2006.

\bibitem{A1} M.~Auslander, \emph{The representation dimension of artin algebras}, Queen Mary College Mathematics Notes (1971). Republished in Selected works of Maurice Auslander, Amer. Math. Soc., Providence, 1999.

\bibitem{A2} M.~Auslander, \emph{Functors and morphisms determined by objects}, Representation theory of algebras (Proc. Conf., Temple Univ., Philadelphia, Pa., 1976), 1¨C244. Lecture Notes in Pure
Appl. Math., \textbf{37}, Dekker, New York, 1978.

\bibitem{AR1} M.~Auslander and I.~Reiten, \emph{Applications of contravariantly finite subcategories}, Adv. Math., \textbf{86} (1991), 111--152.

\bibitem{AR2} M.~Auslander and I.~Reiten, \emph{$k$-Gorenstein algebras and syzygy modules}, J. Pure Appl. Alg., \textbf{92} (1994), 1--27.

\bibitem {ARS} M.~Auslander, I.~Reiten, and S.~O.~Smal{\o}, ``Representation Theory of Artin Algebras," Cambridge Studies in Advanced Mathematics, \textbf{36}, Cambridge University Press, Cambridge, 1995.

\bibitem{AS} M.~Auslander and $\O$.~Solberg, \emph{Gorenstein algebras and algebras with dominant dimension at least 2}, Comm. in Alg., \textbf{21} (1993), 3897--3934.

\bibitem{B} K. Bongartz, \emph{Tilted algebras}, Representations of algebras (Puebla, 1980), pp. 26-38, Lecture Notes in Math., \textbf{903}, Springer, Berlin-New York, 1981.

%\bibitem{CIFR} G.~Cerulli Irelli, E.~Feigin, and M.~Reineke, \emph{Desingularisation of quiver Grassmannians for Dynkin quivers}, Adv. Math., \textbf{245} (2013), 182--207.

\bibitem{CX} H.~Chen and C.~Xi, \emph{Dominant dimensions, derived equivalences and tilting modules}, Israel J.~Math., \textbf{215} (2016), 349--395.

\bibitem{CBS} W.~Crawley-Boevey and J.~Sauter, \emph{On quiver Grassmannians and orbit closures for representation-finite algebras}, Math. Z., \textbf{285} (2017), 367--395. 

\bibitem{F} K.~R.~Fuller, \emph{Generalized uniserial rings and their Kupisch series}, Math. Z., \textbf{106} (1968), 248--260.

\bibitem{FGR} R.~Fossum, P.~Griffith and I.~Reiten, ``Trivial extensions of abelian categories'', Lecture Notes in Math., \textbf{456}, Springer-Verlag, Berlin, 1975.

\bibitem{GHPRU} S.~Gastaminza, D.~Happel, M.~I.~Platzeck, M.~J.~Redondo and L.~Unger, \emph{Global dimensions for endomorphism algebras of tilting modules}, Arch. Math., \textbf{75} (2000), 247--255.

\bibitem{H} D.~Happel, ``Triangulated categories in the representation theory of finite-dimensional algebras'', London Math. Soc. Lecture Note Series, \textbf{119}. Cambridge University Press, Cambridge, 1988.

\bibitem{IG} K.~Igusa, G.~Todorov, \emph{On finitistic global dimension conjecture for artin algebras}, Rep. of algebras and related topics, Fields Inst. Commun., \textbf{45}, Amer. Math. Soc., Providence, RI (2005), 201--204.

\bibitem{I}O.~Iyama, \emph{Finiteness of representation dimension}, Proc. Amer. Math. Soc., \textbf{131} (2003), 1011--1014.

\bibitem{I2}O.~Iyama, \emph{Auslander-Reiten theory revisited}, Trends in representation theory of algebras and related topics, 349--397, EMS Ser.~Congr.~Rep., Eur. Math. Soc., Zurich, 2008.

\bibitem{IS} O.~Iyama and $\O$.~Solberg, \emph{Auslander-Gorenstein algebras and precluster tilting}, arXiv:1608.04179.

\bibitem{Ma} R.~Marczinzik, \emph{Upper bounds for the dominant dimension of Nakayama and related algebras}, arXiv:1605.09634.

\bibitem{Mo} K.~Morita, \emph{Duality for modules and its applications in the theory of rings with mini- mum condition}, Sci. Rep. Tokyo Daigaku A, \textbf{6} (1958), 83--142.

\bibitem{M} B.J.~Mueller, \emph{The classification of algebras by dominant dimension}, Canadian Journal of Mathematics, \textbf{20} (1968), 398--409.

\bibitem{O} S.~Oppermann, \emph{Lower bounds for Auslander's representation dimension}, Duke Math J., \textbf{148} (2009), 211--249.

\bibitem{PS} M.~Pressland and J.~Sauter, \emph{Special tilting modules for algebras with positive dominant dimension}, arXiv:1705.03367.

%\bibitem{RT} I.~Reiten and G.~Todorov, \emph{Tilting modules which are generated and cogenerated by projective-injectives}

%\bibitem{R} C.M.~Ringel, \emph{The Gorenstein projective modules for the Nakayama algebras}.

\bibitem{R2} C.M.~Ringel, \emph{Artin algebras of dominant dimension at least $2$}, preprint manuscript Selected Topics CMR. 

\bibitem{T} H.~Tachikawa, \emph{On dominant dimension of QF-3 algebras}, Trans. Amer. Math. Soc., \textbf{112} (1964), 249--266.

\bibitem{W} J.~Wei, \emph{Finitistic dimension and Igusa-Todorov algebras}, Adv. Math., \textbf{222} (2009), 2215--2226.

\bibitem{Z} A.~Zaks, \emph{Injective dimension of semiprimary rings}, J. Algebra, \textbf{13} (1969), 73--86. 
\end{thebibliography}
\end{document}